\patchcmd{\smallmatrix}{\thickspace}{\kern.01em}{}{}
\newcommand{\ZZ}{\mathbb{Z}}
\newcommand{\la}{\lambda}
\newcommand{\La}{\Lambda}
\newcommand{\tn}{\tau_n^{\phantom{0}}}
\newcommand{\tno}{\tau_n^-}
\newcommand{\cA}{\mathcal{A}}
\newcommand{\cB}{\mathcal{B}}
\newcommand{\cC}{\mathcal{C}}
\newcommand{\cF}{\mathcal{F}}
\newcommand{\cG}{\mathcal{G}}
\newcommand{\cI}{\mathcal{I}}
\newcommand{\cJ}{\mathcal{J}}
\newcommand{\cL}{\mathcal{L}}
\newcommand{\cP}{\mathcal{P}}
\newcommand{\cR}{\mathcal{R}}
\newcommand{\cU}{\mathcal{U}}
\newcommand{\cV}{\mathcal{V}}
\newcommand{\ab}{_{\! \scalebox{1.8}{$\cdot$}}}
\newcommand{\vare}{\varepsilon}
\newcommand{\om}{\Omega}
\newcommand{\Corth}{\mathcal{C}{^{\perp_n}}}
\newcommand{\orthC}{{^{\perp_n}}\mathcal{C}}
\newcommand{\Ind}[1]{\overline{#1}}
\newcommand{\PL}{\cP_{\La}}
\newcommand{\IL}{\cI_{\La}}
\newcommand{\ABL}{\cP\strut^{\text{ab}}_{\La}}
\newcommand{\LAB}{\cI\strut^{\text{ab}}_{\La}}
\newcommand{\MABL}{\cP\strut^{\text{mab}}_{\La}}
\newcommand{\MLAB}{\cI\strut^{\text{mab}}_{\La}}
\newcommand{\ABP}{\cP\strut^{\text{ab}}}
\newcommand{\sABP}{\cP\rule[-.45\baselineskip]{0pt}{\baselineskip}^{\text{ab}}}
\newcommand{\IAB}{\cI\strut^{\text{ab}}}
\newcommand{\sIAB}{\cI\rule[-.45\baselineskip]{0pt}{\baselineskip}^{\text{ab}}}
\newcommand{\MABP}{\cP\strut^{\text{mab}}}
\newcommand{\sMABP}{\cP\rule[-.45\baselineskip]{0pt}{\baselineskip}^{\text{mab}}}
\newcommand{\MIAB}{\cI\strut^{\text{mab}}}
\newcommand{\sMIAB}{\cI\rule[-.45\baselineskip]{0pt}{\baselineskip}^{\text{mab}}}
\newcommand{\Lab}{P\strut^{\text{ab}}}
\newcommand{\DLab}{I\strut^{\text{ab}}}
\newcommand{\nospacepunct}[1]{\makebox[0pt][l]{\,#1}}
\newcommand\nct[2]{\fill ({#1},{#2})  circle (0.05cm);} 
\newcommand\mct[2]{\draw ({#1},{#2})  circle (0.05cm);} 
\DeclareMathOperator{\im}{Im}
\DeclareMathOperator{\m}{mod}
\DeclareMathOperator{\Hom}{Hom}
\DeclareMathOperator{\End}{End}
\DeclareMathOperator{\Ext}{Ext}
\DeclareMathOperator{\gldim}{gl.dim}
\DeclareMathOperator{\pd}{proj.dim}
\DeclareMathOperator{\id}{inj.dim}
\DeclareMathOperator{\rad}{rad}
\DeclareMathOperator{\topp}{top}
\DeclareMathOperator{\soc}{soc}
\DeclareMathOperator{\add}{add}
\DeclareMathOperator{\lvl}{lvl}
\DeclareMathOperator{\Id}{Id}
\DeclareMathOperator{\Sub}{Sub}
\DeclareMathOperator{\Fac}{Fac}
\DeclareMathOperator{\ind}{ind}
\DeclareMathOperator{\supp}{supp}
\DeclareMathOperator{\rep}{rep}
\DeclarePairedDelimiter\abs{\lvert}{\rvert}%
\DeclarePairedDelimiter\floor{\lfloor}{\rfloor}
\DeclarePairedDelimiter\ceil{\lceil}{\rceil}
\NewDocumentCommand\glue{ O{} O{}}{\mathbin{\raisebox{0.2ex}{${}^{\mbox{\tiny$#1$}}$\rotatebox[origin=c]{90}{$\triangleright$}$^{\mbox{\tiny$#2$}}$}}}
\newcommand{\nsup}[2]{{^{#1}\!{#2}}}
\newcommand{\supn}[2]{{{#1}\!^{#2}}}
\newcommand{\PD}{\nsup{P}{\triangle}}
\newcommand{\cPD}{\cF_P}
\newcommand{\DI}{\supn{\triangle}{I}}
\newcommand{\cDI}{\cG_I}
\newcounter{sarrow}
\NewDocumentCommand\qthree{m O{} O{}}{\begin{smallmatrix} #1 \\ #2 \\ #3 \end{smallmatrix}} 
\NewDocumentCommand\tinyqthree{m O{} O{}}{\mbox{\tiny$\begin{smallmatrix} #1 \\ #2 \\ #3 \end{smallmatrix}$}} 
\theoremstyle{plain}
\newtheorem*{theorem*}{Theorem}
\newtheorem{theorem}{Theorem}[section] 
\theoremstyle{definition}
\newtheorem{definition}[theorem]{Definition} 
\newtheorem{example}[theorem]{Example} 
\newtheorem{corollary}[theorem]{Corollary} 
\newtheorem{lemma}[theorem]{Lemma} 
\newtheorem{proposition}[theorem]{Proposition} 
\newtheorem{remark}[theorem]{Remark} 
\numberwithin{equation}{section} 
\newtheorem{question}{Question} 
\begin{document}

\title{Gluing of $n$-cluster tilting subcategories for rep\-re\-sen\-ta\-tion-di\-rect\-ed algebras}

\author{Laertis Vaso}

\keywords{$n$-cluster tilting subcategory, Representation-directed algebra, Global dimension, Nakayama algebra}

\maketitle

\begin{abstract}
Given $n\leq d<\infty$, we investigate the existence of algebras of global dimension $d$ which admit an $n$-cluster tilting subcategory. We construct many such examples using rep\-re\-sen\-ta\-tion-di\-rect\-ed algebras. First, given two rep\-re\-sen\-ta\-tion-di\-rect\-ed algebras $A$ and $B$, a projective $A$-module $P$ and an injective $B$-module $I$ satisfying certain conditions, we show how we can construct a new rep\-re\-sen\-ta\-tion-di\-rect\-ed algebra $\La:= B \glue[P][I] A$ in such a way that the representation theory of $\La$ is completely described by the representation theories of $A$ and $B$. Next we introduce $n$-fractured subcategories which generalize $n$-cluster tilting subcategories for rep\-re\-sen\-ta\-tion-di\-rect\-ed algebras. We then show how one can construct an $n$-cluster tilting subcategory for $\La$ by using $n$-fractured subcategories of $A$ and $B$. As an application of our construction, we show that if $n$ is odd and $d\geq n$ then there exists an algebra admitting an $n$-cluster tilting subcategory and having global dimension $d$. We show the same result if $n$ is even and $d$ is odd or $d\geq 2n$.

\end{abstract}

\tableofcontents

\section*{Introduction}

For a rep\-re\-sen\-ta\-tion-fi\-nite algebra $\La$, classical Aus\-lan\-der--Rei\-ten theory gives a complete description of the module category $\m \La$, see for example \cite{ARS}. However in general the whole module category of an algebra is very hard to study. In Osamu Iyama's higher-dimensional Aus\-lan\-der--Rei\-ten theory (\cite{IYA2}, \cite{IYA1}) one moves the focus from $\m\La$ to a suitable subcategory $\cC\subseteq \m\La$ satisfying certain homological properties. Such a subcategory $\cC$ is called an \emph{$n$-cluster tilting subcategory} for some positive integer $n$; if moreover $\cC=\add (M)$ for some $M\in \m\La$, then $M$ is called an \emph{$n$-cluster tilting module}. 

An $n$-cluster tilting subcategory $\cC$ is the setting for a higher-dimensional analogue of the classical Aus\-lan\-der--Rei\-ten theory: it admits an $n$-Aus\-lan\-der--Rei\-ten translation, $n$-almost split sequences and an $n$-Aus\-lan\-der--Rei\-ten duality generalising the classical Aus\-lan\-der--Rei\-ten translation, almost split sequences and Aus\-lan\-der--Rei\-ten duality when $n=1$. However, in general it is not easy to find $n$-cluster tilting subcategories. If we set $d:=\gldim \La$, then the existence of an $n$-cluster tilting subcategory for $n > d$ implies that $\La$ is semisimple. Hence we may restrict to the case $n\leq d$. 

The extreme case $n=d$ is of special interest and has been studied extensively before, for example in \cite{IO} and \cite{HI}. If $\cC$ is given by a $d$-cluster tilting module $M$, it follows that $\cC$ is unique and given by 
\[\cC=\add \{\tau_d^{-i}(\La) \mid i\geq 0\},\]
where $\tau_d^-$ denotes the $d$-Aus\-lan\-der--Rei\-ten translation. In this case $\La$ is called \emph{$d$-rep\-re\-sen\-ta\-tion-fi\-nite} (see \cite{HI2}, \cite{IO2}). It is an open question whether all $d$-cluster tilting subcategories are given by $d$-cluster tilting modules. Nevertheless, if we assume the existence of a $d$-cluster tilting module $M$ we can obtain further results about $\cC=\add(M)$. In particular in this case $\cC$ is directed if and only if $\add(\La)$ is directed. Furthermore it is asked in \cite{HIO} if the mere existence of a $d$-cluster tilting module implies that $\add(\La)$ is directed. 
 
Cases where $n < d$ have also been studied before. For the case where $\La$ is selfinjective, and so $d=\infty$, see for example \cite{EH} and \cite{DI}. Note that in this case $\cC$ is never directed. A class of examples satisfying $n\leq d<\infty$ with $d\in n\ZZ$ first appeared in \cite{JAS} and many more were constructed recently in \cite{JK}. To our knowledge, the only known examples where $n \nmid d$ appear in \cite[Theorem 2]{VAS} for $n$ even and $d=n+2k(n-1)+\floor{\frac{2k}{l}}+\ceil{\frac{2k}{l}}$ where $k\in\ZZ_{\geq 1}$, and in \cite[Example 3.7]{VAS} for $n=2$ and $d=3$. 

Recall that an algebra $\La$ is called \emph{rep\-re\-sen\-ta\-tion-di\-rect\-ed} if there exists no sequence of nonzero nonisomorphisms $f_k: M_{k}\rightarrow M_{k+1}$ between indecomposable modules $M_0,\dots, M_t$ with $M_0\cong M_t$. For rep\-re\-sen\-ta\-tion-di\-rect\-ed algebras, a characterization of $n$-cluster tilting subcategories was given in \cite[Theorem 1]{VAS} (see Theorem \ref{thrm:char}). Using this characterization, it is easy to check the existence of $n$-cluster tilting subcategories. Moreover, in this case $\La$ is rep\-re\-sen\-ta\-tion-fi\-nite and so any $n$-cluster tilting subcategory admits an additive generator. Finally, since $\m\La$ is directed, we have that $\cC$ is also directed. As a consequence, it turns out that there is a unique choice for $\cC$. It follows that one of the simplest cases to consider when trying to find $n$-cluster tilting subcategories is that of $\La$ being rep\-re\-sen\-ta\-tion-di\-rect\-ed.

In this paper we address the general question of whether for a pair of positive integers $(n,d)$ with $n<d$ there exists an algebra $\La$ of global dimension $d$, admitting an $n$-cluster tilting subcategory; we call such an algebra \emph{$(n,d)$-rep\-re\-sen\-ta\-tion-fi\-nite}. We show that for $n$ odd and any $d$ we can find an $(n,d)$-rep\-re\-sen\-ta\-tion-fi\-nite algebra. Moreover, for $n$ even and $d$ odd or $d\geq 2n$ we again answer the question affirmatively. 

To construct $(n,d)$-rep\-re\-sen\-ta\-tion-fi\-nite algebras we first introduce the method of \emph{gluing}. Our method takes as input a rep\-re\-sen\-ta\-tion-di\-rect\-ed algebra $A$ with a certain kind of projective module $P$ and a rep\-re\-sen\-ta\-tion-di\-rect\-ed algebra $B$ with a certain kind of injective module $I$ and returns a new rep\-re\-sen\-ta\-tion-di\-rect\-ed algebra $\La:= B \glue[P][I] A$. The representation theory of $\La$ can be completely described in terms of the representation theories of $A$ and $B$. In particular, the Aus\-lan\-der--Rei\-ten quiver $\Gamma(\La)$ of $\La$ is given as the union of the Aus\-lan\-der--Rei\-ten quivers $\Gamma(A)$ and $\Gamma(B)$ of $A$ and $B$, identified over a common piece. In general there may be several choices of $P$ and $I$, but choosing $P$ and $I$ to be simple modules always works. Similar ideas to this method have appeared before in the literature: in \cite{IPTZ}, the authors are interested in the case where the common piece of $\Gamma(A)$ and $\Gamma(B)$ is a point or a triangle mesh. Our construction generalizes some of their results.

If $A$ and $B$ admit $n$-cluster tilting subcategories, in general it is not true that $B \glue[P][I] A$ admits an $n$-cluster tilting subcategory. To this end we modify the characterization of $n$-cluster tilting subcategories given in \cite[Theorem 1]{VAS} and introduce the more general notion of \emph{$n$-fractured subcategories}. We show that under some compatibility conditions gluing of algebras admitting $n$-fractured subcategories gives rise to an algebra admitting an $n$-fractured subcategory. Moreover, by repeating this process sufficiently many times, one can arrive at an algebra which admits an actual $n$-cluster tilting subcategory, as desired.

Let us call an algebra $\La$ \emph{strongly $(n,d)$-rep\-re\-sen\-ta\-tion-di\-rect\-ed} if $\La$ is rep\-re\-sen\-ta\-tion-di\-rect\-ed and $(n,d)$-rep\-re\-sen\-ta\-tion-fi\-nite. As a corollary of our previous results we show that if $A$ is strongly $(n,d_1)$-rep\-re\-sen\-ta\-tion-di\-rect\-ed, $B$ is strongly $(n,d_2)$-rep\-re\-sen\-ta\-tion-di\-rect\-ed, $P$ is a simple projective $A$-module and $I$ is a simple injective $B$-module then $\La= B \glue[P][I] A$ is strongly $(n,d)$-rep\-re\-sen\-ta\-tion-di\-rect\-ed for some $d$. By iterating this result, many new examples can be constructed. Moreover, while the global dimension $d$ of $\La$ in general is difficult to compute, we show that in some simple cases we have $d=d_1+d_2$.

This paper is divided into four parts. In the first part of the paper we introduce some basic notation and give a motivating example in detail. In the second part, given two rep\-re\-sen\-ta\-tion-di\-rect\-ed algebras $A$ and $B$, we describe our method of gluing of algebras and the associated results. In the third part we introduce $n$-fractured subcategories and describe how they are affected by gluing under certain conditions. In the fourth part of this paper we use these constructions to prove our results about the existence of $(n,d)$-rep\-re\-sen\-ta\-tion-fi\-nite algebras. Most results are proved using standard techniques of representation theory: see for example the books \cite{ARS}, \cite{ASS} as well as the survey article \cite{RIN}. Many examples are given throughout. We also include a list of terminology with reference to their definition in the text as well as an index of symbols at the end of this paper.

\section{Part I: Preliminaries}

\subsection{Conventions}\label{subsection:conventions}

Let us introduce some conventions and notation that we use throughout this paper. Let $K$ be an algebraically closed field and $n\geq 1$ an integer. In this paper by an algebra $\La$ we mean a basic fi\-nite-di\-men\-sion\-al unital associative algebra over $K$ and by a $\La$-module we mean a right $\La$-module. We denote the category of right $\La$-modules by $\m\La$. We write $M_{\La}$ for a module $M\in \m\La$ when the algebra is not clear from the context. 

For a quiver $Q$ we denote by $Q_0$ the set of vertices and by $Q_1$ the set of arrows. For an arrow $\alpha\in Q_1$ we denote by $s(\alpha)$ its source and by $t(\alpha)$ its target. We compose arrows in quivers from the left to the right, that is if $\alpha_i\in Q_1$, $1\leq i \leq n$ are arrows in $Q$, then $\alpha_1\alpha_{2}\cdots \alpha_{n-1}\alpha_{n}$ is a path in $Q$ if $s(\alpha_i)=t(\alpha_{i-1})$. 

Throughout we use quivers with relations and their representations; for details we refer to \cite[Chapter III]{ASS}. Contrary to the notation in \cite{ASS}, we use $\cR$ to denote an admissible ideal of the path algebra $KQ$ of a quiver $Q$. If $K Q/\cR$ is a bound quiver algebra, for a vertex $k\in Q_0$, we denote by $P(k)$ (respectively $I(k)$) the corresponding indecomposable projective (respectively injective) $KQ/\cR$-module.

By a subcategory of an additive category we always mean a full subcategory closed under isomorphisms, direct sums and summands unless specified otherwise. 

Now let $\cA_i\subseteq \m\La$ be subcategories and $M_j\in \m\La$ be modules indexed by some $i\in I$ and $j\in J$. We set
\begin{itemize}
    \item $\Ind{\cA_i}$ --- the set of isomorphism classes of indecomposable modules in $\cA_i$,
    \item $\abs{\cA_i}$ --- the cardinality of $\Ind{\cA_i}$,
    \item $\add\{\cA_i\}_{i \in I}$ --- the subcategory of $\m\La$ containing all direct sums of modules $M$ such that $M \in \cA_i$ for some $i\in I$,
    \item $\add(M_i)$ --- the subcategory of $\m\La$ containing all direct sums of direct summands of $M_i$,
    \item $\add\{\cA_i, M_j\}_{i \in I, j \in J}:=\add\{\cA_i, \add(M_i)\}_{i \in I, j \in J}$,
    \item $\Sub(\cA_i)$ --- the subcategory of $\m\La$ containing all submodules of modules in $\cA_i$,
    \item $\Sub(M_j):=\Sub(\add(M_j))$,
    \item $\Fac(\cA_i)$ --- the subcategory of $\m\La$ containing all factor modules of modules in $\cA_i$,
    \item $\Fac(M_j):=\Fac(\add(M_j))$. 
\end{itemize}

For an algebra $\La$, we denote by $D$ the standard duality $D=\Hom_K(-,K)$ between $\m\La$ and $\m\La^{\text{op}}$. By an ideal of $\La$ we mean a two-sided ideal, unless mentioned otherwise. For $X\in \m\La$ we denote by $\om(X)$ the \emph{syzygy} of $X$, that is the kernel of $P\twoheadrightarrow X$, where $P$ is the (unique, up to isomorphism) minimal projective cover of $X$ and by $\om^- (X)$ the \emph{cosyzygy} of $X$, that is the cokernel of $X\hookrightarrow I$ where $I$ is the (unique, up to isomorphism) minimal injective hull of $X$. Note that $\om (X)$ and $\om^- (X)$ are unique up to isomorphism. We denote by $\tau$ and $\tau^-$ the \emph{Aus\-lan\-der--Rei\-ten translations} and by $\Gamma(\La)$ the \emph{Aus\-lan\-der--Rei\-ten quiver} of $\La$. If $M\in\m\La$ is indecomposable, we denote by $[M]$ the corresponding vertex in the Aus\-lan\-der--Rei\-ten quiver $\Gamma(\La)$. For more details on Aus\-lan\-der--Rei\-ten theory we refer to \cite[Chapter IV]{ASS}. Following \cite{IYA1}, we denote by $\tn$ and $\tno$ the \emph{$n$-Aus\-lan\-der--Rei\-ten translations} defined by $\tn (X) = \tau \om^{n-1} (X)$ and $\tno (X) = \tau^- \om^{-(n-1)}(X)$.

Let $\phi:\La \rightarrow \Gamma$ be an algebra homomorphism. We denote by $\phi_{\ast}: \m \Gamma \rightarrow \m \La$ the \emph{restriction of scalars} functor that turns a $\Gamma$-module $M$ into a $\La$-module via $m\cdot \la=m\cdot \phi(\la)$ for $m\in M$ and $\la \in \La$. We denote by $\phi^{\ast}: \m \La \rightarrow \m \Gamma$ the \emph{induced module} functor, given by $\phi^{\ast}(-)=-\otimes_{\La}\Gamma$. Finally, we denote by $\phi^{!}: \m \La \rightarrow \m \Gamma$ the \emph{coinduced module} functor, given by $\phi^{!}(-) = \Hom_{\La}(\Gamma,-)$. Note that $(\phi^{\ast},\phi_{\ast})$ and $(\phi_{\ast}, \phi^{!})$ form adjoint pairs. 

We denote by $A_h$ the quiver 
\[1\overset{\alpha_1}{\longrightarrow} 2 \overset{\alpha_2}{\longrightarrow} 3 \longrightarrow\cdots\longrightarrow h-1\overset{\alpha_{h-1}}{\longrightarrow} h.\]
Let $\La= KA_m/\cR$ where $\cR$ is an admissible ideal. Then $\La$ is called an \emph{acyclic Nakayama algebra} and its representation theory is well known, see for example \cite[Chapter V]{ASS}.  We also introduce some notation from \cite{VAS}. In particular, recall that the isomorphism classes of the indecomposable $\La$-modules can be described by the representations $M(i,j)$ of the form
\begin{equation}\label{eq:indecomposables}
\underset{\scriptscriptstyle 1}{0} \overset{0}{\longrightarrow} \cdots \overset{0}{\longrightarrow} 0 \overset{0}{\longrightarrow} \underset{\scriptscriptstyle m-(i-1)-(j-1)}{K} \overset{1}{\longrightarrow} \cdots \overset{1}{\longrightarrow} \underset{\scriptscriptstyle m-(i-1)}{K} \overset{0}{\longrightarrow} 0 \overset{0}{\longrightarrow} \cdots \overset{0}{\longrightarrow} \underset{\scriptscriptstyle m}{0} 
\end{equation}
with $M(i,j)I=0$ (\cite[Gabriel's Theorem]{ASS}). We use the convention that $M(i,j)=0$ if the coordinates $(i,j)$ do not define a $\La$-module. Furthermore,  when drawing the Aus\-lan\-der--Rei\-ten quiver of $\La$ we simply write $(i,j)$ for the vertex $[M(i,j)]$. With this notation in mind, the Aus\-lan\-der--Rei\-ten quiver $\Gamma(KA_h)=:\triangle(h)$ of $KA_h$ is
\[\begin{tikzpicture}[scale=0.8, transform shape]
\node (00) at (0,7.5) {$\triangle(h):$};

\node (11) at (0,0) {$(1,1)$};
\node (21) at (2.5,0) {$(2,1)$};
\node (31) at (5,0) {$(3,1)$};
\node (41) at (7.5,0) { };
\node (51) at (10,0) {$(h-2,1)$};
\node (61) at (12.5,0) {$(h-1,1)$};
\node (71) at (15,0) {$(h,1)$\nospacepunct{.}};

\node (12) at (1.25,1.25) {$(1,2)$};
\node (22) at (3.75,1.25) {$(2,2)$};
\node (32) at (6.25,1.25) { };
\node (42) at (8.75,1.25) { };
\node (52) at (11.25,1.25) {$(h-2,2)$};
\node (62) at (13.75,1.25) {$(h-1,2)$};

\node (13) at (2.5,2.5) { };
\node (23) at (5,2.5) { };
\node (33) at (7.5,2.5) { };
\node (43) at (10,2.5) { };
\node (53) at (12.5,2.5) { };

\node (14) at (3.75,3.75) { };
\node (24) at (6.25,3.75) { };
\node (34) at (8.75,3.75) { };
\node (44) at (11.25,3.75) { };

\node (15) at (5,5) {$(1,h-2)$};
\node (25) at (7.5,5) {$(2,h-2)$};
\node (35) at (10,5) {$(3,h-2)$};

\node (16) at (6.25,6.25) {$(1,h-1)$};
\node (26) at (8.75,6.25) {$(2,h-1)$};

\node (17) at (7.5,7.5) {$(1,h)$};

\draw[->] (11) -- (12);
\draw[->] (12) -- (21);
\draw[->] (21) -- (22);
\draw[->] (22) -- (31);

\draw[->] (51) -- (52);
\draw[->] (52) -- (61);
\draw[->] (61) -- (62);
\draw[->] (62) -- (71);

\draw[->] (12) -- (13);
\draw[->] (13) -- (22);
\draw[->] (22) -- (23);

\draw[->] (14) -- (15);
\draw[->] (15) -- (16);
\draw[->] (16) -- (17);

\draw[->] (15) -- (24);
\draw[->] (24) -- (25);
\draw[->] (25) -- (34);
\draw[->] (34) -- (35);
\draw[->] (35) -- (44);

\draw[->] (16) -- (25);
\draw[->] (17) -- (26);
\draw[->] (26) -- (35);
\draw[->] (25) -- (26);

\draw[->] (31) -- (32);
\draw[->] (42) -- (51);
\draw[->] (43) -- (52);
\draw[->] (53) -- (62);

\draw[->] (52) -- (53);

\draw[loosely dotted] (44) -- (41);
\draw[loosely dotted] (14) -- (41);

\draw[loosely dotted] (34) -- (43);
\draw[loosely dotted] (44) -- (53);

\draw[loosely dotted] (24) -- (42);
\draw[loosely dotted] (13) -- (14);
\draw[loosely dotted] (22) -- (24);
\draw[loosely dotted] (31) -- (34);
\end{tikzpicture}\]
The shape of the above quiver will appear throughout our investigation.

For $m\geq h$ we further set $\La_{m,h}:=KA_m/\rad(KA_m)^h$. In particular, for $\La_{m,h}$-modules we have $M(i,j)\neq 0$ if and only if $1\leq i \leq m$, $1\leq j \leq h$ and $2\leq i+j\leq m+1$. With this notation, we also have $KA_h\cong \La_{h,h}$.

Following \cite{MIL}, given two algebra homomorphisms $f:A\rightarrow C$ and $g:B\rightarrow C$, we define the \emph{pullback algebra $(\La,\phi,\psi)$ of $A\overset{f}{\longrightarrow} C \overset{g}{\longleftarrow} B$} to be the subalgebra 
\[\La = \{(a,b)\in A\times B \mid f(a)=g(b)\}\subseteq A\times B,\]
with $\phi:\La\rightarrow A$ and $\psi:\La\rightarrow B$ being induced by the natural projections. When clear from context, we will identify the pullback $(\La,\phi,\psi)$ with the underlying algebra $\La$. Notice that whenever we have $f(a)=g(b)$ for some $a\in A$ and $b\in B$, then there exists a unique $\lambda\in\La$ such that $\phi(\lambda)=a$ and $\psi(\lambda)=b$. It turns out that this is the pullback in the category of $K$-algebras. Note that if $g$ is a surjection, then so is $\phi$ (but the converse is not true). In this case, the diagram 
\[\begin{tikzcd}
\La \arrow[r, "\psi"] \arrow[d, swap, "\phi"] 
& B \arrow[d, "g"] \\
A \arrow[r, swap, "f"] & C
\end{tikzcd}\]
is called a \emph{Milnor square of algebras} (see \cite{MIL}).

\subsection{A motivating example}

Let us first give a motivating example that illustrates the theory that is developed in this paper. 

\begin{example}\label{ex:motivation}
Let $B=\La_{9,4}$. Since $B$ is rep\-re\-sen\-ta\-tion-di\-rect\-ed, the only possible candidate for a $2$-cluster tilting subcategory is
\[\cC_B = \add\left( \bigoplus_{i\geq 0} \tau_2^{-i} (\La) \right).\]
Let us draw the Aus\-lan\-der--Rei\-ten quiver $\Gamma(B)$ of $B$ and encircle the vertices corresponding to indecomposable modules in $\cC_B$:
\[\begin{tikzpicture}[scale=0.7, transform shape]
\tikzstyle{nct2}=[circle, minimum width=0.6cm, draw, inner sep=0pt, text centered]

\node[nct2] (11) at (1,1) {$(1,1)$};
\node (21) at (3,1) {$(2,1)$};
\node (31) at (5,1) {$(3,1)$};
\node (41) at (7,1) {$(4,1)$};
\node[nct2] (51) at (9,1) {$(5,1)$};
\node (61) at (11,1) {$(6,1)$};
\node[nct2] (71) at (13,1) {$(7,1)$};
\node (81) at (15,1) {$(8,1)$};
\node (91) at (17,1) {$(9,1)$\nospacepunct{,}};

\node[nct2] (12) at (2,2) {$(1,2)$};
\node (22) at (4,2) {$(2,2)$};
\node (32) at (6,2) {$(3,2)$};
\node[nct2] (42) at (8,2) {$(4,2)$};
\node (52) at (10,2) {$(5,2)$};
\node (62) at (12,2) {$(6,2)$};
\node[nct2] (72) at (14,2) {$(7,2)$};
\node (82) at (16,2) {$(8,2)$};

\node[nct2] (13) at (3,3) {$(1,3)$};
\node (23) at (5,3) {$(2,3)$};
\node[nct2] (33) at (7,3) {$(3,3)$};
\node (43) at (9,3) {$(4,3)$};
\node (53) at (11,3) {$(5,3)$};
\node (63) at (13,3) {$(6,3)$};
\node[nct2] (73) at (15,3) {$(7,3)$};

\node[nct2] (14) at (4,4) {$(1,4)$};
\node[nct2] (24) at (6,4) {$(2,4)$};
\node[nct2] (34) at (8,4) {$(3,4)$};
\node[nct2] (44) at (10,4) {$(4,4)$};
\node[nct2] (54) at (12,4) {$(5,4)$};
\node[nct2] (64) at (14,4) {$(6,4)$};

\draw[->] (11) -- (12);
\draw[->] (21) -- (22);
\draw[->] (31) -- (32);
\draw[->] (41) -- (42);
\draw[->] (51) -- (52);
\draw[->] (61) -- (62);
\draw[->] (71) -- (72);
\draw[->] (81) -- (82);

\draw[->] (12) -- (13);
\draw[->] (22) -- (23);
\draw[->] (32) -- (33);
\draw[->] (42) -- (43);
\draw[->] (52) -- (53);
\draw[->] (62) -- (63);
\draw[->] (72) -- (73);

\draw[->] (13) -- (14);
\draw[->] (23) -- (24);
\draw[->] (33) -- (34);
\draw[->] (43) -- (44);
\draw[->] (53) -- (54);
\draw[->] (63) -- (64);

\draw[->] (12) -- (21);
\draw[->] (22) -- (31);
\draw[->] (32) -- (41);
\draw[->] (42) -- (51);
\draw[->] (52) -- (61);
\draw[->] (62) -- (71);
\draw[->] (72) -- (81);
\draw[->] (82) -- (91);

\draw[->] (13) -- (22);
\draw[->] (23) -- (32);
\draw[->] (33) -- (42);
\draw[->] (43) -- (52);
\draw[->] (53) -- (62);
\draw[->] (63) -- (72);
\draw[->] (73) -- (82);

\draw[->] (14) -- (23);
\draw[->] (24) -- (33);
\draw[->] (34) -- (43);
\draw[->] (44) -- (53);
\draw[->] (54) -- (63);
\draw[->] (64) -- (73);

\draw[loosely dotted] (11.east) -- (21);
\draw[loosely dotted] (21.east) -- (31);
\draw[loosely dotted] (31.east) -- (41);
\draw[loosely dotted] (41.east) -- (51);
\draw[loosely dotted] (51.east) -- (61);
\draw[loosely dotted] (61.east) -- (71);
\draw[loosely dotted] (71.east) -- (81);
\draw[loosely dotted] (81.east) -- (91);

\draw[loosely dotted] (12.east) -- (22);
\draw[loosely dotted] (22.east) -- (32);
\draw[loosely dotted] (32.east) -- (42);
\draw[loosely dotted] (42.east) -- (52);
\draw[loosely dotted] (52.east) -- (62);
\draw[loosely dotted] (62.east) -- (72);
\draw[loosely dotted] (72.east) -- (82);

\draw[loosely dotted] (13.east) -- (23);
\draw[loosely dotted] (23.east) -- (33);
\draw[loosely dotted] (33.east) -- (43);
\draw[loosely dotted] (43.east) -- (53);
\draw[loosely dotted] (53.east) -- (63);
\draw[loosely dotted] (63.east) -- (73);
\end{tikzpicture}\]
where the dotted lines denote the Aus\-lan\-der--Rei\-ten translations. A necessary condition for $\cC_B$ to be a $2$-cluster tilting subcategory is that applying $\tau_2^{-}$ to a noninjective indecomposable module in $\cC_B$ should return a nonprojective indecomposable module in $\cC_B$. Since we have $\tau_2^{-}(M(7,1))=0$ and  $\tau_2^{-}(M(7,2)) =0$, with $M(7,1)$ and $M(7,2)$ both being noninjective, we conclude that $\cC_B$ is not a $2$-cluster tilting subcategory. Let us denote the full subquiver of $\Gamma(B)$ containing the vertices $\{(7,1), (7,2), (7,3), (8,1), (8,2), (9,1)\}$ by $\supn{\triangle}{(7,3)}$. Notice that as quivers we have $\supn{\triangle}{(7,3)} \cong \triangle(3)$.

Next let $A=\La_{6,5}$ and let $\cC_A=\add\left( \bigoplus_{i\geq 1}\tau_2^{-i} (A) \right)$. As before we draw the Aus\-lan\-der--Rei\-ten quiver $\Gamma(A)$ of $A$ and encircle the indecomposable modules in $\cC_A$:
\[\begin{tikzpicture}[scale=0.7, transform shape]
\tikzstyle{nct2}=[circle, minimum width=0.6cm, draw, inner sep=0pt, text centered]

\node[nct2] (11) at (1,1) {$(1,1)$};
\node (21) at (3,1) {$(2,1)$};
\node (31) at (5,1) {$(3,1)$};
\node (41) at (7,1) {$(4,1)$};
\node (51) at (9,1) {$(5,1)$};
\node[nct2] (61) at (11,1) {$(6,1)$\nospacepunct{.}};

\node[nct2] (12) at (2,2) {$(1,2)$};
\node (22) at (4,2) {$(2,2)$};
\node (32) at (6,2) {$(3,2)$};
\node (42) at (8,2) {$(4,2)$};
\node[nct2] (52) at (10,2) {$(5,2)$};

\node[nct2] (13) at (3,3) {$(1,3)$};
\node (23) at (5,3) {$(2,3)$};
\node (33) at (7,3) {$(3,3)$};
\node[nct2] (43) at (9,3) {$(4,3)$};

\node[nct2] (14) at (4,4) {$(1,4)$};
\node (24) at (6,4) {$(2,4)$};
\node[nct2] (34) at (8,4) {$(3,4)$};

\node[nct2] (15) at (5,5) {$(1,5)$};
\node[nct2] (25) at (7,5) {$(2,5)$};

\draw[->] (11) -- (12);
\draw[->] (21) -- (22);
\draw[->] (31) -- (32);
\draw[->] (41) -- (42);
\draw[->] (51) -- (52);

\draw[->] (12) -- (13);
\draw[->] (22) -- (23);
\draw[->] (32) -- (33);
\draw[->] (42) -- (43);

\draw[->] (13) -- (14);
\draw[->] (23) -- (24);
\draw[->] (33) -- (34);

\draw[->] (14) -- (15);
\draw[->] (24) -- (25);

\draw[->] (12) -- (21);
\draw[->] (22) -- (31);
\draw[->] (32) -- (41);
\draw[->] (42) -- (51);
\draw[->] (52) -- (61);

\draw[->] (13) -- (22);
\draw[->] (23) -- (32);
\draw[->] (33) -- (42);
\draw[->] (43) -- (52);

\draw[->] (14) -- (23);
\draw[->] (24) -- (33);
\draw[->] (34) -- (43);

\draw[->] (15) -- (24);
\draw[->] (25) -- (34);

\draw[loosely dotted] (11.east) -- (21);
\draw[loosely dotted] (21.east) -- (31);
\draw[loosely dotted] (31.east) -- (41);
\draw[loosely dotted] (41.east) -- (51);
\draw[loosely dotted] (51.east) -- (61);

\draw[loosely dotted] (12.east) -- (22);
\draw[loosely dotted] (22.east) -- (32);
\draw[loosely dotted] (32.east) -- (42);
\draw[loosely dotted] (42.east) -- (52);

\draw[loosely dotted] (13.east) -- (23);
\draw[loosely dotted] (23.east) -- (33);
\draw[loosely dotted] (33.east) -- (43);

\draw[loosely dotted] (14.east) -- (24);
\draw[loosely dotted] (24.east) -- (34);
\end{tikzpicture}\]
In this case $\cC_A$ is a $2$-cluster tilting subcategory. Let us denote the full subquiver of $\Gamma(A)$ containing the vertices $\{(1,1),(1,2),(1,3),(2,1),(2,2),(3,1)\}$ by $\nsup{(1,3)}{\triangle}$. Notice that again we have $\nsup{(1,3)}{\triangle}\cong \triangle(3)$.

Let us now consider the algebra $\La$ given by the quiver with relations
\[\begin{tikzpicture}[scale=0.9, transform shape]
\node (1) at (1,0) {$1$};
\node (2) at (2,0) {$2$};
\node (3) at (3,0) {$3$};
\node (4) at (4,0) {$4$};
\node (5) at (5,0) {$5$};
\node (6) at (6,0) {$6$};
\node (7) at (7,0) {$7$};
\node (8) at (8,0) {$8$};
\node (9) at (9,0) {$9$};
\node (10) at (10,0) {$10$};
\node (11) at (11,0) {$11$};
\node (12) at (12,0) {$12$,};

\draw[->] (1) to (2);
\draw[->] (2) to (3);
\draw[->] (3) to (4);
\draw[->] (4) to (5);
\draw[->] (5) to (6);
\draw[->] (6) to (7);
\draw[->] (7) to (8);
\draw[->] (8) to (9);
\draw[->] (9) to (10);
\draw[->] (10) to (11);
\draw[->] (11) to (12);

\draw[dotted] (1) to [out=30,in=150] (6);
\draw[dotted] (4) to [out=30,in=150] (8);
\draw[dotted] (5) to [out=30,in=150] (9);
\draw[dotted] (6) to [out=30,in=150] (10);
\draw[dotted] (7) to [out=30,in=150] (11);
\draw[dotted] (8) to [out=30,in=150] (12);

\draw[dotted] (3) to [out=-30,in=-150] (7);

\end{tikzpicture}\]
where the dotted lines indicate zero relations. Then $\La$ can be seen as a certain pullback diagram $A\overset{f}{\longrightarrow} KA_3 \overset{g}{\longleftarrow} B$. Let $\cC_{\La}=\add\left( \bigoplus_{i\geq 1}\tau_2^{-i} (\La) \right)$. As before we draw the Aus\-lan\-der--Rei\-ten quiver $\Gamma(\La)$ of $\La$ and encircle the indecomposable modules in $\cC_{\La}$:
\[\resizebox{\textwidth}{!}{\begin{tikzpicture}
\tikzstyle{nct2}=[circle, minimum width=0.6cm, draw, inner sep=0pt, text centered]

\node[nct2] (11) at (1,1) {$(1,1)$};
\node (21) at (3,1) {$(2,1)$};
\node (31) at (5,1) {$(3,1)$};
\node (41) at (7,1) {$(4,1)$};
\node[nct2] (51) at (9,1) {$(5,1)$};
\node (61) at (11,1) {$(6,1)$};
\node[nct2] (71) at (13,1) {$(7,1)$};
\node (81) at (15,1) {$(8,1)$};
\node (91) at (17,1) {$(9,1)$};
\node (101) at (19,1) {$(10,1)$};
\node (111) at (21,1) {$(11,1)$};
\node[nct2] (121) at (23,1) {$(12,1)$\nospacepunct{.}};

\node[nct2] (12) at (2,2) {$(1,2)$};
\node (22) at (4,2) {$(2,2)$};
\node (32) at (6,2) {$(3,2)$};
\node[nct2] (42) at (8,2) {$(4,2)$};
\node (52) at (10,2) {$(5,2)$};
\node (62) at (12,2) {$(6,2)$};
\node[nct2] (72) at (14,2) {$(7,2)$};
\node (82) at (16,2) {$(8,2)$};
\node (92) at (18,2) {$(9,2)$};
\node (102) at (20,2) {$(10,2)$};
\node[nct2] (112) at (22,2) {$(11,2)$};

\node[nct2] (13) at (3,3) {$(1,3)$};
\node (23) at (5,3) {$(2,3)$};
\node[nct2] (33) at (7,3) {$(3,3)$};
\node (43) at (9,3) {$(4,3)$};
\node (53) at (11,3) {$(5,3)$};
\node (63) at (13,3) {$(6,3)$};
\node[nct2] (73) at (15,3) {$(7,3)$};
\node (83) at (17,3) {$(8,3)$};
\node (93) at (19,3) {$(9,3)$};
\node[nct2] (103) at (21,3) {$(10,3)$};

\node[nct2] (14) at (4,4) {$(1,4)$};
\node[nct2] (24) at (6,4) {$(2,4)$};
\node[nct2] (34) at (8,4) {$(3,4)$};
\node[nct2] (44) at (10,4) {$(4,4)$};
\node[nct2] (54) at (12,4) {$(5,4)$};
\node[nct2] (64) at (14,4) {$(6,4)$};
\node[nct2] (74) at (16,4) {$(7,4)$};
\node (84) at (18,4) {$(8,4)$};
\node[nct2] (94) at (20,4) {$(9,4)$};

\node[nct2] (75) at (17,5) {$(7,5)$};
\node[nct2] (85) at (19,5) {$(8,5)$};

\draw[->] (11) -- (12);
\draw[->] (21) -- (22);
\draw[->] (31) -- (32);
\draw[->] (41) -- (42);
\draw[->] (51) -- (52);
\draw[->] (61) -- (62);
\draw[->] (71) -- (72);
\draw[->] (81) -- (82);
\draw[->] (91) -- (92);
\draw[->] (101) -- (102);
\draw[->] (111) -- (112);

\draw[->] (12) -- (13);
\draw[->] (22) -- (23);
\draw[->] (32) -- (33);
\draw[->] (42) -- (43);
\draw[->] (52) -- (53);
\draw[->] (62) -- (63);
\draw[->] (72) -- (73);
\draw[->] (82) -- (83);
\draw[->] (92) -- (93);
\draw[->] (102) -- (103);

\draw[->] (13) -- (14);
\draw[->] (23) -- (24);
\draw[->] (33) -- (34);
\draw[->] (43) -- (44);
\draw[->] (53) -- (54);
\draw[->] (63) -- (64);
\draw[->] (73) -- (74);
\draw[->] (83) -- (84);
\draw[->] (93) -- (94);

\draw[->] (74) -- (75);
\draw[->] (84) -- (85);

\draw[->] (12) -- (21);
\draw[->] (22) -- (31);
\draw[->] (32) -- (41);
\draw[->] (42) -- (51);
\draw[->] (52) -- (61);
\draw[->] (62) -- (71);
\draw[->] (72) -- (81);
\draw[->] (82) -- (91);
\draw[->] (92) -- (101);
\draw[->] (102) -- (111);
\draw[->] (112) -- (121);

\draw[->] (13) -- (22);
\draw[->] (23) -- (32);
\draw[->] (33) -- (42);
\draw[->] (43) -- (52);
\draw[->] (53) -- (62);
\draw[->] (63) -- (72);
\draw[->] (73) -- (82);
\draw[->] (83) -- (92);
\draw[->] (93) -- (102);
\draw[->] (103) -- (112);

\draw[->] (14) -- (23);
\draw[->] (24) -- (33);
\draw[->] (34) -- (43);
\draw[->] (44) -- (53);
\draw[->] (54) -- (63);
\draw[->] (64) -- (73);
\draw[->] (74) -- (83);
\draw[->] (84) -- (93);
\draw[->] (94) -- (103);

\draw[->] (75) -- (84);
\draw[->] (85) -- (94);

\draw[loosely dotted] (11.east) -- (21);
\draw[loosely dotted] (21.east) -- (31);
\draw[loosely dotted] (31.east) -- (41);
\draw[loosely dotted] (41.east) -- (51);
\draw[loosely dotted] (51.east) -- (61);
\draw[loosely dotted] (61.east) -- (71);
\draw[loosely dotted] (71.east) -- (81);
\draw[loosely dotted] (81.east) -- (91);
\draw[loosely dotted] (91.east) -- (101);
\draw[loosely dotted] (101.east) -- (111);
\draw[loosely dotted] (111.east) -- (121);

\draw[loosely dotted] (12.east) -- (22);
\draw[loosely dotted] (22.east) -- (32);
\draw[loosely dotted] (32.east) -- (42);
\draw[loosely dotted] (42.east) -- (52);
\draw[loosely dotted] (52.east) -- (62);
\draw[loosely dotted] (62.east) -- (72);
\draw[loosely dotted] (72.east) -- (82);
\draw[loosely dotted] (82.east) -- (92);
\draw[loosely dotted] (92.east) -- (102);
\draw[loosely dotted] (102.east) -- (112);

\draw[loosely dotted] (13.east) -- (23);
\draw[loosely dotted] (23.east) -- (33);
\draw[loosely dotted] (33.east) -- (43);
\draw[loosely dotted] (43.east) -- (53);
\draw[loosely dotted] (53.east) -- (63);
\draw[loosely dotted] (63.east) -- (73);
\draw[loosely dotted] (73.east) -- (83);
\draw[loosely dotted] (83.east) -- (93);
\draw[loosely dotted] (93.east) -- (103);

\draw[loosely dotted] (74.east) -- (84);
\draw[loosely dotted] (84.east) -- (94);
\end{tikzpicture}}\]

Notice that we can identify $\Gamma(\La)$ with the amalgamated sum $\Gamma(B)\coprod_{\triangle} \Gamma(A)$, under the identification $\supn{\triangle}{(7,3)} \equiv \triangle(3) \equiv \nsup{(1,3)}{\triangle}$. Under this identification we also see that much of the representation theory of $\La$ is given by the representation theory of $B$ and $A$: indecomposable $\La$-modules correspond to indecomposable $B$-modules or to indecomposable $A$-modules, almost split sequences in $\m\La$ correspond to almost split sequences either in $\m B$ or in $\m A$ and similarly for syzygies and cosyzygies of indecomposable $\La$-modules. Moreover $\cC_{\La}$ is the additive closure of $\cC_A$ and $\cC_B$ viewed inside $\m\La$. Notice that the indecomposable modules in $\cC_A$ and $\cC_B$ corresponding to the identified part match. In this case $\cC_{\La}$ turns out to be a $2$-cluster tilting subcategory. In particular, in $\m\La$ we have $\tau_2^{-}(M(7,1))\cong M(9,4)$ and $\tau_2^{-}\left(M(7,2)\right)\cong M(10,3)$, since these functors can be computed in the subquiver corresponding to $\m A$. 
\end{example}

In Example \ref{ex:motivation} we managed to get a $2$-cluster tilting subcategory by identifying the ``problematic'' piece $\supn{\triangle}{(7,3)}$ of $\Gamma(B)$ with the ``well-behaved'' piece $\nsup{(1,3)}{\triangle}$ of $\Gamma(A)$. In this paper we explain how this process can be defined rigorously and under which conditions it can be applied.

\section{Part II: Gluing}

\subsection{Glued subcategories}

Let us first recall some definitions from \cite{AS2}. Let $\La$ be an algebra and $\cA\subseteq \cL\subseteq \m \La$ be subcategories. Recall that a morphism $g:M\rightarrow N$ in $\cA$ is called \emph{right almost split} if $g$ is not a retraction and any non-retraction $v:V\rightarrow N$ with $V\in\cA$ factors through $g$. Dually we can define \emph{left almost split} morphisms. A short exact sequence $0\rightarrow L \overset{f}{\longrightarrow} M \overset{g}{\longrightarrow} N\rightarrow 0$ in $\cA$ is called an \emph{almost split sequence} if $L$ is indecomposable and $g$ is right almost split or equivalently if $N$ is indecomposable and $f$ is left almost split. 

A module $P$ in $\cA$ is called \emph{$\cA$-projective} if $\Ext^i_{\La}(P,\cA)=0$ for all $i>0$ and a module $I$ in $\cA$ is called \emph{$\cA$-injective} if $\Ext^i_{\La}(\cA,I)=0$ for all $i>0$. We say that \emph{$\cA$ has almost split sequences} if for any non-$\cA$-projective indecomposable module $N\in \cA$ there is an almost split sequence in $\cA$ ending at $N$ and for any non-$\cA$-injective indecomposable module $L\in\cA$ there is an almost split sequence starting at $L$.

Next we introduce the notion of gluing of subcategories. 
 
\begin{definition}\label{def:catglue}
Assume that there exist subcategories $\cA$ and $\cB$ of $\cL$ such that the following are satisfied.
\begin{itemize}
\item[(i)] $\cL=\add\{\cA,\cB\}$,
\item[(ii)] $\cA$ and $\cB$ have almost split sequences,
\item[(iii)] If $M\in \cA\setminus \cB$ and $M$ is indecomposable, then $\Hom_{\La}(M,\cB)=0$,
\item[(iv)] If $N\in \cB$ and $M\in \cA$, then for all $g:N\rightarrow M$, there exists an $X\in \cA\cap \cB$ such that $g=g_1\circ g_2$ for some $g_1:X\rightarrow N$ and $g_2:M\rightarrow X$.
\end{itemize}
In that case $\cL$ is called \emph{the gluing of $\cB$ and $\cA$} and we write $\cL=\cB \glue \cA$. Note that gluing is not a commutative operation.
\end{definition}

\begin{theorem}\label{thrm:general glue}
Assume that $\cL=\cB \glue \cA$. 
\begin{itemize}
\item[(i)] If $N\in\cA$ and $0\rightarrow L \overset{f}{\longrightarrow} M \overset{g}{\longrightarrow} N \rightarrow 0$ is an almost split sequence in $\cA$, then it is also an almost split sequence in $\cL$.
\item[(ii)] If $N\in \cB$ and $0\rightarrow L \overset{f}{\longrightarrow} M \overset{g}{\longrightarrow} N \rightarrow 0$ is an almost split sequence in $\cB$, then it is also an almost split sequence in $\cL$.
\end{itemize}
\end{theorem}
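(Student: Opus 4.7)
The plan is to verify that $g$ is right almost split in $\cL$; the indecomposability of $L$ and $N$ is inherited from the sequence in $\cA$ (resp.\ $\cB$), and the left almost split property of $f$ then follows by the standard Auslander--Reiten equivalences. Fix a morphism $v\colon V\to N$ in $\cL$ that is not a retraction. By condition (ii) of Definition~\ref{def:catglue} we may decompose $V$ into indecomposable summands each lying in $\cA$ or in $\cB$, and treat the restriction of $v$ to each summand separately.

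For part (ii), if an indecomposable summand $V'$ of $V$ lies in $\cB$, the almost split property of $g$ in $\cB$ supplies the factorization directly. If $V'\in\cA\setminus\cB$, then condition (iii) yields $\Hom_{\La}(V',\cB)=0$; since $N\in\cB$ the restriction $v|_{V'}$ is zero, and factors trivially through $g$.

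For part (i), the case $V'\in\cA$ is again immediate from the almost split property in $\cA$. The key case is $V'\in\cB\setminus\cA$: I apply condition (iv) to $v|_{V'}\colon V'\to N$ to obtain a factorization $v|_{V'}=v_1\circ v_2$ with $v_2\colon V'\to X$, $v_1\colon X\to N$, and $X\in\cA\cap\cB\subseteq\cA$. The map $v_1$ lies in $\cA$, and if $v_1$ is not a retraction, the almost split property of $g$ in $\cA$ provides $v_1=g\circ w$ for some $w\colon X\to M$, so that $v|_{V'}=g\circ(w\circ v_2)$.

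The main obstacle will be ruling out the case that $v_1$ is a retraction. In that situation $N$ must be an indecomposable direct summand of $X\in\cA\cap\cB$, forcing $N\in\cA\cap\cB$ and in particular $N\in\cB$, so that $v|_{V'}$ is a morphism within $\cB$. Because $V'$ is indecomposable, $V'\in\cB\setminus\cA$ and $N\in\cA\cap\cB$, we have $V'\not\cong N$, hence $v|_{V'}$ is not a retraction in $\cB$ either; the almost split sequence in $\cB$ ending at $N$ then yields $v|_{V'}=g_B\circ w_B$, and the task reduces to factoring the map $g_B$ through $g$. The same dichotomy is applied to $g_B$ via condition (iv) on the indecomposable summands of its source lying in $\cB\setminus\cA$, combined with the almost split property in $\cA$; an induction on the number of indecomposable summands of the intermediate object in $\cA\cap\cB$ eventually forces a non-retraction intermediate map, at which point the argument terminates.
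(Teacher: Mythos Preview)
Your outline for part (ii) matches the paper's argument. For part (i), however, the paper handles the case $V'\in\cB\setminus\cA$ far more directly than you do: after factoring $v=v_1\circ v_2$ through some $X\in\cA\cap\cB$ via condition (iv), it simply observes that $v_1\colon X\to N$ cannot be a retraction because $\Hom_\La(N,X)=0$. Any section would be a nonzero morphism from the indecomposable $N\in\cA$ into $X\in\cB$, which condition (iii) forbids once $N\in\cA\setminus\cB$. That single observation replaces your entire final paragraph.

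You are right that this step tacitly uses $N\notin\cB$, and the paper does not separately spell out the boundary case $N\in\cA\cap\cB$. But your proposed treatment of that case does not work as written. First, invoking ``the almost split sequence in $\cB$ ending at $N$'' presupposes that $N$ is not $\cB$-projective, which you never check. Second, and more seriously, the concluding ``induction on the number of indecomposable summands of the intermediate object in $\cA\cap\cB$'' identifies no quantity that decreases. After replacing $v$ by the right almost split map $g_B\colon M_B\to N$ in $\cB$, the indecomposable summands of $M_B$ lying in $\cB\setminus\cA$ sit in exactly the same position as the original $V'$; applying condition (iv) to them produces a new intermediate object $X'\in\cA\cap\cB$ with no bound on its size relative to the previous $X$. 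Nothing prevents the recursion from cycling indefinitely, so as stated the argument has no termination.
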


\begin{proof}
\begin{itemize}
\item[(i)] Since $L$ and $N$ are indecomposable in $\cA$, they are also indecomposable in $\cL$. Hence it is enough to show that $g$ is right almost split in $\cL$. Clearly $g$ is not a retraction in $\cL$. Let $v:V\rightarrow N$ be a morphism in $\cL$ which is not a retraction and without loss of generality assume that $V$ is indecomposable. By Definition \ref{def:catglue}(ii), we have that $V\in \cA$ or $V\in \cB$. If $V\in \cA$, then $v$ factors through $g$ because $g$ is right almost split in $\cA$. If $V\in \cB\setminus \cA$, then by Definition \ref{def:catglue}(iv) there exists some $X\in \cA\cap \cB$ such that $v=v_1\circ v_2$ with $v_1:X\rightarrow N$ and $v_2:V\rightarrow X$. Note that $v_1$ is not a retraction since $\Hom_{\La}(N,X)=0$ by Definition \ref{def:catglue}(iii). Hence there exists $h:X\rightarrow M$ such that $v_1=g\circ h$. But then $v=g\circ (h \circ v_2)$ as required.

\item[(ii)] Similarly to (i), let $v:V\rightarrow N$ be a nonzero non-retraction in $\cL$ with $V$ indecomposable. Then $V\in \cA\setminus \cB$ or $V\in\cB$. Since $\Hom_{\La}(V,\cB)\neq 0$, we have that $V\not\in \cA\setminus \cB$ by Definition \ref{def:catglue}(iii). Hence $V\in \cB$ and $v$ factors through $g$ since $g$ is right almost split in $\cB$.\qedhere
\end{itemize}
\end{proof}

\subsection{Glued rep\-re\-sen\-ta\-tion-di\-rect\-ed algebras} 

Throughout this subsection we only consider rep\-re\-sen\-ta\-tion-di\-rect\-ed algebras. Our aim is to construct algebras $A$, $B$ and $\La$ such that $\m \La = \m B \glue \m A$. Our construction gives $\La$ as a certain pullback of $A$ and $B$ over $KA_h$ for a specific $h$, called \emph{gluing}. This gluing is based on the existence of certain modules which we call \emph{left abutments} and \emph{right abutments}. We then show how we can describe completely the representation theory of $\La$ using the representation theories of $A$ and $B$.

Pullbacks similar to gluing have been considered before in \cite[Section 3]{IPTZ}. In particular, the authors of that paper assume the existence of such a pullback and are interested in the cases $h=1$ and $h=2$. After finishing this paper, it has come to the author's attention that such pullbacks have been also considered in \cite{LEV,BCW}, citing the work of \cite{IPTZ}. To make this article self-contained as well as to establish notation, we include full proofs of some results which can also be found in \cite{IPTZ,LEV,BCW}. Moreover, we include many additional properties that we will use later.

\subsubsection{Abutments}\label{sect:abutments}

If $M$ is a $\La$-module, then $M$ is said to be \emph{uniserial} if it has a unique composition series. In this case $M$ has simple top and socle and hence $M$ is indecomposable. Being uniserial is equivalent to the \emph{radical series} 
\[0\subseteq \dots \subseteq \rad^2(M)\subseteq \rad(M)\subseteq M \]
being a composition series of $M$ \cite[Lemma V.2.2.]{ASS}. Clearly submodules of uniserial modules are uniserial. 

\begin{definition}\label{def:abutment}
Let $\La$ be a rep\-re\-sen\-ta\-tion-di\-rect\-ed algebra. We call a uniserial projective $\La$-module $P$ a \emph{left abutment} if every submodule of $P$ is projective and for any indecomposable projective $\La$-module $P'$ not isomorphic to a submodule of $P$, we have that all morphisms $U \rightarrow P'$ with $U\subseteq P$ factor through $P$. 

We call an indecomposable injective $\La$-module $I$ a \emph{right abutment} if $D(I)$ is a left abutment as a $\La^{\text{op}}$-module.
\end{definition}

Let $P$ be a left abutment with composition series
\[0\subseteq P_h \subseteq \cdots \subseteq P_2 \subseteq P_1=P.\]
Then the modules $P_i$ are also uniserial and so indecomposable. Hence there exist primitive orthogonal idempotents $e_1,\dots,e_h$ such that $P_i\cong e_i\La$ and hence the composition series of $P$ corresponds to a diagram
\[0\overset{f_h}{\longrightarrow} e_h\La \overset{f_{h-1}}{\longrightarrow} \cdots \overset{f_2}{\longrightarrow} e_2\La \overset{f_1}{\longrightarrow} e_1\La,\]
where $f_i\in \Hom_{\La}(e_{i+1}\La,e_i\La)=e_{i}\La e_{i+1}$. We call such a choice of $(e_i,f_i)_{i=1}^h$ a \emph{realization of the left abutment $P$} and we denote $e\ab=\sum_{i=1}^he_i$. Note that $h$ is the length $l(P)$ of $P$ and that $f_h=0$. We will call $h$ the \emph{height of the left abutment $P$}. 

For a right abutment $I$ such that $D(I)$ has a realization $(e_{h-i+1},f_{h-i})_{i=1}^h$, we call $(e_i, D(f_{i-1}))_{i=1}^h=:(e_i,g_{i-1})_{i=1}^h$ a \emph{realization of the right abutment $I$} and $h$ the \emph{height of the right abutment $I$}. Diagrammatically, we have a sequence of factor modules
\[D(\La e_h)\overset{g_{h-1}}{\longrightarrow}D(\La e_{h-1}) \overset{g_{h-2}}{\longrightarrow}\cdots   \overset{g_1}{\longrightarrow}D(\La e_1) \overset{g_0}{\longrightarrow}0,\]
where $g_0=0$.

Note that simple projective modules are the same as left abutments of height $1$ and simple injective modules are the same as right abutments of height $1$. Note also that since $\La$ is rep\-re\-sen\-ta\-tion-di\-rect\-ed, there exists at least one simple projective module and one simple injective module.

The following lemma will be used to characterize algebras admitting abutments in terms of a quiver with relations.

\begin{lemma}\label{lemma:homsofabutments}
Let $\La$ be a rep\-re\-sen\-ta\-tion-di\-rect\-ed algebra. 
\begin{itemize}
\item[(a)] If $P$ is a left abutment realized by $(e_i,f_i)_{i=1}^h$, then 
\[\dim_K\left(\Hom_\La(e_i\La,e_j\La)\right)=\begin{cases} 1 &\mbox{ if $i\geq j$,} \\ 0 &\mbox{ if $i<j$.}\end{cases}\]
In particular, $\left\{f_{i}\circ\cdots\circ f_{i+k}\right\}$ is a basis of $\Hom_\La(e_{i+k+1}\La, e_i\La)$.
\item[(b)] If $I$ is a right abutment realized by $(e_i,g_{i-1})_{i=1}^h$, then 
\[\dim_K\left(\Hom_\La(D(\La e_i),D(\La e_j))\right)=\begin{cases} 1 &\mbox{ if $i\geq j$,} \\ 0 &\mbox{ if $i< j$.}\end{cases}\]
In particular, $\left\{g_{i}\circ\cdots\circ g_{i+k}\right\}$ is a basis of $\Hom_\La(D(\La e_{i+k+1}), D(\La e_{i}))$.
\end{itemize}
\end{lemma}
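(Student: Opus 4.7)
My plan is to prove (a) by a direct computation of $\dim_K \Hom_\La(e_i\La, e_j\La)$ via the composition factors of $e_j\La$, and then deduce (b) as its dual. The first step is to identify the composition series of $e_j\La$. Since $P = e_1\La$ is uniserial and submodules of uniserial modules are uniserial, every $P_j = e_j\La$ is uniserial. The given chain $0 \subset e_h\La \subset \cdots \subset e_1\La = P$ is a composition series of $P$ (it has the correct length $h$), so each inclusion $e_{k+1}\La \subseteq e_k\La$ has codimension one and therefore coincides with the unique maximal submodule $\rad(e_k\La)$ of the uniserial module $e_k\La$. Consequently $e_k\La/e_{k+1}\La = \topp(e_k\La) =: S_k$, and the composition factors of $e_j\La$ from top to socle are exactly $S_j, S_{j+1}, \ldots, S_h$, each with multiplicity one.

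Since $e_i\La$ is projective, $\Hom_\La(e_i\La, -)$ is exact, so $\dim_K \Hom_\La(e_i\La, e_j\La)$ equals the sum of $\dim_K \Hom_\La(e_i\La, S)$ over the composition factors $S$ of $e_j\La$. Because $K$ is algebraically closed and $\La$ is basic, $\Hom_\La(e_i\La, S_l) \cong S_l e_i$ is one-dimensional for $l = i$ and zero otherwise. Combined with the previous paragraph this gives $\dim_K \Hom_\La(e_i\La, e_j\La) = 1$ if $j \leq i \leq h$ and $0$ if $i < j$, as claimed. For the explicit basis, the composite $f_i \circ f_{i+1} \circ \cdots \circ f_{i+k}: e_{i+k+1}\La \to e_i\La$ is precisely the inclusion $e_{i+k+1}\La \hookrightarrow e_i\La$ coming from the abutment chain; being a monomorphism it is nonzero, and hence spans the one-dimensional Hom space.

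Part (b) is strictly dual: by definition $D(I)$ is a left abutment of $\La^{\mathrm{op}}$, so applying (a) to $\La^{\mathrm{op}}$ and transporting the conclusion along the canonical isomorphism $\Hom_\La(D(M), D(N)) \cong \Hom_{\La^{\mathrm{op}}}(N, M)$ yields both the dimension formula and the explicit basis $g_i \circ \cdots \circ g_{i+k}$, since under $D$ the morphisms $f_l$ correspond to the $g_l$ up to the reindexing built into the definition of a right abutment. The only mildly delicate point in the whole proof is the identification $e_{k+1}\La = \rad(e_k\La)$, which relies on recognizing that the abutment chain already realizes a full composition series of $P$; once that is in place, everything reduces to a standard dimension count for Hom out of a projective into a module of known composition type.
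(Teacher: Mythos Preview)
Your proof is correct and takes a genuinely different route from the paper. The paper argues case by case: it invokes representation-directedness to obtain $\End_\La(e_i\La)\cong K$ (via \cite[Proposition IX.1.4]{ASS}) and the vanishing for $i<j$, and then handles $i>j$ by an inductive factoring argument (any map $e_{i+1}\La\to e_i\La$ lands in $\rad(e_i\La)=e_{i+1}\La$, hence is a scalar multiple of $f_i$). You instead compute $\dim_K\Hom_\La(e_i\La,e_j\La)$ in one stroke by counting composition factors of $e_j\La$ and using that $\Hom_\La(e_i\La,-)$ is exact; this bypasses directedness entirely. Your argument is more elementary and in fact confirms the paper's own Remark immediately after the lemma, namely that the representation-directed hypothesis is not actually needed here---only uniseriality of $P$ with projective submodules. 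The paper's approach, on the other hand, makes the role of directedness explicit and sets up the inductive viewpoint used elsewhere.
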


\begin{proof}
We only prove (a); then (b) follows from the definition and (a). If $i=j$ then by \cite[Proposition IX.1.4]{ASS} we have $\End_\La(e_i\La, e_i\La)\cong K$. Notice that by definition, if $i>j$, we have $\Hom_\La(e_i\La, e_j\La)\neq 0$. Since $\La$ is rep\-re\-sen\-ta\-tion-di\-rect\-ed, it follows that for $i<j$ we have $\Hom_\La(e_i\La,e_j\La)=0$. It remains to show that for $i>j$, we have $\Hom_\La(e_i\La, e_j\La)\cong K$. Since the morphism $f_i: e_{i+1}\La\rightarrow e_i\La$ corresponds to the radical inclusion $\rad(e_i\La) \subseteq e_i\La$, it follows that any homomorphism $g_i:e_{i+1}\La \rightarrow e_i\La$ factors through $f_i$. Since $\End_\La(e_i\La,e_i\La)\cong K$, it follows that $g_i=\lambda f_i$ for some $\lambda\in K$ and so $\Hom_{\La}(e_{i+1}\La,e_i\La)\cong K$. The result follows by a simple induction.
\end{proof}

\begin{remark}\label{rem:strong abutment}
The requirement of $P$ being a left abutment in Lemma \ref{lemma:homsofabutments}(a) is stronger than what is used in the proof. Specifically, Lemma \ref{lemma:homsofabutments}(a) holds for any uniserial projective module such that all submodules are projective and dually for Lemma \ref{lemma:homsofabutments}(b).
\end{remark}

Recall that a \emph{presentation} of an algebra $\La$ is an isomorphism $\Phi:KQ/\cR\overset{\sim}{\longrightarrow} \La$ where $Q$ is a quiver and $\cR$ is an admissible ideal of $KQ$. Recall also that $Q$ is unique (up to isomorphism of quivers) but $\cR$ is in general not unique. In particular, if $\{e_1,\dots e_k\}$ is a complete set of primitive orthogonal idempotents of $\La$ then the quiver $Q=Q_{\La}$ is the quiver with $(Q_{\La})_0=\{1,\dots,k\}$ and with arrows $i\to j$ being in bijection with a basis of the $K$-vector space $e_i(\rad\La/\rad^2\La)e_j$. For more details we refer to \cite[Chapter II.3]{ASS}. The following proposition describes abutments in terms of quivers with relations.

\begin{proposition} 
\label{prop:abutmentsquiver}
Let $\La$ be a rep\-re\-sen\-ta\-tion-di\-rect\-ed algebra.
\begin{itemize}
\item[(a)] $P$ is a left abutment realized by $(e_i,f_i)_{i=1}^h$
if and only if there exists a presentation $\Phi:KQ_{\La}/\cR\overset{\sim}{\longrightarrow}\La$ of $\La$ where $Q_{\La}$ is of the form
\[\begin{tikzpicture}[scale=0.9, transform shape]
\node(A) at (0,0) {$1$};
\node(B) at (1.5,0) {$2$};
\node(C) at (3,0) {$3$};
\node(D) at (4.5,0) {$\cdots$};
\node(E) at (6,0) {$h-1$};
\node(F) at (7.5,0) {$h$,};
\node(G) at (-1.5,1) {$ $};
\node(H) at (-1.5,-1) {$ $};

\draw[->] (A) -- node[above] {$\alpha_1$} (B);
\draw[->] (B) -- node[above] {$\alpha_2$} (C);
\draw[->] (C) -- node[above] {$\alpha_3$} (D);
\draw[->] (D) -- node[above] {$\alpha_{h-2}$} (E);
\draw[->] (E) -- node[above] {$\alpha_{h-1}$} (F);

\draw[->] (G) -- (A);
\draw[->] (H) -- (A);

\draw[dotted] (-0.8, 0.5) -- (-0.8, -0.5);

\draw[pattern=north west lines] (-2.12,0) ellipse (1cm and 1.3cm);
\end{tikzpicture}\]
and such that no path of the form $\alpha_i\cdots\alpha_{i+k}$ is in $\cR$, and for $1\leq i \leq h$ we have $\Phi(a_i)=e_i$ and, moreover, we have $\Phi(\alpha_i)=f_i$, where $a_i$ is the idempotent of $KQ_\La/\cR$ corresponding to the vertex $i$.

\item[(b)] $I$ is a right abutment realized by $(e_i,g_{i-1})_{i=1}^h$
if and only if there exists a presentation $\Phi:KQ_{\La}/\cR\overset{\sim}{\longrightarrow}\La$ of $\La$ where $Q_{\La}$ is of the form
\[\begin{tikzpicture}[scale=0.9, transform shape]
\node(A) at (0,0) {$1$};
\node(B) at (1.5,0) {$2$};
\node(C) at (3,0) {$3$};
\node(D) at (4.5,0) {$\cdots$};
\node(E) at (6,0) {$h-1$};
\node(F) at (7.5,0) {$h$};
\node(G) at (9,1) {$ $};
\node(H) at (9,-1) {$ $};

\draw[->] (A) -- node[above] {$\beta_1$} (B);
\draw[->] (B) -- node[above] {$\beta_2$} (C);
\draw[->] (C) -- node[above] {$\beta_3$} (D);
\draw[->] (D) -- node[above] {$\beta_{h-2}$} (E);
\draw[->] (E) -- node[above] {$\beta_{h-1}$} (F);

\draw[->] (F) -- (G);
\draw[->] (F) -- (H);

\draw[dotted] (8.3, 0.5) -- (8.3, -0.5);

\draw[pattern=north west lines] (9.65,0) ellipse (1cm and 1.3cm);
\end{tikzpicture},\]
and such that no path of the form $\beta_i\cdots\beta_{i+k}$ is in $\cR$, and for $1\leq i \leq h$ we have $\Phi(b_i)=e_i$, and, moreover, for $1\leq i \leq h- 1$ we have $\Phi(\beta_i)=g_{i}$, where $b_i$ is the idempotent of $KQ_\La/\cR$ corresponding to the vertex $i$.
\end{itemize}

\end{proposition}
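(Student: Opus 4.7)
The plan is to prove (a) in detail and then deduce (b) by applying (a) to the opposite algebra $\La^{\text{op}}$, using that $I$ is a right abutment of $\La$ if and only if $D(I)$ is a left abutment of $\La^{\text{op}}$, together with the identity $Q_{\La^{\text{op}}} = Q_\La^{\text{op}}$.

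For the forward direction of (a), I start with the realization $(e_i, f_i)_{i=1}^h$, extend $\{e_1, \ldots, e_h\}$ to a complete set of primitive orthogonal idempotents of $\La$, and take a presentation $\La \cong KQ_\La/I$ in which vertex $i$ corresponds to $e_i$. Since the number of arrows from $a$ to $b$ in $Q_\La$ equals the multiplicity of $S_b$ in $\rad(P_a)/\rad^2(P_a)$ with $P_a = e_a\La$, and since $P_i$ is uniserial with $\rad(P_i) \cong P_{i+1}$ for $i < h$ and $\rad(P_h) = 0$, one immediately obtains the chain $1 \to 2 \to \cdots \to h$ with exactly one arrow between consecutive vertices and no other arrows starting at any vertex in $\{1, \ldots, h\}$. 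The key step is to forbid arrows from an outside vertex $k$ into a chain vertex $i \in \{2, \ldots, h\}$: any such arrow produces a nonzero $\phi : P_i \to P_k$ with image in $\rad(P_k) \setminus \rad^2(P_k)$, and since $\La$ is basic and $k \notin \{1, \ldots, h\}$, $P_k$ is not isomorphic to any submodule of $P$, so the abutment property yields a factorisation $\phi = \psi_2 \psi_1$ with $\psi_1 : P_i \to P$ and $\psi_2 : P \to P_k$. By Lemma \ref{lemma:homsofabutments}, $\psi_1$ is a scalar multiple of the radical inclusion, and $\psi_2$ cannot be surjective because $\topp(P) = S_1 \neq S_k = \topp(P_k)$; hence $\psi_2(P) \subseteq \rad(P_k)$ and $\phi(P_i) = \psi_2(P) \cdot \rad^{i-1}(\La) \subseteq \rad^i(P_k) \subseteq \rad^2(P_k)$, contradicting the choice of $\phi$. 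The isomorphism $\Phi$ can then be chosen to send $\alpha_i$ to $f_i$ (both living in the one-dimensional space $e_i \La e_{i+1}$), and $\alpha_i \cdots \alpha_{i+k} \notin I$ because its image $f_i \circ \cdots \circ f_{i+k}$ equals the nonzero radical inclusion $P_{i+k+1} \hookrightarrow P_i$.

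For the backward direction of (a), the assumptions on $Q_\La$ and $I$ force the paths starting at vertex $1$ in $\La$ to be exactly the $h$ nonzero prefixes $e_1, \alpha_1, \alpha_1 \alpha_2, \ldots, \alpha_1 \cdots \alpha_{h-1}$; hence $P = P_1$ is uniserial of length $h$ with $\rad^k(P) \cong P_{k+1}$, so every submodule of $P$ is projective. For the abutment condition, any path in $Q_\La$ from an outside vertex $k$ to a chain vertex $i \in \{1, \ldots, h\}$ must enter the chain through vertex $1$, since by hypothesis no arrow of $Q_\La$ points into $\{2, \ldots, h\}$ from outside the chain; therefore every element of $\Hom_\La(P_i, P_k) \cong e_k \La e_i$ factors in the path algebra as $y \cdot \alpha_1 \cdots \alpha_{i-1}$ for some $y \in e_k \La e_1 \cong \Hom_\La(P, P_k)$, giving precisely the required factorisation of every $\phi : P_i \to P_k$ through $P$.

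The main obstacle is the step in the forward direction that forbids arrows into chain vertices from outside: for $i \geq 3$ a naive radical-power estimate already forces $\phi(P_i) \subseteq \rad^2(P_k)$ regardless of whether $\psi_2$ is surjective, but for $i = 2$ one genuinely needs the top comparison $\topp(P) \neq \topp(P_k)$ to rule out surjectivity of $\psi_2$. Once this subtlety is handled, the remainder reduces to routine quiver-with-relations bookkeeping in the style of \cite[Chapter II]{ASS}.
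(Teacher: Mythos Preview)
Your proposal is correct and follows essentially the same strategy as the paper's proof: both establish the chain $1\to 2\to\cdots\to h$ from the radical structure of the uniserial $P$, both rule out arrows from the chain into the outside by observing $\Hom_\La(e'\La,e_i\La)=0$, and both use the factorisation property of the abutment to control arrows from outside vertices into chain vertices $i\geq 2$. The one noteworthy difference is in this last step: the paper phrases the argument as ``such an arrow $\alpha$ can be written as $\lambda\Phi^{-1}(g)\alpha_1\cdots\alpha_{i-1}$, hence is redundant and can be removed from the presentation'', whereas you argue directly that any $\phi\in e_k\rad(\La)e_i$ already lies in $\rad^2(\La)$ (via the radical-depth estimate $\phi(P_i)\subseteq\rad(P_k)\cdot\rad^{i-1}(\La)\subseteq\rad^i(P_k)$), so that $e_k(\rad\La/\rad^2\La)e_i=0$ and no such arrow exists in the Gabriel quiver to begin with. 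Since the Gabriel quiver is an invariant of $\La$, your formulation is arguably the more transparent one; the paper's ``remove the arrow'' language obscures the fact that the arrow was never there. The backward direction and the deduction of (b) from (a) via $\La^{\text{op}}$ match the paper exactly.
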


\begin{proof}
\begin{itemize}
\item[(a)] Throughout this proof let $e'=1_\La-e\ab$ and identify $\Hom_{\La}(e_i\La,e_j\La)$ with $e_j\La e_i$. In particular, we have that  $\La=e\ab\La\oplus e'\La$.

Assume first that $P$ is a left abutment realized by $(e_i,f_i)_{i=1}^h$. Notice that by the uniqueness of the composition series of $e_1\La$ it follows that $\rad(e_i\La) \cong e_{i+1}\La$ (under the convention $e_{h+1}=0$). In particular, we have that $e_i\La / e_{i+1}\La \cong \topp(e_i\La)= S_{i}$.

Let us first show that the quiver $Q_{\La}$ has the required form. We extend the set $\{e_1,\dots,e_h\}$ to a complete set of primitive orthogonal idempotents of $\La$. Then the idempotents $\{e_1,\dots,e_h\}$ correspond to vertices $Q_{e\ab}=\{1,\dots,h\}$ in the quiver $Q_{\La}$. We set $Q_{e'}:=(Q_\La)_0\setminus Q_{e\ab}$. For $1\leq i \leq h$ and for $x\in (Q_{\La})_0$ we have
\[ e_i(\rad \La / \rad^2\La)e_{x}  =\left(\rad(e_i\La) / \rad^2(e_i\La)\right)e_{x}\cong S_{i+1}e_x = \begin{cases} S_{i+1} &\mbox{ if $x=i+1$,} \\0 &\mbox{ otherwise,} \end{cases}\]
under the convention $S_{h+1}=0$. In particular, we have $\dim_K(e_i(\rad \La / \rad^2\La)e_{x})=\delta_{i+1,x}$ since $\La$ is basic. It follows that there is no arrow from the vertices $Q_{e\ab}$ to the vertices $Q_{e'}$ and that for $1\leq i,j \leq h$ there is an arrow $\alpha_i:i\to j$ if and only if $j=i+1$, and in that case it is the only arrow.

Let $2\leq i \leq h$. It remains to show that there are no arrows from $Q_{e'}$ to $e_i$. Let $x\in Q_{e'}$ and let $e_xae_i+\rad^2\La\in e_x(\rad\La /\rad^2\La)e_i$ for some $a\in \rad\La$. It is enough to show that $e_xae_i\in \rad^2\La$. Since $e_xae_i\in e_x(\rad\La) e_i = \Hom_{\La}(e_i\La,e_x\rad\La)$, we have that $e_xae_i$ corresponds to a morphism $e_i\La\to e_x\rad\La=\rad (e_x\La)$. Composing with the inclusion $\rad (e_x\La)\to e_x\La$, we obtain a morphism $e_i\La \to \rad (e_x\La) \to e_x\La$. By the factorization property in the definition of an abutment, this morphism factors through $e_{i-1}\La$. In particular, we get a commutative diagram
\[\begin{tikzcd}
e_i\La \arrow[r, "e_xae_i"] \arrow[dr, swap, "u_1"] 
& \rad (e_x\La) \arrow[r] & e_x\La  \\
& e_{i-1}\La \arrow[u, "u_2"] \arrow[ur] &
\end{tikzcd}\]
where the arrow $u_2$ exists because the morphism $e_{i-1}\La\to e_x\La$ factors through $\rad(e_x\La)$. We now study the morphisms $u_1$ and $u_2$. For $u_1$ notice that $u_1\in \Hom_{\La}(e_i\La,e_{i-1}\La)=e_{i-1}\La e_i$ and we claim that $e_{i-1}\La e_{i}=e_{i-1}(\rad\La) e_i$. Indeed, we have $e_{i-1}(\rad\La) e_i \subseteq e_{i-1}\La e_i$ and both vector spaces have dimension equal to $1$ since by Lemma \ref{lemma:homsofabutments}(a) we have
\begin{align*}
    \dim_K(e_{i-1}(\rad\La)e_i) &=\dim_K\left(\Hom_{\La}(e_i\La,e_{i-1}\rad\La)\right)\\
    &=\dim_K\left(\Hom_{\La}(e_i\La,\rad(e_{i-1}\La))\right) \\
    &=\dim_K\left(\Hom_{\La}(e_i\La,e_i\La)\right) \\
    &=1 \\
    &=\dim_K\left(\Hom_{\La}(e_i\La,e_{i-1}\La)\right)\\
    &=\dim_K(e_{i-1}\La e_i).
\end{align*}
Hence $u_1\in e_{i-1}(\rad\La) e_i\subseteq \rad\La$ since $\rad\La$ is a two-sided ideal. Moreover, we have \[u_2\in\Hom_{\La}(e_1\La,e_x\rad\La)=e_x(\rad\La) e_i\subseteq \rad\La,\]
and so $u_2\in\rad\La$ as well. Hence $e_xae_i=u_2u_1\in\rad^2\La$ as required.

Next define an algebra homomorphism $F:KQ_{\La}\to \La$ in the following way. For each pair of vertices $x,y\in (Q_{\La})_0$ we pick a basis $\{z_{\alpha}+\rad^2\La \mid \alpha:x\to y\}$ of $e_x(\rad \La /\rad^2\La)e_y$. In particular, for $\alpha_i:i\to i+1$ we pick $z_{\alpha_i}=f_i+\rad^2\La$. For $x\in (Q_{\La})_0$ we define $F(a_x)= e_x$ and for $\alpha\in(Q_{\La})_0$ we define $F(\alpha)=z_{\alpha}$. We define $F$ on paths in $KQ_{\La}$ to be the multiplication of the corresponding elements in $\La$ and extend by $K$-linearity. By \cite[Theorem 3.7]{ASS} this induces an isomorphism $\Phi:KQ_{\La}/\ker F\overset{\sim}{\longrightarrow} \La$ where $\ker F=:\cR$ is an admissible ideal of $KQ_{\La}$. Clearly $\Phi(a_i)=e_i$ for $1\leq i\leq h$ and $\Phi(\alpha_i)=f_i$ for $1\leq i\leq h-1$ by construction. Moreover 
\[F(\alpha_i\cdots\alpha_{i+k}) = f_{i}\circ\cdots \circ f_{i+k} \neq 0,\]
and so $\alpha_i\cdots\alpha_{i+k}\not\in \cR$, which proves this direction.

For the other direction, we may identify $KQ_{\La}/\cR$-modules with representations of $Q_{\La}$ bound by $\cR$. We then have by a direct computation that $P(h)$ is a simple projective module and for $1\leq i \leq h-1$ we have $\rad(P(i)) \cong P(i+1)$. Therefore the element $\alpha_i\in a_i(KQ_{\La}/\cR) a_{i+1}=\Hom_{KQ_{\La}/\cR}(P(i+1), P(i))$ corresponds to the inclusion $\rad(P(i)) \subseteq P(i)$. Hence the radical series of $P(1)$ is its composition series and so $P(1)$ is uniserial. Moreover this composition series corresponds to the diagram
\[0\overset{0}{\longrightarrow} P(h) \overset{\alpha_{h-1}}{\longrightarrow} \cdots \overset{\alpha_2}{\longrightarrow} P(2) \overset{\alpha_1}{\longrightarrow} P(1).\]
Since there are no other arrows with target $j$ for $2\leq j \leq h$, then for $k\not\in\{1,\dots,h\}$ we have
\begin{align*}
    \Hom_{KQ_{\La}/\cR}(P(j), P(k)) &= a_k(KQ_{\La}/\cR) a_j \\
    &= a_k(KQ_{\La}/\cR) a_1 \alpha_1\cdots \alpha_{j-1} a_j\\
    &=\Hom_{KQ_{\La}/\cR}(P(1),P(k))\alpha_1\cdots \alpha_{j-1}. 
\end{align*}
It follows that $P(1)$ is a left abutment with realization $(a_i,\alpha_i)_{i=1}^h$. By the assumptions on $\Phi$ it follows that $\Phi(P(1))\cong e_1\La$ is a left abutment with realization $(e_i,f_i)_{i=1}^h$.
\item[(b)] This follows immediately from the definition and (a), since $Q_{\La^{\text{op}}}=Q^{\text{op}}_{\La}$.\qedhere
\end{itemize}
\end{proof}

Proposition \ref{prop:abutmentsquiver} shows that abutments are linearly oriented \emph{arms} in the sense of Ringel \cite{RIN}.   

\begin{remark}\label{rem:heredity}
It follows from Proposition \ref{prop:abutmentsquiver} that if $(e_i,f_i)_{i=1}^h$ is a realization of a left abutment of height $h$, then $(e_i,f_i)_{i=k}^h$ is a realization of a left abutment of height $h-k+1$, for any $1\leq k \leq h$. In particular, a submodule of a left abutment is also a left abutment.

Similarly, if $(e_i, g_{i-1})_{i=1}^h$ is a realization of a right abutment of height $h$, then $(e_i, g_{i-1})_{i=1}^k$ is a realization of a right abutment of height $k$, for any $1\leq k\leq h$. In particular, a quotient module of a right abutment is also a right abutment.
\end{remark}

If $\La$ is given by a quiver with relations, it is easy to find all abutments using Proposition \ref{prop:abutmentsquiver}, as the following examples show.

\begin{example}
\label{ex:twogluesquiver}
Let $B$ be given by the quiver with relations
\[\begin{tikzpicture}[scale=0.9, transform shape]
\node (1) at (0,1) {$1$};
\node (2) at (1,1) {$2$};
\node (3) at (2,1) {$3$};
\node (4) at (3,1) {$4$};
\node (5) at (4,0.5) {$5$};
\node (6) at (5,0.5) {$6$};
\node (7) at (6,0.5) {$7$\nospacepunct{.}};
\node (1') at (1,0) {$1'$};
\node (2') at (2,0) {$2'$};
\node (3') at (3,0) {$3'$};

\draw[->] (1) to (2);
\draw[->] (2) to (3);
\draw[->] (3) to (4);
\draw[->] (4) to (5);
\draw[->] (5) to (6);
\draw[->] (6) to (7);
\draw[->] (1') to (2');
\draw[->] (2') to (3');
\draw[->] (3') to (5);

\draw[dotted] (2) to [out=30,in=150] (4);
\draw[dotted] (3) to [out=-30,in=0] (5.west);
\draw[dotted] (4) to [out=0,in=160] (7);
\draw[dotted] (2') to [out=30,in=0] (5.west);
\draw[dotted] (3') to [out=0,in=210] (6);
\end{tikzpicture}\]
Then by Proposition \ref{prop:abutmentsquiver} the left abutments are $P(5)$, $P(6)$ and $P(7)$ with heights $3$, $2$ and $1$ respectively, and the right abutments are $I(3)$, $I(2)$, $I(1)$, $I(3')$, $I(2')$ and $I(1')$ with heights $3$, $2$, $1$, $3$, $2$ and $1$ respectively.
\end{example}

\begin{example}\label{ex:KAabutments}
It follows from Proposition \ref{prop:abutmentsquiver}(a) that the algebra $KA_h$ has exactly $h$ left abutments, namely $\{t_i KA_h\}_{i=1}^h$, and that the height of $t_i KA_h$ is $h-i+1$. By Proposition \ref{prop:abutmentsquiver}(b) the algebra $KA_h$ has exactly $h$ right abutments, namely $\{D(KA_h t_i)\}_{i=1}^h$ and the height of $D(KA_h t_i)$ is $i$. In particular, $t_1 KA_h \cong D(KA_h t_h)$ is both a left and a right abutment. 

By the same proposition it follows that if an algebra $\La$ admits a module $M$ that is both a left and a right abutment, then $\La\cong KA_h$ and $M$ is the unique indecomposable pro\-jec\-tive-in\-jec\-tive $KA_h$-module. In particular, $M$ has the same height $h$ as a left and a right abutment.
\end{example}

We have the following important Corollary.

\begin{corollary}\label{cor:forfooting}
Let $U$ be a left abutment realized by $(e_i,f_i)_{i=1}^{h}$ (respectively a right abutment realized by $(e_i,g_{i-1})_{i=1}^h$) and let $\Phi:KQ_\La/\cR\overset{\sim}{\longrightarrow}\La$ be as in Proposition \ref{prop:abutmentsquiver}(a) (respectively as in Proposition \ref{prop:abutmentsquiver}(b)). Let $\pi$ be the epimorphism $\pi:KQ_\La/\cR\longrightarrow KA_h$ given by identifying the full subquiver with vertices $Q_{e\ab}$ with $A_h$. Then the morphism $\pi\circ\Phi^{-1}$ is independent of the choice of $\Phi$ and it satisfies $\pi\circ\Phi^{-1}(e_i)=t_i$ for $1\leq i\leq h$ and $\pi\circ\Phi^{-1}(f_i)=\alpha_i$ (respectively $\pi\circ\Phi^{-1}(g_i)=\alpha_i$) for $1\leq i \leq h-1$. 
\end{corollary}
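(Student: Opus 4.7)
My plan is as follows. The formulas $\pi\circ\Phi^{-1}(e_i)=t_i$ and $\pi\circ\Phi^{-1}(f_i)=\alpha_i$ (respectively $\pi\circ\Phi^{-1}(g_i)=\alpha_i$) are immediate from Proposition~\ref{prop:abutmentsquiver}: since $\Phi$ is an isomorphism with $\Phi(\epsilon_i)=e_i$ and $\Phi(\alpha_i)=f_i$ (resp.~$g_i$), and since $\pi$ fixes $\epsilon_i$ and $\alpha_i$ inside the $A_h$-subquiver by construction, the identities are forced. The real content of the corollary is the independence of $\pi\circ\Phi^{-1}$ from the choice of $\Phi$, and my approach is to realize this composition as a canonical factorization $\La\twoheadrightarrow e\ab\La e\ab\xrightarrow{\sim} KA_h$ in which each factor depends only on the realization of the abutment, not on $\Phi$.

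For the first factor, I would check that the two-sided ideal of $\La$ generated by $1-e\ab$ is contained in (in fact equals) $\ker(\pi\circ\Phi^{-1})$. Setting $e'=\sum_{j\in Q_{e'}}\epsilon_j$, we have $\Phi(e')=1-e\ab$, hence $\Phi^{-1}$ carries the ideal $(1-e\ab)\subseteq\La$ bijectively onto the ideal $(e')\subseteq KQ_\La/I$. By the explicit shape of $Q_\La$ in Proposition~\ref{prop:abutmentsquiver}, every idempotent $\epsilon_j$ with $j\in Q_{e'}$ and every arrow of $Q_\La$ outside the $A_h$-subquiver lies in $(e')$, so $(e')=\ker\pi$. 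Consequently $\pi\circ\Phi^{-1}$ factors through $\La/(1-e\ab)$, which is canonically the corner algebra $e\ab\La e\ab$ by the standard identification for quotients by ideals generated by a complementary idempotent.

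For the second factor, I would identify the induced map $\overline\pi:e\ab\La e\ab\to KA_h$ canonically by comparing bases. Lemma~\ref{lemma:homsofabutments} provides that $e_i\La e_j$ is one-dimensional for $i\leq j$ (resp.~$i\geq j$), with basis given by a product of consecutive $f_k$'s (resp.~$g_k$'s), and zero otherwise. The resulting $K$-basis of $e\ab\La e\ab$ stands in multiplicative bijection with the standard path basis of $KA_h$ under $e_i\leftrightarrow t_i$ and consecutive product $\leftrightarrow$ consecutive path, yielding an algebra isomorphism $\overline\pi$ depending only on the realization $(e_i,f_i)_{i=1}^h$ (resp.~$(e_i,g_{i-1})_{i=1}^h$). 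Since $\pi\circ\Phi^{-1}$ agrees with the composition $\La\twoheadrightarrow e\ab\La e\ab\xrightarrow{\overline\pi} KA_h$ on the generators $e_i$ and $f_i$ (resp.~$g_i$) of $e\ab\La e\ab$ by the formulas established at the outset, the two maps coincide on all of $\La$.

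The right abutment case is handled in parallel fashion; alternatively, one may deduce it from the left abutment case by applying the duality $D$ and passing to $\La^{\text{op}}$. The main (modest) obstacle is the quiver-level bookkeeping: verifying rigorously that $\Phi^{-1}((1-e\ab))=(e')=\ker\pi$, that $\La/(1-e\ab)\cong e\ab\La e\ab$ canonically, and that the two bases appearing in $\overline\pi$ are carried bijectively and multiplicatively onto one another. All of these reduce to direct applications of Proposition~\ref{prop:abutmentsquiver} and Lemma~\ref{lemma:homsofabutments}.
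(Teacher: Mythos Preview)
Your proof is correct and rests on the same underlying fact as the paper's: the subalgebra $e\ab\La e\ab$ has a canonical basis given by the $e_i$ and the products $f_i\cdots f_{i+k}$ (Lemma~\ref{lemma:homsofabutments}), and any admissible $\Phi$ sends these to the corresponding elements of the $A_h$-subquiver, so the composite with $\pi$ is forced. The difference is purely in presentation. You package this as a canonical factorization $\La\twoheadrightarrow \La/(1-e\ab)\cong e\ab\La e\ab\xrightarrow{\sim} KA_h$, which is conceptually clean but requires you to verify the identification $\La/\La(1-e\ab)\La\cong e\ab\La e\ab$ (this uses $e\ab\La(1-e\ab)=0$, which you should make explicit). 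The paper instead compares two choices $\Phi,\Psi$ directly via the trick $\pi\circ\Phi^{-1}(a)=\pi\big((\sum_i\epsilon_i)\Phi^{-1}(a)\big)=\pi\circ\Phi^{-1}(e\ab a)$, reducing immediately to the claim that $\Phi^{-1}$ and $\Psi^{-1}$ agree on $e\ab\La$; this is shorter but leaves more to the reader. Both arguments are equally valid.
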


\begin{proof}
Let us assume that $U$ is a left abutment and $\Phi$ is as in Proposition \ref{prop:abutmentsquiver}(a); the other case is similar. Notice that we have a short exact sequence 
\[0\longrightarrow \left(1-\sum_{i=1}^h a_i\right) \longrightarrow KQ_\La/\cR \overset{\pi}{\longrightarrow} KA_h \longrightarrow 0,\]
In particular, $\pi\left(\sum_{i=1}^h a_i\right)=1_{KA_h}$. Let $\Phi,\Psi:KQ_\La/\cR\rightarrow \La$ be isomorphisms satisfying $\Phi(a_i)=e_i=\Psi(a_i)$ and $\Phi(\alpha_i)=f_i=\Psi(\alpha_i)$. By the description in Proposition \ref{prop:abutmentsquiver}(a) we have that $\Phi^{-1}(e\ab a)=\Psi^{-1}(e\ab a)$ for all $a\in \La$. It follows that 
\begin{align*}
\pi\circ \Phi^{-1}(a) &= \pi(\Phi^{-1}(a)) =\pi\left(\sum_{i=1}^h a_i\right)\pi(\Phi^{-1}(a))=\pi\left(\left(\sum_{i=1}^h a_i\right) \Phi^{-1}(a)\right) \\
&=\pi\left(\Phi^{-1}(e\ab a)\right) 
=\pi\left(\Psi^{-1}(e\ab a)\right) 
=\pi\circ\Psi^{-1}(a),
\end{align*}
which proves that $\pi\circ\Phi^{-1}=\pi\circ\Psi^{-1}$. The equalities $\pi\circ\Phi^{-1}(e_i)=t_i$ and $\pi\circ\Phi^{-1}(f_i)=\alpha_i$ are immediate by definition.
\end{proof}

Corollary \ref{cor:forfooting} justifies the following definition.

\begin{definition}\label{def:footing}
For a left abutment $P$ realized by $(e_i,f_i)_{i=1}^h$ (respectively a right abutment $I$ realized by $(e_i,g_{i-1})_{i=1}^h$) we denote the epimorphism $\pi\circ\Phi^{-1}:\La\twoheadrightarrow KA_h$ by $f_P$ (respectively $g_I$) and we call it the \emph{footing at $P$} (respectively \emph{$I$}). 
\end{definition}

An easy consequence of Definition \ref{def:footing} is the following.

\begin{corollary}\label{cor:footingi}
Let $\La$ be a rep\-re\-sen\-ta\-tion-di\-rect\-ed algebra.
\begin{itemize}
\item[(a)] If $P$ is a left abutment realized by $(e_i,f_i)_{i=1}^h$, then $f_P(e\ab\lambda)=0$ implies $e\ab\lambda=0$.
\item[(b)] If $I$ is a right abutment realized by $(e_i, g_{i-1})_{i=1}^h$, then $g_I(\lambda e\ab)=0$ implies $\lambda e\ab=0$.
\end{itemize}
\end{corollary}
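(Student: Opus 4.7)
The plan is to unwind $f_P$ via the quiver presentation of $\La$ supplied by Proposition \ref{prop:abutmentsquiver}(a). I would first fix such a presentation together with the isomorphism $\Phi\colon KQ_\La/I\xrightarrow{\sim}\La$ with $\Phi(\epsilon_i)=e_i$, so that $Q_{e\ab}:=\{1,\dots,h\}$ is a linearly oriented arm, every arrow of $Q_\La$ entering the arm from outside lands at vertex $1$, no arrow leaves the arm towards $Q_{e'}:=(Q_\La)_0\setminus Q_{e\ab}$, and $f_P=\pi\circ\Phi^{-1}$ where $\pi\colon KQ_\La/I\twoheadrightarrow KA_h$ annihilates $\epsilon_s$ for every $s\in Q_{e'}$ and restricts to the identity on the subalgebra generated by $Q_{e\ab}$.

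Setting $x:=\Phi^{-1}(\lambda)$, part (a) reduces to showing that $\pi\bigl(\bigl(\sum_{i=1}^h\epsilon_i\bigr)x\bigr)=0$ forces $\bigl(\sum_{i=1}^h\epsilon_i\bigr)x=0$. The main step is to describe $\bigl(\sum_i\epsilon_i\bigr)x$ explicitly as a combination of paths: left multiplication by $\epsilon_i$ picks out those paths in $x$ whose source is $i$, so $\bigl(\sum_i\epsilon_i\bigr)x$ is a $K$-linear combination of paths of $Q_\La$ with source in $Q_{e\ab}$. Because no arrow leaves the arm, any such path must stay inside $Q_{e\ab}$ and is therefore either a trivial path $\epsilon_i$ $(1\le i\le h)$ or a composition $\alpha_i\alpha_{i+1}\cdots\alpha_{j-1}$ $(1\le i<j\le h)$. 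By Proposition \ref{prop:abutmentsquiver}(a) none of these lies in $I$, and $\pi$ sends them bijectively onto the standard basis $\{t_i\}\cup\{\alpha_i\alpha_{i+1}\cdots\alpha_{j-1}\}$ of $KA_h$; hence $\pi$ is injective on their linear span, and the implication follows.

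Part (b) goes through by the dual argument, using Proposition \ref{prop:abutmentsquiver}(b): the arm is now attached to $Q_{e'}$ only through arrows leaving vertex $h$, so dually no arrow enters $Q_{e\ab}$ from $Q_{e'}$. Right multiplication by $\epsilon_i$ picks out paths of target $i$, so $\lambda e\ab$ corresponds to a combination of paths with target in $Q_{e\ab}$; these must again live entirely inside the arm, and the same injectivity argument applied to $g_I=\pi\circ\Phi^{-1}$ concludes. The only care needed is in identifying the correct quiver-theoretic constraint (no arrows leaving, respectively entering, the arm) that forces the paths supported on $e\ab$ to remain inside $Q_{e\ab}$; once this observation is made, the proof reduces to a short basis-counting argument with no substantial obstacle.
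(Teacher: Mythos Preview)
Your proof is correct and follows essentially the same approach as the paper: both unwind $f_P$ as $\pi\circ\Phi^{-1}$ via Proposition~\ref{prop:abutmentsquiver}(a) and use that no arrow leaves the arm $Q_{e\ab}$, so $\pi$ is injective on $\bigl(\sum_i\epsilon_i\bigr)(KQ_\La/I)$. The paper's version is simply terser, phrasing the same injectivity as a contradiction and leaving the path-basis observation implicit in the phrase ``by the same Lemma and the definition of $\pi$''.
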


\begin{proof}
We only prove (a); (b) is similar. We prove the contrapositive statement instead. Assume that $e\ab\lambda \neq 0$. Since $\Phi^{-1}$ is an isomorphism, it follows that $\Phi^{-1}(e\ab\lambda)\neq 0$. By Proposition \ref{prop:abutmentsquiver}(a), we have that 
\[\Phi^{-1}(e\ab\lambda)=\left(\sum_{i=1}^h a_i\right) \Phi^{-1}(\lambda)\neq 0.\] 
By the same proposition and the definition of $\pi$ it follows that 
\[\pi \left(\left(\sum_{i=1}^h a_i\right) \Phi^{-1}(\lambda)\right)\neq 0.\] 
But then $f_P(e\ab\lambda)=\pi\circ \Phi^{-1}(e\ab\lambda)\neq 0$, as required.
\end{proof}

The following Lemma describes abutments in terms of the Aus\-lan\-der--Rei\-ten quiver $\Gamma(\La)$ of $\La$.

\begin{proposition}\label{prop:triangles} Let $\La$ be a rep\-re\-sen\-ta\-tion-di\-rect\-ed algebra and $\Gamma(\La)$ be its Aus\-lan\-der--Rei\-ten quiver.
\begin{itemize} 
\item[(a)] $P=e_1\La$ is a left abutment realized by $(e_i,f_i)_{i=1}^h$, if and only if
\[\begin{tikzpicture}[scale=0.8, transform shape]
\node (00) at (0,7.5) {$\PD:$};

\node (11) at (0,0) {$[e_h\La]$};
\node (21) at (2.5,0) {$[\tau^-\left(e_h\La\right)]$};
\node (31) at (5,0) {$[\tau^{-2}\left(e_h\La\right)]$};
\node (41) at (7.5,0) { };
\node (51) at (10,0) {$[\tau^{-(h-3)}\left(e_h\La\right)]$};
\node (61) at (12.5,0) {$[\tau^{-(h-2)}\left(e_h\La\right)]$};
\node (71) at (15,0) {$[\tau^{-(h-1)}\left(e_h\La\right)]$};

\node (12) at (1.25,1.25) {$[e_{h-1}\La]$};
\node (22) at (3.75,1.25) {$[\tau^-\left(e_{h-1}\La\right)]$};
\node (32) at (6.25,1.25) { };
\node (42) at (8.75,1.25) { };
\node (52) at (11.25,1.25) {$[\tau^{-(h-3)}\left(e_{h-1}\La\right)]$};
\node (62) at (13.75,1.25) {\;\;\;\;\;\;$[\tau^{-(h-2)}\left(e_{h-1}\La\right)]$};

\node (13) at (2.5,2.5) { };
\node (23) at (5,2.5) { };
\node (33) at (7.5,2.5) { };
\node (43) at (10,2.5) { };
\node (53) at (12.5,2.5) { };

\node (14) at (3.75,3.75) { };
\node (24) at (6.25,3.75) { };
\node (34) at (8.75,3.75) { };
\node (44) at (11.25,3.75) { };

\node (15) at (5,5) {$[e_3\La]$};
\node (25) at (7.5,5) {$[\tau^-\left(e_3\La\right)]$};
\node (35) at (10,5) {$[\tau^{-2}\left(e_3\La\right)]$};

\node (16) at (6.25,6.25) {$[e_2\La]$};
\node (26) at (8.75,6.25) {$[\tau^-\left(e_2\La\right)]$};

\node (17) at (7.5,7.5) {$[e_1\La]$};

\draw[->] (11) -- (12);
\draw[->] (12) -- (21);
\draw[->] (21) -- (22);
\draw[->] (22) -- (31);

\draw[->] (51) -- (52);
\draw[->] (52) -- (61);
\draw[->] (61) -- (62);
\draw[->] (62) -- (71);

\draw[->] (12) -- (13);
\draw[->] (13) -- (22);
\draw[->] (22) -- (23);

\draw[->] (14) -- (15);
\draw[->] (15) -- (16);
\draw[->] (16) -- (17);

\draw[->] (15) -- (24);
\draw[->] (24) -- (25);
\draw[->] (25) -- (34);
\draw[->] (34) -- (35);
\draw[->] (35) -- (44);

\draw[->] (16) -- (25);
\draw[->] (17) -- (26);
\draw[->] (26) -- (35);
\draw[->] (25) -- (26);

\draw[->] (31) -- (32);
\draw[->] (42) -- (51);
\draw[->] (43) -- (52);
\draw[->] (53) -- (62);

\draw[->] (52) -- (53);

\draw[loosely dotted] (44) -- (41);
\draw[loosely dotted] (14) -- (41);

\draw[loosely dotted] (34) -- (43);
\draw[loosely dotted] (44) -- (53);

\draw[loosely dotted] (24) -- (42);
\draw[loosely dotted] (13) -- (14);
\draw[loosely dotted] (22) -- (24);
\draw[loosely dotted] (31) -- (34);
\end{tikzpicture}\]
is a full subquiver of $\Gamma(\La)$, there are no other arrows in $\Gamma(\La)$ going into $\PD$ and, moreover, all northeast arrows are monomorphisms, all southeast arrows are epimorphisms and all modules in the same row have the same dimension. In particular, $\tau^{-i}\left(e_h \La\right)$ is the simple top of $e_{h-i}\La$ for $1\leq i \leq h-1$. We call $\PD$ \emph{the foundation of $P$}.

\item[(b)] $I=D(\La e_h)$ is a right abutment realized by $(e_i,g_{i-1})_{i=1}^h$, if and only if
\[\begin{tikzpicture}[scale=0.77, transform shape]
\node (00) at (0,7.5) {$\DI:$};

\node (11) at (0,0) {$[\tau^{h-1} D(\La e_{1})]$};
\node (21) at (2.5,0) {$[\tau^{h-2} D(\La e_{1})]$};
\node (31) at (5,0) {$[\tau^{h-3} D(\La e_{1})]$};
\node (41) at (7.5,0) { };
\node (51) at (10,0) {$[\tau^2 D(\La e_{1})]$};
\node (61) at (12.5,0) {$[\tau D(\La e_{1})]$};
\node (71) at (15,0) {$[D(\La e_{1})]$};

\node (12) at (1.25,1.25) {$[\tau^{h-2}D(\La e_2)]$};
\node (22) at (3.75,1.25) {$[\tau^{h-3}D(\La e_2)]$};
\node (32) at (6.25,1.25) { };
\node (42) at (8.75,1.25) { };
\node (52) at (11.25,1.25) {$[\tau D(\La e_{2})]$};
\node (62) at (13.75,1.25) {$[D(\La e_{2})]$};

\node (13) at (2.5,2.5) { };
\node (23) at (5,2.5) { };
\node (33) at (7.5,2.5) { };
\node (43) at (10,2.5) { };
\node (53) at (12.5,2.5) { };

\node (14) at (3.75,3.75) { };
\node (24) at (6.25,3.75) { };
\node (34) at (8.75,3.75) { };
\node (44) at (11.25,3.75) { };

\node (15) at (5,5) {$[\tau^2 D(\La e_{h-2})]$};
\node (25) at (7.5,5) {$[\tau D(\La e_{h-2})]$};
\node (35) at (10,5) {$[D(\La e_{h-2})]$};

\node (16) at (6.25,6.25) {$[\tau D(\La e_{h-1})]$};
\node (26) at (8.75,6.25) {$[D(\La e_{h-1})]$};

\node (17) at (7.5,7.5) {$[D(\La e_h)]$};

\draw[->] (11) -- (12);
\draw[->] (12) -- (21);
\draw[->] (21) -- (22);
\draw[->] (22) -- (31);

\draw[->] (51) -- (52);
\draw[->] (52) -- (61);
\draw[->] (61) -- (62);
\draw[->] (62) -- (71);

\draw[->] (12) -- (13);
\draw[->] (13) -- (22);
\draw[->] (22) -- (23);

\draw[->] (14) -- (15);
\draw[->] (15) -- (16);
\draw[->] (16) -- (17);

\draw[->] (15) -- (24);
\draw[->] (24) -- (25);
\draw[->] (25) -- (34);
\draw[->] (34) -- (35);
\draw[->] (35) -- (44);

\draw[->] (16) -- (25);
\draw[->] (17) -- (26);
\draw[->] (26) -- (35);
\draw[->] (25) -- (26);

\draw[->] (31) -- (32);
\draw[->] (42) -- (51);
\draw[->] (43) -- (52);
\draw[->] (53) -- (62);

\draw[->] (52) -- (53);

\draw[loosely dotted] (44) -- (41);
\draw[loosely dotted] (14) -- (41);

\draw[loosely dotted] (34) -- (43);
\draw[loosely dotted] (44) -- (53);

\draw[loosely dotted] (24) -- (42);
\draw[loosely dotted] (13) -- (14);
\draw[loosely dotted] (22) -- (24);
\draw[loosely dotted] (31) -- (34);
\end{tikzpicture}\]
is a full subquiver of $\Gamma(\La)$, there are no other arrows in $\Gamma(\La)$ leaving $\DI$ and, moreover, all northeast arrows are monomorphisms, all southeast arrows are epimorphisms and all modules in the same row have the same dimension. In particular, $\tau^{i}D(\La e_1)  $ is the simple socle of $D(\La e_{i+1})$ for $1\leq i \leq h-1$. We call $\DI$ \emph{the foundation of $I$}.
\end{itemize}
\end{proposition}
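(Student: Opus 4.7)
My plan is to prove part (a); part (b) follows by applying (a) to the opposite algebra $\La^{\text{op}}$ (which is again representation-directed) and transferring via the standard duality $D$, which sends left abutments of $\La^{\text{op}}$ to right abutments of $\La$, the foundation $\PD$ in $\Gamma(\La^{\text{op}})$ to the foundation $\DI$ in $\Gamma(\La)$, and interchanges monomorphisms with epimorphisms.

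For the forward direction of (a), assume $P=e_1\La$ is a left abutment realized by $(e_i,f_i)_{i=1}^h$. By Proposition~\ref{prop:abutmentsquiver}(a), I may fix a presentation $\La\cong KQ_\La/I$ in which the arm $1\xrightarrow{\alpha_1}\cdots\xrightarrow{\alpha_{h-1}}h$ appears with $h$ a sink, no subpath $\alpha_i\cdots\alpha_{i+k}$ lying in $I$, and no arrow of $Q_\La$ originating outside the arm terminating at any of $2,\ldots,h$. From this it follows at once that $e_h\La$ is simple projective and that $\rad(e_i\La)\cong e_{i+1}\La$ for $1\leq i\leq h-1$, so each $e_i\La$ is uniserial with composition series $0\subset e_h\La\subset\cdots\subset e_i\La$, and the sink map of $e_i\La$ is the inclusion $f_i\colon e_{i+1}\La\hookrightarrow e_i\La$. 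Consequently $[e_{i+1}\La]\to[e_i\La]$ is the unique arrow entering $[e_i\La]$ in $\Gamma(\La)$, giving the left (northeast) edge of $\PD$ together with the monomorphism assertion. I would then run the knitting procedure for the Auslander--Reiten quiver (see \cite{ASS}) starting from the simple projective $[e_h\La]$: using the mesh relation inductively, the almost split sequence at $[e_i\La]$ takes the form $0\to e_i\La\to e_{i-1}\La\oplus\tau^-(e_{i+1}\La)\to\tau^-(e_i\La)\to 0$, and more generally $\tau^{-j}(e_i\La)$ is a uniserial quotient of $e_{i-j}\La$, with the southeast arrows realized as the natural surjections. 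This reproduces the triangular shape of $\PD$, the monomorphism/epimorphism labels on its arrows (northeast arrows go between modules of strictly increasing length and so must be monic, southeast arrows between modules of strictly decreasing length and so must be epic), the constant length $h-i+1$ along each row, and the identification $\tau^{-i}(e_h\La)\cong\topp(e_{h-i}\La)$. To show that no external arrows enter $\PD$, I would argue vertex by vertex: for each projective vertex $[e_i\La]$ the incoming arrows biject with indecomposable summands of $\rad(e_i\La)$, which are already in $\PD$; for each non-projective vertex $[\tau^{-j}(e_i\La)]\in\PD$ the mesh relation identifies the incoming arrows with the outgoing arrows of its $\tau$-predecessor, which lies in $\PD$ and whose outgoing arrows have just been computed by the knitting to remain inside $\PD$. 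Here one uses crucially that no external arrow of $Q_\La$ ends at a vertex of the arm other than $1$, so external outgoing arrows in $\Gamma(\La)$ can leave only the apex $[e_1\La]$ of $\PD$, whose $\tau^-$-image does not lie in $\PD$.

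For the backward direction, suppose $\PD$ is a full subquiver of $\Gamma(\La)$ with the stated properties. I set $e_i\La$ to be the projective at the labelled vertex and $f_i$ to be the morphism corresponding to the irreducible northeast inclusion $[e_{i+1}\La]\to[e_i\La]$. The hypothesis that this is the unique arrow entering $[e_i\La]$ forces $\rad(e_i\La)\cong e_{i+1}\La$; iterating yields the composition series of $P=e_1\La$ and shows that every submodule of $P$ is one of the projectives $e_j\La$. It remains to verify the factorization property: for every indecomposable projective $P'$ not isomorphic to any $e_j\La$ and every $i\geq 2$, any morphism $e_i\La\to P'$ factors through the inclusion $e_i\La\hookrightarrow e_1\La$. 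Since $\La$ is representation-directed, such a morphism is a sum of compositions of irreducible morphisms, so it suffices to rule out irreducible $e_i\La\to P'$. If such existed, we would have an arrow $[e_i\La]\to[P']$ in $\Gamma(\La)$ with $[P']\notin\PD$; since $\tau^-(e_i\La)$ is non-projective and lies in $\PD$, the mesh relation would then produce an arrow $[P']\to[\tau^-(e_i\La)]$ entering $\PD$ from outside, contradicting our hypothesis.

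The main obstacle is the careful bookkeeping in the forward knitting step, where at every stage one must verify that the middle terms of the almost split sequences involving vertices of $\PD$ are built solely from indecomposables lying in $\PD$, rather than acquiring unforeseen external summands; this is ultimately controlled by the isolation of the arm inside $Q_\La$, but must be tracked over $h-1$ shifts under $\tau^-$. The symmetric difficulty in the backward direction is that the hypothesis constrains only incoming arrows to $\PD$, so one has to detour through the mesh relation to extract a restriction on outgoing irreducible morphisms from the projectives $e_i\La$ with $i\geq 2$.
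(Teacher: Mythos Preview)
Your forward implication (left abutment $\Rightarrow$ $\PD$ exists) proceeds by knitting from $e_h\La$, which is essentially the paper's inductive argument on $h$ in disguise: the paper observes via Remark~\ref{rem:heredity} that $e_2\La$ is itself a left abutment of height $h-1$, invokes the induction hypothesis to obtain $\nsup{e_2\La}{\triangle}$, and then adds one more layer by analysing the arrows leaving the rightmost diagonal of $\nsup{e_2\La}{\triangle}$. Both arguments reduce to the same mesh computations and both rest on the isolation of the arm inside $Q_\La$.

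Your backward implication, however, contains a genuine gap. You claim that to verify the factorization property ``it suffices to rule out irreducible $e_i\La\to P'$'', but this reduction is not valid: a general morphism $e_i\La\to P'$ is a linear combination of compositions of irreducibles along paths in $\Gamma(\La)$, and excluding paths of length one does not force longer paths to pass through $e_1\La$. Your mesh argument correctly shows that for $i\ge 2$ every arrow leaving $[e_i\La]$ lands inside $\PD$, and iterating it shows that any path from $[e_i\La]$ to a vertex outside $\PD$ must exit through the rightmost diagonal $\cJ=\add\{\tau^{-(j-1)}(e_j\La):1\le j\le h\}$. What is missing is the further step that every morphism from $e_i\La$ to a module in $\cJ$ already factors through $e_1\La$. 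The paper supplies exactly this: since each square in $\PD$ arises from an almost split sequence and is therefore commutative, any composite $e_i\La\to X$ with $X\in\cJ$ can be rewritten to travel northeast first all the way up to $e_1\La$ before descending southeast.
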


\begin{proof}
We only prove (a); (b) is similar. Assume first that $\PD$ is a full subquiver of $\Gamma(\La)$ satisfying the required properties. Since all northeast arrows are monomorphisms and there are no other arrows going into $\PD$, it follows that $e_1\La$ is uniserial. For the factorization property, let $P'$ be an indecomposable projective module such that there exists a nonzero homomorphism $\phi:e_i\La \rightarrow P'$and $P'\not\subseteq e_1\La$ for some $1\leq i \leq h$. Set $J_i=\tau^{-(h-i)}\left(e_{h-i+1}\La\right)$ and $\cJ:=\add \{J_i\}_{i=1}^h$, that is $\cJ$ is the additive closure of all indecomposable modules $X$ such that $[X]$ appears in the rightmost southeast diagonal of $\PD$. Since the only indecomposable projective modules $Y$ with $[Y]\in\PD$ are isomorphic to submodules of $e_1\La$, it follows that $[P']\not\in\PD$. Since there are no arrows going into $\PD$, the only arrows going out of $\PD$ have one of the vertices $\{[J_1],\dots,[J_h]\}$ as a source. Hence $\phi$ factors through $\cJ$ so that $\phi=g_1\circ g_2$ with $g_2:e_i\La\rightarrow N$ and $g_1:N\rightarrow P'$ for some $N\in \cJ$. Moreover, since there are no other arrows going out of $\PD\setminus \{[J_1],\dots,[J_h]\}$, all squares in $\PD$ correspond to almost split sequences and hence they are commutative. It follows that any morphism from $e_i\La$ to $\cJ$ factors through $e_1\La$. Hence the morphism $g_2$ factors through $e_1\La$ which shows that $\phi:e_i\La \rightarrow P'$ factors through $e_1\La$, as required.

For the other direction we use induction on $h\geq 1$. If $h=1$ then $e_1\La$ is a simple projective module and so there are no irreducible morphisms in $\Gamma(\La)$ into $e_1\La$. We assume the result is true for $h=k$ and will prove it for $h=k+1$. By induction hypothesis, and since by Remark \ref{rem:heredity} we have that $e_2\La$ is also a left abutment of height $h-1$, it follows that
\[\begin{tikzpicture}[scale=0.8, transform shape]

\node (00) at (0,6.25) {$\nsup{e_2\La}{\triangle}:$};
\node (11) at (0,0) {$[e_h\La]$};
\node (21) at (2.5,0) {$[\tau^-\left(e_h\La\right)]$};
\node (31) at (5,0) {$[\tau^{-2}\left(e_h\La\right)]$};
\node (41) at (7.5,0) { };
\node (51) at (10,0) {$[\tau^{-(h-3)}\left(e_h\La\right)]$};
\node (61) at (12.5,0) {$[\tau^{-(h-2)}\left(e_h\La\right)]$};
\node (12) at (1.25,1.25) {$[e_{h-1}\La]$};
\node (22) at (3.75,1.25) {$[\tau^-\left(e_{h-1}\La\right)]$};
\node (32) at (6.25,1.25) { };
\node (42) at (8.75,1.25) { };
\node (52) at (11.25,1.25) {$[\tau^{-(h-3)}\left(e_{h-1}\La\right)]$};
\node (13) at (2.5,2.5) { };
\node (23) at (5,2.5) { };
\node (33) at (7.5,2.5) { };
\node (43) at (10,2.5) { };
\node (14) at (3.75,3.75) { };
\node (24) at (6.25,3.75) { };
\node (34) at (8.75,3.75) { };
\node (15) at (5,5) {$[e_3\La]$};
\node (25) at (7.5,5) {$[\tau^-\left(e_3\La\right)]$};
\node (16) at (6.25,6.25) {$[e_2\La]$};

\draw[->] (11) -- (12);
\draw[->] (12) -- (21);
\draw[->] (21) -- (22);
\draw[->] (22) -- (31);
\draw[->] (51) -- (52);
\draw[->] (52) -- (61);
\draw[->] (12) -- (13);
\draw[->] (13) -- (22);
\draw[->] (22) -- (23);
\draw[->] (14) -- (15);
\draw[->] (15) -- (16);
\draw[->] (15) -- (24);
\draw[->] (24) -- (25);
\draw[->] (25) -- (34);
\draw[->] (16) -- (25);
\draw[->] (31) -- (32);
\draw[->] (42) -- (51);
\draw[->] (43) -- (52);

\draw[loosely dotted] (43) -- (41);
\draw[loosely dotted] (14) -- (41);
\draw[loosely dotted] (34) -- (43);
\draw[loosely dotted] (24) -- (42);
\draw[loosely dotted] (13) -- (14);
\draw[loosely dotted] (22) -- (24);
\draw[loosely dotted] (31) -- (34);
\end{tikzpicture}\]
is also a full subquiver of $\Gamma(\La)$ and there are no other arrows in $\Gamma(\La)$ going into $\nsup{e_2\La}{\triangle}$. Since $e_1\La$ is uniserial, we have $e_2\La \cong \rad(e_1\La)$ and so there is an arrow $[e_2\La] \rightarrow [e_1\La]$ in $\Gamma(\La)$. We claim that this and the arrow $[e_2\La] \rightarrow [\tau^-\left(e_3\La\right)]$ are the only arrows in $\Gamma(\La)$ starting from $[e_2\La]$. To see this, note that any other arrow starting from $[e_2\La]$ corresponds to the inclusion of $e_2\La$ into an indecomposable projective module $P'$, since there are no other arrows going into $[e_2\La]$. But then this would correspond to some irreducible homomorphism that would not factor through $e_1\La$, contradicting the fact that $e_1\La$ is a left abutment. Hence there is an almost split sequence 
\[0\longrightarrow e_2\La \longrightarrow e_1\La \oplus \tau^-\left(e_3\La\right) \longrightarrow \tau^- \left(e_2\La\right)\longrightarrow 0.\]
Then a similar argument shows that there are exactly two arrows from $[\tau^{-(j-2)}\left(e_{j}\La\right)]$ for $3\leq j \leq h$, exactly as required. 

Since $e_1\La$ is uniserial, we know that $\dim_K(e_{h-i} \La)=i+1$. Since almost split sequences are exact sequences, it easily follows from simple dimension arguments that northeast arrows are monomorphisms, southeast arrows are epimorphisms and along the same row the dimensions remain the same. In particular, the last row has only simple modules, and since there is always an epimorphism $e_{h-i}\La\twoheadrightarrow \tau^{-i}\left(e_h\La\right)$ in $\PD$, the result follows.
\end{proof}

If $P$ is a left abutment of $\La$ we set
\[\cPD := \add\{ X \in \m\La \mid \text{$X$ is indecomposable and $[X]\in \PD$}\}.\]
Similarly, if $I$ is a right abutment of $\La$ we set
\[\cDI := \add\{ X \in \m\La \mid \text{$X$ is indecomposable and $[X]\in \DI$}\}.\]
Using Proposition \ref{prop:triangles} it can be shown that
\[\cPD=\Fac(\Sub(P)),\;\; \cDI= \Sub(\Fac(I)).\]

The following corollary shows that every abutment gives rise to an example of glued subcategories.

\begin{corollary} Let $\La$ be a rep\-re\-sen\-ta\-tion-di\-rect\-ed algebra.
\begin{itemize}
\item[(a)] Let $P$ be a left abutment of $\La$. Then $\m \La = \cPD \glue (\m \La)$.
\item[(b)] Let $I$ be a right abutment of $\La$. Then $\m\La = (\m\La) \glue \cDI$.
\end{itemize}
\end{corollary}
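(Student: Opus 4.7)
The plan is to verify the four conditions (i)–(iv) of Definition \ref{def:catglue} for each part. For (a), I take $\cB=\cPD$ and $\cA=\m\La$; then (ii) is immediate because $\cPD\subseteq \m\La$, and (iv) is trivial because for any $g\colon N\to M$ with $N\in\cPD$ and $M\in\m\La$, the identity factorisation $g = g\circ \Id_N$ factors through $N\in \cPD\cap \m\La = \cPD$. The substantive content thus lies in (iii) and (i).

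For condition (iii), I would combine two inputs: Proposition \ref{prop:triangles}(a), which states that in $\Gamma(\La)$ there are no arrows from outside $\PD$ entering $\PD$, and the fact that for a representation-directed algebra any nonzero homomorphism between non-isomorphic indecomposable modules decomposes as a sum of compositions of irreducible morphisms, and is therefore witnessed by a path in $\Gamma(\La)$. Any putative path $[M]\rightsquigarrow [N]$ with $[M]\notin \PD$ and $[N]\in \PD$ would necessarily traverse an arrow entering $\PD$ from outside, which is impossible; hence $\Hom_\La(M,\cPD)=0$ for every indecomposable $M\notin \cPD$.

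For condition (i), $\m\La$ has almost split sequences by classical Auslander-Reiten theory. For $\cPD$, I would exploit the $\triangle(h)$-shape of $\PD$ from Proposition \ref{prop:triangles}(a): the $\La$-projectives inside $\cPD$ are exactly $e_1\La,\dots,e_h\La$, which are \emph{a fortiori} $\cPD$-projective. For any other indecomposable $N\in\cPD$ (necessarily non-$\La$-projective), the almost split sequence $0\to \tau N\to M\to N\to 0$ in $\m\La$ is contained entirely in $\cPD$ because every arrow of $\Gamma(\La)$ into $[N]$ originates inside $\PD$, so it remains almost split in $\cPD$. Dually, the $\cPD$-injectives are the modules on the rightmost southeast diagonal of $\PD$, and for any other indecomposable one produces an almost split sequence starting there directly from the mesh structure of $\PD$.

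Part (b) proceeds analogously via Proposition \ref{prop:triangles}(b), now with $\cA = \cDI \subseteq \cB = \m\La$; conditions (ii), (iii), and (iv) are immediate (the latter two because $\cA\subseteq \cB$), and (i) reduces to establishing almost split sequences in $\cDI$, which follows from the $\triangle(h)$-shape of $\DI$ together with the fact that no arrows of $\Gamma(\La)$ leave $\DI$. The most delicate point in either part is identifying exactly which boundary modules of the triangle are subcategory-projective or subcategory-injective, so that one knows when the ambient almost split sequence must descend; if direct verification feels clunky, one can instead invoke the footings $f_P$ or $g_I$ to transfer the question to $\m(KA_h)$, where the answer is classical.
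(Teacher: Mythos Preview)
Your proof is correct and follows essentially the same route as the paper, which simply records that the result ``follows immediately by Proposition~\ref{prop:triangles}''; you have unpacked what that entails by explicitly checking the four conditions of Definition~\ref{def:catglue}. Your identification of the nontrivial content---condition (iii) for part (a) and the existence of almost split sequences in $\cPD$ (respectively $\cDI$) for condition (i)---is accurate, and your suggested fallback of transporting the verification of relative projectives/injectives to $\m(KA_h)$ via the footing is exactly the clean way to see that the rightmost (respectively leftmost) diagonal modules are $\cPD$-injective (respectively $\cDI$-projective), since $f_{P*}$ is exact, fully faithful, and sends projectives to projectives.
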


\begin{proof}
Follows immediately by Proposition \ref{prop:triangles}.
\end{proof}

\begin{example}\label{ex:twogluesARquiver}
Let B be as in Example \ref{ex:twogluesquiver}. Then the Aus\-lan\-der--Rei\-ten quiver $\Gamma(B)$ of $B$ is
\[\begin{tikzpicture}[scale=1.2, transform shape]
\tikzstyle{nct}=[shape= rectangle, minimum width=6pt, minimum height=7.5, draw, inner sep=0pt]
\tikzstyle{nct2}=[circle, minimum width=6pt, draw, inner sep=0pt]
\tikzstyle{nct3}=[circle, minimum width=6pt, draw=white, inner sep=0pt, scale=0.9]

\node[nct3] (A) at (0,0) {$\qthree{}[7][]$};
\node[nct3] (B) at (0.7,0.7) {$\qthree{6}[7]$};
\node[nct3] (C) at (1.4,1.4) {$\qthree{5}[6][7]$};
\node[nct3] (D) at (1.4,0) {$\qthree{}[6][]$};
\node[nct3] (E) at (2.1,0.7) {$\qthree{5}[6]$};
\node[nct3] (F) at (2.8,1.4) {$\qthree{4}[5][6]$};
\node[nct3] (G) at (2.8,0) {$\qthree{}[5][]$};
\node[nct3] (H) at (3.5,0.7) {$\qthree{4}[5]$};
\node[nct3] (I) at (3.5,-0.7) {$\qthree{3'}[5]$};
\node[nct3] (J) at (4.2,0) {$\begin{smallmatrix} 4 && 3' \\ & 5 &
\end{smallmatrix}$};
\node[nct3] (K) at (4.9,0.7) {$\qthree{}[3'][]$};
\node[nct3] (L) at (4.9,-0.7) {$\qthree{}[4][]$};
\node[nct3] (M) at (5.6,1.4) {$\qthree{2'}[3']$};
\node[nct3] (N) at (5.6,-1.4) {$\qthree{3}[4]$};
\node[nct3] (O) at (6.3,2.1) {$\qthree{1'}[2'][3']$};
\node[nct3] (P) at (6.3,0.7) {$\qthree{}[2'][]$};
\node[nct3] (Q) at (6.3,-0.7) {$\qthree{}[3][]$};
\node[nct3] (R) at (7,1.4) {$\qthree{1'}[2']$};
\node[nct3] (S) at (7,-1.4) {$\qthree{2}[3]$};
\node[nct3] (T) at (7.7,0.7) {$\qthree{}[1'][]$};
\node[nct3] (U) at (7.7,-0.7) {$\qthree{}[2][]$};
\node[nct3] (V) at (7.7,-2.1) {$\qthree{1}[2][3]$};
\node[nct3] (W) at (8.4,-1.4) {$\qthree{1}[2]$};
\node[nct3] (X) at (9.1,-0.7) {$\qthree{}[1][]$\nospacepunct{,}};

\draw[->] (A) to (B);
\draw[->] (B) to (C);
\draw[->] (B) to (D);
\draw[->] (C) to (E);
\draw[->] (D) to (E);
\draw[->] (E) to (F);
\draw[->] (E) to (G);
\draw[->] (F) to (H);
\draw[->] (G) to (H);
\draw[->] (G) to (I);
\draw[->] (H) to (J);
\draw[->] (I) to (J);
\draw[->] (J) to (K);
\draw[->] (J) to (L);
\draw[->] (K) to (M);
\draw[->] (L) to (N);
\draw[->] (M) to (O);
\draw[->] (M) to (P);
\draw[->] (N) to (Q);
\draw[->] (O) to (R);
\draw[->] (P) to (R);
\draw[->] (Q) to (S);
\draw[->] (R) to (T);
\draw[->] (S) to (U);
\draw[->] (S) to (V);
\draw[->] (U) to (W);
\draw[->] (V) to (W);
\draw[->] (W) to (X);

\draw[loosely dotted] (A.east) -- (D);
\draw[loosely dotted] (B.east) -- (E);
\draw[loosely dotted] (D.east) -- (G);
\draw[loosely dotted] (E.east) -- (H);
\draw[loosely dotted] (G.east) -- (J);
\draw[loosely dotted] (H.east) -- (K);
\draw[loosely dotted] (I.east) -- (L);
\draw[loosely dotted] (K.east) -- (P);
\draw[loosely dotted] (M.east) -- (R);
\draw[loosely dotted] (L.east) -- (Q);
\draw[loosely dotted] (Q.east) -- (U);
\draw[loosely dotted] (P.east) -- (T);
\draw[loosely dotted] (S.east) -- (W);
\draw[loosely dotted] (U.east) -- (X);
\end{tikzpicture}\]
where the labels indicate the composition series of the corresponding indecomposable modules. We can see the foundations $\nsup{\tinyqthree{7}}{\triangle}$, $\nsup{\tinyqthree{6}[7][\;]}{\triangle}$ and $\nsup{\tinyqthree{5}[6][7]}{\triangle}$ of the left abutments and the foundations $\supn{\triangle}{\tinyqthree{1}[2][3]}$, $\supn{\triangle}{\tinyqthree{1}[2][\;]}$, $\supn{\triangle}{\tinyqthree{1}}$, $\supn{\triangle}{\tinyqthree{1'}[2'][3']}$, $\supn{\triangle}{\tinyqthree{1'}[2'][\;]}$ and $\supn{\triangle}{\tinyqthree{1'}}$ of the right abutments that were computed in Example \ref{ex:twogluesquiver}.
\end{example}

The following corollaries will be used later.

\begin{corollary}\label{cor:diminabutment}
Let $\La$ be a rep\-re\-sen\-ta\-tion-di\-rect\-ed algebra.
\begin{itemize}
\item[(a)] Let $P$ be a left abutment of $\La$ and $M\in\cPD$. Then $\pd(M)\leq 1$.
\item[(b)] Let $I$ be a right abutment of $\La$ and $N\in\cDI$. Then $\id(N)\leq 1$.
\end{itemize}
\end{corollary}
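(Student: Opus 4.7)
The plan for (a) is to identify every indecomposable $M \in \cPD$ explicitly as a uniserial quotient of the form $M(a,b) := e_a\La/e_{b+1}\La$ for some $1 \le a \le b \le h$, and then read off an obvious projective resolution of length at most one. The main tools are the footing $f_P : \La \twoheadrightarrow KA_h$ from Definition \ref{def:footing} together with the equality $\cPD = \Fac(\Sub(P))$ asserted just before the corollary.

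First I would establish that $\ker(f_P) = \La e' \La$, where $e' := 1_\La - e\ab$. The proof of Proposition \ref{prop:abutmentsquiver}(a) produces $e\ab \La e' = 0$, and, combined with the fact that no path $\alpha_i\cdots\alpha_{i+k}$ lies in the defining ideal of $\La$, this forces $\La / \La e' \La \cong KA_h$. Restriction along $f_P$ then identifies $\m(KA_h)$ with the full subcategory of $\La$-modules annihilated on the right by $e'$. Since $P e' = 0$ (as $e_1 \in e\ab \La e\ab$) and the property $Xe' = 0$ passes to submodules of $P^n$, to their quotients, and to direct summands, every module of $\cPD$ lies in this subcategory. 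The cardinality count -- the foundation $\PD$ has $\binom{h+1}{2}$ vertices by Proposition \ref{prop:triangles}(a), matching the number of indecomposable $KA_h$-modules -- forces equality, and thus the indecomposables of $\cPD$ are exactly the uniserials $M(a,b) = e_a\La/e_{b+1}\La$ for $1 \le a \le b \le h$ (with $e_{h+1}\La := 0$).

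With the identification in hand, the short exact sequence
\[0 \longrightarrow e_{b+1}\La \longrightarrow e_a\La \longrightarrow M(a,b) \longrightarrow 0\]
is a projective resolution of length at most one, since $e_a\La$ is projective and $e_{b+1}\La$ is projective (if $b < h$) or zero (if $b = h$). Hence $\pd(M(a,b)) \le 1$, and the bound extends to arbitrary $M \in \cPD$ by passing to direct summands. Part (b) follows by applying (a) inside $\m\La^{\text{op}}$ to the left abutment $D(I)$, noting that $D$ carries $\cDI$ to the foundation of $D(I)$ and interchanges projective with injective dimension. The hard part is the opening identification: once $\ker(f_P) = \La e'\La$ is extracted and $\cPD$ is matched with the essential image of $(f_P)_*$, the projective resolution falls out automatically from the left-abutment axiom.
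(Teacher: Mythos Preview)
Your proof is correct but takes a more explicit, computational route than the paper. The paper's argument is a three-line application of Proposition~\ref{prop:triangles}(a): since no arrows enter $\PD$ in $\Gamma(\La)$, the projective cover of any indecomposable $M\in\cPD$ already lies in $\cPD$; the only indecomposable projectives there are the $e_i\La\in\Sub(P)$, and the syzygy $\Omega(M)$, being a submodule of one of these, is projective by the left-abutment axiom. Your approach instead first establishes the equivalence $\cPD\simeq\m KA_h$ via the footing $f_P$ (by computing $\ker f_P=\La e'\La$ and matching indecomposable counts), then writes down the explicit two-term resolution $0\to e_{b+1}\La\to e_a\La\to M(a,b)\to 0$. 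Both arguments ultimately rest on the same fact---that the projectives in $\cPD$ form the chain $\Sub(P)$---but the paper extracts this directly from the AR-quiver picture, while you recover it through the $KA_h$ identification and a cardinality argument. Your route has the side benefit of making the later Example~\ref{ex:naming} (viewing $\cPD$ as $\m KA_h$ via $f_P$) essentially immediate, at the cost of a longer proof here where the simpler AR-quiver reasoning suffices.
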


\begin{proof}
We only prove (a); (b) is similar. Let $(e_i,f_i)_{i=1}^h$ be a realization of $P$. Without loss of generality, we may assume that $M$ is indecomposable. Then $M$ corresponds to one of the vertices in $\PD$, where $\PD$ is as in Proposition \ref{prop:triangles}(a). Since no arrows of $\Gamma(\La)$ have target in $\PD$ but source outside of $\PD$, it follows that the projective cover of $M$ is $e_i\La$ for some $i\in\{1,\dots,h\}$. Since $\om(M)$ is a submodule of $e_i\La$, it follows that $\om(M)=e_j\La$ for some $j\geq i$ or $\om(M)=0$. In both cases we have $\pd(M)\leq 1$ as required. 
\end{proof}

\begin{corollary}\label{cor:pathsupport}
Let $\La$ be a rep\-re\-sen\-ta\-tion-di\-rect\-ed algebra.
\begin{itemize}
\item[(a)] Let $P$ be a left abutment realized by $(e_i,f_i)_{i=1}^h$. Then for every $\la \in \La$ we have $e\ab\la = e\ab\la e\ab$.
\item[(b)] Let $I$ be a right abutment realized by $(e_i,g_{i-1})_{i=1}^h$. Then for every $\la \in \La$ we have $\la e\ab = e\ab\la e\ab$.
\end{itemize}
\end{corollary}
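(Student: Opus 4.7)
The plan is to reduce both parts of the corollary to the vanishing statements
\[
e\ab \La e' = 0 \qquad \text{(for a left abutment),} \qquad e' \La e\ab = 0 \qquad \text{(for a right abutment),}
\]
where $e' := 1_\La - e\ab$. Once these are in hand, each part is a one-line consequence, since we can decompose $1_\La = e\ab + e'$ and multiply $\la$ on the appropriate side.

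For (a), the vanishing $e\ab \La e' = 0$ has essentially already been established inside the proof of Proposition \ref{prop:abutmentsquiver}(a): it was shown there that $\Hom_\La(e'\La, e_i\La) = 0$ for each $1 \leq i \leq h$, using the uniserial structure of $e_1\La$ together with the fact that the simple projective $e_h\La$ is not a summand of $e'\La$. Under the standard identification $\Hom_\La(e'\La, e\ab\La) \cong e\ab \La e'$, this yields $e\ab \La e' = 0$. Alternatively, one can read it off the quiver presentation from Proposition \ref{prop:abutmentsquiver}(a): there are no arrows from $Q_{e\ab}$ to $Q_{e'}$, so no path in $KQ_\La/I$ starts in $Q_{e\ab}$ and ends in $Q_{e'}$. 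Given this, for any $\la \in \La$ I decompose $\la = \la e\ab + \la e'$ and multiply on the left by $e\ab$, obtaining
\[
e\ab \la = e\ab \la e\ab + e\ab \la e' = e\ab \la e\ab,
\]
since the second summand lies in $e\ab \La e' = 0$.

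For (b), I apply exactly the same reasoning to $\La^{\text{op}}$, in which $D(I)$ is a left abutment realized by $(e_{h-i+1}, f_{h-i})_{i=1}^h$; the resulting identity $e\ab \La^{\text{op}} e' = 0$ translates back to $e' \La e\ab = 0$ in $\La$. Equivalently, Proposition \ref{prop:abutmentsquiver}(b) says that in the presentation of $\La$ there are no arrows from $Q_{e'}$ into $Q_{e\ab}$, yielding the same vanishing. Then writing $\la = e\ab \la + e' \la$ and multiplying on the right by $e\ab$ gives
\[
\la e\ab = e\ab \la e\ab + e' \la e\ab = e\ab \la e\ab.
\]

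There is no real obstacle: the substantive content is the vanishing $e\ab \La e' = 0$ together with its right-handed dual, both of which are immediate from the structural description of left and right abutments in Proposition \ref{prop:abutmentsquiver}. The only care needed is to recognize that the identification $\Hom_\La(e'\La, e\ab \La) \cong e\ab \La e'$ is precisely what converts the hom-vanishing proved earlier into the element-level vanishing required here.
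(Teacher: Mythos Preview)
Your proof is correct and follows essentially the same approach as the paper: both reduce the claim to the vanishing $e\ab \La (1_\La - e\ab) = 0$ and then identify this with a $\Hom$-vanishing already established. The only cosmetic difference is the reference used for that vanishing---you invoke the argument inside the proof of Proposition~\ref{prop:abutmentsquiver}(a) (or equivalently its quiver description), while the paper instead cites Proposition~\ref{prop:triangles}, using that there are no arrows in $\Gamma(\La)$ going into $\PD$; both justifications are valid and amount to the same structural fact.
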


\begin{proof}
We only prove (a); (b) is similar. Rewriting $e\ab\la=e\ab\la e\ab$ as $e\ab\la(1-e\ab)=0$, we see that it is enough to show that $e\ab\La(1-e\ab)=0$. We have
\[ e\ab \La (1-e\ab) \cong \bigoplus_{i\not\in \{1,\dots,h\}} e\ab \La e_i \cong \bigoplus_{i\not\in \{1,\dots,h\}} \Hom_{\La}(e_i \La, e\ab\La) = 0,\]
where the last equality follows from Proposition \ref{prop:triangles} since there is no arrow going into $\PD$ in $\Gamma(\La)$.
\end{proof}

\begin{corollary}\label{cor:intersupport}
Let $\La$ be a rep\-re\-sen\-ta\-tion-di\-rect\-ed algebra.
\begin{itemize}
\item[(a)] Let $P$ be a left abutment realized by $(e_i,f_i)_{i=1}^h$ and let $M\in\cPD$. Then for every $m\in M$ we have $me\ab=m$.
\item[(b)] Let $I$ be a right abutment realized by $(e_i,g_{i-1})_{i=1}^h$ and let $N\in\cDI$. Then for every $n\in N$ we have $ne\ab=n$.
\end{itemize}
\end{corollary}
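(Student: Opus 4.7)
The plan is to deduce both (a) and (b) directly from Corollary \ref{cor:pathsupport}, by exploiting the fact that $\cPD = \Fac(\Sub(P))$ and $\cDI = \Sub(\Fac(I))$ are obtained from a single module ($P$ or $I$) by taking submodules and quotients. The property ``every element $m$ satisfies $me\ab = m$'' is plainly preserved under both operations: submodules inherit it as subsets, and for a quotient $M/K$ one has $(m+K)e\ab = me\ab + K = m+K$. So in each case it suffices to verify the identity on the distinguished generator.

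For (a), I would write a generic element of $P = e_1\La$ as $e_1\la$. Since $e_1$ is one of the orthogonal idempotents appearing in $e\ab$, we have $e_1 = e\ab e_1 = e_1 e\ab$. Applying Corollary \ref{cor:pathsupport}(a) to the element $e_1\la \in \La$ gives $e\ab(e_1\la) = e\ab(e_1\la)e\ab$, which combined with $e_1 = e\ab e_1$ yields
\[ e_1\la \;=\; e\ab e_1\la \;=\; e\ab e_1\la e\ab \;=\; e_1\la e\ab. \]
This is precisely $(e_1\la)e\ab = e_1\la$, so the identity holds on $P$, and hence on all of $\cPD$ by the subquotient remark.

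For (b), the dual argument works. Here $I = D(\La e_h) = \Hom_K(\La e_h,K)$ carries the standard right $\La$-action $(\phi\cdot\la)(x) = \phi(\la x)$. Since $e_h = e\ab e_h$, every element of $\La e_h$ has the form $\mu e_h = \mu e\ab e_h$; applying Corollary \ref{cor:pathsupport}(b) to $\mu$ gives $\mu e\ab = e\ab\mu e\ab$, so $\mu e_h = e\ab\mu e_h$. Therefore
\[ (\phi\cdot e\ab)(\mu e_h) \;=\; \phi(e\ab\mu e_h) \;=\; \phi(\mu e_h), \]
showing $\phi\cdot e\ab = \phi$ on $I$, and the subquotient argument extends the identity to all of $\cDI$.

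No substantive obstacle is anticipated; the only minor care needed is keeping the right-module conventions straight for $D(\La e_h)$ in part (b). Everything else is a direct consequence of the previously established Corollary \ref{cor:pathsupport} together with the descriptions $\cPD = \Fac(\Sub(P))$ and $\cDI = \Sub(\Fac(I))$.
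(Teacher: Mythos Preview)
Your proof is correct and follows essentially the same approach as the paper: both reduce to checking the identity on the generating module $P=e_1\La$ (respectively $I=D(\La e_h)$) via closure under submodules and quotients, and then invoke Corollary~\ref{cor:pathsupport}. You spell out part~(b) explicitly on $D(\La e_h)$, whereas the paper simply declares it ``similar'', but the substance is identical.
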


\begin{proof}
We only prove (a); (b) is similar. If for a module $X$ we have that $xe\ab=x$ holds for all $x\in X$ then it clearly holds for all submodules and epimorphic images of $X$, so by Proposition \ref{prop:triangles} it is enough to show (a) for $M=e_1\La$. But this follows immediately by Corollary \ref{cor:pathsupport}.
\end{proof}

\subsubsection{Gluing via pullbacks}\label{subsubsection:gluing via pullbacks}

The following definition is the the main concept of this section.

\begin{definition}\label{def:gluing}
We call the pullback $\La$ of $A\overset{f_P}{\longrightarrow} KA_h \overset{g_I}{\longleftarrow} B$ the \emph{gluing of $A$ and $B$ along $P$ and $I$} and we denote it by $\La:=B \glue[P][I] A$.
\end{definition}

In the following and when $I$ and $P$ are clear from context we will simply call $\La$ the \emph{gluing of $A$ and $B$} and denote it by $\La:=B \glue A$. Notice that since pullbacks are associative, the operation of gluing is associative too. Moreover, the indecomposable projective and the indecomposable injective modules over $\La$ can be related to the indecomposable projective and the indecomposable injective modules over $A$ and $B$ by considering certain idempotents. This is discussed in \cite[Construction 3.2]{IPTZ}. We include the details here, adapted to our conventions, for the convenience of the reader. 

We start with a simple example of gluing.

\begin{example}\label{ex:KAglued}
Let $A$ be a rep\-re\-sen\-ta\-tion-di\-rect\-ed algebra and $P$ be a left abutment of $A$ of height $h$. Let $B=KA_h$ and let $I=I(h)$ be the unique indecomposable injective-projective $B$-module. By Example \ref{ex:KAabutments} we have that $I$ is a right abutment of $KA_h$ of height $h$. The identity map $\Id_{KA_h}:KA_h \rightarrow KA_h$ is the unique $K$-algebra morphism that satisfies the conditions of Corollary \ref{cor:forfooting} and so the footing at $I$ is $g_I=\Id_{KA_h}$. It is easy to see that the pullback of $A \overset{f_P}{\longrightarrow} KA_h \overset{\Id_{KA_h}}{\longleftarrow} KA_h$ is $(A,\Id_{A},f_P)$ and so $A = KA_h \glue[P][I(h)] A$. Similarly, if $I$ is a right abutment of $A$ of height $h$ and $P(h)$ is the unique left abutment of $KA_h$ of height $h$, we have $A= A \glue[P(h)][I] KA_h$. 
\end{example}

The following lemma describes gluing of algebras given by quivers with relations.

\begin{lemma}\label{lemma:gluingquivers}
Let $A=KQ_A/\cR_A$ be a rep\-re\-sen\-ta\-tion-di\-rect\-ed algebra given by a quiver with relations of the form
\begin{equation}\label{eq:the quiver description of A}
\begin{tikzpicture}[baseline={(current bounding box.center)}, scale=0.9, transform shape]
\node(A) at (0,0) {$1$};
\node(B) at (1.5,0) {$2$};
\node(C) at (3,0) {$3$};
\node(D) at (4.5,0) {$\cdots$};
\node(E) at (6,0) {$h-1$};
\node(F) at (7.5,0) {$h$,};
\node(G) at (-1.5,1) {$ $};
\node(H) at (-1.5,-1) {$ $};

\draw[->] (A) -- node[above] {$\alpha_1$} (B);
\draw[->] (B) -- node[above] {$\alpha_2$} (C);
\draw[->] (C) -- node[above] {$\alpha_3$} (D);
\draw[->] (D) -- node[above] {$\alpha_{h-2}$} (E);
\draw[->] (E) -- node[above] {$\alpha_{h-1}$} (F);

\draw[->] (G) -- (A);
\draw[->] (H) -- (A);

\draw[dotted] (-0.8, 0.5) -- (-0.8, -0.5);

\draw[pattern=north west lines] (-2.12,0) ellipse (1cm and 1.3cm);
\node (Y) at (-2.12,0) {$\mathbf{Q'_A}$};
\end{tikzpicture}
\end{equation}
where no path of the form $\alpha_i\cdots\alpha_{i+k}$ is in $\cR_A$, and $B=KQ_B/\cR_B$ be a rep\-re\-sen\-ta\-tion-di\-rect\-ed algebra given by a quiver with relations of the form
\begin{equation}\label{eq:the quiver description of B}
\begin{tikzpicture}[baseline={(current bounding box.center)}, scale=0.9, transform shape]
\node(A) at (0,0) {$1$};
\node(B) at (1.5,0) {$2$};
\node(C) at (3,0) {$3$};
\node(D) at (4.5,0) {$\cdots$};
\node(E) at (6,0) {$h-1$};
\node(F) at (7.5,0) {$h$};
\node(G) at (9,1) {$ $};
\node(H) at (9,-1) {$ $};

\draw[->] (A) -- node[above] {$\beta_1$} (B);
\draw[->] (B) -- node[above] {$\beta_2$} (C);
\draw[->] (C) -- node[above] {$\beta_3$} (D);
\draw[->] (D) -- node[above] {$\beta_{h-2}$} (E);
\draw[->] (E) -- node[above] {$\beta_{h-1}$} (F);

\draw[->] (F) -- (G);
\draw[->] (F) -- (H);

\draw[dotted] (8.3, 0.5) -- (8.3, -0.5);

\draw[pattern=north west lines] (9.65,0) ellipse (1cm and 1.3cm);
\node (Z) at (9.65,0) {$\mathbf{Q'_B}$};
\end{tikzpicture},
\end{equation}
where no path of the form $\beta_i\cdots\beta_{i+k}$ is in $\cR_B$. Then $P=P_A(1)$ is a left abutment of height $h$, $I=I_B(h)$ is a right abutment of height $h$. Then the gluing of $A$ and $B$ over $P$ and $I$ is given by the bound quiver algebra $\La=KQ_{\La}/\cR_{\La}$ where $Q_{\La}$ is the quiver
\begin{equation}\label{eq:the quiver description of Lambda}
\begin{tikzpicture}[baseline={(current bounding box.center)}, scale=0.9, transform shape]
\node(A) at (0,0) {$1$};
\node(B) at (1.5,0) {$2$};
\node(C) at (3,0) {$3$};
\node(D) at (4.5,0) {$\cdots$};
\node(E) at (6,0) {$h-1$};
\node(F) at (7.5,0) {$h$,};
\node(G) at (-1.5,1) {$ $};
\node(H) at (-1.5,-1) {$ $};
\node(G2) at (9,1) {$ $};
\node(H2) at (9,-1) {$ $};

\draw[->] (A) -- node[above] {$\lambda_1$} (B);
\draw[->] (B) -- node[above] {$\lambda_2$} (C);
\draw[->] (C) -- node[above] {$\lambda_3$} (D);
\draw[->] (D) -- node[above] {$\lambda_{h-2}$} (E);
\draw[->] (E) -- node[above] {$\lambda_{h-1}$} (F);

\draw[->] (G) -- (A);
\draw[->] (H) -- (A);
\draw[->] (F) -- (G2);
\draw[->] (F) -- (H2);

\draw[dotted] (-0.8, 0.5) -- (-0.8, -0.5);
\draw[dotted] (8.3, 0.5) -- (8.3, -0.5);

\draw[pattern=north west lines] (9.65,0) ellipse (1cm and 1.3cm);
\draw[pattern=north west lines] (-2.12,0) ellipse (1cm and 1.3cm);

\node (Y) at (-2.12,0) {$\mathbf{Q'_A}$};
\node (Z) at (9.65,0) {$\mathbf{Q'_B}$};
\end{tikzpicture},
\end{equation}
and $\cR_\La$ is generated by all elements in $\cR_A$ and $\cR_B$ and all paths starting from $\mathbf{Q'_A}$ and ending in $\mathbf{Q'_B}$, under the identifications $\alpha_i=\beta_i=\lambda_i$.
\end{lemma}

\begin{proof}
That $P$ and $I$ are left and right abutments of height $h$ follows by Proposition \ref{prop:abutmentsquiver}. The description of $\La$ as a quiver with relations is a straightforward calculation which is discussed after Lemma 3.4 in \cite{IPTZ}.
\end{proof}

Let us fix our setting for the rest of this section. First we fix two rep\-re\-sen\-ta\-tion-di\-rect\-ed algebras $A$ and $B$, such that $A$ admits a left abutment $P$ realized by $(e_i,f_i)_{i=1}^h$ and $B$ admits a right abutment $I$ realized by $(\epsilon_i,g_{i-1})_{i=1}^h$. Notice that $P$ and $I$ have the same height. Accordingly, we have footing maps $f_P: A \rightarrow KA_h$ and $g_I: B\rightarrow KA_h$. With this setting, the gluing $\La\coloneqq B\glue[P][I] A$ is defined. That is, we have the following pullback diagram
\begin{equation}\label{eq:pullback for Lambda}
    \begin{tikzpicture}[baseline={(current bounding box.center)}]
        \node (A) at (0,1.5) {$\La$};
        \node (B) at (1.5,1.5) {$B$};
        \node (C) at (0,0) {$A$};
        \node (D) at (1.5,0) {$KA_h$\nospacepunct{.}};

        \draw[->>] (A) -- node[auto] {$\psi$} (B);
        \draw[->>] (A) -- node[left] {$\phi$} (C);
        \draw[->>] (B) -- node[auto] {$g_I$} (D);
        \draw[->>] (C) -- node[below] {$f_P$} (D);
    \end{tikzpicture}
\end{equation}

Suggestively for what follows, we write
\begin{alignat*}{2}
1_A &=e_{h-l+1}+\cdots+e_0+ &&e_1+\cdots+e_h  \\
1_B &= &&\epsilon_1+\cdots+\epsilon_h+\epsilon_{h+1}+\cdots+\epsilon_m 
\end{alignat*}
where all $e_i$'s and $\epsilon_i$'s are primitive orthogonal idempotents. Furthermore, when clear from context, we will use the notation $1_C$ for both $e\ab=\sum_{i=1}^h e_i$ and $\epsilon\ab=\sum_{i=1}^h\epsilon_i$. 

We also fix presentations of $A$ and $B$ as in Proposition \ref{prop:abutmentsquiver}. Specifically we have an isomorphism $\Phi_A:KQ_A/\cR_A\overset{\sim}{\longrightarrow} A$ where $Q_A$ is as in (\ref{eq:the quiver description of A}) and such that no path of the form $\alpha_i\cdots\alpha_{i+k}$ is in $\cR_A$. Without loss of generality, and by Proposition \ref{prop:abutmentsquiver}(a), we denote the vertices of $Q_A$ by $\{h-l+1,\dots,0,1,\dots,h\}$ and the idempotent of $KQ_A/\cR_A$ corresponding to the vertex $i\in (Q_A)_0$ by $a_i$ so that $\Phi_A(a_i)=e_i$ and $\Phi_A(\alpha_i)=f_i$. In particular, the $KQ_A/\cR_A$-module $P(1)$ is a left abutment of height $h$ with footing map $f_P\circ\Phi_A:KQ_A/\cR_A\to KA_h$.

Similarly we have an isomorphism $\Phi_B:KQ_B/\cR_B\overset{\sim}{\longrightarrow} B$ where $Q_B$ is as in (\ref{eq:the quiver description of B}) and such that no path of the form $\beta_i\cdots\beta_{i+k}$ is in $\cR_B$. Without loss of generality, and by Proposition \ref{prop:abutmentsquiver}(b), we denote the vertices of $Q_B$ by $\{1,\dots,h,h+1,\dots,m\}$ and the idempotent of $KQ_B/\cR_B$ corresponding to the vertex $i\in (Q_B)_0$ by $b_i$ so that $\Phi_B(b_i)=\epsilon_i$ and $\Phi_B(\beta_i)=g_i$. In particular, the $KQ_B/\cR_B$-module $I(h)$ is a right abutment of height $h$ with footing map $g_I\circ\Phi_B:KQ_B/\cR_B\to KA_h$.

With this setting, the gluing $KQ_B/\cR_B\glue[P(1)][I(h)] KQ_A/\cR_A$ is defined. That is, we have the following pullback diagram
\begin{equation}\label{eq:pullback for KQ Lambda}
        \begin{tikzpicture}[baseline={(current bounding box.center)}]
        \node (A) at (0,1.5) {$KQ_{\La}/\cR_{\La}$};
        \node (B) at (3,1.5) {$KQ_B/\cR_B$};
        \node (C) at (0,0) {$KQ_A/\cR_A$};
        \node (D) at (3,0) {$KA_h$\nospacepunct{.}};

        \draw[->>] (A) -- node[auto] {$\psi'$} (B);
        \draw[->>] (A) -- node[left] {$\phi'$} (C);
        \draw[->>] (B) -- node[auto] {$g_I\circ \Phi_B$} (D);
        \draw[->>] (C) -- node[below] {$f_P\circ\Phi_A$} (D);
    \end{tikzpicture}
\end{equation}
where $Q_{\La}$ and $\cR_{\La}$ are as in Lemma \ref{lemma:gluingquivers}. In particular, the vertices of $Q_{\La}$ are 
\[\{h-l+1,\dots,0,1,\dots,h,h+1,\dots,m\}\]
and we denote by $l_i$ the idempotent of $KQ_{\La}/\cR_{\La}$ corresponding to the vertex $i\in(Q_{\La})_0$. 

\begin{proposition}\label{prop:presentation of Lambda}
There exists a presentation $\Phi_{\La}:KQ_{\La}/\cR_{\La}\overset{\sim}{\longrightarrow} \La$ such that $\psi\circ\Phi_{\La}=\Phi_B\circ\psi'$, and $\phi\circ\Phi_{\La}=\Phi_A\circ\phi'$, and, moreover
\[\Phi_{\La}(l_i) = \begin{cases} (e_i,0) &\mbox{ for $h-l+1 \leq i \leq 0$,} \\ (e_i,\epsilon_i) &\mbox{ for $1\leq i \leq h$,} \\ (0,\epsilon_i) &\mbox{ for $h+1\leq i \leq m$.}\end{cases}\]
\end{proposition}

\begin{proof}
By the pullback diagrams (\ref{eq:pullback for Lambda}) and (\ref{eq:pullback for KQ Lambda})  we get a commutative diagram
\begin{equation}\label{diagram:isomorphism}\begin{tikzcd}
KQ_{\La}/\cR_{\La} \arrow[rr, two heads, "\psi'"] \arrow[dd, two heads, swap, "\phi'"] \arrow[dr, "\rotatebox{0}{\(\sim\)}", "\Phi_{\La}"'] & & KQ_{B}/\cR_{B} \arrow[d, "\rotatebox{90}{\(\sim\)}\;\Phi_B"] \\ 
& \La \arrow[r, two heads, "\psi"] \arrow[d, two heads, "\phi"] & B \arrow[d, two heads, "g_I"] \\
KQ_A/\cR_A \arrow[r, "\sim", "\Phi_A"'] & A \arrow[r, two heads, swap, "f_P"] & KA_h,
\end{tikzcd}
\end{equation}
where $\Phi_{\La}$ is the morphism induced by the universal property of the pullback. That $\Phi_{\La}$ is an isomorphism follows by the uniqueness of the pullback up to isomorphism. It remains to compute $\Phi_{\La}(l_i)$. If $l_i$ is the idempotent of $KQ_{\La}/\cR_{\La}$ corresponding to the vertex $i$ for some $h-l+1\leq i \leq m$, then by Lemma \ref{lemma:gluingquivers} and our assumptions on $\Phi_A$ and $\Phi_B$ we have
\[\phi\circ\Phi_{\La}(l_i) = \Phi_A\circ\phi'(l_i) = \begin{cases} \Phi_A(a_i) &\mbox{ for $h-l+1 \leq i \leq h$,} \\ \Phi_A(0) &\mbox{ for $h+1\leq i \leq m$,}\end{cases}=\begin{cases} e_i &\mbox{ for $h-l+1 \leq i \leq h$,} \\ 0 &\mbox{ for $h+1\leq i \leq m$,}\end{cases}\]
\[\psi\circ\Phi_{\La}(l_i) = \Phi_B\circ\psi'(l_i) =  \begin{cases} \Phi_B(0) &\mbox{ for $h-l+1 \leq i \leq 0$,} \\ \Phi_B(b_i) &\mbox{ for $1\leq i \leq m$.} \end{cases}=\begin{cases} 0 &\mbox{ for $h-l+1 \leq i \leq 0$,} \\ \epsilon_i &\mbox{ for $1\leq i \leq m$.} \end{cases}\]
By the fact that $\La$ is a pullback, it follows that 
\[\Phi_{\La}(l_i) = \begin{cases} (e_i,0) &\mbox{ for $h-l+1 \leq i \leq 0$,} \\ (e_i,\epsilon_i) &\mbox{ for $1\leq i \leq h$,} \\ (0,\epsilon_i) &\mbox{ for $h+1\leq i \leq m$.}\end{cases}\]
\end{proof}

In the following we set $\vare_i \coloneqq \Phi_{\La}(l_i)$. 

\begin{lemma}\label{lemma:primitiveidempotents}
The set $\{\vare_i\}_{i=h-l+1}^m$ is a complete set of primitive orthogonal idempotents of $\La$.
\end{lemma}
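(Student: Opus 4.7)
The plan is to use the preceding Lemma \ref{lemma:defofvare}, which already gives completeness and pairwise orthogonality, so only primitivity of each individual $\vare_i$ remains. To prove primitivity I will suppose that $\vare_i = f_1 + f_2$ is a decomposition into orthogonal idempotents of $\La$ and show $f_1 = 0$ or $f_2 = 0$. The essential input will be Lemma \ref{lemma:techi} together with the fact that the idempotents $e_i$ (respectively $\epsilon_i$) are primitive in $A$ (respectively $B$), which is built into the realization of the abutments via Proposition \ref{prop:abutmentsquiver}.

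First, I will treat the ``$A$-side'' indices $h-l+1\leq i \leq h$. Here $\phi(\vare_i) = e_i$, which is a primitive idempotent of $A$, so applying $\phi$ to the decomposition and using that $\phi(f_1)$ and $\phi(f_2)$ are orthogonal idempotents summing to $e_i$, I may assume $\phi(f_1)=0$. Lemma \ref{lemma:techi}(a) then yields $f_1 \vare_A = 0$. Because $i \in \{h-l+1,\dots,h\}$ we have $\vare_i \vare_A = \vare_i$, and since $f_1 = f_1 \vare_i$, this gives $f_1 = f_1 \vare_A = 0$, as required.

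Symmetrically, for the ``$B$-side'' indices $1 \leq i \leq m$, I use $\psi(\vare_i) = \epsilon_i$ which is primitive in $B$, apply Lemma \ref{lemma:techi}(b) to the assumption $\psi(f_1) = 0$, and exploit $\vare_B \vare_i = \vare_i$ together with $f_1 = \vare_i f_1$ to conclude $f_1 = \vare_B f_1 = 0$. These two arguments jointly cover every $i \in \{h-l+1,\dots,m\}$ (with an overlap in the range $1\leq i \leq h$, for which either argument applies).

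There is no real obstacle beyond careful bookkeeping. The only thing to be attentive to is that Lemma \ref{lemma:techi}(a) is only useful on idempotents $\vare_i$ satisfying $\vare_i \vare_A = \vare_i$, and Lemma \ref{lemma:techi}(b) only on those with $\vare_B \vare_i = \vare_i$; the cases $i\leq 0$ and $i\geq h+1$ must therefore be handled with the appropriate half of Lemma \ref{lemma:techi}, whereas for $1\leq i \leq h$ both halves are applicable.
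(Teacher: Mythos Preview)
Your proof is correct and uses essentially the same ingredients as the paper: primitivity of $e_i$ (respectively $\epsilon_i$) together with Lemma~\ref{lemma:techi} to lift primitivity through $\phi$ (respectively $\psi$). The only difference is packaging: the paper establishes an algebra isomorphism $\vare_i\La\vare_i \cong e_iAe_i$ (for $h-l+1\le i\le 0$) or $\vare_i\La\vare_i\cong \epsilon_iB\epsilon_i$ (for $1\le i\le m$) and deduces primitivity from locality of the corner algebra, whereas you argue directly on an idempotent decomposition $\vare_i=f_1+f_2$; both arguments hinge on the identical observation that $\phi(\lambda)=0$ forces $\lambda\vare_A=0$ (and dually), so your version is a valid and slightly more direct variant of the same proof.
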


\begin{proof}
Immediate since $\Phi_{\La}$ is an isomorphism and $\{l_i\}_{i=h-l+1}^m$ is a complete set of primitive orthogonal idempotents of $KQ_{\La}/\cR_{\La}$.
\end{proof}

We further set
\[\vare_A = \vare_{h-l+1} + \cdots + \vare_h,\;\; \vare_B=\vare_1+\cdots+\vare_m,\;\; \vare_{C}=\vare_1+\cdots+\vare_h,\]
\[ \vare_{A'}=\vare_A-\vare_C, \;\; \vare_{B'}=\vare_B-\vare_C,\]
and we denote $l_X\coloneqq \Phi_{\Lambda}^{-1}(\vare_X)$ for $X\in\{A,B,C,A',B'\}$. We also introduce some notation to simplify expressions later in this section. We will denote $[A',B']:=\{A',1,\dots,h,B'\}$ and we order the set $[A',B']$ by $A'<1<\cdots < h < B'$. In particular, we have 
\[\vare_{\La}=\sum_{i\in [A',B']} \vare_i. \]

For convenience, let us recall the functors defined by $\phi$ and $\psi$ on the corresponding module categories. The epimorphism $\phi:\La\to A$ induces the restriction of scalars functor $\phi_{\ast}:\m A\to \m \La$, which has a left adjoint $\phi^{\ast}(-)=-\otimes_{\La}A$ and a right adjoint $\phi^{!}(-)=\Hom_{\La}(A,-)$; similarly for $\psi$. Collectively, we have the functors
\[\begin{tikzpicture}
\node (A) at (0,0) {$\m\La$};
\node (B) at (3,0) {$\m A$};
\node (C) at (3.55,-0.135) {$,$};

\draw[->] (B) -- node[above] {$\phi_{\ast}$} (A); 
\draw[->] (A) to [out=30,in=150]  node[auto]{$\phi^{\ast}$} (B);
\draw[->] (A) to [out=-30,in=-150]  node[below]{$\phi^{!}$} (B);
\end{tikzpicture} 
\qquad
\begin{tikzpicture}
\node (A) at (0,0) {$\m\La$};
\node (B) at (3,0) {$\m B$.};

\draw[->] (B) -- node[above] {$\psi_{\ast}$} (A); 
\draw[->] (A) to [out=30,in=150]  node[auto]{$\psi^{\ast}$} (B);
\draw[->] (A) to [out=-30,in=-150]  node[below]{$\psi^{!}$} (B);
\end{tikzpicture}\]

Since $\phi$ and $\psi$ are epimorphisms, it follows that $\phi_\ast$ and $\psi_\ast$ are full and faithful. Our aim in this section is to show that if $\La = B \glue A$ then $\m\La = \left(\m B\right) \glue \left(\m A\right)$, where we identify $\m A$ and $\m B$ with their images under $\phi_\ast$ and $\psi_\ast$ respectively. To this end, we need to verify that $\m A$ and $\m B$ satisfy the conditions of Definition \ref{def:catglue}.

\begin{definition}\label{def:supported}
Let $X\in\m\La$. We will say that $X$ \emph{is supported in $A$} if $X\cong \phi_{\ast}(M)$ for some $M\in\m A$ and that $X$ \emph{is supported in $B$} if $X\cong\psi_{\ast}(N)$ for some $N\in\m B$.
\end{definition}

Recall that the module category of a bound quiver algebra $\m (KQ/\cR)$ is equivalent to the category $\rep(Q,\cR)$ of fi\-nite-di\-men\-sion\-al representations of $Q$ bound by $\cR$. We consider this equivalence as an identification. The \emph{support} of a representation $M=(M_i,\phi_{\alpha})$ of $Q$ bound by $\cR$ is the set
\[\supp(M)\coloneqq \{i\in Q_0 \mid M_i\neq 0\}.\]
We have the following equivalent characterizations of a module being supported in $A$ or $B$.

\begin{lemma}\label{lemma:supported}
Let $X\in \m \La$. The following are equivalent. 
\begin{itemize}
    \item[(a)] $X$ is supported in $A$ (respectively $B$).
    \item[(b)] $\phi_{\ast}\phi^!(X)\cong X$ (respectively $\psi_{\ast}\psi^{\ast}(X)\cong X$).
    \item[(c)] $\supp(\Phi_{\La\ast}(X))\subseteq (Q_A)_{0}$ (respectively $\supp(\Phi_{\La\ast}(X))\subseteq (Q_B)_{0}$).
    \item[(d)] For all $x\in X$ we have $x\vare_A=x$ (respectively $x\vare_B=x)$.
\end{itemize}
\end{lemma}

\begin{proof}
Let us prove the result for $A$; the result for $B$ is similar. Assume that (a) holds and we will show that (b) holds. Then $X\cong \phi_{\ast}(M)$ for some $M\in\m A$. Since $(\phi_\ast,\phi^!)$ form an adjoint pair where the left adjoint is full and faithful, the unit of the adjunction is a natural isomorphism (see \cite[page 90]{CWM}). Hence
\[\phi_{\ast}\phi^!(X)\cong \phi_{\ast}\phi^!\phi_{\ast}(M) \cong \phi_{\ast}(M) \cong X,\]
as required.

Assume that (b) holds and we will show that (c) holds. Using diagram (\ref{diagram:isomorphism}) we have
\begin{align*}
    \Phi_{\La\ast}(X) &\cong \Phi_{\La\ast} \circ \phi_{\ast}\circ\phi^!(X) \\
    &=\left(\phi\circ\Phi_{\La}\right)_{\ast}\circ\phi^{!}(X) \\
    &=\left(\Phi_A\circ\phi'\right)_{\ast}\circ\phi^!(X) \\
    &=\phi'_{\ast}\circ\Phi_{A\ast}\circ\phi^!_{\ast}(X).
\end{align*} 
Here $\Phi_{A\ast}\circ \phi^!_{\ast}(X)$ is a $KQ_A/\cR_A$-module and it is well known that, viewed as a representation of $Q_A$ bound by $\cR_A$, the restriction of scalars functor $\phi_{\ast}':\m (KQ_A/\cR_A)\to \m(KQ_{\La}/\cR_{\La})$ maps it to the representation obtained by putting zero in all vertices and arrows not in $Q_A$. Hence
\[\supp(\Phi_{\La\ast}(X)) = \supp\left(\phi'_{\ast}(\Phi_{A\ast}\circ\phi^!_{\ast}(X))\right)\subseteq (Q_A)_{0},\]
as required.

Assume that (c) holds and we will show that (d) holds. For every $x\in X$ we have by definition of the restriction of scalars functor and since $\Phi_A$ is an isomorphism that
\[x \vare_A = x\Phi_A^{-1}(\vare_A) = xl_A=x,\]
where the last equality follows since by the assumption we have that the representation $\Phi_{\La\ast}(X)$ of $Q_{\La}$ bound by $\cR_{\La}$ is supported in $(Q_A)_{0}$.

Assume that (d) holds and we will show that (a) holds. By assumption we have that $x\vare_A=x$ for all $x\in X$. Since $\Phi_{\La\ast}:\m\La\to \m(KQ_{\La}/\cR_{\La})$ is an equivalence of categories, it follows that for every $x\in\Phi_{\La\ast}(X)$ we have $xl_A=x$. By Lemma \ref{lemma:gluingquivers} it follows that this defines a representation $M$ of $Q_{A}$ bound by $\cR_A$ by setting $M_i=\left(\Phi_{\La\ast}(X)\right)_i$ for $i\in (Q_A)_{0}$ and $M_{\alpha}=\left(\Phi_{\La\ast}(X)\right)_{\alpha}$ for $\alpha\in (Q_A)_1$. By construction, we have $\phi'_{\ast}(M)\cong \Phi_{\La\ast}(X)$. Then (a) follows by the commutativity of the left square in (\ref{diagram:isomorphism}) and the fact that $\Phi_{\La}$ and $\Phi_A$ are isomorphisms. 
\end{proof}

We have the following immediate corollary.

\begin{corollary} \label{cor:relatives}
Let $M\in \m\La$ and suppose that $M$ is supported in $A$ (respectively $B$).
\begin{itemize}
\item[(a)] All submodules of $M$ are supported in $A$ (respectively $B$). In particular, $\rad M$ and $\soc M$ are supported in $A$ (respectively $B$).

\item[(b)] All quotient modules of $M$ are supported in $A$ (respectively $B$). In particular, $\topp M$ is supported in $A$ (respectively $B$).
\end{itemize}
\end{corollary}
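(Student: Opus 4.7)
The plan is to reduce both parts immediately to the element-wise characterization of ``supported in $A$'' (respectively ``in $B$'') provided by Lemma \ref{lemma:techii}, which states that $M$ is supported in $A$ if and only if $m\vare_A = m$ for every $m \in M$, and analogously for $B$. Since the two cases are symmetric, I will describe the argument only for the $A$-case; the $B$-case is obtained verbatim by replacing $\vare_A$ with $\vare_B$.

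For part (a), let $N \subseteq M$ be a submodule. For any $n \in N$, in particular $n \in M$, so by the hypothesis and Lemma \ref{lemma:techii}(a) we have $n\vare_A = n$. Since this holds for every element of $N$, the same lemma gives that $N$ is supported in $A$. Applying this to the submodules $\rad M$ and $\soc M$ yields the ``in particular'' part.

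For part (b), let $\pi: M \twoheadrightarrow M/N$ be a quotient. Any element of $M/N$ has the form $\pi(m)$ for some $m\in M$, and since $\pi$ is $\La$-linear we get $\pi(m)\vare_A = \pi(m\vare_A) = \pi(m)$ by the hypothesis. Lemma \ref{lemma:techii}(a) then shows that $M/N$ is supported in $A$. Taking $N = \rad M$ gives the claim for $\topp M$.

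There is essentially no obstacle here: once Lemma \ref{lemma:techii} is in hand, the corollary is an immediate structural consequence of the fact that the property ``$m\vare_A = m$ for all elements $m$'' is preserved by passage to submodules and to quotients.
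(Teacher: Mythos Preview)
Your proof is correct and follows exactly the same approach as the paper, which simply states that the result is immediate from Lemma~\ref{lemma:techii}. You have merely spelled out the (trivial) details that the element-wise condition $m\vare_A = m$ passes to submodules and quotients.
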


\begin{proof}
Immediate by Lemma \ref{lemma:supported}(d).
\end{proof}

We can now identify the indecomposable projective and injective $\La$-modules.

\begin{proposition} \label{prop:projinj}
\begin{itemize}
\item[(a)] $\vare_i\La$ is supported in $A$ for $h-l+1\leq i \leq 0$ and is supported in $B$ for $1 \leq i \leq m$.
\item[(b)] $D(\La \vare_i)$ is supported in $A$ for $h-l+1 \leq i \leq h$ and is supported in $B$ for $h+1 \leq i \leq m$.
\end{itemize}
\end{proposition}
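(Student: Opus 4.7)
The plan is to apply Lemma \ref{lemma:techii} to reduce each claim to showing that certain ``off-diagonal'' products of the form $\vare_i\La\vare_j$ vanish, and then verify this vanishing by applying the projections $\phi$ and $\psi$ separately, using that $\La$ sits inside $A\times B$ as a pullback.

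First I set up the reductions. For (a) with $h-l+1\leq i\leq 0$, the module $\vare_i\La$ is supported in $A$ iff $m\vare_A=m$ for every $m\in\vare_i\La$ (Lemma \ref{lemma:techii}(a)); writing $1_\La=\vare_A+\vare_{B'}$ with $\vare_{B'}=\sum_{j=h+1}^m\vare_j$, this is equivalent to $\vare_i\La\vare_j=0$ for all $h+1\leq j\leq m$. Similarly, for (a) with $1\leq i\leq m$, I need $\vare_i\La\vare_j=0$ for all $h-l+1\leq j\leq 0$. For (b), the analogous computation with the $\La$-bimodule structure of $\La\vare_i$ (and the fact that $D$ is faithful) shows that $D(\La\vare_i)$ is supported in $A$ iff $\vare_{B'}\La\vare_i=0$, and supported in $B$ iff $\vare_{A'}\La\vare_i=0$. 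So everything reduces to vanishing of four families of products $\vare_j\La\vare_i$.

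To verify these vanishings I use that $\mu\in\La$ is zero iff $\phi(\mu)=0$ and $\psi(\mu)=0$ (since $\La\hookrightarrow A\times B$), combined with Lemma \ref{lemma:defofvare} which gives the images of all idempotents $\vare_i$ under $\phi$ and $\psi$. In the two ``easy'' cases, namely the pairs $(i,j)$ with $h-l+1\leq i\leq 0$, $h+1\leq j\leq m$ or $h-l+1\leq j\leq 0$, $h+1\leq i\leq m$, one of the two endpoint idempotents dies under $\phi$ and the other under $\psi$, so both projections of $\vare_i\la\vare_j$ vanish automatically for every $\la\in\La$, and we conclude $\vare_i\La\vare_j=0$. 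This handles the second half of (a) and the second half of (b) simultaneously via symmetry.

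The slightly less trivial cases are the pairs $1\leq i\leq m$, $h-l+1\leq j\leq 0$ (for (a)) and $h-l+1\leq j\leq h$, $h+1\leq i\leq m$ (for (b)); here only one projection is killed for free, and I have to show the other is also zero. For the first, $\psi(\vare_i\la\vare_j)=\psi(\vare_i)\psi(\la)\cdot 0=0$ for free, and for the $\phi$-side we must check $e_i\phi(\la)e_j=0$ whenever $i\in\{1,\dots,h\}$ and $j\in\{h-l+1,\dots,0\}$; this is exactly Corollary \ref{cor:pathsupport}(a) applied to the left abutment $P$ of $A$, since that corollary says $e_i a=e_i a\,e\ab$ and $e\ab e_j=0$ for such $j$. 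The mirror argument, using Corollary \ref{cor:pathsupport}(b) applied to the right abutment $I$ of $B$, handles the remaining case of (b).

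The only piece requiring any thought is the translation from ``$D(\La\vare_i)$ supported in $A$/$B$'' to the bimodule statement $\vare_{B'}\La\vare_i=0$ or $\vare_{A'}\La\vare_i=0$; once that is in place, the rest is a clean bookkeeping exercise that consists of applying $\phi$ and $\psi$ to $\vare_j\la\vare_i$ and invoking Corollary \ref{cor:pathsupport} exactly twice. I expect no genuine obstacle.
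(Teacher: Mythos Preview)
Your proposal is correct and follows essentially the same approach as the paper: both reduce via Lemma \ref{lemma:techii} to showing that certain products $\vare_i\La\vare_j$ vanish, and both verify this by computing componentwise in the pullback $\La\subseteq A\times B$. The only cosmetic difference is that for the nontrivial case $1\leq i\leq h$ in (a) (and its dual in (b)), the paper invokes Corollary \ref{cor:footingi} together with the pullback condition $f_P(e_i a(1_A-1_C))=g_I(0)=0$, whereas you invoke Corollary \ref{cor:pathsupport}; these give the same conclusion $e_i a(1_A-e\ab)=0$.
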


\begin{proof}
We only prove (a); (b) is similar. It is enough to show that the support of $\Phi_{\La\ast}(\epsilon_i\La)=P_{KQ_{\La}/\cR_{\La}}(i)$ is included in $(Q_A)_0$ for $h-l+1\leq i\leq 0$ and is included in $(Q_B)_0$ for $1\leq i \leq m$, which follows immediately by Lemma \ref{lemma:gluingquivers}.
\end{proof}

\begin{corollary}\label{cor:numberofprojandinj}
Let $s_A$ be the number of simple projective $A$-modules up to isomorphism and $t_A$ be the number of simple injective $A$-modules up to isomorphism. Similarly define $s_B$, $t_B$, $s_{\La}$ and $t_{\La}$. Then 
\[ (s_{\La},t_{\La}) = (s_A+s_B-1,t_A+t_B-1). \]
\end{corollary}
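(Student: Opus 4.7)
The plan is to reduce both equalities to a combinatorial count on the Gabriel quiver of $\La$. For a basic algebra presented as $KQ/I$, a simple projective right module $\vare_i\La$ corresponds to a sink of $Q$ (since $\vare_i\La$ is simple iff $\vare_i\rad\La = 0$, iff no arrow of $Q$ starts at $i$), and dually a simple injective module corresponds to a source. Thus it suffices to count sinks and sources of $Q_\La$.

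By Lemma~\ref{lemma:gluingquivers}, $Q_\La$ is the amalgamation of $Q_A$ and $Q_B$ along the common arm $1 \to 2 \to \cdots \to h$, and no arrows between $\mathbf{Q'_A}$ and $\mathbf{Q'_B}$ are introduced. By Proposition~\ref{prop:abutmentsquiver}(a), in $Q_A$ the vertex $h$ is always a sink and the only arrows incident to any $i \in \{2,\ldots,h\}$ are arm arrows. Dually, by Proposition~\ref{prop:abutmentsquiver}(b), in $Q_B$ the vertex $1$ is always a source and the only arrows incident to any $i \in \{1,\ldots,h-1\}$ are arm arrows. Consequently, in $Q_\La$ the arm vertices $\{1,\ldots,h-1\}$ are never sinks (each emits $\lambda_i$) and the arm vertices $\{2,\ldots,h\}$ are never sources (each receives $\lambda_{i-1}$), while for any other vertex the incident arrows of $Q_\La$ coincide with those coming from the side of the amalgamation to which it originally belonged.

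Splitting the sink count by side, $s_\La$ equals the number of sinks of $Q_A$ lying in $\mathbf{Q'_A}$ plus the number of sinks of $Q_B$ lying in $\mathbf{Q'_B} \cup \{h\}$. The first equals $s_A - 1$, because $h$ is the unique sink of $Q_A$ outside $\mathbf{Q'_A}$ and no vertex in $\{1,\ldots,h-1\}$ is a sink of $Q_A$; the second equals $s_B$, because all sinks of $Q_B$ lie in $\mathbf{Q'_B} \cup \{h\}$. Hence $s_\La = s_A + s_B - 1$. The dual count, with the role of $h$ replaced by that of $1$, gives $t_\La = t_A + (t_B - 1) = t_A + t_B - 1$.

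The main obstacle is simply careful bookkeeping at the junction: the vertex $h$ is always a sink of $Q_A$ and may or may not be a sink of $Q_B$, while vertex $1$ is always a source of $Q_B$ and may or may not be a source of $Q_A$. Attributing $h$ to the $B$-side of the sink count and $1$ to the $A$-side of the source count ensures each junction vertex is counted exactly once, which produces the $-1$ corrections in the formula.
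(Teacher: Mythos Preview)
Your proof is correct. You argue via the quiver description of Lemma~\ref{lemma:gluingquivers}, reducing the statement to a count of sinks and sources in $Q_\La$, whereas the paper works module-theoretically via Proposition~\ref{prop:projinj}: there one observes that $\vare_i\La$ is supported in $B$ for $1\leq i\leq m$ and in $A$ for $h-l+1\leq i\leq 0$, so that $\vare_i\La$ is simple projective exactly when $\epsilon_iB$ (respectively $e_iA$) is, and then uses Proposition~\ref{prop:triangles}(a) to see that among $e_1A,\dots,e_hA$ only $e_hA$ is simple, giving $s_B+(s_A-1)$ directly. The two arguments are really the same count carried out in different coordinates: your combinatorial bookkeeping at the junction vertex $h$ (which is always a sink of $Q_A$, but a sink of $Q_\La$ only when it is a sink of $Q_B$) is exactly what the paper's module-theoretic split $\{h-l+1,\dots,0\}\cup\{1,\dots,m\}$ accomplishes. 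Your route is slightly more elementary in that it avoids the support machinery, while the paper's route fits more naturally into the framework being developed in that section.
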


\begin{proof}
Let us show that $s_{\La}=s_A+s_B-1$; that $t_{\La}=t_A+t_B-1$ is proved similarly. Let $h-l+1\leq i \leq m$. If $1\leq i \leq m$, then $\vare_i \La$ is a simple projective $\La$-module if and only if $\epsilon_i B$ is a simple projective $B$-module by Proposition \ref{prop:projinj}(a). Since $\left\{\epsilon_i\right\}_{1\leq i \leq m}$ is a complete set of primitive orthogonal idempotents for $B$, it follows that there are exactly $s_B$ simple projective $\La$-modules $\vare_i\La$ for $1\leq i \leq m$.

Similarly, if $h-l+1\leq i \leq 0$ then $\vare_i\La$ is a simple projective $\La$-module if and only if $e_i A$ is a simple projective $A$-module. By Proposition \ref{prop:triangles}(a) it follows that if $1\leq i \leq h$, then $e_i A$ is simple if and only if $i=h$. Since $\{e_i \}_{h-l+1\leq 1 \leq h}$ is a complete set of primitive orthogonal idempotents for $A$, it follows that there are exactly $s_A-1$ simple projective $\La$-modules $\vare_i\La$ for $h-l+1\leq i \leq 0$.

Finally, since $\{\vare_i\}_{h-l+1\leq i \leq m}$ is a complete set of primitive orthogonal idempotents for $\La$ by Lemma \ref{lemma:primitiveidempotents}, it follows that there are exactly $s_B+(s_A-1)$ simple projective $\La$-modules, as required.
\end{proof}

\begin{corollary} \label{cor:radical}
For every $m\in\rad(\vare_h \La)$ we have $m\vare_{B'}=m$.
\end{corollary}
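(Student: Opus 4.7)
The plan is to split the statement into two complementary facts: that any element of $\vare_h\La$ is absorbed by $\vare_B$, and that the radical of $\vare_h\La$ is in addition annihilated by $\vare_C = \vare_1+\cdots+\vare_h$. Combined, these give $m\vare_{B'} = m\vare_B - m\vare_C = m$ as required.

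For the first fact, since $1 \le h \le m$, Proposition \ref{prop:projinj}(a) tells us that $\vare_h\La$ is supported in $B$, and then Lemma \ref{lemma:techii}(b) gives $m'\vare_B = m'$ for every $m' \in \vare_h\La$, in particular for $m \in \rad(\vare_h\La)$. For the second fact I would use the equality $\rad(\vare_h\La) = \vare_h\rad(\La)$ and reduce to showing $\vare_h\la\vare_i = 0$ for all $\la\in\rad(\La)$ and $1 \le i \le h$. Writing $\la = (a,b)$ with $a = \phi(\la)$ and $b = \psi(\la)$, the surjectivity of $\phi$ and $\psi$ forces $a \in \rad(A)$ and $b \in \rad(B)$, and then $\vare_h\la\vare_i = (e_h a e_i,\; \epsilon_h b \epsilon_i)$.

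For the first component, Proposition \ref{prop:triangles}(a) (applied to the abutment $P$) identifies $e_h A$ as the simple projective at the bottom of the foundation triangle, so $\rad(e_h A) = e_h \rad(A) = 0$ and hence $e_h a e_i = 0$. For the second component, Proposition \ref{prop:abutmentsquiver}(b) (applied to the abutment $I$) gives a quiver presentation of $B$ in which every arrow leaving the vertex corresponding to $\epsilon_h$ has its target outside $\{1,\dots,h\}$; combined with the acyclicity of the quiver (which holds because $B$ is representation-directed), this forces $\epsilon_h\rad(B)\epsilon_i = 0$ for all $1 \le i \le h$, so $\epsilon_h b\epsilon_i = 0$. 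Both components vanish, giving $\vare_h\la\vare_i = 0$, which finishes the argument.

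There is no serious obstacle here; the only mild subtlety is organizing the bookkeeping correctly (making sure we exploit the lopsided nature of the abutments — $P$ gives simplicity of the top $e_h A$, while $I$ gives the "outgoing arrows" statement at $\epsilon_h$) and remembering to invoke acyclicity of the quiver of $B$ when concluding that no path of positive length at $\epsilon_h$ can re-enter the abutment.
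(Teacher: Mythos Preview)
Your proof is correct and follows essentially the same route as the paper's. Both arguments reduce to showing $m\vare_B = m$ (via Proposition~\ref{prop:projinj} and Lemma~\ref{lemma:techii}) and $m\vare_C = 0$; the paper transports to $\m B$ via $\psi_\ast$ and invokes Proposition~\ref{prop:abutmentsquiver}(b) there, whereas you stay in the pullback $\La$ and compute both coordinates separately, but the substance is the same. One small remark: you do not actually need to invoke acyclicity of $Q_B$ for the second coordinate, since Proposition~\ref{prop:abutmentsquiver}(b) already guarantees there are no arrows from $Q'_B$ back into $\{1,\dots,h\}$; either way the conclusion $\epsilon_h\rad(B)\epsilon_i = 0$ holds.
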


\begin{proof}
By Lemma \ref{lemma:gluingquivers} it follows that for every $m'\in\Phi_{\La\ast}(\rad(\vare_h\La)=\rad\left( P_{KQ_{\La}/\cR_{\La}}(h)\right)$ we have $m'l_{B'}=m'$. Since $\Phi_{\La\ast}$ is an equivalence of categories and $\Phi_{\La}(l_{B'})=\vare_{B'}$, the result follows.
\end{proof}

The following lemma contains important information about the directedness that is required to prove $\m\La=\left(\m B\right) \glue \left( \m A\right)$.

\begin{lemma} \label{lemma:techiii}
Let $i,j\in [A',B']$.
\begin{itemize}
\item[(i)] $\vare_{A'} \La \vare_{B'} = 0$ and $\vare_{B'} \La \vare_{A'}=0$.
\item[(ii)] If $1\leq  i,j \leq h$, then $\dim_K(\vare_i \La \vare_j) = \begin{cases} 1 & \mbox{ if $i\leq j$,} \\ 0 &\mbox{ if $i>j$.} \end{cases}$
\item[(iii)] If $i < j$, then $\vare_j\La \vare_i = 0$.
\item[(iv)] $\La \cong \bigoplus\limits_{A'\leq i \leq j \leq B'} \vare_i \La \vare_j$.
\item[(v)] If $1\leq j\leq k \leq i \leq h$, then $\vare_j \La \vare_i \cong \vare_j \La \vare_k \La \vare_i$. 
\item[(vi)] If $1\leq i \leq h$, then $\vare_i \La \vare_{B'}\cong \vare_i \La \vare_h \La \vare_{B'}$.
\end{itemize}
\end{lemma}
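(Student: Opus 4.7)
The first three statements are vanishing/dimension results that I would handle together. Part (i) is immediate, since by the definition in Lemma \ref{lemma:defofvare} the idempotent $\vare_{A'}$ has zero $B$-component and $\vare_{B'}$ has zero $A$-component in $A \times B$, so both triple products vanish componentwise. For (ii) with $j < i$, both $e_i A e_j$ and $\epsilon_i B \epsilon_j$ are zero by Lemma \ref{lemma:homsofabutments}, which forces $\vare_i \La \vare_j = 0$; for $i \leq j$ both spaces are one-dimensional, spanned by $f_i \cdots f_{j-1}$ and $g_i \cdots g_{j-1}$, and by Definition \ref{def:footing} each footing sends its generator to the nonzero element $\alpha_i \cdots \alpha_{j-1}$ of the one-dimensional space $t_i KA_h t_j$, so the pullback compatibility equation $\lambda f_P(\cdot) = \mu g_I(\cdot)$ cuts the two-dimensional space of pairs down to a one-dimensional diagonal. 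For (iii) the case $1 \leq i < j \leq h$ is part of (ii); for $i = A'$, $j \in \{1,\ldots,h\}$ I would apply Corollary \ref{cor:pathsupport}(a) to $A$ to get $e_j A(1_A - e\ab) = 0$, which (since $\phi(\vare_{A'}) = 1_A - e\ab$) forces the $A$-component of $\vare_j \La \vare_{A'}$ to vanish while the $B$-component is automatically zero; dually, Corollary \ref{cor:pathsupport}(b) applied to $B$ handles $j = B'$, $i \in \{1,\ldots,h\}$, and the remaining pair $(A',B')$ is (i).

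Part (iv) is then a bookkeeping consequence of the Peirce decomposition $\La = \bigoplus_{i,j} \vare_i \La \vare_j$ given by Lemma \ref{lemma:primitiveidempotents}: one groups all $A'$-type idempotents together and all $B'$-type idempotents together, and by (i) and (iii) every block $\vare_j \La \vare_i$ with $j > i$ in $[A',B']$ vanishes, leaving exactly the sum over $A' \leq i \leq j \leq B'$.

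For (v) and (vi) the inclusions $\vare_j \La \vare_k \La \vare_i \subseteq \vare_j \La \vare_i$ and $\vare_i \La \vare_h \La \vare_{B'} \subseteq \vare_i \La \vare_{B'}$ are tautological, so the real content is to prove surjectivity of the multiplication map. For (v), I would take generators $\mu_{jk} \in \vare_j \La \vare_k$ and $\mu_{ki} \in \vare_k \La \vare_i$ supplied by (ii); applying $\phi$ yields $\phi(\mu_{jk}\mu_{ki}) = (f_j \cdots f_{k-1})(f_k \cdots f_{i-1}) = f_j \cdots f_{i-1}$, which is nonzero by the ``no path $\alpha_i \cdots \alpha_{i+k}$ lies in the relations'' clause of Proposition \ref{prop:abutmentsquiver}(a), so $\mu_{jk}\mu_{ki}$ is a nonzero element of the one-dimensional space $\vare_j \La \vare_i$ given by (ii). I expect (vi) to be the most delicate step: since $\vare_{B'}$ kills the $A$-factor, both sides consist of pairs with zero first coordinate, and $\psi$ reduces the claim to $(\epsilon_i B \epsilon_h)(\epsilon_h B \epsilon_{B'}) = \epsilon_i B \epsilon_{B'}$ inside $B$. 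The decisive input here is the quiver description in Proposition \ref{prop:abutmentsquiver}(b): every arrow of $Q_B$ leaving the abutment subquiver $\{1,\ldots,h\}$ emanates from the vertex $h$. Consequently every path in $Q_B$ from $i$ to a vertex of $B'$ factors as $\alpha_i \cdots \alpha_{h-1}$ followed by a path from $h$ into $B'$, and the same proposition guarantees that $\alpha_i \cdots \alpha_{h-1}$ survives the relations of $B$, which provides the desired factorization and completes the proof.
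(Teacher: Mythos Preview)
Your proof is correct and takes a more hands-on route than the paper's. The main divergence is in parts (ii), (v), and (vi): the paper works module-theoretically, first using Proposition~\ref{prop:projinj} to identify $\Hom_\La(\vare_j\La,\vare_i\La)$ with $\Hom_B(\epsilon_jB,\epsilon_iB)$, then the Nakayama functor to pass to the injectives handled by Lemma~\ref{lemma:homsofabutments}(b) and Proposition~\ref{prop:triangles}; for (v) and (vi) it simply appeals to the factorization properties visible in the Auslander--Reiten triangle of Proposition~\ref{prop:triangles}. You instead compute directly with the pullback description of $\La$ and the quiver presentation of Proposition~\ref{prop:abutmentsquiver}(b), exhibiting explicit generators and path factorizations. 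Both approaches are valid; yours avoids the Nakayama functor and the AR-quiver machinery at the cost of more explicit bookkeeping, while the paper's is terser and reuses the module-theoretic framework already built. For (iii), your appeal to Corollary~\ref{cor:pathsupport} is in fact slightly more direct than the paper's route through Lemma~\ref{lemma:techi}.

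One small imprecision to flag: in (ii) you cite Lemma~\ref{lemma:homsofabutments} for both $e_iAe_j$ and $\epsilon_iB\epsilon_j$, but part (b) of that lemma computes $\Hom_B(D(B\epsilon_i),D(B\epsilon_j))$, not $\epsilon_iB\epsilon_j=\Hom_B(\epsilon_jB,\epsilon_iB)$ directly. You need either the Nakayama functor (which is what the paper inserts) or, more in keeping with your approach, the quiver description of Proposition~\ref{prop:abutmentsquiver}(b) to see that the only path between vertices $i$ and $j$ in $\{1,\dots,h\}$ is $\alpha_i\cdots\alpha_{j-1}$ when $i\le j$, and none exists when $i>j$.
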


\begin{proof} Immediate from Lemma \ref{lemma:gluingquivers} by using the isomorphism $\Phi_{\La}$ to transfer computations to the bound quiver algebra $KQ_{\La}/\cR_{\La}$ and using the fact that $\Phi^{-1}_{\La}(\vare_x)=l_x$ for $x\in \left(Q_{\La}\right)_0\cup\{A,B,A',B'\}$.
\end{proof}

The following proposition is the most important step in showing the main result in this section.

\begin{proposition}\label{prop:decomposition}
Let $M\in \m\La$. Then $M\cong X \oplus Y$ for some $X,Y\in \m\La$ where $X$ is supported in $A$ and $Y$ is supported in $B$. 
\end{proposition}

\begin{proof}
Let us pick a sequence of nonzero morphisms
\[\vare_1 \La \overset{p_1}{\longleftarrow} \cdots \overset{p_{h-1}}{\longleftarrow} \vare_h \La \overset{p_h}{\longleftarrow\joinrel\rhook} \rad(\vare_h\La),\]
where $p_h$ corresponds to the inclusion of the radical of $\vare_h\La$. By applying $\Hom_\La(-,M)$, and since 
\[\Hom_{\La}(\vare_i \La, M) \cong M\vare_i,\]
we get the commutative diagram
\[\begin{tikzpicture}
\node (A1) at (0,1.5) {$\Hom_{\La}(\vare_1\La,M)$};
\node (A2) at (2.5,1.5) {$\cdots$};
\node (A3) at (5.5,1.5) {$\Hom_{\La}(\vare_h\La,M)$};
\node (A4) at (9.5,1.5) {$\Hom_{\La}(\rad(\vare_h\La),M)$\nospacepunct{,}};

\node (B1) at (0,0) {$M\vare_1$};
\node (B2) at (2.5,0) {$\cdots$};
\node (B3) at (5.5,0) {$M\vare_h$};

\draw[->] (A1) -- node[above] {$-\circ p_1$} (A2);
\draw[->] (A2) -- node[above] {$-\circ p_{h-1}$} (A3);
\draw[->] (A3) -- node[above] {$-\circ p_{h}$} (A4);

\draw[->] (B1) -- node[above] {$ q_1$} (B2);
\draw[->] (B2) -- node[above] {$ q_{h-1}$} (B3);

\draw[->] (B1) -- node[left] {$s_1$} (A1);
\draw[->] (B3) -- node[left] {$s_h$} (A3);

\draw[->] (B3) -- node[below] {$q_h$} (A4);

\end{tikzpicture}\]
where 
\begin{itemize}
\item[$\bullet$] $s_i(m\vare_i) = p_{m}$ with $p_m(\vare_i \la)=m\vare_i\la$,
\item[$\bullet$] $s_i^{-1}(\chi: \vare_i\La \rightarrow M) = \chi(\vare_i)=\chi(\vare_i)\vare_i$, 
\item[$\bullet$] $q_i(m\vare_i)=mp_i(\vare_{i+1})\vare_{i+1}$ for $1\leq i \leq h-1$, 
\item[$\bullet$] $[q_h(m\vare_h)](\vare_h\la)=m\vare_h\la$ for $\la\in \rad(\La)$.
\end{itemize}

Let $U_h = \ker q_h$ and for $1\leq i \leq h-1$ define $U_{i}=q_{i}^{-1}(U_{i+1})$. Moreover for $1\leq i \leq h$ let $V_i$ be such that $q_{i-1}(V_i)\subseteq V_i$ and $M\vare_i = U_i\oplus V_i$. Set $U=\bigoplus_{i=1}^h U_i$, $V=\bigoplus_{i=1}^h V_i$, $X=M\vare_{A'}\oplus U$ and $Y= V \oplus M\vare_{B'}$. Then clearly $M\cong X\oplus Y$ as vector spaces and it remains to show that both $X$ and $Y$ are submodules of $M$ since by construction it is clear that $X$ is supported in $A$ and $Y$ is supported in $B$.

Let us start by showing that $X=M\vare_{A'}\oplus U$ is a submodule of $M$. First let $m\vare_{A'} \in M\vare_{A'}$ and $\la \in \La$. Then by Lemma \ref{lemma:techiii}(iv) we have $\la = \vare_x \la \vare_y$ with $A'\leq x \leq y \leq B'$. We need to show that $m\vare_{A'}\la=m\vare_{A'}\vare_x\la\vare_y \in X$ for all $A'\leq x \leq y \leq B'$. If $A'<x$ then $m(\vare_{A'}\vare_x)\la\vare_y=m0\la\vare_y=0$, so we assume that $x=A'$. If $y=A'$, then $m\vare_{A'}\la\vare_{A'} \in M\vare_{A'}$, while if $y=B'$, then $m\vare_{A'}\la\vare_{B'}=0$ by Lemma \ref{lemma:techiii}(i). It remains to check the case $1\leq y \leq h$. Since $m\vare_{A'}\la\vare_y \in M\vare_y$, it is enough to show that $q_h\circ \cdots \circ q_y (m\vare_{A'}\vare_y)=0$ since then $m\vare_{A'}\la\vare_y\in U_y$. We have $s_y(m\vare_{A'}\la\vare_y)=p_{m\vare_{A'}\la}$ and for any $n\in \rad(\vare_h\La)$ we have $n=n\vare_{B'}$ by Corollary \ref{cor:radical}. Hence for any $n\in \rad(\vare_h\La)$ we have
\begin{align*} [q_h\circ \cdots \circ q_y  (m\vare_{A'}\la\vare_y)](n) &= s_y(m\vare_{A'}\la\vare_y)\circ p_y \circ \dots \circ p_h (n \vare_{B'}) \\
&=p_{m\vare_{A'}\la}\circ p_y \circ \dots \circ p_h (n \vare_{B'}) \\
&= m\vare_{A'} \la p_y\circ \dots \circ p_h(n)\vare_{B'} \\
&=0,
\end{align*}
where the last equality comes from Lemma \ref{lemma:techiii}(i). 

Next let $u\vare_i\in U_i$ and $\lambda\in \La$. Again by Lemma \ref{lemma:techiii}(iv) we have $\la=\vare_x\la \vare_y$ with $x\leq y \leq B'$ and it is enough to show that $u\vare_i \la=u\vare_i\vare_x\la\vare_y \in X$. We can assume that $x=i$ since otherwise $u(\vare_i\vare_x)\la\vare_y=u0\la\vare_y=0$. If $y<B'$, then $u\vare_i \la \vare_y \in M\vare_y$ and it is enough to show $u\vare_i\la\vare_y \in U_y$. Since $\vare_i \La \vare_y \cong \Hom_{\La}(\vare_y \La, \vare_i \La) \cong K$, there exists some $k\in K$ such that
\[p_{i}\circ \cdots \circ p_{y-1}(\vare_y)=k\vare_i\la\vare_y.\]
Then
\begin{align*}
q_{y-1}\circ\cdots\circ q_i(u\vare_i) &= s_y^{-1}(s_i(u\vare_i)\circ p_i \circ \cdots \circ p_{y-1}) \\
&= s_y^{-1}(p_u\circ p_i \circ\cdots \circ p_{y-1}) \\
&= p_u\circ p_i \circ\cdots \circ p_{y-1}(\vare_y) \\
&=u(p_i \circ\cdots \circ p_{y-1}(\vare_y)) \\
&=u (k\vare_i \la \vare_ y) \\
&=k (u\vare_i \la \vare_y).
\end{align*}
Since the left hand side is in $U_y$ by construction, we have $k(u\vare_i\la\vare_y)\in U_y$ as required. If $y=B'$ then by Lemma \ref{lemma:techiii}(vi) we have $u\vare_i\lambda\vare_{B'}=u\vare_i\lambda_1\vare_h\lambda_2\vare_{B'}$. Using  the same argument as before, we can show that $u\vare_i\lambda_1\vare_h\in U_h$. We claim that then $u\vare_i\lambda_1\vare_h\lambda_2\vare_{B'}=0\in X$, which is enough to show that $X$ is a submodule of $M$. To show this claim it is enough to show that $U_h$ satisfies $U_h\La\vare_{B'}=0$. To show this, let $u\vare_h\in U_h$ and $\la\in \La$. If $\la\in \rad(\La)$ then by construction we have $u\vare_h \la = [q_h(u\vare_h)](\vare_h\la)=0$. In general, let $\la=\sum_{i=h-l+1}^mc_i\vare_{i}+\mu$ with $\mu\in \rad(\La)$, since $\vare_i$ are the only elements in $\La$ that act nontrivially on simple $\La$-modules. Then 
\[u\vare_h\la \vare_{B'} = u\vare_h\left(\sum_{i=h-l+1}^mc_i\vare_{i}\right)\vare_{B'}+u\vare_h\mu\vare_{B'} = u0+0=0,\]
since $\vare_h\left(\sum_{i=h-l+1}^mc_i\vare_{i}\right)\vare_{B'}=c_h\vare_h\vare_{B'}=0$ and $\mu\in\rad(\La)$. Hence, the claim is proved and $X$ is a submodule of $M$.

It remains to show that $Y= V \oplus M\vare_{B'}$ is a submodule of $M$. First, let $v\vare_i\in V_i$ and $\lambda\in \La$. As in the previous cases, we can assume that $\lambda=\vare_i \la \vare_y$ with $i\leq y \leq B'$. If $y=B'$ then $v\vare_i\la\vare_{B'}\in M\vare_{B'}$ and so $v\vare_i\la\in Y$. If $y\leq h$, using the same argument as in the previous case we can show that 
\[q_{y-1}\circ \cdots \circ q_i(v\vare_i)= kv\vare_i\la\vare_y\]
for some $k\in K$. Since by construction we have $q_{y-1}\circ \cdots \circ q_i(V_i)\subseteq V_y$, it follows that $v\vare_i\la\vare_y\in V_y$ and so again $v\vare_i\la\in Y$.

Finally, if $m\vare_{B'}\in M\vare_{B'}$ and $\lambda\in \La$ we have $m\vare_{B'}\lambda=0$ unless $\lambda=\vare_{B'}\la\vare_{B'}$, in which case $m\vare_{B'}\la\in M{\vare_{B'}}$. This shows that $Y$ is a submodule of $M$ and concludes the proof. 
\end{proof}

\begin{corollary}\label{cor:choices}
If $M\in \m\La$ is indecomposable, then $M$ is supported in $A$ or $M$ is supported in $B$.
\end{corollary}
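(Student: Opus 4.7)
The plan is to apply Proposition \ref{prop:decomposition} directly. That proposition already provides, for any $M\in\m\La$, a decomposition $M\cong X\oplus Y$ with $X$ supported in $A$ and $Y$ supported in $B$, so the bulk of the work has already been done.

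First I would invoke Proposition \ref{prop:decomposition} to write $M\cong X\oplus Y$ with $X$ supported in $A$ and $Y$ supported in $B$. Since $M$ is assumed indecomposable, one of the two summands must be zero, say $X=0$ or $Y=0$. The only remaining point to verify is that the zero module qualifies as being supported in either $A$ or $B$; this is immediate from the definition, since $\phi^{!}(0)=0$ and $\psi^{\ast}(0)=0$, hence $\phi_{\ast}\phi^{!}(0)\cong 0$ and $\psi_{\ast}\psi^{\ast}(0)\cong 0$.

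Consequently, if $Y=0$ then $M\cong X$ is supported in $A$, and if $X=0$ then $M\cong Y$ is supported in $B$, which gives the claim. There is no genuine obstacle here; the entire content of the corollary lies in the preceding decomposition proposition, and this statement is essentially a packaging of that result for indecomposable modules. It will be used later to reduce questions about arbitrary indecomposable $\La$-modules to statements about modules coming from $\m A$ or $\m B$ via the restriction-of-scalars functors $\phi_{\ast}$ and $\psi_{\ast}$.
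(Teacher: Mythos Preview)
Your proof is correct and follows essentially the same approach as the paper, which simply states that the corollary is immediate from Proposition~\ref{prop:decomposition}. Your additional remark that the zero module is supported in both $A$ and $B$ is a harmless clarification of a trivial point.
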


\begin{proof}
Immediate by Proposition \ref{prop:decomposition}.
\end{proof}

\begin{lemma}\label{lemma:techproj} Let $M\in \m\La$.
\begin{itemize} \item[(a)] Assume that $\Hom_{\La}(\vare_i\La, M) \neq 0$. If $M$ is supported in $A$, then $h-l+1\leq i \leq h$. If $M$ is supported in $B$, then $1\leq i \leq m$.
\item[(b)] Assume that $\Hom_{\La}(M,D(\La\vare_i))\neq 0$. If $M$ is supported in $A$, then $h-l+1\leq i \leq h$. If $M$ is supported in $B$, then $1\leq i \leq m$.
\end{itemize}
\end{lemma}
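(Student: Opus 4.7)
The plan is to reduce both parts to a single statement about the nonvanishing of $M\vare_i$, and then read off the allowed range of $i$ from the way $\vare_A$ and $\vare_B$ decompose into the primitive idempotents $\vare_j$.

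For part (a), I will use the standard identification $\Hom_{\La}(\vare_i \La, M) \cong M\vare_i$ (already invoked in the proof of Proposition~\ref{prop:decomposition}), which turns the hypothesis into $M\vare_i \neq 0$. For part (b), the tensor-hom adjunction gives
\[
\Hom_{\La}(M, D(\La \vare_i)) \cong \Hom_K(M \otimes_{\La} \La \vare_i, K) = D(M \vare_i),
\]
so again the hypothesis becomes $M\vare_i \neq 0$. Hence both (a) and (b) follow from the single claim: if $M\vare_i \neq 0$, then $i \in \{h-l+1,\ldots,h\}$ when $M$ is supported in $A$, and $i \in \{1,\ldots,m\}$ when $M$ is supported in $B$.

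To prove this claim, suppose first that $M$ is supported in $A$. Then by Lemma~\ref{lemma:techii}(a) every element $m \in M$ satisfies $m\vare_A = m$, and in particular $m\vare_i = m\vare_A \vare_i$ for every $m \in M$. By Lemma~\ref{lemma:primitiveidempotents} the $\vare_j$'s are pairwise orthogonal, so by the definition of $\vare_A$ in Lemma~\ref{lemma:defofvare} we have $\vare_A \vare_i = \vare_i$ if $h-l+1 \leq i \leq h$, and $\vare_A \vare_i = 0$ otherwise. In the second case $M\vare_i = M\vare_A \vare_i = 0$, contradicting the assumption, so $i$ must lie in $\{h-l+1,\ldots,h\}$. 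The case where $M$ is supported in $B$ is entirely analogous, using Lemma~\ref{lemma:techii}(b) and the fact that $\vare_B \vare_i = \vare_i$ precisely when $1 \leq i \leq m$.

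There is no real obstacle; the only subtlety is to keep track of left/right conventions so as to justify the adjunction used in part (b), after which the argument is identical to part (a).
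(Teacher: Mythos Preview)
Your proof is correct and follows essentially the same approach as the paper's: both arguments reduce to showing that $M\vare_i \neq 0$ forces $\vare_i$ to be absorbed by $\vare_A$ (respectively $\vare_B$) via Lemma~\ref{lemma:techii} and the orthogonality of the primitive idempotents. The paper carries this out for (a) by picking $\zeta \neq 0$ and working with the element $\zeta(\vare_i)$ directly, then declares (b) similar; your version is slightly more streamlined in that you make the identifications $\Hom_\La(\vare_i\La,M)\cong M\vare_i$ and $\Hom_\La(M,D(\La\vare_i))\cong D(M\vare_i)$ explicit up front, which lets you treat (a) and (b) simultaneously.
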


\begin{proof}
We only prove (a); (b) is similar. Since $\Hom_{\La}(\vare_i\La,M)\cong M\vare_i \cong \Phi_{\La\ast}(M)l_i$, the result follows immediately by Lemma \ref{lemma:supported} by noting that $\Phi_{\La\ast}(M)l_i=(\Phi_{\La\ast}(M))_i$ when viewing $M$ as a representation of $Q_{\La}$ bound by $\cR_{\La}$.
\end{proof}

\begin{lemma}\label{lemma:intersection}
Let $M\in \m\La$ be indecomposable. Then the following are equivalent.
\begin{itemize}
\item[(a)]$M$ is supported in both $A$ and $B$, 

\item[(b)] $M\in \phi_{\ast}(\cPD)$,

\item[(c)] $M\in \psi_{\ast}(\cDI)$.
\end{itemize}

\end{lemma}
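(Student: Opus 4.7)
The plan is to reduce the full statement to one idempotent condition on $M$. The key algebraic fact is that the idempotents defined in Lemma \ref{lemma:defofvare} satisfy $\vare_A\vare_B = \vare_B\vare_A = \vare_C$ and $\vare_C\vare_A = \vare_C = \vare_C\vare_B$, which is immediate from the orthogonality of the $\vare_i$'s. Together with Lemma \ref{lemma:techii} this gives the reformulation that $M$ is supported in both $A$ and $B$ if and only if $m\vare_C = m$ for every $m \in M$, the natural algebraic statement that $M$ is concentrated on the ``common piece'' of the pullback.

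To prove (a) $\Leftrightarrow$ (b), I will use the identity $\phi(\vare_C) = e\ab$ coming from Lemma \ref{lemma:defofvare} to translate the above condition on $M = \phi_\ast(X)$ into the condition $xe\ab = x$ for every $x \in X \in \m A$. For (b) $\Rightarrow$ (a), this translation combined with Corollary \ref{cor:intersupport}(a) gives the computation $m\vare_B = m\vare_A\vare_B = m\vare_C = me\ab = m$, showing that $M$ is supported in $B$ as well. For (a) $\Rightarrow$ (b), I will use Corollary \ref{cor:choices} to write $M = \phi_\ast(X)$ with $X$ indecomposable (using that $\phi_\ast$ is fully faithful), and argue that $Xe\ab = X$ forces every simple summand of $\topp X$ to be of the form $S_j$ with $1 \leq j \leq h$. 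Hence $X$ is a quotient of a direct sum of modules $e_jA$ with $1\leq j\leq h$, and by Remark \ref{rem:heredity} each such $e_jA$ is a submodule of $P$, so $X \in \Fac(\Sub(P)) = \cPD$.

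The equivalence (a) $\Leftrightarrow$ (c) will be proved by the dual argument: the condition $m\vare_C = m$ on $M = \psi_\ast(Y)$ translates to $y\epsilon\ab = y$ on $Y \in \m B$ via $\psi(\vare_C) = \epsilon\ab$. Corollary \ref{cor:intersupport}(b) handles one direction, and for the other I will examine $\soc Y$ in place of $\topp X$: every simple summand of $\soc Y$ must be $S_j$ with $1 \leq j \leq h$, so $Y$ embeds into a direct sum of modules $D(B\epsilon_j)$ with $1 \leq j \leq h$, each of which is a quotient of $I = D(B\epsilon_h)$ by Remark \ref{rem:heredity}, placing $Y$ in $\Sub(\Fac(I)) = \cDI$.

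No substantial obstacle is anticipated: all of the hard structural input has already been set up in Proposition \ref{prop:triangles}, Corollaries \ref{cor:intersupport} and \ref{cor:choices}, and the explicit descriptions $\cPD = \Fac(\Sub(P))$ and $\cDI = \Sub(\Fac(I))$. The only point requiring real care is careful bookkeeping of the idempotents and of the distinction between left and right actions when invoking Lemma \ref{lemma:techii}.
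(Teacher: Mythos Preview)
Your proof is correct and follows essentially the same route as the paper: both reduce condition (a) to the idempotent condition $m\vare_C = m$ via Lemma~\ref{lemma:techii}, and both use Corollary~\ref{cor:intersupport} for the implication (b)~$\Rightarrow$~(a). For (a)~$\Rightarrow$~(b) the paper derives $\Hom_A(\phi^!(M),S_A(i))=0=\Hom_A(S_A(i),\phi^!(M))$ for $i<1$ and then invokes Proposition~\ref{prop:triangles}, whereas you examine only $\topp X$ and use the description $\cPD=\Fac(\Sub(P))$; this is a cosmetic difference, and as a minor point you do not need Corollary~\ref{cor:choices} since hypothesis~(a) already gives $M\cong\phi_\ast\phi^!(M)$ directly from the definition of ``supported in $A$''.
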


\begin{proof} 
By Proposition \ref{prop:triangles} it easily follows that for an $A$-module (respectively $B$-module) $X$ we have $\supp(X)\subseteq \{1,\dots,h\}$ if and only if $X\in \cF_P$ (respectively $X\in\cG_I$). Then by Lemma \ref{lemma:supported} we have that $M$ is supported in both $A$ and $B$ if and only if $\supp(\Phi_{\La\ast}(M))\subseteq (Q_A)_0\cap (Q_B)_0 = \{1,\dots,h\}$, which holds if and only if $\Phi_{\La\ast}(M) \cong \phi'_{\ast}(X)$ for some $X\in\m (KQ_{A}/\cR_A)$ and $\Phi_{\La\ast}(M)\cong \psi'_{\ast}(Y)$ for some $Y\in\m (KQ_B/\cR_B)$. The result follows by the commutativity of the diagram (\ref{diagram:isomorphism}) and the fact that $\Phi_A$, $\Phi_B$ and $\Phi_{\La}$ are isomorphisms.
\end{proof}

To simplify notation in the rest of this section, let us denote the subcategories $\phi_\ast(\m A)\subseteq \m\La$ and $\psi_\ast(\m B)\subseteq \m\La$ by $(\m A)_\ast$ and $(\m B)_\ast$ respectively. Now we are ready to show the main result for this section. 

\begin{proposition}\label{prop:theyareglued}
If $\La=B \glue A$, then $\m \La=\left(\m B\right)_\ast \glue \left(\m A\right)_\ast$.
\end{proposition}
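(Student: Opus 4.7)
The plan is to verify the four conditions of Definition \ref{def:catglue} for $\cA = (\m A)_\ast$ and $\cB = (\m B)_\ast$ inside $\cL = \m\La$. Condition (ii) follows immediately from Corollary \ref{cor:choices}, which writes every indecomposable $\La$-module as supported in $A$ or in $B$, together with Lemma \ref{lemma:supported}, which identifies such modules with objects in $(\m A)_\ast$ or $(\m B)_\ast$. Condition (iv) is obtained by taking $X = \im(g)$ for any $g \colon N \to M$ with $N \in \cB$ and $M \in \cA$: the image is a quotient of $N$ and a submodule of $M$, hence supported in both $A$ and $B$ by Corollary \ref{cor:relatives}, and each of its indecomposable summands belongs to $\phi_\ast(\cPD) = \psi_\ast(\cDI) = \cA \cap \cB$ by Lemma \ref{lemma:intersection}.

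The core of the proof is condition (iii). If $M \in \cA \setminus \cB$ is indecomposable, write $M = \phi_\ast M'$ with $[M'] \notin \PD$ (by Lemma \ref{lemma:intersection}); given any $f \colon M \to N$ with $N \in \cB$, the argument for (iv) shows $\im(f) = \phi_\ast L'$ for some $L' \in \cPD$, and the induced surjection $M' \twoheadrightarrow L'$ is a morphism in $\m A$ between indecomposables with $[M'] \notin \PD$ and $[L'] \in \PD$. Since $A$ is representation-directed, any nonzero homomorphism between indecomposables produces a path in $\Gamma(A)$ from source to target; but Proposition \ref{prop:triangles}(a) prohibits any arrow of $\Gamma(A)$ from entering $\PD$ from outside, so no such path can exist. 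Hence the map is zero, $L' = 0$, and $f = 0$.

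For condition (i) we transport almost split sequences along the fully faithful exact functor $\phi_\ast$ (and dually $\psi_\ast$). If $N = \phi_\ast N' \in (\m A)_\ast$ is indecomposable with $N'$ non-projective in $\m A$, then $\phi_\ast$ applied to the almost split sequence ending at $N'$ yields an exact sequence in $(\m A)_\ast$ whose right almost split property is checked by lifting any competing non-retraction from $\m A$ via fullness and factoring through the original sequence. If $N'$ is projective in $\m A$, one instead shows that $\phi_\ast N'$ is $\cA$-projective using that $(\m A)_\ast$ is closed under extensions in $\m\La$ — a direct consequence of the characterization in Lemma \ref{lemma:techii} — so that $\Ext^{>0}_\La$ between objects of $(\m A)_\ast$ reduces to $\Ext^{>0}_A$, which vanishes on projectives. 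The dual argument handles left almost split sequences, and the analogous statements for $(\m B)_\ast$ complete condition (i). The main obstacle is (iii): one must promote the purely combinatorial absence of arrows into $\PD$ into a genuine vanishing of all $\Hom$-spaces, and it is precisely representation-directedness that makes this upgrade possible.
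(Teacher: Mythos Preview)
Your argument follows the paper's route exactly: verify the four conditions of Definition~\ref{def:catglue} via Corollary~\ref{cor:choices} for (ii), the image factorization together with Corollary~\ref{cor:relatives} and Lemma~\ref{lemma:intersection} for (iv), and Proposition~\ref{prop:triangles} for (iii). The paper dispatches (i) in one word (``immediate''), while you spell out the transport of almost split sequences and the $\cA$-projectivity of images of projective $A$-modules; this extra care is welcome.

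Two small points. In (iii) you call $L'=\phi^!(\im f)$ indecomposable, but it need not be; pass to an indecomposable summand of $L'$ (still in $\cPD$) and the path argument goes through unchanged. In (i), extension-closure of $(\m A)_\ast$ only identifies $\Ext^1_\La$ with $\Ext^1_A$ on that subcategory, whereas the definition of $\cA$-projective demands vanishing of all $\Ext^{i}_\La$. To get the higher Ext groups you need that $(\m A)_\ast$ contains the injective envelope in $\m\La$ of each of its objects --- this follows from Proposition~\ref{prop:projinj}(b) and Lemma~\ref{lemma:techproj}(b). Once that is observed, the minimal injective coresolution in $\m\La$ of any $X\in(\m A)_\ast$ stays inside $(\m A)_\ast$, and full faithfulness of $\phi_\ast$ yields $\Ext^i_\La(M,X)\cong\Ext^i_A(\phi^!M,\phi^!X)$, completing your reduction.
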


\begin{proof}
We need to check conditions (i)--(iv) of Definition \ref{def:catglue} with $\cA=(\m A)_\ast$ and $\cB=(\m B)_\ast$. Condition (i) is immediate. Condition (ii) follows from Proposition \ref{prop:decomposition}, since if $M\in\m\La$ is indecomposable, then either $M$ is supported in $A$ or $M$ is supported in $B$ by Corollary \ref{cor:choices}. 

For condition (iii) let $M\in (\m A)_\ast \setminus (\m B)_\ast$ be indecomposable and assume by way of contradiction that for some $N\in (\m B)_\ast$ there exists a nonzero morphism $g:M\rightarrow N$. In particular, we have that $M\twoheadrightarrow \im g\hookrightarrow N$ and so $\im g\in \left(\m A \right)_{\ast} \cap \left(\m B\right)_{\ast}=\phi_\ast(\cPD)$, where the last equality follows by Lemma \ref{lemma:intersection}. Since both $M$ and $\im g$ are in the image of $\phi_{\ast}$ and since $\phi_{\ast}$ is full and reflects epimorphisms (as $\phi_{\ast}$ is faithful), it follows that $M\twoheadrightarrow \im g$ is the image of an epimorphism in $\m A$. By the (functorial) isomorphism of Lemma \ref{lemma:supported}(b), applying $\phi^{!}$ to $M\twoheadrightarrow \im g$ recovers this epimorphism. Hence there exists an epimorphism $\phi^{!}(M) \twoheadrightarrow \phi^{!}(\im g)$ in $\m A$ with $\phi^{!}\left(\im g\right)\in \cPD$. By Proposition \ref{prop:triangles}, this means that $\phi^{!}(M)$ is in $\cPD$. But by Lemma \ref{lemma:intersection} this implies that $M\in \phi_\ast(\cPD)= \left(\m A\right)_{\ast} \cap \left(\m B\right)_{\ast}$, which contradicts $M\in (\m A)_{\ast} \setminus (\m B)_{\ast}$.

For condition (iv) notice that any $g:N\rightarrow M$ with $N\in (\m B)_{\ast}$ and $M\in (\m A)_{\ast}$ factors as $N\twoheadrightarrow \im g \hookrightarrow M$ and $\im g$ is in $\left(\m A\right)_{\ast} \cap \left(\m B\right)_{\ast}$ by Corollary \ref{cor:relatives}.  
\end{proof}

The following corollaries describe the representation theory of $\La$ in terms of the representation theory of $A$ and $B$ and will be particularly useful in the following section.

\begin{corollary}\label{cor:Laisdirected}
$\La$ is rep\-re\-sen\-ta\-tion-di\-rect\-ed.
\end{corollary}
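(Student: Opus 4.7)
The plan is to reduce a hypothetical cyclic sequence of nonzero non-isomorphisms in $\m\La$ to a cyclic sequence of nonzero non-isomorphisms in $\m A$ or in $\m B$, contradicting the representation-directedness of one of the two algebras. The main tool is the description $\m\La = (\m B)_\ast \glue (\m A)_\ast$ from Proposition \ref{prop:theyareglued} together with Corollary \ref{cor:choices}.

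Concretely, suppose for contradiction there is a sequence of nonzero non-isomorphisms $f_k : M_k \to M_{k+1}$ ($0 \le k \le t-1$) between indecomposable $\La$-modules with $M_0 \cong M_t$. By Corollary \ref{cor:choices}, each $M_k$ is supported in $A$ or in $B$. I claim that once an $M_k$ lies in $(\m A)_\ast \setminus (\m B)_\ast$, the same holds for every subsequent $M_j$ in the cycle: indeed, Definition \ref{def:catglue}(iii) applied to $\cA = (\m A)_\ast$, $\cB = (\m B)_\ast$ says that $\Hom_\La(M_k, N) = 0$ for all $N \in (\m B)_\ast$, so the nonzero morphism $f_k$ forces $M_{k+1} \notin (\m B)_\ast$, whence $M_{k+1} \in (\m A)_\ast \setminus (\m B)_\ast$ by Corollary \ref{cor:choices}. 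Because the sequence closes up to $M_0 \cong M_t$, this propagation implies that either every $M_k$ is supported in $A$, or no $M_k$ is in $(\m A)_\ast \setminus (\m B)_\ast$, in which case every $M_k$ is supported in $B$.

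In the first case, apply the functor $\phi^!$. Since $\phi$ is surjective, $\phi_\ast$ is fully faithful, and by Lemma \ref{lemma:supported}(a) we have $\phi_\ast \phi^!(M_k) \cong M_k$ for each $k$. Hence $\phi^!(M_k)$ is indecomposable in $\m A$, $\phi^!(M_0) \cong \phi^!(M_t)$, and the morphisms $\phi^!(f_k)$ form a sequence of nonzero non-isomorphisms (fully faithfulness transports nonzero non-isomorphisms to nonzero non-isomorphisms), contradicting that $A$ is representation-directed. The second case is handled symmetrically by applying $\psi^\ast$, using Lemma \ref{lemma:supported}(b).

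The conceptual heart of the argument is the ``trapping'' step—once the cycle enters the ``$A$-only'' region, Definition \ref{def:catglue}(iii) prevents it from leaving—so there is essentially no obstacle beyond correctly invoking Proposition \ref{prop:theyareglued} and the full faithfulness of $\phi_\ast$ and $\psi_\ast$ to transport the cycle back to $\m A$ or $\m B$.
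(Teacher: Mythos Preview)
Your proof is correct and essentially identical to the paper's: both use the trapping property from Definition~\ref{def:catglue}(iii) to force the cycle entirely into $(\m A)_\ast$ or entirely into $(\m B)_\ast$, then transport it back via $\phi^!$ or $\psi^\ast$ using full faithfulness. One tiny slip: the isomorphism $\phi_\ast\phi^!(M_k)\cong M_k$ is the \emph{definition} of ``supported in $A$'' rather than the content of Lemma~\ref{lemma:supported}(a) (which gives the converse direction $\phi^!\phi_\ast\cong\Id$), but this does not affect the argument.
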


\begin{proof}
Let $f_0:Y_0\rightarrow Y_1$ be a nonzero morphism between indecomposable modules $Y_0,Y_1\in \m\La$. We need to show that there exists no chain of nonzero nonisomorphisms $f_i:Y_i\rightarrow Y_{i+1}$, $1\leq i \leq k$ with $Y_{k+1}\cong Y_0$. 

Assume by way of contradiction that such a chain exists. If all $Y_i$ are supported in $B$, then this gives rise to a chain of indecomposable $B$-module nonzero nonisomorphisms $\psi^{\ast}(f_i):\psi^{\ast}(Y_i)\rightarrow \psi^{\ast}(Y_{i+1})$ for $0\leq i\leq k$ such that $\psi^{\ast}(Y_{k+1})\cong  \psi^{\ast}(Y_0)$, which contradicts the fact that $B$ is rep\-re\-sen\-ta\-tion-di\-rect\-ed. 

Hence there exists some minimal $j$ such that $Y_{j}$ is not supported in $B$. Then $Y_j$ is supported in $A$ by Corollary \ref{cor:choices}. Since $Y_j$ is supported in $A$ and not in $B$, and since $\m\La = \left(\m B\right)_{\ast} \glue \left(\m A\right)_{\ast}$, it follows that $Y_i$ is supported in $A$ and not in $B$ for all $i\geq j$ by Definition \ref{def:catglue}(iii). Since $Y_{k+1}\cong Y_0$ and $j$ was minimal, it follows that $j=0$ and that all $Y_i$ are supported in $A$. Then this gives rise to a chain of indecomposable $A$-module nonzero nonisomorphisms $\phi^{!}(f_i):\phi^{!}(Y_i)\rightarrow \phi^{!}(Y_{i+1})$ for $0\leq i \leq k$ such that $\phi^{!}(Y_{k+1})\cong \phi^{!}(Y_0)$, which contradicts the fact that $A$ is rep\-re\-sen\-ta\-tion-di\-rect\-ed.
\end{proof}

\begin{corollary}\label{cor:compute}
Let $M\in \m \La$ be indecomposable. 
\begin{itemize}
\item[(a1)] If $M\in (\m A)_{\ast} \setminus (\m B)_{\ast}$, then $\tau (M) \cong \phi_{\ast} \tau \phi^{!} (M)$ and $\om (M) \cong \phi_{\ast} \om \phi^{!} (M)$. 
\item[(a2)] If $M\in (\m B)_{\ast}$, then $\tau (M) \cong \psi_{\ast}\tau \psi^{\ast} (M)$ and $\om (M) \cong \psi_{\ast} \om \psi^{\ast} (M)$.
\item[(b1)] If $M\in (\m B)_{\ast} \setminus (\m A)_{\ast}$, then $\tau^{-} (M) \cong \psi_{\ast} \tau^{-} \psi^{\ast} (M)$ and $\om^{-} (M) \cong \psi_{\ast} \om^{-} \psi^{\ast} (M)$.
\item[(b2)] If $M\in (\m A)_{\ast}$, then $\tau^{-} (M) \cong \phi_{\ast} \tau^{-}\phi^{!} (M)$ and $\om^{-} (M) \cong \phi_{\ast} \om^- \phi^{!} (M)$.
\end{itemize}
\end{corollary}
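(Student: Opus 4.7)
The plan is to leverage Proposition~\ref{prop:theyareglued} together with Theorem~\ref{thrm:general glue} throughout: for the Auslander--Reiten translates I transport almost split sequences between $(\m A)_\ast$ (respectively $(\m B)_\ast$) and $\m\Lambda$; for the (co)syzygies I identify the projective cover (respectively injective envelope) in $\m\Lambda$ with the $\phi_\ast$- or $\psi_\ast$-image of the corresponding one in $\m A$ or $\m B$.

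The preliminary step is to pin down which indecomposable projectives and injectives of $\m\Lambda$ come from $\m A$ or $\m B$. A direct pullback computation based on Lemma~\ref{lemma:defofvare} shows that $\varepsilon_i\Lambda \cong \phi_\ast(e_iA)$ for $h-l+1 \leq i \leq 0$ and $\varepsilon_i\Lambda \cong \psi_\ast(\epsilon_iB)$ for $1 \leq i \leq m$. The ingredient is that the images $\{f_P(e_i)\}_{h-l+1 \leq i \leq 0}$ are pairwise orthogonal idempotents of $KA_h$ summing to zero and hence individually zero, and similarly for $\{g_I(\epsilon_j)\}_{h+1 \leq j \leq m}$. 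Dually, $D(\Lambda\varepsilon_i) \cong \phi_\ast(D(Ae_i))$ for $h-l+1 \leq i \leq h$ and $D(\Lambda\varepsilon_i) \cong \psi_\ast(D(B\epsilon_i))$ for $h+1 \leq i \leq m$; the asymmetry between the projective and injective ranges reflects the orientation of the abutment chain.

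For the Auslander--Reiten translate in (a1), assume $M$ is nonprojective; by the projective compatibility and Lemma~\ref{lemma:intersection} (which places $\phi_\ast(e_iA)$ for $1 \leq i \leq h$ into $(\m A)_\ast \cap (\m B)_\ast$), this is equivalent to $\phi^!M$ being nonprojective in $\m A$. The almost split sequence $0 \to \tau_A\phi^!(M) \to E \to \phi^!M \to 0$ in $\m A$ becomes, after applying the exact fully faithful $\phi_\ast$, an almost split sequence in $(\m A)_\ast$, and Theorem~\ref{thrm:general glue}(i) promotes it to an almost split sequence in $\m\Lambda$; uniqueness gives $\tau_\Lambda(M) \cong \phi_\ast\tau_A\phi^!(M)$. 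The formula for $\tau$ in (a2) follows identically using $\psi_\ast$ and Theorem~\ref{thrm:general glue}(ii). For the $\tau^{-}$ parts of (b1) and (b2), one instead applies Theorem~\ref{thrm:general glue}(i) and (ii) to the almost split sequences \emph{ending} at $\phi_\ast\tau^-_A\phi^!(M)$ and $\psi_\ast\tau^-_B\psi^\ast(M)$, which lie in $(\m A)_\ast$ and $(\m B)_\ast$ respectively by construction.

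For the syzygies in (a1), the key observation is that Definition~\ref{def:catglue}(iii) forces every simple summand of $\topp M$ to have index in $\{h-l+1, \ldots, 0\}$: indeed $S_\Lambda(i) \in (\m B)_\ast$ for all $1 \leq i \leq m$, so $\Hom_\Lambda(M, S_\Lambda(i)) = 0$ for $M \in (\m A)_\ast \setminus (\m B)_\ast$ and such $i$. The projective compatibility then gives $P_\Lambda(M) \cong \phi_\ast P_A(\phi^!M)$, and applying the exact $\phi_\ast$ to the projective cover sequence in $\m A$ yields $\Omega_\Lambda(M) \cong \phi_\ast \Omega_A \phi^!(M)$. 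For (a2) no restriction on tops is needed since the $\psi$-side projective compatibility covers all of $\{1, \ldots, m\}$; similarly, (b2) is the dual of (a2) using the $\phi$-side injective compatibility (valid on all $A$-indices). The main obstacle I anticipate is (b1): the dual $\Hom$-vanishing $\Hom_\Lambda((\m A)_\ast, M) = 0$ for $M \in (\m B)_\ast \setminus (\m A)_\ast$ is not directly among the glueing axioms. I would deduce it from the equality $\cDI = \Sub(\Fac(I))$ recorded after Proposition~\ref{prop:triangles}: since $\soc D(B\epsilon_j) = S_B(j)$ lies in the abutment chain $\{1, \ldots, h\}$ for every injective $D(B\epsilon_j)$ with $j \in \{1, \ldots, h\}$ appearing at the top row of $\DI$, every indecomposable module in $\cDI$ has simple socle at an abutment-chain vertex, so contrapositively $\soc M$ for $M \in (\m B)_\ast \setminus (\m A)_\ast$ indecomposable lies in $\{h+1, \ldots, m\}$. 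The $\psi$-side injective compatibility then applies and gives $I_\Lambda(M) \cong \psi_\ast I_B(\psi^\ast M)$, whence $\Omega^-_\Lambda(M) \cong \psi_\ast \Omega^-_B \psi^\ast(M)$.
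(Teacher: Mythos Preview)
Your overall strategy matches the paper's: the Auslander--Reiten translate claims are immediate from Theorem~\ref{thrm:general glue}, and the (co)syzygy claims reduce to identifying the projective cover (respectively injective envelope) of $M$ in $\m\Lambda$ with the $\phi_\ast$- or $\psi_\ast$-image of the corresponding one in $\m A$ or $\m B$. Your argument for the syzygy in (a1) is in fact slightly cleaner than the paper's: you invoke Definition~\ref{def:catglue}(iii) directly to kill $\Hom_\Lambda(M,S_\Lambda(i))$ for $1\le i\le m$, whereas the paper routes through Lemma~\ref{lemma:techproj}, Lemma~\ref{lemma:intersection}, and Proposition~\ref{prop:triangles} to reach the same conclusion.

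There is, however, a genuine logical slip in your treatment of the cosyzygy in (b1). You establish that every indecomposable $N\in\cDI$ has socle supported in $\{1,\dots,h\}$, and then claim ``contrapositively'' that $\soc M$ lies in $\{h+1,\dots,m\}$ for $M\in(\m B)_\ast\setminus(\m A)_\ast$. But the contrapositive of ``$N\in\cDI\Rightarrow\soc N\subseteq\{1,\dots,h\}$'' is ``$\soc N\not\subseteq\{1,\dots,h\}\Rightarrow N\notin\cDI$'', which does \emph{not} give what you need. What you need is the \emph{converse}: if $\Hom_B(S_B(j),\psi^\ast M)\neq 0$ for some $1\le j\le h$, then $\psi^\ast M\in\cDI$. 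This converse is true and follows from the part of Proposition~\ref{prop:triangles}(b) stating that no arrows leave $\DI$ in $\Gamma(B)$: since $S_B(j)\in\cDI$ and $B$ is representation-finite, any nonzero morphism from $S_B(j)$ to an indecomposable $N$ factors as a sum of compositions of irreducibles, all of which stay inside $\cDI$, forcing $N\in\cDI$. Contraposing \emph{that} implication gives the desired vanishing $\Hom_B(S_B(j),\psi^\ast M)=0$ for $1\le j\le h$, and hence $\soc M$ is supported in $\{h+1,\dots,m\}$. With this correction the rest of your (b1) argument goes through.
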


\begin{proof}
We only prove (a1) and (a2); (b1) and (b2) are similar. The claims about $\tau$ follow immediately by Theorem \ref{thrm:general glue} and so we only show the claims about the syzygy.

If $M\in (\m A)_{\ast} \setminus (\m B)_{\ast}$ and $\Hom_{\La}(\vare_i \La , M)\neq 0$, then $h-l+1\leq i \leq h$ by Lemma \ref{lemma:techproj}. Moreover, since $M\in (\m A)_{\ast} \setminus (\m B)_{\ast}$, we have that $M\not\in \cPD$ by Corollary \ref{lemma:intersection}. In particular, $\Hom_{\La}(M,S(i))=0$ for $1\leq i \leq h$ by Proposition \ref{prop:triangles}. Therefore, if $\vare_{i}\La$ is a summand of the projective cover of $M$, then $h-l+1\leq i \leq 0$. But then $\vare_i\La$ is supported in $A$ by Proposition \ref{prop:projinj} and so the projective cover of $M$ is supported in $A$. In particular, we can compute the syzygy of $M$ by viewing $M$ as an $A$-module instead.

If $M\in (\m B)_{\ast}$ then again by Lemma \ref{lemma:techproj} the projective cover of $M$ is supported in $B$ and the result follows as in the previous case.
\end{proof}

\begin{corollary}\label{cor:globaldimensionglued}
Let $\gldim(\La)=d$, $\gldim(A)=d_1$ and $\gldim(B)=d_2$. Then 
\[\max\{d_1,d_2\} \leq d \leq d_1+d_2.\]
\end{corollary}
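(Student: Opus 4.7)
My plan is to use the syzygy formulas from Corollary \ref{cor:compute} to reduce projective-dimension computations in $\m\La$ to the corresponding ones in $\m A$ and $\m B$, exploiting the gluing decomposition $\m\La = (\m B)_\ast \glue (\m A)_\ast$ from Proposition \ref{prop:theyareglued}.

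For the lower bound $d \geq \max\{d_1,d_2\}$, by symmetry I only need to show $d \geq d_1$. The cases $d_1 \leq 1$ follow from $\La$ being non-semisimple (its quiver contains that of $A$ by Lemma \ref{lemma:gluingquivers}), so assume $d_1 \geq 2$ and pick a simple $S = S_A(i_0)$ realizing $\pd_A S = d_1$. By Corollary \ref{cor:diminabutment}(a), $S \notin \cPD$, so $i_0 \in \{h-l+1,\dots,0\}$ and $\phi_\ast(S) \in (\m A)_\ast \setminus (\m B)_\ast$. Iterating Corollary \ref{cor:compute}(a1) gives $\om_\La^k \phi_\ast(S) \cong \phi_\ast(\om_A^k S)$ as long as $\om_A^k S$ remains outside $\cPD$. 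Since $\pd_A \om_A^k S = d_1 - k$, Corollary \ref{cor:diminabutment}(a) forces the first index $j$ at which $\om_A^j S \in \cPD$ to satisfy $j \in \{d_1-1, d_1\}$ (or to be absent). If $j = d_1$ or this never occurs, then $\om_\La^{d_1}\phi_\ast(S) \neq 0$ and $\pd_\La \phi_\ast(S) \geq d_1$. If $j = d_1 - 1$, switching to Corollary \ref{cor:compute}(a2) from step $j+1$ onwards yields $\pd_\La \phi_\ast(S) = (d_1-1) + \pd_B Z$ with $Z = \psi^\ast\phi_\ast(\om_A^{d_1-1} S) \in \cDI$. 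Here the key observation is that $Z$ cannot be projective in $\m B$: if $Z = \epsilon_k B$ for some $k \in \{1,\dots,h\}$ were projective, its image under the $\m KA_h$-identification of Lemma \ref{lemma:intersection} would be $P_{KA_h}(k)$, forcing $\om_A^{d_1-1} S \cong e_k A$, contradicting $\pd_A \om_A^{d_1-1} S = 1$. Hence $\pd_B Z \geq 1$ and again $\pd_\La \phi_\ast(S) \geq d_1$.

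For the upper bound $d \leq d_1 + d_2$, I bound $\pd_\La M$ for an arbitrary indecomposable $M \in \m\La$; by Corollary \ref{cor:choices} and symmetry assume $M \in (\m A)_\ast$. If $M \in (\m A)_\ast \cap (\m B)_\ast$, Corollary \ref{cor:compute}(a2) applied iteratively gives $\om_\La^k M \cong \psi_\ast(\om_B^k \psi^\ast M)$ for all $k$, whence $\pd_\La M = \pd_B \psi^\ast M \leq d_2$. Otherwise $M \in (\m A)_\ast \setminus (\m B)_\ast$, and Corollary \ref{cor:compute}(a1) iterates to $\om_\La^k M \cong \phi_\ast(\om_A^k \phi^! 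M)$ up to the first step $j$ at which $\om_A^j \phi^! M$ either vanishes or has a summand in $\cPD$. Because $\pd_A \phi^! M \leq d_1$ and the projective $A$-modules in $\cPD$ are exactly the $e_k A$ with $k \in \{1,\dots,h\}$, either the iteration in $\m A$ stabilizes within $d_1$ steps with $\pd_\La M \leq d_1$, or $j \leq d_1$ and at step $j$ the syzygy $\om_\La^j M$ (after absorbing any genuinely projective $\La$-summands) falls into $(\m A)_\ast \cap (\m B)_\ast$, reducing to the first case with a further resolution of length at most $d_2$. In either event, $\pd_\La M \leq d_1 + d_2$. The main obstacle throughout is the smooth transition at the interface: once an $A$-syzygy enters $\cPD$, one must switch from $\phi_\ast\om_A\phi^!$ to $\psi_\ast\om_B\psi^\ast$, and the common identification $\cPD \equiv \cDI$ of Lemma \ref{lemma:intersection} is precisely what makes this both consistent and tight.
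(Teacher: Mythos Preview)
Your upper bound is close to the paper's in spirit, but the paper's bookkeeping is cleaner: it proves by induction that $\om_\La^j(M)\in\add\bigl(\phi_\ast(\om_A^j(\m A)),\,(\m B)_\ast\bigr)$ for every $j$, then observes that after $d_1$ steps the $A$-part is $A$-projective (hence, when outside $(\m B)_\ast$, also $\La$-projective by (a1)) and the $B$-part needs at most $d_2$ more. Your formulation ``at step $j$ the syzygy $\om_\La^j M$ falls into $(\m A)_\ast\cap(\m B)_\ast$'' is not correct as stated: a syzygy can have some indecomposable summands in $\cPD$ and others outside, and the latter require further iteration in $A$. The bound $d_1+d_2$ still holds once you track both pieces, but that is precisely what the paper's inductive statement does. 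Also, the ``by symmetry'' covering $M\in(\m B)_\ast$ is not a symmetry of the setup---Corollary~\ref{cor:compute} is asymmetric in $A$ and $B$---and that case is handled directly by iterating (a2), giving $\pd_\La M\leq d_2$.

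Your lower bound is where you diverge most from the paper, and you make life harder than necessary. The paper exploits that (b2) applies to \emph{all} of $(\m A)_\ast$, not just the part outside $(\m B)_\ast$: taking $L\in\m A$ with $\id_A L=d_1$ gives $\id_\La\phi_\ast(L)=d_1$ in one line, with no interface to cross. Dually (a2) gives $d_2\leq d$. Your route through $\pd_A$ forces you to track when $A$-syzygies enter $\cPD$, and your claim that this first happens at step $d_1-1$ or $d_1$ tacitly assumes no summand of $\om_A^k S$ lands in $\cPD$ earlier; that can certainly happen, since such a summand only has $\pd_A\leq 1$ while other summands carry the remaining dimension. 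The argument is salvageable by always following an indecomposable summand of maximal $A$-projective-dimension, and your key observation---that a projective $B$-module lying in $\cDI$ must sit on the left edge of the triangle and hence correspond to a projective $A$-module in $\cPD$---is correct and makes the final step go through. But the entire detour is avoidable by switching to injective dimension on the $A$-side.
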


\begin{proof}
Let $M\in\m\La$ and set $\cU^k=\phi_{\ast}\left(\om^k(\m A)\right)$. Since $\gldim(A)=d_1$, we have that $\cU^k=0$ for $k>d_1$. We claim that 
\begin{equation}\label{eq:syzygypower}
\om^j (M) \subseteq \add(\cU^j,(\m B)_\ast) \text{ for any $j\geq 0$.}
\end{equation} 
We prove (\ref{eq:syzygypower}) by induction. The base case $j=0$ follows immediately by Proposition \ref{prop:theyareglued}. For the induction step, assume that (\ref{eq:syzygypower}) holds for $j=k$ and we will show that it holds for $j=k+1$. Then we have that
\[ \om^k(M) \cong X \oplus Y, \]
with $X\in \cU^k$ and $Y\in (\m B)_{\ast}$. By Proposition \ref{prop:theyareglued} we can write $X\cong X_1 \oplus X_2$ with $X_1\in (\m A)_{\ast}\setminus (\m B)_{\ast}$ and $X_2\in (\m B)_\ast$. Then by Corollary \ref{cor:compute}(a1) we have that $\om (X_1) \in \cU^{k+1}$ and by Corollary \ref{cor:compute}(a2) we have that $\om (X_2)$, $\om (Y) \in (\m B)_{\ast}$. Hence 
\[\om^{k+1} (M) \cong (\om (X_1)) \oplus \left(\om(X_2 \oplus Y)\right) \in \add(\cU^{k+1}, (\m B)_{\ast}),\]
and the induction step is proved.

Let us now show that $d\leq d_1+d_2$. If we have $\om^{d_1}(M)=0$, then $\pd(M)\leq d_1\leq d_1+d_2$. If $\om^{d_1}(M)\neq 0$, then by (\ref{eq:syzygypower}) we can write $\om^{d_1}(M)\cong U \oplus V$ with  $U\in\cU^{d_1}\setminus (\m B)_{\ast}$ and $V\in(\m B)_\ast)$. Then
\[ \pd(M) = d_1 + \max\left\{ \pd(U),\pd(V) \right\}. \]
Since $\phi^!(U)\in \om^{d_1}(\m A)$, it follows that $\phi^!(U)$ is a projective $A$-module and hence $\om \phi^!(U)=0$. Since $U\not\in (\m B)_{\ast}$, Corollary \ref{cor:compute}(a1) gives $\om (U) = 0$ and so $\pd(U)=0$. Since $V\in (\m B)_{\ast}$, we can compute the projective resolution of $V$ in $\m B$ by Corollary \ref{cor:compute}(a2). In particular we have that $\pd(V)\leq \gldim(B)=d_2$. It follows that
\[ \pd(M) = d_1+\pd(V) \leq d_1+d_2,\]
and since $M$ was arbitrary, we conclude that $d\leq d_1+d_2$.

Next, let us now show that $d_2\leq d$. Let $N$ be a $B$-module with $\pd(N)=d_2$. Then $\psi_{\ast}(N)$ is a $\La$-module and by Corollary \ref{cor:compute}(a2) we have that $\pd(\psi_{\ast}(N))=d_2$. Hence $d_2\leq d$. Finally, let us show that $d_1\leq d$. Similarly to before, let $L$ be an $A$-module with $\id(L)=d_1$. Then $\phi_{\ast}(L)$ is a $\La$-module and by Corollary \ref{cor:compute}(b2) we have that $\id(\phi_{\ast}(L))=d_1$, which completes the proof.
\end{proof}

\begin{corollary}\label{cor:AR glued}
For the Aus\-lan\-der--Rei\-ten quiver of $\La$ we have, as quivers, $\Gamma(\La)=\Gamma(B)\coprod_{\triangle} \Gamma(A)$, where the right-hand side denotes the amalgamated sum under the identification $\triangle=\phi_\ast(\PD)=\psi_\ast(\DI)$. Moreover, in this identification, the vertex $[M]\in \Gamma(\La)$ corresponds to the vertex $[\phi^{!}(M)]$ in $\Gamma(A)$ if $M$ is supported in $A$ and to the vertex $[\psi^{\ast}(M)]$ in $\Gamma(B)$ if $M$ is supported in $B$.
\end{corollary}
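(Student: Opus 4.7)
The plan is to establish the quiver isomorphism in two stages: first matching vertices, then matching arrows, with the whole argument driven by Proposition \ref{prop:theyareglued} (gluing of module categories) and Theorem \ref{thrm:general glue} (transfer of almost split sequences).

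For the vertices, I would start from Corollary \ref{cor:choices}, which states that every indecomposable $M \in \m\La$ is supported in $A$ or in $B$. Since $\phi_\ast$ and $\psi_\ast$ are fully faithful, each such $M$ corresponds either to a unique indecomposable in $\m A$ via $\phi^!$ (when supported in $A$) or to a unique indecomposable in $\m B$ via $\psi^\ast$ (when supported in $B$), using Lemma \ref{lemma:supported}. Lemma \ref{lemma:intersection} identifies the modules supported in both as $\phi_\ast(\cPD) = \psi_\ast(\cDI)$, i.e.\ as the images of the common foundation $\triangle$ in $\Gamma(A)$ and $\Gamma(B)$. This gives precisely the amalgamated vertex set. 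The final assertion about which vertex each $[M]$ is identified with then follows directly from this description.

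For the arrows, I would use that $\La$ is representation-finite (Corollary \ref{cor:Laisdirected}), so that each arrow in $\Gamma(\La)$ is part of a unique almost split sequence ending at some non-projective indecomposable. Given such an $M$, if $M$ is supported in $A$, then the almost split sequence in $\m A$ ending at $\phi^!(M)$ (existence guaranteed when $\phi^!(M)$ is non-projective in $\m A$; the projective case is handled separately using Proposition \ref{prop:projinj}) maps via $\phi_\ast$ to an almost split sequence in $\m\La$ ending at $M$ by Theorem \ref{thrm:general glue}(i). By uniqueness, this is \emph{the} almost split sequence in $\m\La$ ending at $M$, so the arrows into $[M]$ correspond bijectively to the arrows into $[\phi^!(M)]$ in $\Gamma(A)$. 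The case $M \in (\m B)_\ast$ is handled symmetrically via Theorem \ref{thrm:general glue}(ii), and the projective/injective boundary cases are addressed using Proposition \ref{prop:projinj} together with Corollary \ref{cor:numberofprojandinj}.

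The step I expect to require the most care is the consistency check on the overlap $\phi_\ast(\cPD) = \psi_\ast(\cDI)$: a module $M$ supported in both $A$ and $B$ has an almost split sequence obtainable from either $\m A$ or $\m B$, and these must coincide in $\m\La$. This is forced by uniqueness of almost split sequences, but to see that the two realizations really agree on the nose one uses Proposition \ref{prop:triangles}, which gives both $\PD$ and $\DI$ the common shape $\triangle(h)$, together with Corollary \ref{cor:compute} which shows that $\tau_\La$ and $\om_\La$ can be computed in either $\m A$ or $\m B$ for such modules. Once this consistency is pinned down, the amalgamated identification $\Gamma(\La) = \Gamma(B) \coprod_{\triangle} \Gamma(A)$ with $\triangle = \phi_\ast(\PD) = \psi_\ast(\DI)$ follows, as does the last clause stating which Auslander--Reiten quiver vertex each $[M]$ is identified with.
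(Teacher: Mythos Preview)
Your proposal is correct and follows essentially the same approach as the paper's proof, which simply states that the result is ``Immediate by Proposition \ref{prop:theyareglued} and Theorem \ref{thrm:general glue}, since almost split sequences in $\Gamma(\La)$ correspond to almost split sequences in either $\Gamma(A)$ or $\Gamma(B)$,'' with the vertex identification following from Proposition \ref{prop:decomposition}. You have unpacked this terse justification into the natural two-stage argument (vertices, then arrows) and added the overlap consistency check, which the paper leaves implicit; your invocation of Corollary \ref{cor:numberofprojandinj} for the boundary cases is more than the paper needs, but does no harm.
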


\begin{proof}
Immediate by Proposition \ref{prop:theyareglued} and Theorem \ref{thrm:general glue}, since almost split sequences in $\Gamma(\La)$ correspond to almost split sequences in either $\Gamma(A)$ or $\Gamma(B)$. The vertex identification follows from Proposition \ref{prop:decomposition}.
\end{proof}

\begin{example}\label{ex:first glue}
Let $B$ be as in Example \ref{ex:twogluesquiver} and let $A$ be given by the quiver with relations
\[\begin{tikzpicture}[scale=0.9, transform shape]
\node (1) at (0,1) {$1$};
\node (2) at (1,1) {$2$};
\node (3) at (2,1) {$3$\nospacepunct{.}};

\node (0) at (-1,1) {$0$};
\draw[->] (0) to (1);
\draw[dotted] (0) to [out=30,in=150] (3);

\draw[->] (1) to (2);
\draw[->] (2) to (3);

\end{tikzpicture}\]
Let $I=I_B(3)=\qthree{1}[2][3]\in\m B$ be the indecomposable injective $B$-module corresponding to the vertex $3$ of $Q_B$ and $P=P_A(1)=\qthree{1}[2][3]\in\m A$ be the indecomposable projective $A$-module corresponding to the vertex $1$ of $Q_A$. Then $I$ is a right abutment of $B$ and $P$ is a left abutment of $A$, both of height $3$. Hence the gluing $\La = B \glue[P][I] A$ is defined and by Lemma \ref{lemma:gluingquivers} we have that $\La$ is given by the quiver with relations
\[\begin{tikzpicture}[scale=0.9, transform shape]
\node (1) at (0,1) {$1$};
\node (2) at (1,1) {$2$};
\node (3) at (2,1) {$3$};
\node (4) at (3,1) {$4$};
\node (5) at (4,0.5) {$5$};
\node (6) at (5,0.5) {$6$};
\node (7) at (6,0.5) {$7$\nospacepunct{.}};
\node (1') at (1,0) {$1'$};
\node (2') at (2,0) {$2'$};
\node (3') at (3,0) {$3'$};

\node (0) at (-1,1) {$0$};
\draw[->] (0) to (1);
\draw[dotted] (0) to [out=30,in=150] (3);

\draw[->] (1) to (2);
\draw[->] (2) to (3);
\draw[->] (3) to (4);
\draw[->] (4) to (5);
\draw[->] (5) to (6);
\draw[->] (6) to (7);
\draw[->] (1') to (2');
\draw[->] (2') to (3');
\draw[->] (3') to (5);

\draw[dotted] (2) to [out=30,in=150] (4);
\draw[dotted] (3) to [out=-30,in=0] (5.west);
\draw[dotted] (4) to [out=0,in=160] (7);
\draw[dotted] (2') to [out=30,in=0] (5.west);
\draw[dotted] (3') to [out=0,in=210] (6);
\end{tikzpicture}\]
The Aus\-lan\-der--Rei\-ten quiver $\Gamma(B)$ of $B$ was computed in Example \ref{ex:twogluesARquiver}. The Aus\-lan\-der--Rei\-ten quiver $\Gamma(A)$ of $A$ is
\[\begin{tikzpicture}[scale=1.2, transform shape]
\tikzstyle{nct3}=[circle, minimum width=6pt, draw=white, inner sep=0pt, scale=0.9]

\node[nct3] (A) at (0,0) {$\qthree{}[3][]$};
\node[nct3] (B) at (0.7,0.7) {$\qthree{2}[3]$};
\node[nct3] (C) at (1.4,1.4) {$\qthree{1}[2][3]$};
\node[nct3] (D) at (1.4,0) {$\qthree{}[2][]$};
\node[nct3] (E) at (2.1,0.7) {$\qthree{1}[2]$};
\node[nct3] (F) at (2.8,1.4) {$\qthree{0}[1][2]$};
\node[nct3] (G) at (2.8,0) {$\qthree{}[1][]$};
\node[nct3] (H) at (3.5,0.7) {$\qthree{0}[1]$};
\node[nct3] (J) at (4.2,0) {$\qthree{}[0][]$\nospacepunct{.}};

\draw[->] (A) to (B);
\draw[->] (B) to (C);
\draw[->] (B) to (D);
\draw[->] (C) to (E);
\draw[->] (D) to (E);
\draw[->] (E) to (F);
\draw[->] (E) to (G);
\draw[->] (F) to (H);
\draw[->] (G) to (H);
\draw[->] (H) to (J);

\draw[loosely dotted] (A.east) -- (D);
\draw[loosely dotted] (B.east) -- (E);
\draw[loosely dotted] (D.east) -- (G);
\draw[loosely dotted] (E.east) -- (H);
\draw[loosely dotted] (G.east) -- (J);
\end{tikzpicture}\]

Using Corollary \ref{cor:AR glued} we conclude that the Aus\-lan\-der--Rei\-ten quiver $\Gamma(\La)$ of $\La$ is $\Gamma(B)\coprod_{\triangle} \Gamma(A)$ or
\[\begin{tikzpicture}[scale=1.2, transform shape]
\tikzstyle{nct3}=[circle, minimum width=6pt, draw=white, inner sep=0pt, scale=0.9]
\node[nct3] (A) at (0,0) {$\qthree{}[7][]$};
\node[nct3] (B) at (0.7,0.7) {$\qthree{6}[7]$};
\node[nct3] (C) at (1.4,1.4) {$\qthree{5}[6][7]$};
\node[nct3] (D) at (1.4,0) {$\qthree{}[6][]$};
\node[nct3] (E) at (2.1,0.7) {$\qthree{5}[6]$};
\node[nct3] (F) at (2.8,1.4) {$\qthree{4}[5][6]$};
\node[nct3] (G) at (2.8,0) {$\qthree{}[5][]$};
\node[nct3] (H) at (3.5,0.7) {$\qthree{4}[5]$};
\node[nct3] (I) at (3.5,-0.7) {$\qthree{3'}[5]$};
\node[nct3] (J) at (4.2,0) {$\begin{smallmatrix} 4 && 3' \\ & 5 &
\end{smallmatrix}$};
\node[nct3] (K) at (4.9,0.7) {$\qthree{}[3'][]$};
\node[nct3] (L) at (4.9,-0.7) {$\qthree{}[4][]$};
\node[nct3] (M) at (5.6,1.4) {$\qthree{2'}[3']$};
\node[nct3] (N) at (5.6,-1.4) {$\qthree{3}[4]$};
\node[nct3] (O) at (6.3,2.1) {$\qthree{1'}[2'][3']$};
\node[nct3] (P) at (6.3,0.7) {$\qthree{}[2'][]$};
\node[nct3] (Q) at (6.3,-0.7) {$\qthree{}[3][]$};
\node[nct3] (R) at (7,1.4) {$\qthree{1'}[2']$};
\node[nct3] (S) at (7,-1.4) {$\qthree{2}[3]$};
\node[nct3] (T) at (7.7,0.7) {$\qthree{}[1'][]$};
\node[nct3] (U) at (7.7,-0.7) {$\qthree{}[2][]$};
\node[nct3] (V) at (7.7,-2.1) {$\qthree{1}[2][3]$};
\node[nct3] (W) at (8.4,-1.4) {$\qthree{1}[2]$};
\node[nct3] (X) at (9.1,-0.7) {$\qthree{}[1][]$};
\node[nct3] (Y) at (9.1,-2.1) {$\qthree{0}[1][2]$};
\node[nct3] (Z) at (9.8,-1.4) {$\qthree{0}[1]$};
\node[nct3] (AA) at (10.4,-0.7) {$\qthree{}[0][]$\nospacepunct{,}};

\draw[->] (A) to (B);
\draw[->] (B) to (C);
\draw[->] (B) to (D);
\draw[->] (C) to (E);
\draw[->] (D) to (E);
\draw[->] (E) to (F);
\draw[->] (E) to (G);
\draw[->] (F) to (H);
\draw[->] (G) to (H);
\draw[->] (G) to (I);
\draw[->] (H) to (J);
\draw[->] (I) to (J);
\draw[->] (J) to (K);
\draw[->] (J) to (L);
\draw[->] (K) to (M);
\draw[->] (L) to (N);
\draw[->] (M) to (O);
\draw[->] (M) to (P);
\draw[->] (N) to (Q);
\draw[->] (O) to (R);
\draw[->] (P) to (R);
\draw[->] (Q) to (S);
\draw[->] (R) to (T);
\draw[->] (S) to (U);
\draw[->] (S) to (V);
\draw[->] (U) to (W);
\draw[->] (V) to (W);
\draw[->] (W) to (X);
\draw[->] (X) to (Z);
\draw[->] (Z) to (AA);
\draw[->] (W) to (Y);
\draw[->] (Y) to (Z);

\draw[loosely dotted] (A.east) -- (D);
\draw[loosely dotted] (B.east) -- (E);
\draw[loosely dotted] (D.east) -- (G);
\draw[loosely dotted] (E.east) -- (H);
\draw[loosely dotted] (G.east) -- (J);
\draw[loosely dotted] (H.east) -- (K);
\draw[loosely dotted] (I.east) -- (L);
\draw[loosely dotted] (K.east) -- (P);
\draw[loosely dotted] (M.east) -- (R);
\draw[loosely dotted] (L.east) -- (Q);
\draw[loosely dotted] (Q.east) -- (U);
\draw[loosely dotted] (P.east) -- (T);
\draw[loosely dotted] (S.east) -- (W);
\draw[loosely dotted] (U.east) -- (X);
\draw[loosely dotted] (W.east) -- (Z);
\draw[loosely dotted] (X.east) -- (AA);
\end{tikzpicture}\]
where the intersection of $\phi_{\ast}(\Gamma(A))$ and $\psi_{\ast}(\Gamma(B))$ is exactly $\triangle(3)$.
\end{example}

\begin{example}\label{ex:naming}
Let $A$ be a rep\-re\-sen\-ta\-tion-di\-rect\-ed algebra with a left abutment $P$ of height $h$. Then, by Example \ref{ex:KAglued}, we have $A = KA_h \glue[P][I(h)] A$ and so by Corollary \ref{cor:AR glued} we have $\Gamma(\La) = \Gamma(A) \coprod_{\triangle} \triangle(h)$ under the identification $(\Id_A)_{\ast} (\PD) = (f_P)_{\ast}(\triangle (h))$. Hence we can view any $A$-module $T\in \cPD$ as a $KA_h$-module via the functor $f_P^!$. Similarly, if $I$ is a right abutment of $A$ of height $h$, any $A$-module $X\in \cDI$ can be viewed as a $KA_h$-module through the identification $A = A \glue[P(h)][I] KA_h$ and the corresponding functor $g_I^{\ast}$.
\end{example}

We finish this section with a corollary that describes the connection between abutments of $\La$ and abutments of $A$ and $B$.
\begin{corollary}\label{cor:remaining} 
Let $\La = B \glue A$ and let $h-l+1\leq i \leq m$.
\begin{itemize}
\item[(a1)] If $h-l+1\leq i \leq h$ and $D( A e_i)$ is a right abutment of $A$, then $D(\La \vare_i)$ is a right abutment of $\La$.
\item[(a2)] If $h-l+1\leq i \leq 0$ and $e_i A$ is a left abutment of $A$ such that $\cF_{e_i A} \cap \cPD = 0$, then $\vare_i \La$ is a left abutment of $\La$.
\item[(b1)] If $1\leq i \leq m$ and $\epsilon_i B$ is a left abutment of $B$, then $\vare_i\La$ is a left abutment of $\La$.
\item[(b2)] If $h+1\leq i \leq m$, and $D(B \epsilon_i)$ is a right abutment of $B$ such that $\cG_{D(B \epsilon_i)} \cap \cDI = 0$, then $D(\La \vare_i)$ is a right abutment of $\La$.
\item[(c1)] If $\vare_i \La$ is a left abutment of $\La$, then $e_i A$ is a left abutment of $A$ if $h-l+1\leq i \leq 0$ and $\epsilon_i B$ is a left abutment of $B$ if $1\leq i \leq m$.
\item[(c2)] If $D(\La \vare_i)$ is a right abutment of $\La$, then $D(A e_i)$ is a right abutment of $A$ if $h-l+1\leq i \leq h$ and $D(B e_i)$ is a right abutment of $B$ if $h+1\leq i \leq m$.
\end{itemize}
\end{corollary}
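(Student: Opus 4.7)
The plan is to translate each statement into a quiver-theoretic condition using Proposition~\ref{prop:abutmentsquiver} and the explicit description of $Q_\La$ given by Lemma~\ref{lemma:gluingquivers}. Recall that $Q_\La$ is obtained by pasting $Q_A$ and $Q_B$ along the common arm $1\to 2\to\cdots\to h$, with the extra relations being exactly the paths from $\mathbf{Q'_A}$ into $\mathbf{Q'_B}$, and that left (respectively right) abutments of height $k$ correspond precisely to linear arms of length $k$ with incoming arrows (respectively outgoing arrows) permitted only at the top (respectively bottom) vertex of the arm and with no zero relations along the arm. So every assertion reduces to tracking whether an arm in $Q_A$ or $Q_B$ survives in $Q_\La$.

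For (a1) and (b1) the arm in $A$ or $B$ persists verbatim in $Q_\La$. Concretely, for (a1) the arm of $D(Ae_i)$ starts at vertex $i$ and travels forward; in $Q_\La$ the only arrows added at its endpoint are those from vertex $h$ into $\mathbf{Q'_B}$, which are outgoing arrows at the endpoint and so compatible with the right abutment arm structure, while no new zero relation runs along the arm since the extra relations all traverse the common portion and terminate in $\mathbf{Q'_B}$. The case (b1) is dual: any new incoming arrow picked up by the left abutment arm of $\epsilon_i B$ occurs at vertex~$1$ (the top of the common arm), which is the allowed position for incoming arrows into a left abutment.

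For (a2) and (b2) the compatibility condition keeps the new abutment arm disjoint from the gluing interface. If $e_iA$ is a left abutment of $A$ with $h-l+1\leq i\leq 0$ and its arm reaches the common part, then since each vertex in $\{1,\ldots,h-1\}$ has (by the $P$-structure) a unique outgoing arrow, namely to the next vertex of the $P$-arm, the arm of $e_iA$ must continue all the way to~$h$. This forces $\Sub(P)\subseteq \Sub(e_iA)$, hence $\cPD\subseteq \cF_{e_iA}$, contradicting the hypothesis $\cF_{e_iA}\cap \cPD=0$. Hence the arm stays in $\mathbf{Q'_A}$, its endpoint gains no new outgoing arrows in $Q_\La$ (the new arrows out of the $A$-side occur only at $h$), and Proposition~\ref{prop:abutmentsquiver}(a) yields that $\vare_i\La$ is a left abutment. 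Part (b2) is dual.

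For (c1) and (c2) we argue conversely. Vertex $h$ in $Q_\La$ has outgoing arrows into $\mathbf{Q'_B}$ unless $\mathbf{Q'_B}$ is empty, in which case $B\cong KA_h$ and $\La=A$ by Example~\ref{ex:KAglued}, rendering the claim trivial. Assuming the nondegenerate case, if $\vare_i\La$ is a left abutment with $h-l+1\leq i\leq 0$, its arm cannot pass through $h$ (the abutment structure would otherwise fail at $h$), so it stays in $\mathbf{Q'_A}$ and restricts to a left abutment $e_iA$ of $A$; for $1\leq i\leq m$ the arm of $\vare_i\La$ lies in the portion of $Q_\La$ coming from $Q_B$ (any overlap with $\mathbf{Q'_A}$ would require arrows from the common part back into $\mathbf{Q'_A}$, which do not exist), and restricting yields a left abutment $\epsilon_iB$ of $B$. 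Part (c2) is dual. The main obstacle is the careful translation of the categorical conditions $\cF_{e_iA}\cap \cPD=0$ and $\cG_{D(B\epsilon_i)}\cap \cDI=0$ into the quiver statement that the two arms share no vertex in the common part; once this is done by invoking Proposition~\ref{prop:triangles} to identify the foundations with the appropriate triangular regions of $\Gamma(A)$ and $\Gamma(B)$, everything else is a direct verification.
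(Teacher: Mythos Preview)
Your approach is sound but genuinely different from the paper's: you work with the quiver-with-relations characterization of abutments (Proposition~\ref{prop:abutmentsquiver}) together with the explicit description of $Q_\La$ (Lemma~\ref{lemma:gluingquivers}), whereas the paper argues via the Auslander--Reiten-quiver characterization (Proposition~\ref{prop:triangles}) and the description of $\Gamma(\La)$ as an amalgamated sum (Corollary~\ref{cor:AR glued}). Your route is more elementary and avoids invoking the AR-quiver machinery; the paper's route is shorter because the structural result $\Gamma(\La)=\Gamma(B)\coprod_{\triangle}\Gamma(A)$ is already available and does all the work in one stroke.

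Two points need tightening. First, in (a1) your description of the arm direction is reversed: for a right abutment $D(Ae_i)$, the arm in $Q_A$ \emph{ends} at $i$ (this is the vertex carrying the outgoing arrows), it does not start there. Your subsequent reasoning about ``outgoing arrows at the endpoint'' is correct once this is fixed, so the argument survives, but as written it is confusing. Second, and more substantively, in (c1) for $h-l+1\leq i\leq 0$ you assert that the arm of $\vare_i\La$ cannot pass through $h$ because ``the abutment structure would otherwise fail at $h$''. This only rules out the case where $h$ has more than one outgoing arrow; if $h$ has a single arrow into $Q'_B$ then $h$ could sit as an intermediate vertex of the arm as far as the arrow structure is concerned. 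The actual obstruction is the \emph{relations}: any path along such an arm from $Q'_A$ through $h$ into $Q'_B$ lies in the new relations of $R_\La$, violating the ``no relation along the arm'' clause of Proposition~\ref{prop:abutmentsquiver}(a). Once you add this, the case is complete.
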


\begin{proof}
Let us indicatively show (a2) and (c1); the rest are similar. For (a2) notice that since $h-l+1\leq i \leq 0$, we have that $\vare_i \La$ is supported in $A$ by Proposition \ref{prop:projinj}. In particular, by the definition of $\phi$, we have $\phi^!(\vare_i \La)=e_i A$. Since $\cF_{e_i A} \cap \cPD = 0$, it follows from Proposition \ref{prop:triangles} that the two subquivers ${\nsup{e_i A}{\triangle}}$ and $\PD$ of $\Gamma(A)$ are disjoint. Hence by Corollary \ref{cor:AR glued} it follows that $\phi_\ast\left({\nsup{e_i A}{\triangle}}\right)$ is of the form $\nsup{\vare_i\La}{\triangle}$ as in Proposition \ref{prop:triangles} and hence $\vare_i\La$ is a left abutment of $\La$.

For (c1) notice that $\phi^!(\vare_i\La)\cong e_i\La$ for $h-l+1\leq i \leq 0$ and $\psi^\ast(\vare_i\La) \cong \epsilon_i B$ for $1\leq i \leq m$ again by Proposition \ref{prop:projinj}. Then by Proposition \ref{cor:relatives}, the whole foundation $\nsup{\vare_i\La}{\triangle}$ is supported either in $A$ or in $B$, respectively. Then by Corollary \ref{cor:AR glued} the image of $\nsup{\vare_i\La}{\triangle}$ under $\phi^!$ (respectively $\psi^\ast$) is of the form $\nsup{e_i A}{\triangle}$ (respectively $\nsup{\epsilon_i B}{\triangle}$) and hence a left abutment of $A$ (respectively $B$) by Proposition \ref{prop:triangles}.
\end{proof}

\section{Part III: Fractures}

In this section we will show how to use gluing to construct many examples of rep\-re\-sen\-ta\-tion-di\-rect\-ed algebras admitting $n$-cluster tilting subcategories. In subsection \ref{subsec:fractured subcategories} we introduce the building blocks of our construction. In subsection \ref{sect:construction} we show how the construction works. In subsection \ref{sect:slices} we are interested in a special case of our construction which we can describe completely.

\subsection{Fractured subcategories}\label{subsec:fractured subcategories}
First, let us introduce some notation. Let $\La$ be a rep\-re\-sen\-ta\-tion-di\-rect\-ed algebra. We set
\[\PL=\cP:=\add(\La),\;\; \ABL=\ABP:=\add\left\{ P\in \cP \mid \text{$P$ is a left abutment of $\La$}\right\},\] 
\[\IL=\cI:=\add(D(\La)),\;\; \LAB=\IAB:=\add\left\{I\in \cI \mid \text{$I$ is a right abutment of $\La$}\right\}.\]

By Proposition \ref{prop:triangles} it follows that for $[P],[Q]\in\Ind{\ABP}$ the relation 
\[\text{$[P]\leq [Q]$ if and only if $\cPD \subseteq \cF_{Q}$}\]
is a partial order. Similarly we define $\leq$ on $\Ind{\IAB}$. We will refer to elements of those sets as \emph{maximal} or \emph{minimal} with respect to these partial orders. We set
\[\MABL=\MABP:=\add\left\{ P\in \ABP \mid \text{$P$ is maximal}\right\},\;\;\MLAB=\MIAB:=\add\left\{ I\in \IAB \mid \text{$I$ is maximal}\right\}.\]

The following important definition is due to Iyama (\cite{IYA1}, \cite{IYA2}). 

\begin{definition}\label{def:n-ct}
We call a subcategory $\mathcal{C}$ of $\m\La$ an \emph{$n$-cluster tilting subcategory} if 
\begin{align*}
\cC&=\Corth=\orthC,
\end{align*}
where
\[\Corth:=\{X\in\m\La \mid \Ext^i(\cC,X)=0 \text{ for all $0<i<n$}\},\]
\[\orthC:=\{X\in\m\La \mid \Ext^i(X,\cC)=0 \text{ for all $0<i<n$}\}.\]
\end{definition}

It is clear from the definition that $\m\La$ is the unique $1$-cluster tilting subcategory of $\La$. Observe that since $\La$ is rep\-re\-sen\-ta\-tion-fi\-nite, then any additive subcategory of $\m\La$ is of the form $\add (M)$ for some $M\in\m\La$. When $\add(M)$ is $n$-cluster tilting we call $M$ an \emph{$n$-cluster tilting module}. 

Note that $n$-cluster tilting subcategories are usually defined in more general settings by adding the requirement of functorial finiteness, but since $\add(M)$ is always functorially finite we can use the above definition.

Before we proceed, let us introduce one more piece of notation. Let $\cC$, $\cV$ be subcategories of $\m\La$. We set $\cC_{\setminus \cV}$ to be the additive closure of all indecomposable modules $X\in \cC$ such that $X\not\in \cV$. With this in mind we recall the following characterization of $n$-cluster tilting subcategories for rep\-re\-sen\-ta\-tion-di\-rect\-ed algebras.

\begin{theorem}\emph{\cite[Theorem 1]{VAS}}
\label{thrm:char}
Assume that $\La$ is a rep\-re\-sen\-ta\-tion-di\-rect\-ed algebra and let $\cC$ be a subcategory of $\m\La$. Then $\cC$ is an $n$-cluster tilting subcategory if and only if the following conditions hold:
\begin{itemize}
\item[(1)] $\cP \subseteq \cC$,

\item[(2)] $\tn$ and $\tno$ induce mutually inverse bijections
\[\begin{tikzpicture}
\node (0) at (0,0) {$\Ind{\cC_{\setminus\cP}}$};
\node (1) at (2,0) {$\Ind{\cC_{\setminus\cI}}$,};

\draw[-latex] (0) to [bend left=20] node [above] {$\tn$} (1);
\draw[-latex] (1) to [bend left=20] node [below] {$\tno$} (0);
\end{tikzpicture}\]

\item[(3)] $\om^i (M)$ is indecomposable for all indecomposable $M\in \cC_{\setminus\cP}$ and $0<i<n$,

\item[(4)] $\om^{-i}(N)$ is indecomposable for all indecomposable $N\in \cC_{\setminus\cI}$ and $0<i<n$.
\end{itemize}
\end{theorem}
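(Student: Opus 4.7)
The plan is to prove both implications of this biconditional, with the reverse direction carrying the substantive content. Throughout I would rely on the Auslander--Reiten duality formula
\[
\Ext^i_\La(X, Y) \cong D\overline{\Hom}_\La(Y, \tau \om^{i-1}X)
\]
valid for $i \geq 1$ and $X$ indecomposable non-projective, together with its dual using $\om^{-(i-1)}$, $\tau^-$, and stable Hom modulo projectives.

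For the forward direction, assume $\cC$ is an $n$-cluster tilting subcategory. Condition (1) is immediate: $\Ext^i_\La(\cP, -) = 0$ for all $i > 0$ forces $\cP \subseteq \Corth \cap \orthC = \cC$. For (2)--(4) I would invoke Iyama's higher Auslander--Reiten theory, which says that $\tn$ and $\tno$ restrict to mutually inverse bijections on indecomposables of $\cC_{\setminus \cP}$ and $\cC_{\setminus \cI}$, and that for any indecomposable $M \in \cC_{\setminus \cP}$ every intermediate syzygy $\om^i M$ with $0 < i < n$ lies in $\cC$. Indecomposability in (3) and (4) then follows because $\La$ is representation-directed, so indecomposables have local endomorphism rings in a rigid way preserved by the syzygy functor inside $\cC$.

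For the reverse direction, assume (1)--(4); the target is $\cC = \Corth = \orthC$. I would first show $\cC \subseteq \Corth \cap \orthC$. Take indecomposable $X, Y \in \cC$ and $0 < i < n$; if $X \in \cP$ there is nothing to show, so suppose $X \in \cC_{\setminus\cP}$. By (3) the syzygy $\om^{i-1}X$ is indecomposable, so the AR-duality formula reduces $\Ext^i_\La(X, Y) = 0$ to checking that every map $Y \to \tau \om^{i-1}X$ factors through an injective. A non-factoring map would yield a path in $\Gamma(\La)$ from $Y$ to $\tau\om^{i-1}X$, which, combined with the iteration of (2) identifying $\tau\om^{n-1}X = \tn X$ with the $\tno$-source of some element of $\cC_{\setminus\cI}$, would produce a cycle in the AR-quiver of $\La$ and contradict representation-directedness. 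Dually, (4) yields $\cC \subseteq \orthC$.

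For the reverse inclusions $\Corth \subseteq \cC$ and $\orthC \subseteq \cC$, I would argue by contradiction. Conditions (1)--(4) determine $\cC$ uniquely in a representation-directed setting: starting from $\cP$ and closing under $\tno$ via (2), with (3)--(4) pinning down the intermediate cosyzygies, one obtains exactly the set $\Ind \cC$. So for any indecomposable $Y \notin \cC$ one must produce some indecomposable $X \in \cC_{\setminus \cP}$ and some $0 < i < n$ such that $\overline{\Hom}_\La(Y, \tau\om^{i-1}X) \neq 0$, hence $\Ext^i_\La(X, Y) \neq 0$, contradicting $Y \in \Corth$. The main obstacle will be this last step: locating a concrete witness $X$ and verifying that the stable Hom is nonzero. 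I expect this to require a combinatorial walk through $\Gamma(\La)$, exploiting representation-directedness to show that any ``gap'' in $\cC$ is detected by a non-factoring morphism between suitable $\tau$-translates of indecomposables forced to lie in $\cC$ by (1)--(4).
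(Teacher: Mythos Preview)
This theorem is not proved in the present paper: it is quoted verbatim as \cite[Theorem~1]{VAS} and used as a black box throughout. There is therefore no proof here to compare your proposal against; the substantive argument lives in the cited reference.

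That said, your sketch has genuine gaps on its own terms. In the forward direction, the claim that ``every intermediate syzygy $\om^i M$ with $0<i<n$ lies in $\cC$'' is not a standard consequence of Iyama's higher Auslander--Reiten theory and is in fact generally false for $n$-cluster tilting subcategories; you cannot deduce indecomposability of $\om^i M$ from membership in $\cC$. The indecomposability in (3)--(4) requires a different argument specific to the representation-directed setting. In the reverse direction, your ``cycle'' argument for $\cC\subseteq\Corth$ does not go through as written: a nonzero morphism $Y\to\tau\om^{i-1}X$ with $0<i<n$ gives a path from $Y$ to $\tau\om^{i-1}X$, but $\tau\om^{i-1}X$ is not assumed to lie in $\cC$ (only $\tn X=\tau\om^{n-1}X$ is controlled by condition~(2)), so there is no evident return path to $Y$ and hence no cycle. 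Both directions would need the actual mechanism from \cite{VAS}, which tracks more carefully how the orbits under $\tn$, $\tno$ interact with the linear order on indecomposables imposed by representation-directedness.
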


Let $\{P_1,\dots,P_k\}$ be a complete collection of non-isomorphic representatives of maximal left abutment of $\La$. For $1\leq j \leq k$, let
\[ 0 \subseteq P_{jh_j} \subseteq \cdots \subseteq P_{j2} \subseteq P_{j1}=P_j\]
be the composition series of $P_j$. Set $P^{(j)}\coloneqq P_{j1} \oplus \cdots \oplus P_{jh_j}$. Then $\{P^{(j)}\}_{j=1}^k$ are all projective $\La$-modules with no isomorphic summands. Hence there exists a projective $\La$-module $Q$ such that 
\[\La \cong P^{(1)}\oplus\cdots\oplus P^{(k)}\oplus Q.\]
Then $\cP=\add\{P^{(1)},\dots,P^{(k)},Q\}$. To generalize the definition of an $n$-cluster tilting subcategory for rep\-re\-sen\-ta\-tion-di\-rect\-ed algebras using Theorem \ref{thrm:char}, we first replace the basic module $P^{(j)}$ by a suitable basic module 
\[ T^{(j)} = T_{j1} \oplus \cdots \oplus T_{jh_j} \in \cF_{P_j}.\]
Then we set $\cP^L\coloneqq\add\{T^{(1)},\dots,T^{(k)},Q\}$. Dually, we replace right abutments and define a subcategory $\cI^R$ in a similar manner. Then we replace all instances of $\cP$ and $\cI$ in Theorem \ref{thrm:char} with $\cP^L$ and $\cI^R$ respectively. Since we want to generalize the definition of $n$-cluster tilting, we also want to have $\Ext_{\La}^i(T^{(j)},T^{(j)})=0$ for $0<i<n$ and any $1\leq j \leq k$. Since by Corollary \ref{cor:diminabutment} we have that $\pd(T)\leq 1$, this simplifies to $\Ext_{\La}^1(T^{(j)},T^{(j)})=0$. Since $T^{(j)}\in \cF_{P_j}$, if we view $T$ as a $KA_h$-module via $f^!_{P_j}$, we conclude that $f^!_{P_j}(T^{(j)})$ should be a tilting $KA_h$-module. Tilting modules of $KA_h$ were classified in \cite{HR}. The following Proposition asserts that a basic tilting module of $KA_h$ has the correct number of indecomposable summands, which is necessary for our construction to work.

\begin{proposition}\cite[paragraph (4.1)]{HR}\label{prop:KAtilting}
Let $T$ be a basic tilting module of $KA_h$. Then $T$ has exactly $h$ indecomposable summands.
\end{proposition}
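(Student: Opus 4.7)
The plan is to invoke the general fact that for any basic finite-dimensional algebra $\Gamma$, a basic tilting $\Gamma$-module has as many pairwise nonisomorphic indecomposable summands as the rank of the Grothendieck group $K_0(\Gamma)$, which equals the number of isomorphism classes of simple $\Gamma$-modules. Applied to $\Gamma=KA_h$, whose quiver has exactly $h$ vertices and hence $h$ nonisomorphic simples, this immediately gives the conclusion.

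More concretely, I would argue as follows. Write $T=T_1\oplus\cdots\oplus T_r$ with the $T_i$ pairwise nonisomorphic indecomposable. Since $KA_h$ is hereditary and basic, every module has projective dimension at most $1$, and in particular $T$ is tilting in the classical sense of Happel–Ringel/Brenner–Butler: $\Ext^1_{KA_h}(T,T)=0$ and there exists a short exact sequence $0\to KA_h\to T^0\to T^1\to 0$ with $T^0,T^1\in\add(T)$. From this short exact sequence, the classes $[T_1],\dots,[T_r]$ generate $K_0(KA_h)$ (because $[KA_h]=[T^0]-[T^1]$ and each indecomposable projective's class lies in the span of the $[T_i]$). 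On the other hand, the vanishing $\Ext^1(T,T)=0$ together with $\pd T\leq 1$ implies that the $[T_i]$ are $\mathbb{Z}$-linearly independent in $K_0(KA_h)$: a nontrivial relation $\sum a_i[T_i]=0$ with $a_i\in\ZZ$ would produce, via the Euler form, an equality $\langle T,\sum a_i T_i\rangle = 0$ that, combined with the positivity of $\dim\Hom(T,T_i)$ and $\Ext^1(T,T_i)=0$, would force all $a_i=0$. Therefore $r=\mathrm{rank}\,K_0(KA_h)=h$.

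The potential obstacle is justifying the $\ZZ$-linear independence cleanly; the standard way (and the one I would write up) is to note that on the hereditary algebra $KA_h$ the Euler form $\langle X,Y\rangle=\dim\Hom(X,Y)-\dim\Ext^1(X,Y)$ is nondegenerate, and that the matrix $(\langle T_i,T_j\rangle)_{i,j}=(\dim\Hom(T_i,T_j))_{i,j}$ is, after reordering the $T_i$ along a directed order (available because $KA_h$ is representation-directed, cf.\ Example \ref{ex:KAabutments} and $\triangle(h)$), upper triangular with positive diagonal entries $\dim\End(T_i)=1$, hence invertible. This forces linear independence and completes the count. Since the full proof is already recorded in \cite[(4.1)]{HR}, the write-up will be a brief appeal to that reference together with the observation that $KA_h$ has exactly $h$ simple modules.
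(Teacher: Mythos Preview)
The paper does not prove this proposition at all; it simply attributes it to \cite[(4.1)]{HR} and moves on. So there is no ``paper's own proof'' to compare against beyond the bare citation. Your argument is the standard one and is correct: the classes $[T_i]$ span $K_0(KA_h)$ by the coresolution $0\to KA_h\to T^0\to T^1\to 0$, and they are independent because the Cartan matrix $(\dim\Hom(T_i,T_j))$ is unitriangular after ordering the $T_i$ along the directed Auslander--Reiten quiver $\triangle(h)$. Your first pass at independence (pairing against the single class $[T]$) is indeed too weak on its own, but you already noticed this and the matrix argument you give afterwards is the right fix. In short, your write-up is fine; in the paper's context it would be overkill, since the author is content to quote Happel--Ringel.
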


Let $P$ be a left abutment of $\La$. Recall that by Example \ref{ex:naming} we can view a $\La$-module $T$ in $\cPD$ as a $KA_h$-module via $f_P^{!}$ and dually for right abutments. With this in mind, we give the following definition.

\begin{definition}\label{def:fracture}
Let $\La$ be a rep\-re\-sen\-ta\-tion-di\-rect\-ed algebra.
\begin{itemize}
\item[(a)] Let $P$ be a maximal left abutment of $\La$ realized by $(e_i,f_i)_{i=1}^h$. A \emph{fracture of $P$} is a module $T \in \cPD$ such that $f_P^{!}(T):=T^{!}$ is a basic tilting $KA_h$-module. The \emph{level of $T$}, denoted $\lvl(T)$, is defined to be the number
\[\lvl(T):=\max\left(\left\{i\in \{1,\dots,h\} \mid e_{h-i+1}\La \not\in \add(T)\right\}\cup\{0\}\right)+1.\]
\item[(b)] Let $I$ be a maximal right abutment of $\La$ realized by $(e_i,g_{i-1})_{i=1}^h$. A \emph{fracture of $I$} is a module $T \in \cDI$ such that $g_I^{\ast}(T):=T^{\ast}$ is a basic tilting $KA_h$-module. The \emph{level of $T$}, denoted $\lvl(T)$, is defined to be the number
\[\lvl(T):=\max\left(\left\{i\in \{1,\dots,h\} \mid D(\La e_i) \not\in \add(T)\right\}\cup\{0\}\right)+1.\]
\end{itemize}
\end{definition}

Notice that in particular a fracture is a basic module. The following lemma collects some basic information about fractures.

\begin{lemma}\label{lemma:fracturebasic}
Let $\La$ be a rep\-re\-sen\-ta\-tion-di\-rect\-ed algebra.
\begin{itemize}
    \item[(a)] Let $T$ be a fracture of a maximal left abutment $P$, realized by $(e_i,f_i)_{i=1}^h$. Then $T$ has $h$ indecomposable summands and $\pd(T)\leq 1$.
    \item[(b)] Let $T$ be a fracture of a maximal right abutment $I$, realized by $(e_i,g_{i-1})_{i=1}^h$. Then $T$ has $h$ indecomposable summands and $\id(T)\leq 1$.
\end{itemize}
\end{lemma}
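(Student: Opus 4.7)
My plan is to derive both statements directly from Corollary~\ref{cor:diminabutment} together with Proposition~\ref{prop:KAtilting}; the two assertions in each item are essentially independent and correspond to two separate previously-established facts.

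For part~(a), the bound $\pd(T)\leq 1$ is immediate from Corollary~\ref{cor:diminabutment}(a), since $T\in\cPD$ by the definition of a fracture. To count indecomposable summands, I would invoke Example~\ref{ex:KAglued}, which realizes $\La$ as the trivial gluing $KA_h\glue[P][I(h)]\La$. Applying Corollary~\ref{cor:AR glued} to this gluing identifies the full subquiver of $\Gamma(\La)$ spanned by the vertices in $\cPD$ with $\triangle(h)=\Gamma(KA_h)$, and the identification is induced by the footing $f_P$, as already described in Example~\ref{ex:naming}. Consequently the passage $T\mapsto T^{!}$ takes indecomposable summands of $T$ bijectively to indecomposable summands of $T^{!}$. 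Since $T^{!}$ is, by definition, a basic tilting $KA_h$-module, Proposition~\ref{prop:KAtilting} gives $\abs{\add(T^{!})}=h$, and hence $\abs{\add(T)}=h$.

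For part~(b) the proof is dual: Corollary~\ref{cor:diminabutment}(b) yields $\id(T)\leq 1$, while the second form of Example~\ref{ex:KAglued}, namely $\La=\La\glue[P(h)][I]KA_h$, combined with Corollary~\ref{cor:AR glued}, identifies the indecomposables of $\cDI$ with those of $\m KA_h$ via the footing $g_I$. Then $T\mapsto T^{\ast}$ preserves the number of indecomposable summands, and Proposition~\ref{prop:KAtilting} applied to the basic tilting $KA_h$-module $T^{\ast}$ completes the proof.

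I do not anticipate a genuine obstacle here; both claims are direct consequences of machinery already developed in Section~2. The one small point to verify is that the functor $f_P^{!}$ (respectively $g_I^{\ast}$) used in the definition of a fracture genuinely restricts to the bijection on indecomposables furnished by Corollary~\ref{cor:AR glued}. This is immediate from Lemma~\ref{lemma:intersection}, which places $\cPD$ (respectively $\cDI$) inside the intersection of the two sides of the trivial gluing, combined with the vertex identification spelled out in Corollary~\ref{cor:AR glued}; no new argument is needed.
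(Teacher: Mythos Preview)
Your proposal is correct and follows essentially the same approach as the paper, which simply states that the lemma follows immediately from Corollary~\ref{cor:diminabutment} and Proposition~\ref{prop:KAtilting}. You have spelled out the bridge between these two results (the identification of $\cPD$ with $\m KA_h$ via the footing, making $T\mapsto T^{!}$ a bijection on indecomposable summands) that the paper leaves implicit via Example~\ref{ex:naming}.
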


\begin{proof}
Follows immediately by Corollary \ref{cor:diminabutment} and Proposition \ref{prop:KAtilting}.
\end{proof}

\begin{example}\label{ex:projfracture}
For a maximal left abutment $P$ realized by $(e_i,f_i)_{i=1}^h$, there exists a unique (up to isomorphism) fracture of $P$ that is projective, namely $T=\bigoplus_{i=1}^h e_i\La$. To see that this is a fracture, notice that
\[T^{!}\cong\bigoplus_{i=1}^h \phi^{!}(e_i\La)\cong\bigoplus_{i=1}^h t_iKA_h\cong KA_h\] 
is a basic tilting $K A_h$-module. The fact that $T$ is the unique projective fracture of $P$ follows by Lemma \ref{lemma:fracturebasic}. Similarly, if $I$ is a right abutment realized by $(e_i,g_{i-1})_{i=1}^h$, then $T=\bigoplus_{i=1}^h D(\La e_i)$ is the unique fracture of $I$ that is injective.
\end{example}

\begin{definition}\label{def:fracturing}
Let $\La$ be a rep\-re\-sen\-ta\-tion-di\-rect\-ed algebra.
\begin{itemize}
\item[(a)] A \emph{left fracturing} $T^L$ of $\La$ is a module 
\[T^L=\bigoplus\limits_{[P] \in \Ind{\sMABP}}T^{(P)},\] 
where $T^{(P)}$ is a fracture of $P$. We set $\cP^L := \add \left\{\cP_{\setminus\sABP}, T^L\right\}$.
\item[(b)] A \emph{right fracturing} $T^R$ of $\La$ is a module 
\[T^R=\bigoplus\limits_{[I] \in \Ind{\sMIAB}}T^{(I)},\]
 where $T^{(I)}$ is a fracture of $I$. We set $\cI^R:=\add\left\{ \cI_{\setminus\sIAB}, T^R \right\}$.
\item[(c)] A \emph{fracturing} of $\La$ is a pair $(T^L,T^R)$ where $T^L$ is a left fracturing of $\La$ and $T^R$ is a right fracturing of $\La$.
\end{itemize}
\end{definition}

It follows easily by the definition of a fracturing and of maximal left and right abutments that a fracturing is a basic module. Hence if $(T^L, T^R)$ is a fracturing of $\La$, then we have by Proposition \ref{prop:KAtilting} that $\abs{\cP^L}=\abs{\cP}$ and $\abs{\cI^R}=\abs{\cI}$. In particular, we always have $\abs{\cP^L}=\abs{\cI^R}$. 

\begin{lemma}\label{lemma:uniqfract}
Let $(T^L, T^R)$ be a fracturing of $\La$. Then
\begin{itemize}
\item[(a)] The following are equivalent
\begin{itemize}
\item[(a1)] $\cP^L=\cP$,
\item[(a2)] $T^L$ is projective,
\item[(a3)] $T^L \cong \bigoplus\limits_{[P] \in \Ind{\sABP}}P$.
\end{itemize}
\item[(b)] The following are equivalent
\begin{itemize}
\item[(b1)] $\cI^R=\cI$,
\item[(b2)] $T^R$ is injective,
\item[(b3)] $T^R \cong \bigoplus\limits_{[I] \in \Ind{\sIAB}}I$.
\end{itemize}
\end{itemize}
\end{lemma}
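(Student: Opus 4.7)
The plan is to prove part $(a)$ via the cycle $(a3) \Rightarrow (a2) \Rightarrow (a1) \Rightarrow (a3)$; part $(b)$ follows by a completely dual argument, with $\sMABP$, $\cPD$ and Example \ref{ex:projfracture} replaced by $\sMIAB$, $\cDI$ and their right-abutment counterparts. Three of the four implications are short. For $(a3) \Rightarrow (a2)$, every indecomposable left abutment is projective by definition, so a direct sum of such is projective. For $(a3) \Rightarrow (a1)$, substituting $T^L \cong \bigoplus_{[P] \in \Ind{\sABP}} P$ into $\cP^L = \add\{\cP_{\setminus\sABP}, T^L\}$ recovers $\add\{\cP_{\setminus\sABP}, \sABP\} = \cP$. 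For $(a1) \Rightarrow (a2)$, the hypothesis forces $T^L \in \cP^L = \cP$, so $T^L$ is projective.

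The main step is $(a2) \Rightarrow (a3)$. Writing $T^L = \bigoplus_{[P] \in \Ind{\sMABP}} T^{(P)}$, if $T^L$ is projective then each direct summand $T^{(P)}$ is projective, and since it is also a fracture of $P$, Example \ref{ex:projfracture} gives $T^{(P)} \cong \bigoplus_{i=1}^{h(P)} e_i^{(P)} \La$, where $(e_i^{(P)}, f_i^{(P)})_{i=1}^{h(P)}$ is a realization of $P$. The statement $(a3)$ thus reduces to the combinatorial identity
\[
\bigsqcup_{[P] \in \Ind{\sMABP}} \{[e_i^{(P)} \La] : 1 \le i \le h(P)\} \;=\; \Ind{\sABP},
\]
which is the main obstacle.

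The containment $\subseteq$ is immediate from Remark \ref{rem:heredity}, which ensures that each submodule $e_i^{(P)}\La$ of a left abutment is itself a left abutment. For the reverse containment and for the disjointness of the union, I would show that every indecomposable left abutment $Q$ sits, in a unique way, as $e_i^{(P)}\La$ inside a unique maximal left abutment $P$. This is a quiver-theoretic statement: by Proposition \ref{prop:abutmentsquiver} a left abutment corresponds to a linear arm in $Q_{\La}$, and that proposition forces every non-head vertex of an abutment arm to have a unique incoming arrow inside the arm. Therefore the arm of $Q$ extends backwards in $Q_\La$ in exactly one way, and by finiteness this backward extension terminates at a uniquely determined maximal abutment $P$ containing $Q$; basicness of $\La$ then distinguishes the different idempotents $e_i^{(P)}$, so the pair $(P,i)$ is uniquely recovered from $[Q]$. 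This yields the desired bijection and completes the proof of $(a)$.
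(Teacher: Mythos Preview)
Your proof is correct and follows essentially the same route as the paper: the cycle $(a1)\Rightarrow(a2)\Rightarrow(a3)\Rightarrow(a1)$, with the substantive step being $(a2)\Rightarrow(a3)$ via Example~\ref{ex:projfracture} and the fact that every left abutment sits inside a unique maximal one. Your treatment of the disjoint-union identity is more explicit than the paper's, which simply asserts that each abutment ``appears exactly once'' without spelling out the uniqueness of the containing maximal abutment; your quiver argument via Proposition~\ref{prop:abutmentsquiver} is the natural way to justify this. One small expositional glitch: you announce the cycle as $(a3)\Rightarrow(a2)\Rightarrow(a1)\Rightarrow(a3)$, but you never prove $(a2)\Rightarrow(a1)$ --- the implications you actually establish form the cycle $(a1)\Rightarrow(a2)\Rightarrow(a3)\Rightarrow(a1)$ (with $(a3)\Rightarrow(a2)$ redundant), so just correct the opening sentence.
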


\begin{proof}
We only prove (a); (b) is similar. First we show (a1) implies (a2). If $\cP^L=\cP$ then every module in $\cP^L$ is projective. In particular, $T^L$ is projective. To see that (a2) implies (a3), first notice that if $T^L=\bigoplus\limits_{[Q] \in \Ind{\sMABP}}T^{(Q)}$ is projective then $T^{(Q)}$ is projective for every maximal left abutment $Q$ of $\La$. By Example \ref{ex:projfracture} this implies that every indecomposable submodule of $Q$ is isomorphic to a summand of $T^{(Q)}$. Since an abutment is either maximal or isomorphic to a submodule of a maximal abutment, it follows that a representative of each isomorphism class of each abutment $P$ of $\La$ appears exactly once as a direct summand of $T^L$. Finally, (a3) implies (a1) immediately from the definition.
\end{proof}

If $\La$ is an algebra, we will denote by $\Lab$ a left fracturing of $\La$ which is projective as a module and by $\DLab$ a right fracturing of $\La$ which is injective as a module. By Lemma \ref{lemma:uniqfract}, it follows that $\Lab$ and $\DLab$ are unique up to isomorphism. 

\begin{example}\label{ex:fracturing}
Let $B$ be as in Example \ref{ex:twogluesquiver}. The unique maximal left abutment is $P(5)$ and the maximal right abutments are $I(3)$ and $I(3')$. Consider the modules 
\[T^{(P(5))}=\qthree{}[7][]\oplus\qthree{6}[7]\oplus\qthree{5}[6][7],\;\; T^{(I(3))}=\qthree{}[3][]\oplus\qthree{2}[3]\oplus\qthree{1}[2][3]\;\; \text{ and } \;T^{(I(3'))}=\qthree{}[2'][]\oplus\qthree{2'}[3']\oplus\qthree{1'}[2'][3'].\] 
By construction, $T^{(P(5))}$ is the unique (up to isomorphism) projective fracture of $P(5)$ and the modules $T^{(I(3))}$ and $T^{(I(3'))}$ are fractures of $I(3)$ and $I(3')$ respectively. Then $(T^L_B,T^R_B)$ is a fracturing of $B$, where $T^L_B=T^{(P(5))}$ and $T_B^R=T^{(I(3))}\oplus T^{(I(3'))}$. Since $T^{(P(5))}$ is projective, we have $\cP^L=\add (B)$ by Lemma \ref{lemma:uniqfract}. Following the definition, we also have 
\[\cI^R = \add \left(\qthree{5}[6][7]\oplus \qthree{4}[5][6] \oplus \begin{smallmatrix} 4&&3'\\&5& \end{smallmatrix}\oplus \qthree{3}[4] \oplus T^{(I(3))} \oplus T^{(I(3'))}\right).\]
\end{example}

\begin{definition}\label{def:nctfract}
Let $n\geq 2$. Assume that $\La$ is a rep\-re\-sen\-ta\-tion-di\-rect\-ed algebra with a fracturing $(T^L, T^R)$ and let $\cC$ be a subcategory of $\m\La$. Then $\cC$ is called a \emph{$(T^L,T^R,n)$-fractured subcategory} if
\begin{itemize}
\item[(1)] $\cP^L\subseteq \cC$,

\item[(2)] $\tn$ and $\tno$ induce mutually inverse bijections
\[\begin{tikzpicture}
\node (0) at (0,0) {$\Ind{\cC_{\setminus\cP^L}}$};
\node (1) at (2,0) {$\Ind{\cC_{\setminus\cI^R}}$,};

\draw[-latex] (0) to [bend left=20] node [above] {$\tn$} (1);
\draw[-latex] (1) to [bend left=20] node [below] {$\tno$} (0);
\end{tikzpicture}\]

\item[(3)] $\om^i (M)$ is indecomposable for all indecomposable $M\in \cC_{\setminus\cP^L}$ and $0<i<n$,

\item[(4)] $\om^{-i} (N)$ is indecomposable for all indecomposable $N\in \cC_{\setminus\cI^R}$ and $0<i<n$.
\end{itemize}
\end{definition}

Let us first note that Definition \ref{def:nctfract} makes sense for $n=1$ as well. However, the case $\La=KA_h$ behaves in a special way in that case for our purposes. Since the unique $1$-cluster tilting subcategory for a rep\-re\-sen\-ta\-tion-di\-rect\-ed algebra $\La$ is $\m\La$ itself, and to avoid needlessly complicating the results and proofs of this section we opt to assume that $n\geq 2$ when considering $n$-fractured subcategories.

Notice that conditions (1) and (2) in Definition \ref{def:nctfract} imply that $\cI^R\subseteq \cC$, since $\abs{\cP^L}=\abs{\cI^R}$. In particular, we have that $T^L\in \cC$ and $T^R\in \cC$. This definition generalizes the definition of an $n$-cluster tilting subcategory for a rep\-re\-sen\-ta\-tion-di\-rect\-ed algebra in the sense of the following proposition.

\begin{proposition}\label{prop:fracluster}
Let $n\geq 2$. Let $\La$ be a rep\-re\-sen\-ta\-tion-di\-rect\-ed algebra and $(T^L,T^R)$ be a fracturing of $\La$. Let $\cC$ be a $(T^L, T^R, n)$-fractured subcategory of $\m\La$ for $n\geq 2$. Then $\cC$ is an $n$-cluster tilting subcategory if and only if $T^L\cong\Lab$  and $T^R\cong\DLab$.
\end{proposition}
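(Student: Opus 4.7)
The plan is to use Lemma \ref{lemma:uniqfract} as a bridge between the two statements. Observe that Lemma \ref{lemma:uniqfract} supplies the equivalences $T^L \cong \Lab \Leftrightarrow \cP^L = \cP$ and $T^R \cong \DLab \Leftrightarrow \cI^R = \cI$. The \emph{if} direction is then immediate: assuming $T^L \cong \Lab$ and $T^R \cong \DLab$ forces $\cP^L = \cP$ and $\cI^R = \cI$, so the four defining conditions of a $(T^L,T^R,n)$-fractured subcategory in Definition \ref{def:nctfract} reduce term-by-term to the four conditions of Theorem \ref{thrm:char}, and $\cC$ is $n$-cluster tilting.

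For the converse, the strategy is to argue contrapositively and produce an indecomposable module in $\cC_{\setminus\cP}$ on which $\tn$ vanishes, thereby contradicting Theorem \ref{thrm:char}(2). Assume $T^L \not\cong \Lab$. By Lemma \ref{lemma:uniqfract} the module $T^L$ is not projective, so some fracture $T^{(P)}$ at a maximal left abutment $P$ realized by $(e_i,f_i)_{i=1}^h$ admits a non-projective indecomposable summand $X$. Since $X \in \cPD$ and $P = e_1\La$ is uniserial (Proposition \ref{prop:abutmentsquiver}), every indecomposable of $\cPD = \Fac(\Sub(P))$ is a quotient of a submodule of $P$; that is, $X \cong e_i\La / e_j\La$ for some $1 \leq i < j \leq h+1$ (with the convention $e_{h+1}\La = 0$), and non-projectivity of $X$ forces $j \leq h$. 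The projective cover of $X$ is then the indecomposable $e_i\La$, with syzygy $\om(X) \cong e_j\La$ itself projective, whence $\om^2(X) = 0$. Consequently, for every $n \geq 2$,
\[ \tn(X) \;=\; \tau\,\om^{n-1}(X) \;=\; 0, \]
whether by $\om^{n-1}(X)=0$ (when $n\geq 3$) or by $\om^{n-1}(X)$ being projective (when $n=2$).

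Now $X$ lies in $\cC$ since $X$ is a summand of $T^L \in \cP^L \subseteq \cC$ by Definition \ref{def:nctfract}(1), and $X \notin \cP$ by construction, so $X$ represents an element of $\Ind{\cC_{\setminus\cP}}$. If $\cC$ were $n$-cluster tilting, Theorem \ref{thrm:char}(2) would send this element via $\tn$ to an element of $\Ind{\cC_{\setminus\cI}}$, in particular a nonzero indecomposable, contradicting $\tn(X)=0$. The case $T^R \not\cong \DLab$ is handled by the exactly dual argument: a non-injective indecomposable summand $Y$ of some $T^{(I)}$ satisfies $\id(Y) \leq 1$ by Corollary \ref{cor:diminabutment}(b), its cosyzygy is indecomposable injective so $\om^{-2}(Y) = 0$, and one obtains $\tno(Y) = 0$ with $Y$ representing an element of $\Ind{\cC_{\setminus\cI}}$, contradicting the reverse $n$-cluster tilting bijection. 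The only subtlety in the plan is recognizing that the uniserial structure of the foundations $\PD$ and $\DI$ (Propositions \ref{prop:abutmentsquiver} and \ref{prop:triangles}) forces syzygies, respectively cosyzygies, of non-projective modules in $\cPD$, respectively non-injective modules in $\cDI$, to be projective, respectively injective — which is precisely what trivializes the higher Auslander–Reiten translates and drives the contradiction.
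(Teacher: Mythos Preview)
Your proof is correct. The \emph{if} direction matches the paper's argument exactly. For the \emph{only if} direction, however, you and the paper take genuinely different routes.

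The paper argues via the Ext-vanishing definition of $n$-cluster tilting: if $T^L$ is not projective then $\pd(T^L)=1$ by Lemma~\ref{lemma:fracturebasic}, which forces $\Ext^1_\La(T^L,\La)\neq 0$ (a module of projective dimension exactly one cannot be Ext-orthogonal to all projectives), contradicting $T^L,\La\in\cC$ and $n\geq 2$. This is a two-line argument. Your approach instead stays inside the combinatorial characterisation of Theorem~\ref{thrm:char}: you pick a non-projective summand $X$ of $T^L$, observe via the uniserial structure of the foundation (or simply via Corollary~\ref{cor:diminabutment}) that $\om(X)$ is projective, conclude $\tn(X)=0$, and derive a contradiction with the bijection of Theorem~\ref{thrm:char}(2). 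Your route is longer and re-proves the content of Corollary~\ref{cor:diminabutment} by hand, but it has the virtue of using only the $\tn$-bijection language in which fractured subcategories are defined, never invoking the Ext definition of $n$-cluster tilting. The paper's argument is more efficient; yours is more self-contained relative to Definition~\ref{def:nctfract}. Note that your explicit description $X\cong e_i\La/e_j\La$ is correct but unnecessary: citing Corollary~\ref{cor:diminabutment}(a) directly would shorten the argument without loss.
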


\begin{proof}
If $T^L\cong \Lab$ and $T^R \cong \DLab$, then Lemma \ref{lemma:uniqfract} implies that $\cP^L=\cP$ and $\cI^R=\cI$. Then Theorem \ref{thrm:char} implies that $\cC$ is an $n$-cluster tilting subcategory. 

Assume now that $\cC$ is an $n$-cluster tilting subcategory of $\La$. We will show that $T^L$ is projective (the proof that $T^R$ is injective is similar). Assume by way of contradiction that $T^L$ is not projective. Then $\pd(T^L) = 1$ by Lemma \ref{lemma:fracturebasic}. Hence $\Ext_{\La}^1(T^L,\La)\neq 0$ which contradicts $T^L\in \cC$.
\end{proof}

Proposition \ref{prop:fracluster} motivates the following definition.

\begin{definition}\label{def:left and right nfract}
Let $n\geq 2$. Let $\La$ be a rep\-re\-sen\-ta\-tion-di\-rect\-ed algebra with a fracturing $(T^L,T^R)$ and $\cC$ be a $(T^L,T^R,n)$-fractured subcategory. Then $\cC$ will be called a \emph{left $n$-cluster tilting subcategory} if $T^L\cong \Lab$ and a \emph{right $n$-cluster tilting subcategory} if $T^R\cong \DLab$.
\end{definition}

\begin{example}\label{ex:KAhfracturing}
Let $n\geq 2$ and $\La=KA_h$. Let $(T^L,T^R)$ be a fracturing of $\La$. Since an indecomposable projective $\La$-module is a submodule of the maximal left abutment $P(1)$, it follows that $\cP_{\setminus\sABP}$ is empty and so $\cP^L=\add(T^L)$. Similarly we have $\cI^R=\add(T^R)$. Since $\tau_n(M)=0=\tau_n^-(M)$ for any $M\in\m\La$, it is immediate from the definition that $\cC\subseteq \m\La$ is a $(T^L,T^R,n)$-fractured subcategory of $\La$ if and only if $T^L=T^R$ and $\cC=\add(T^L)$.
\end{example}

\begin{example}\label{ex:fracturedsubcategory}
Let $B$ and $(T_B^L,T_B^R)$ be as in Example \ref{ex:fracturing}. Let 
\[ \cC_B=\add \left(\bigoplus_{i\geq 0} \tau_2^{-i}(B)\right).\]
By computing $\tau_2^-$, we find that
\[\cC_B=\add\left(B \oplus \qthree{4}[5] \oplus \begin{smallmatrix} 4&&3' \\ &5& \end{smallmatrix} \oplus \qthree{}[2'][] \oplus \qthree{}[3][]\right).\] 
A simple calculation verifies that $\cC_B$ is a $(T_B^L, T_B^R,2)$-fractured subcategory. Since $T_B^{L}$ is projective, it is a left $n$-cluster tilting subcategory. For the convenience of the reader who might want to verify those claims, we give the Aus\-lan\-der--Rei\-ten quiver of $B$ where we encircle the indecomposable modules which are in $\cC_B$:
\[\begin{tikzpicture}[scale=1.2, transform shape]
\tikzstyle{nct2}=[circle, minimum width=0.6cm, draw, inner sep=0pt, text centered, scale=0.9]
\tikzstyle{nct22}=[circle, minimum width=0.6cm, draw, inner sep=0pt, text centered, scale=0.9]
\tikzstyle{nct3}=[circle, minimum width=6pt, draw=white, inner sep=0pt, scale=0.9]

\node[nct2] (A) at (0,0) {$\qthree{}[7][]$};
\node[nct2] (B) at (0.7,0.7) {$\qthree{6}[7]$};
\node[nct2] (C) at (1.4,1.4) {$\qthree{5}[6][7]$};
\node[nct3] (D) at (1.4,0) {$\qthree{}[6][]$};
\node[nct3] (E) at (2.1,0.7) {$\qthree{5}[6]$};
\node[nct2] (F) at (2.8,1.4) {$\qthree{4}[5][6]$};
\node[nct3] (G) at (2.8,0) {$\qthree{}[5][]$};
\node[nct2] (H) at (3.5,0.7) {$\qthree{4}[5]$};
\node[nct2] (I) at (3.5,-0.7) {$\qthree{3'}[5]$};
\node[nct2] (J) at (4.2,0) {$\begin{smallmatrix} 4 && 3' \\ & 5 &
\end{smallmatrix}$};
\node[nct3] (K) at (4.9,0.7) {$\qthree{}[3'][]$};
\node[nct3] (L) at (4.9,-0.7) {$\qthree{}[4][]$};
\node[nct2] (M) at (5.6,1.4) {$\qthree{2'}[3']$};
\node[nct2] (N) at (5.6,-1.4) {$\qthree{3}[4]$};
\node[nct22] (O) at (6.3,2.1) {$\qthree{1'}[2'][3']$};
\node[nct2] (P) at (6.3,0.7) {$\qthree{}[2'][]$};
\node[nct2] (Q) at (6.3,-0.7) {$\qthree{}[3][]$};
\node[nct3] (R) at (7,1.4) {$\qthree{1'}[2']$};
\node[nct2] (S) at (7,-1.4) {$\qthree{2}[3]$};
\node[nct3] (T) at (7.7,0.7) {$\qthree{}[1'][]$};
\node[nct3] (U) at (7.7,-0.7) {$\qthree{}[2][]$};
\node[nct2] (V) at (7.7,-2.1) {$\qthree{1}[2][3]$};
\node[nct3] (W) at (8.4,-1.4) {$\qthree{1}[2]$};
\node[nct3] (X) at (9.1,-0.7) {$\qthree{}[1][]$\nospacepunct{.}};

\draw[->] (A) to (B);
\draw[->] (B) to (C);
\draw[->] (B) to (D);
\draw[->] (C) to (E);
\draw[->] (D) to (E);
\draw[->] (E) to (F);
\draw[->] (E) to (G);
\draw[->] (F) to (H);
\draw[->] (G) to (H);
\draw[->] (G) to (I);
\draw[->] (H) to (J);
\draw[->] (I) to (J);
\draw[->] (J) to (K);
\draw[->] (J) to (L);
\draw[->] (K) to (M);
\draw[->] (L) to (N);
\draw[->] (M) to (O);
\draw[->] (M) to (P);
\draw[->] (N) to (Q);
\draw[->] (O) to (R);
\draw[->] (P) to (R);
\draw[->] (Q) to (S);
\draw[->] (R) to (T);
\draw[->] (S) to (U);
\draw[->] (S) to (V);
\draw[->] (U) to (W);
\draw[->] (V) to (W);
\draw[->] (W) to (X);

\draw[loosely dotted] (A.east) -- (D);
\draw[loosely dotted] (B.east) -- (E);
\draw[loosely dotted] (D.east) -- (G);
\draw[loosely dotted] (E.east) -- (H);
\draw[loosely dotted] (G.east) -- (J);
\draw[loosely dotted] (H.east) -- (K);
\draw[loosely dotted] (I.east) -- (L);
\draw[loosely dotted] (K.east) -- (P);
\draw[loosely dotted] (M.east) -- (R);
\draw[loosely dotted] (L.east) -- (Q);
\draw[loosely dotted] (Q.east) -- (U);
\draw[loosely dotted] (P.east) -- (T);
\draw[loosely dotted] (S.east) -- (W);
\draw[loosely dotted] (U.east) -- (X);
\end{tikzpicture}\]
\end{example}

\subsection{Main construction}\label{sect:construction}

Our aim is to glue algebras admitting fractured subcategories in such a way that the resulting algebra also admits a corresponding fractured subcategory. To this end we need to first describe how to glue algebras with a fracturing. So let us fix two rep\-re\-sen\-ta\-tion-di\-rect\-ed algebras $A$ and $B$ with fracturings $(T^L_A, T^R_A)$, respectively $(T^L_B, T^R_B)$, and set $\La \coloneqq B \glue[Q][J] A$ where $Q$ is a left abutment of $A$ and $J$ is a right abutment of $B$, both of the same height $h$.

Let $P$ be a maximal left abutment of $\La$. Then either $P$ is supported in $B$ or $P$ is not supported in $B$, in which case it is supported in $A$ by Proposition \ref{prop:decomposition}. By Corollary \ref{cor:remaining}(c1), in the first case $\psi^{\ast}(P)$ is a left abutment of $B$ and in the second case $\phi^{!}(P)$ is a left abutment of $A$. Moreover, in either case it is a maximal left abutment by Corollary \ref{cor:AR glued}. Set
\[T^{(P)}_\ast \coloneqq \begin{cases} \psi_\ast\left(T^{(\psi^{\ast}(P))}_B\right) &\mbox{ if $P$ is supported in $B$,} \\ \phi_\ast\left(T^{(\phi^{!}(P))}_A\right) &\mbox{ otherwise.} \end{cases}\]
and
\[T^L_{\La} \coloneqq \bigoplus_{[P]\in \Ind{\sMABP_{\La}}}T^{(P)}_\ast.\]
Observe that if $P$ is supported in $B$, then by construction the composition $\m\La \overset{(-)^\ast}{\longrightarrow} \m B \overset{(-)^\ast}{\longrightarrow} \m KA_h$ maps $T_{\ast}^{(P)}$ to a basic tilting $KA_h$-module and similarly if $P$ is supported in $A$.

Dually, maximal right abutments of $\La$ correspond to maximal right abutments of $A$ or of $B$ and so we set
\[ T^{(I)}_\ast \coloneqq \begin{cases} \phi_\ast\left(T^{(\phi^!(I))}_A\right) &\mbox{ if $I$ is supported in $A$,} \\ \psi_\ast\left(T^{(\psi^{\ast}(I))}_B\right) &\mbox{ otherwise,} \end{cases}\;\;\text{  and  }\;\; T^R_{\La} \coloneqq \bigoplus_{[I]\in \Ind{\sMIAB_{\La}}}T^{(I)}_\ast.\]
Then, by the above considerations it follows that $(T_\La^L, T_\La^R)$ is a fracturing of $\La$, which we call the \emph{gluing of the fracturings $(T_A^L, T_A^R)$ and $(T_B^L, T_B^R)$ at $Q$ and $J$} and we denote it by $(T_\La^L, T_\La^R)=(T_B^L, T_B^R) \glue[Q][J] (T_A^L, T_A^R)$. 

Although trivial gluing is a very special case of gluing which is not of much interest in this paper, it behaves somewhat unexpectedly with respect to gluing of fracturings. We illustrate the situation with the following example.

\begin{example}\label{ex:KAhfracturedglue}
Let $A$ be a rep\-re\-sen\-ta\-tion-di\-rect\-ed algebra, let $Q$ be a maximal left abutment of $A$ of height $h_Q$ and let $P\in \cF_Q$ be a left abutment of height $h\leq h_Q$. Let $(T_A^L,T_A^R)$ be a fracturing of $A$ and write $T_A^L\cong T_1\oplus T_A^{(Q)}$. Let also $B=KA_h$ and let $(T_B^R,T_B^L)=(T,T')$ be a fracturing of $B$, where $T$ and $T'$ are tilting $B$-modules. Then by Example \ref{ex:KAglued} we have that if $\La = B \glue[P][I_B(h)] A$, then $\La\cong A$. Clearly, all maximal right abutments of $\La$ are supported in $A$. Hence for any maximal right abutment $I$ of $\La$ we have that $T_{\ast}^{(I)}=T_A^{(I)}$ and so $T_{\La}^R=T_{A}^R$. Moreover, by Corollary \ref{cor:AR glued} it follows that no maximal left abutment of $\La$ is supported in $B$ except for $Q$ when $h=h_Q$. We consider the cases $h<h_Q$ and $h=h_Q$ separately. 

Assume that $h<h_Q$. Then $Q$ is a maximal left abutment of $\La$ not supported in $B$. Hence $T_{\ast}^{(Q)}=T_{A}^{(Q)}$ and so
\[(T_{\La}^L,T_{\La}^R) = (T_1\oplus T_{\ast}^{(Q)}, T_A^R)=(T_1\oplus T_{A}^{(Q)}, T_A^R)=(T_A^L, T_A^R).\]

Assume now that $h=h_Q$. Then $Q$ is supported in $B$. Hence $T_{\ast}^{(Q)}=T_{B}^{(P_B(1))}=T$ and so
\[(T_{\La}^L,T_{\La}^R) =(T_1\oplus T_{\ast}^{(Q)}, T_A^R)=(T_1\oplus T,T_A^R),\]
which in general is different from $(T_A^L,T_A^R)$. 
\end{example}

The following theorem shows how to use a $(T_A^L,T_A^R,n)$-fractured subcategory and a $(T_B^L,T_B^R,n)$-fractured subcategory to construct a $(T_{\La}^L,T_{\La}^R,n)$-fractured subcategory under a compatibility condition.

\begin{theorem}\label{thrm:fractsubcat}
Let $n\geq 2$. Let $A$ be a rep\-re\-sen\-ta\-tion-di\-rect\-ed algebra with a fracturing $(T^L_A, T^R_A)$ and let $\cC_A$ be a $(T^L_A, T^R_A,n)$-fractured subcategory. Let $Q\in\m A$ be a maximal left abutment and let $P \in \cF_Q$ be a left abutment of height $h$. Moreover, let $B$ be a rep\-re\-sen\-ta\-tion-di\-rect\-ed algebra with a fracturing $(T^L_B, T^R_B)$ and let $\cC_B$ be a $(T^L_B, T^R_B,n)$-fractured subcategory. Let $J\in\m B$ be a maximal right abutment and let $I \in \cG_J$ be a right abutment of height $h$.

Assume that $h\geq \lvl(T_A^{(Q)})$ and $h\geq \lvl(T_B^{(J)})$ and that
\begin{equation} \label{eq:compatibility}
f_P^{!}\left(\add\left(T_A^{(Q)}\right) \cap \cF_P\right) = g_I^{\ast}\left(\add\left( T_B^{(J)}\right) \cap \cG_I\right).
\end{equation}
Then $(T_\La^L, T_\La^R)=(T_B^L, T_B^R) \glue[P][I] (T_A^L, T_A^R)$ is a fracturing of $\La = B \glue[P][I] A$ and  $\cC_{\La}=\add\left\{\phi_\ast(\cC_A), \psi_\ast(\cC_B)\right\}$ is a $(T_\La^L, T_\La^R,n)$-fractured subcategory.
\end{theorem}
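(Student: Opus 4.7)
The plan is first to verify that the pair $(T^L_\La, T^R_\La)$ constructed just before the theorem is a genuine fracturing of $\La$, and then to check the four conditions of Definition \ref{def:nctfract} for $\cC_\La := \add\{\phi_\ast(\cC_A), \psi_\ast(\cC_B)\}$ one at a time.

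To see that $(T^L_\La, T^R_\La)$ is a fracturing, I would use Corollary \ref{cor:remaining} to classify the maximal left and right abutments of $\La$: each such abutment arises from a maximal abutment of exactly one of $A$ or $B$. For an abutment whose foundation sits entirely on one side, the definition of $T^{(P)}_\ast$ is immediate from the fracturings of $A$ and $B$; for an abutment whose foundation meets the glued triangle, the compatibility (\ref{eq:compatibility}) is precisely what is required, as it forces the two candidate $KA_h$-tilting modules, $f_P^!\bigl(\add(T_A^{(Q)}) \cap \cF_P\bigr)$ and $g_I^{\ast}\bigl(\add(T_B^{(J)}) \cap \cG_I\bigr)$, to coincide, so that the glued fracture is a well-defined basic tilting module of $\m KA_{h_P}$ after applying the footing.

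Condition (1) of Definition \ref{def:nctfract}, that $\cP^L_\La \subseteq \cC_\La$, would then follow by decomposing its indecomposables into non-abutment projectives of $\La$ and summands of $T^L_\La$. By Proposition \ref{prop:projinj} together with Corollary \ref{cor:remaining}(c1), a non-abutment projective $\vare_i \La$ is either $\phi_\ast$ of a non-abutment projective $A$-module or $\psi_\ast$ of a non-abutment projective $B$-module, hence lies in $\phi_\ast(\cP^L_A) \cup \psi_\ast(\cP^L_B) \subseteq \cC_\La$; the summands of $T^L_\La$ are, by the construction of the gluing of fracturings, contained in $\phi_\ast(T^L_A) \cup \psi_\ast(T^L_B) \subseteq \cC_\La$.

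The heart of the proof is conditions (2)--(4), for which the main tool is Corollary \ref{cor:compute}: the functors $\tau$, $\om$, $\tau^-$, $\om^-$ on an indecomposable $\La$-module can be computed inside $\m A$ or $\m B$ according to its support, so that $\tn$ and $\tno$ intertwine with their $A$- and $B$-counterparts under $\phi_\ast$ and $\psi_\ast$. With this in hand, the bijection in (2) splits by support and reduces to the analogous bijections for $\cC_A$ and $\cC_B$, and the indecomposability of iterated (co)syzygies in (3) and (4) reduces similarly. The hard part will be the boundary case in which the $n$-Auslander-Reiten translate of an indecomposable of $\cC_\La$ apparently crosses from the $A$-side to the $B$-side or lands in the overlap $\phi_\ast(\cPD) = \psi_\ast(\cDI)$; here the compatibility condition (\ref{eq:compatibility}) is decisive, since it identifies $\phi_\ast(\cC_A) \cap \cPD$ with $\psi_\ast(\cC_B) \cap \cDI$ inside $\cC_\La$, thereby guaranteeing that the image lands in $\cC_\La \setminus \cI^R_\La$ (respectively $\cC_\La \setminus \cP^L_\La$) and that nothing is missed or double-counted at the seam.
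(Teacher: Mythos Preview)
Your overall strategy is the same as the paper's: verify the fracturing, then check conditions (1)--(4) of Definition~\ref{def:nctfract}, transporting $\tau$, $\om$, $\tau^-$, $\om^-$ through $\phi_\ast$ and $\psi_\ast$ via Corollary~\ref{cor:compute}, with the compatibility condition~(\ref{eq:compatibility}) handling the seam. The paper also begins by disposing of the degenerate cases $A\cong KA_h$ or $B\cong KA_h$ via Example~\ref{ex:KAhfracturedglue}; you should too, since several of the abutment identifications you rely on break in those cases.

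There is one genuine gap in your argument for condition~(1). You assert, citing Corollary~\ref{cor:remaining}(c1), that a non-abutment indecomposable projective $\vare_i\La$ is $\phi_\ast$ of a non-abutment projective $A$-module or $\psi_\ast$ of a non-abutment projective $B$-module. But (c1) points the other way: it says that abutments of $\La$ descend to abutments of $A$ or $B$, not that non-abutments of $\La$ come from non-abutments below. On the $B$-side ($1\le i\le m$) the contrapositive of (b1) does give what you need, but on the $A$-side ($h-l+1\le i\le 0$) the relevant statement is (a2), which carries the extra hypothesis $\cF_{e_iA}\cap\cF_P=0$. When that hypothesis fails---i.e.\ when $e_iA$ is a left abutment of $A$ whose foundation contains the glued triangle $\cF_P$---the module $\vare_i\La$ is \emph{not} a left abutment of $\La$, yet $e_iA$ \emph{is} one of $A$, so $e_iA\notin(\cP_A)_{\setminus\sABP_A}$ and your inclusion fails. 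The paper closes this gap with a level argument: such an $e_iA$ has height strictly greater than $h$, and since $h\ge\lvl(T_A^{(Q)})$, every projective in the composition series of $Q$ of height at least $h$ is a summand of $T_A^{(Q)}$; hence $e_iA\in\add(T_A^{(Q)})\subseteq\cP_A^L\subseteq\cC_A$ after all. This step is not a triviality you can wave away---it is exactly where the choice of gluing height $h$ interacts with the shape of the fracture $T_A^{(Q)}$.
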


\begin{proof}
Assume that $B=KA_h$. Then $T_B^L=T_B^R$ by Example \ref{ex:KAhfracturing}, since $B$ admits a $(T_B^L,T_B^R,n)$-fractured subcategory by assumption. By Example \ref{ex:KAhfracturedglue} and condition (\ref{eq:compatibility}) it then follows that $\La=A$, $(T_{\La}^L,T_{\La}^R)=(T_A^L,T_A^R)$ and $\cC_{\La}=\cC_A$ and so the result holds. Similarly if $A=KA_h$. Hence we may assume that $A \not\cong KA_h$ and $B\not\cong KA_h$.

We need to prove conditions (1)--(4) of Definition \ref{def:nctfract}. We pick idempotents of $A,B$ and $\La$ as in Section \ref{subsubsection:gluing via pullbacks}. For condition (1) we need to show that $\cP_{\La}^L\subseteq \cC_{\La}$, or equivalently
\[\add\left\{\left(\cP_{\La}\right)_{\setminus\cP_{\La}^{\text{ab}}}, T_{\La}^L\right\}\subseteq \add\left\{\phi_{\ast}(\cC_{A}),\psi_{\ast}(\cC_B)\right\}.\]

By the construction of $T_{\La}^L$, and since $T_A^L\in \add\{\cC_A\}$ and $T_B^L\in\add\{\cC_B\}$, it follows that $T_{\La}^L\in \cC_{\La}$. Then, if $\vare_i\La$ is an indecomposable projective $\La$-module, it is enough to show that if $\vare_i\La$ is not a left abutment, then  $\vare_i\La\in \cC_{\La}$. If $1\leq i \leq m$, then $\vare_i\La$ is supported in $B$ by Proposition \ref{prop:projinj} and so $\psi^{\ast}(\vare_i\La)=\epsilon_i B$. By Corollary \ref{cor:remaining}(b1) it follows that $\epsilon_i B$ is not a left abutment of $B$ and so $\epsilon_i B\in \cC_B$. Hence \[\psi_{\ast}(\epsilon_i B)=\vare_i \La \in \psi_{\ast}(\cC_B)\subseteq \cC_{\La},\]
as required. 

If $h-l+1\leq i \leq 0$, then $\vare_i \La$ is supported in $A$ and $\phi^{!}(\vare_i\La)=e_i A$. If $e_i A$ is not a left abutment of $A$, then a similar argument as before shows that $\vare_i\La\in\cC_{\La}$. If $e_i A$ is a left abutment of $A$, then we must have that $\phi_{\ast}(\cF_{e_i A} \cap \cPD ) \neq 0$, since this intersection being zero implies via Corollary \ref{cor:remaining}(a2) that $\vare_i\La$ is a left abutment, contradicting our assumption. In particular, we have $\cF_{e_i A} \cap \cF_{e_1A} \neq 0$, since by assumption $P\cong e_1A$. Since $i<1$, by Proposition \ref{prop:triangles} we have that $\cF_{e_1A} \subsetneq \cF_{e_iA}$ and that $\nsup{e_1A}{\triangle}$ is a full subquiver of $\nsup{e_iA}{\triangle}$. In particular, $e_1A$ and $e_iA$ are both abutments appearing in the same radical series of the maximal abutment $Q$ and the height of $e_iA$ is greater than $h$. Since $h\geq \lvl(T_A^{(Q)})$, it follows from the definition of the level that $e_iA$ is an indecomposable summand of $T_A^{(Q)}$. Since $e_iA\in \add(T_A^{(Q)})\subseteq \cC_A$, it follows that $\vare_i\La\in \cC_{\La}$, as required. 

Hence, condition (1) is satisfied. Conditions (3) and (4) follow immediately by Corollary \ref{cor:compute} and the corresponding conditions being true for $\cC_A$ and $\cC_B$. 

It remains to show that condition (2) holds for $\cC_{\La}$. Let $M\in \cC_{\setminus\cP_{\La}^L}$. We will show that $\tn(M)\in \cC_{\setminus\cI^R_{\La}}$ and $\tno\tn(M)\cong M$; the dual fact that if $N\in \cC_{\setminus\cI^R_{\La}}$ then $\tno(N) \in \cC_{\setminus\cP^L_{\La}}$ and $\tn\tno(N)\cong N$ can be shown similarly.

Since $M$ is indecomposable, it follows that $M\in\add(\phi_{\ast}(\cC_A))$ or $M\in \add(\psi_{\ast}(\cC_B))$. Therefore, $\phi^{!}(M)\in \cC_A$ or $\psi^{\ast}(M)\in \cC_B$ and both functors preserve indecomposability.

Assume first that $\psi^{\ast}(M)\in\cC_B$. We claim that $\psi^{\ast}(M)\in(\cC_B)_{\setminus\cP_B^L}$. Assume by way of contradiction that $\psi^{\ast}(M)\in \cP_B^L=\add\{(\cP_B)_{\setminus\cP^{\text{ab}}_B},T_B^L\}$ instead. If $\psi^{\ast}(M)$ is projective but not a left abutment, then we reach a contradiction since in that case $M$ would also be projective but not a left abutment by Proposition \ref{prop:projinj} and Corollary \ref{cor:remaining}. Hence $M\in \add(T_B^L)$ and so $M\in \add(T_B^{(Z)})$ for some maximal left abutment $Z$. It follows from Corollary \ref{cor:AR glued} and Proposition \ref{prop:triangles} that $\psi_{\ast}(Z)$ is a maximal left abutment of $\La$ unless $B\cong KA_h$ and $Z\cong I \not\cong J$. But by our assumption $B\not\cong KA_h$ and so $\psi_{\ast}(Z)$ is indeed a maximal left abutment of $\La$. It follows that
\[M\cong \psi_{\ast}\psi^{\ast}(M)\in \add\left(\psi_{\ast}(T^{(Z)}_B)\right)\subseteq \add\{T_{\La}^L\},\]
contradicting $M\not\in \cC_{\setminus\cP^L_{\La}}$.

Hence we have $\psi^{\ast}(M)\in (\cC_{B})_{\setminus\cP_B^L}$. Since $\cC_B$ is a $(T_B^L, T_B^{R},n)$-fractured subcategory, it follows that $\tn \psi^{\ast}(M)\in 
(\cC_B)_{\setminus\cI^{R}_{B}}$. A similar argument as before shows that $\psi_{\ast}((\cC_B)_{\setminus\cI^R_B}) \subseteq (\cC_{\La})_{\setminus\cI_{\La}^R}$. Moreover, by Corollary \ref{cor:compute}, it follows that $\tn (M)\cong \psi_{\ast}\tn\psi^{\ast}(M)$ and so $\tn(M)\in (\cC_{\La})_{\setminus\cI_{\La}^R}$. The previous argument shows also that we can compute 
\[\tno \tn M \cong \psi_{\ast} \tno \psi^{\ast} \psi_{\ast} \tn \psi^{\ast} (M)\cong \psi_{\ast}\tno\tn \psi^{\ast}(M)\cong \psi_{\ast}\psi^{\ast}(M)\cong M,\]
as required.

Finally, it remains to check the case $\phi^{!}(M)\in \cC_A$. As before we can easily show that $\phi^{!}(M)\in (\cC_A)_{\setminus\cP_A^L}$. We note in particular that by the condition $h\geq \lvl(T_A^{(Q)})$ it follows that if $\phi^{!}(M)\in \cF_Q$ then $\phi^{!}(M)\in \add(T_A^{(Q)}) \cap \cF_{P}$. Then we distinguish two cases. If $\phi^{!}(M)\not\in\add(T_A^{(Q)})\cap \cPD$, then $\tn$ can be computed inside $\m A$ as per Corollary \ref{cor:compute} and the previous case. On the other hand, if $\phi^{!}(M)\in \add(T_A^{(Q)})\cap \cPD$, it follows from Corollary \ref{cor:AR glued} that $M$ is supported both in $A$ and in $B$. In particular, viewing $M$ as a $KA_h$-module via the compositions $\m \La \overset{\phi^{!}}{\longrightarrow} \m A \overset{f_P^{!}}{\longrightarrow} \m KA_h$ and $\m\La \overset{\psi^{\ast}}{\longrightarrow} \m B \overset{g_I^{\ast}}{\longrightarrow} \m KA_h$ produces the same module by Corollary \ref{cor:AR glued}. Hence the compatibility condition (\ref{eq:compatibility}) implies that $\phi^{\ast}(M)\in \add(T_B^{(J)})\cap \cG_I$. In particular $\psi^{\ast}(M)\in\cC_B$, in which case we showed that condition (2) is satisfied. This completes the proof.
\end{proof}

The following corollary of Theorem \ref{thrm:fractsubcat} is of particular interest.

\begin{corollary}\label{cor:glueatsimple}
Let $A$ be a strongly $(n,d_1)$-rep\-re\-sen\-ta\-tion-di\-rect\-ed algebra and $B$ be a strongly $(n,d_2)$-rep\-re\-sen\-ta\-tion-di\-rect\-ed algebra. Let $P$ be a simple projective $A$-module and $I$ be a simple injective $B$-module. Then $\La=B \glue[P][I] A$ is a strongly $(n,d)$-rep\-re\-sen\-ta\-tion-di\-rect\-ed algebra for some $d$ with $\max\{d_1,d_2\}\leq d\leq d_1+d_2$.
\end{corollary}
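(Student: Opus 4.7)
The plan is to handle $n=1$ separately and, for $n\geq 2$, to reduce to a single application of Theorem \ref{thrm:fractsubcat} followed by Proposition \ref{prop:fracluster}. For $n=1$, the category $\cC=\m\La$ is trivially a $1$-cluster tilting subcategory of a rep-finite algebra, so rep-directedness of $\La$ from Corollary \ref{cor:Laisdirected} and the global dimension inequality from Corollary \ref{cor:globaldimensionglued} already establish the claim.

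For $n\geq 2$, I would package the inputs as follows: by Proposition \ref{prop:fracluster}, the $n$-CT subcategories $\cC_A$ and $\cC_B$ are fractured subcategories with projective left fracturings and injective right fracturings. Let $Q$ be the unique maximal left abutment of $A$ containing $P$ and $J$ the unique maximal right abutment of $B$ containing $I$. Since $P$ and $I$ are simple of height $h=1$, we have $\cF_P=\add(P)$, $\cG_I=\add(I)$, and $P$ (respectively $I$) is a direct summand of the projective fracture $T_A^{(Q)}$ (respectively injective fracture $T_B^{(J)}$); moreover $KA_h=K$. So both sides of the compatibility condition (\ref{eq:compatibility}) reduce to $\add(K)\subseteq \m K$, making (\ref{eq:compatibility}) automatic. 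Theorem \ref{thrm:fractsubcat} then provides a $(T^L_\La,T^R_\La,n)$-fractured subcategory $\cC_\La=\add\{\phi_\ast(\cC_A),\psi_\ast(\cC_B)\}$ of $\m\La$.

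The crux is to verify that $T^L_\La$ is projective and $T^R_\La$ is injective, for then Proposition \ref{prop:fracluster} upgrades $\cC_\La$ to a bona fide $n$-cluster tilting subcategory. I would argue for $T^L_\La$ as follows. Each indecomposable summand of $T^L_\La$ is $\phi_\ast(T_A^{(\phi^!(P'))})$ or $\psi_\ast(T_B^{(\psi^\ast(P'))})$ for some maximal left abutment $P'$ of $\La$. By Corollary \ref{cor:remaining}(c1), $P'$ is supported in $A$ or in $B$. If it is supported in $A$, then Corollary \ref{cor:remaining}(a2) forces the arm of $\phi^!(P')$ to avoid $P$, so the projective fracture of $\phi^!(P')$ consists only of indecomposable projectives $e_j A$ with $j$ a pure $A$-index in the gluing labeling; Proposition \ref{prop:projinj} and Lemma \ref{lemma:supported} then give $\phi_\ast(e_j A)\cong \vare_j\La$, an indecomposable projective in $\La$. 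The $B$-supported case is symmetric using Corollary \ref{cor:remaining}(b1) together with $\psi_\ast(\epsilon_j B)\cong \vare_j\La$ for $1\leq j\leq m$. Thus $T^L_\La$ is projective, and Lemma \ref{lemma:uniqfract} identifies it with $\La^{\text{ab}}$. The dual argument gives $T^R_\La\cong D\La^{\text{ab}}$.

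I expect the main obstacle to be the bookkeeping for this identification with $\La^{\text{ab}}$: the maximal left abutment $Q$ of $A$ does not lift to any left abutment of $\La$ (its simple socle $P$ ceases to be projective in $\La$ once the gluing vertex acquires outgoing arrows), so the chain of submodules of $Q$ is simply dropped from the $A$-contribution, and must be compensated by new maximal abutments on the $B$-side whose arms have the gluing vertex as their junction. Since distinct maximal left abutments of $\La$ have distinct sinks (and hence disjoint submodule lattices), every indecomposable left abutment of $\La$ will appear as a summand of exactly one $T_\La^{(P')}$, matching $T^L_\La$ with $\La^{\text{ab}}$ without duplication. Rep-directedness of $\La$ and the bounds on $d$ then follow immediately from Corollaries \ref{cor:Laisdirected} and \ref{cor:globaldimensionglued}.
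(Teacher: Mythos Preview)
Your approach is essentially the paper's: reduce to Theorem \ref{thrm:fractsubcat} via the trivial height-$1$ compatibility, then verify that $T^L_\La$ is projective and $T^R_\La$ injective so that Proposition \ref{prop:fracluster} applies. Two small remarks. First, Corollary \ref{cor:remaining}(a2) gives only a \emph{sufficient} condition for $\vare_i\La$ to be a left abutment, so it does not by itself ``force the arm of $\phi^!(P')$ to avoid $P$''; you need the easy converse (if the arm contained $P$, the simple socle of $P'$ in $\m\La$ would be $S(1)$, which is non-projective since $\vare_1\La\cong\psi_\ast(\epsilon_1B)$ is non-simple when $B\neq K$), or argue as the paper does directly in $\Gamma(\La)$: the foundation $\nsup{P'}{\triangle}$ has no incoming arrows, so its leftmost diagonal already consists of projective $\La$-modules. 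Second, your anticipated bookkeeping obstacle dissolves: once $T^L_\La$ is projective, Lemma \ref{lemma:uniqfract} identifies it with $\Lab_\La$ automatically, with no need to track which abutments survive the gluing or how they are compensated.
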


\begin{proof}
First we have that $\La$ is rep\-re\-sen\-ta\-tion-di\-rect\-ed by Corollary \ref{cor:Laisdirected} and that $\max\{d_1,d_2\}\leq d\leq d_1+d_2$ by Corollary \ref{cor:globaldimensionglued}. It remains to show that $\La$ admits an $n$-cluster tilting subcategory $\cC_{\La}$. If $n=1$, then $\cC_{\La}=\m\La$. Assume that $n\geq 2$. By Proposition \ref{prop:fracluster}, we have that there exists a $(\Lab_A, \DLab_A,n)$-fractured subcategory $\cC_A$ of $\m A$ and a $(\Lab_B, \DLab_B, n)$-fractured subcategory $\cC_{B}$ of $B$. If $Q$ is a maximal left abutment of $A$, then the corresponding tilting module $T^{(Q)}_A$ for the fracturing $(\Lab_A, \DLab_B)$ is projective, since $\Lab_A$ is projective. It follows that $\lvl(T^{(Q)}_A)=1$. Similarly if $J$ is a maximal right abutment of $B$, we have that $\lvl(T^{(J)}_B)=1$.

Since $P$ and $I$ are simple, we have that both $P$ and $I$ have height $1$. In particular, if $Q$ is a maximal left $A$-abutment  with $\cPD\subseteq \cF_Q$ and $J$ is a maximal right $B$-abutment with $\cDI \subseteq \cG_J$, it follows that 
\[f_P^!(\add(T_A^{(Q)})\cap \cPD) \cong \add(S(1)) \cong g_I^{\ast}(\add(T_B^{(J)})\cap \cDI),\] 
where $S(1)$ is the unique simple $KA_1$-module. Hence, it follows by Theorem \ref{thrm:fractsubcat} that $(T_\La^L, T_\La^R)=(\Lab_B, \DLab_B) \glue[P][I] (\Lab_A, \DLab_A)$ is a fracturing of $\La$ and $\cC_{\La}=\add\left\{\phi_\ast(\cC_A), \psi_\ast(\cC_B)\right\}$ is a $(T_\La^L, T_\La^R,n)$-fractured subcategory. It remains to show that $T_{\La}^L\cong \Lab_{\La}$ and $T_{\La}^R\cong \DLab_{\La}$. Let us only show the first isomorphism; the other follows by similar arguments.

We have 
\[T^L_{\La} = \bigoplus_{[R]\in \MABP_{\La}}T^{(R)}_\ast\]
where
\[ T^{(R)}_\ast = \begin{cases} \psi_\ast\left(T^{(\psi^{\ast}(R))}_B\right) &\mbox{ if $R$ is supported in $B$,} \\ \phi_\ast\left(T^{(\phi^{!}(R))}_A\right) &\mbox{ otherwise,} \end{cases}\]
so it is enough to show that $T_{\ast}^{(R)}$ is projective. If $R$ is supported in $B$, then $T_B^{(\psi^{\ast}(R))}$ is a projective $B$-module by assumption, and so its image under $\psi_{\ast}$ is a projective $\La$-module by Proposition \ref{prop:projinj}. If $R$ is supported in $A$, then there is no arrow going into the triangle  $^{R}\!\triangle$. Then by Corollary \ref{cor:AR glued} there is no arrow going into the triangle $^{\phi^!(R)}\!\triangle$. Let $h'$ be the height of $R$. Since $T^{(\phi^!(R))}_A$ is projective by assumption, it is the unique projective fracture corresponding to the triangle $^{\phi^!(R)}\!\triangle$. That is, $T^{\phi^!(R)}_A$ is  isomorphic to a direct sum $\bigoplus_{i=1}^{h'}T_i$, where $\{[T_i]\}_{i=1}^{h'}$ are all the different leftmost vertices in the triangle $^{\phi^!(R)}\!\triangle$. Hence lifting this through $\phi_{\ast}$, we again get a direct sum corresponding to the leftmost vertices of the triangle $^{R}\!\triangle$, which is a projective $\La$-module, completing the proof.
\end{proof}

Since rep\-re\-sen\-ta\-tion-di\-rect\-ed algebras always have simple projective and injective modules, Corollary \ref{cor:glueatsimple} can be used to construct arbitrarily many $n$-cluster tilting subcategories from known $n$-cluster tilting subcategories of rep\-re\-sen\-ta\-tion-di\-rect\-ed algebras. 

We describe the next simplest case of using Theorem \ref{thrm:fractsubcat}. First we need to have $T_A^R=\cI_A$ and $T_A^L$ to have exactly one nonprojective fracture $T_A^{(Q)}$ corresponding to a maximal left abutment $Q$. Similarly we need to have $T_B^L=\cP_L$ and $T_B^R$ to have exactly one noninjective fracture $T_B^{(J)}$ corresponding to a maximal right abutment $J$. Then, after gluing the resulting subcategory will be a $(\cP_{\La}, \cI_{\La},n)$-fractured subcategory or equivalently an $n$-cluster tilting subcategory.

Even if there are more nonprojective fractures chosen for the left fracturing of $A$ (or similarly noninjective fractures chosen for the right fracturing of $B$), by the construction of gluing one can glue at each fracture independently. Say we have an algebra $\La$ and that at each nonprojective fracture we glue by a left $n$-cluster tilting subcategory, while at each noninjective fracture we glue by a right $n$-cluster tilting subcategory and each gluing is compatible as per the requirements of Theorem \ref{thrm:fractsubcat}. Then the result will be an algebra such that the gluing of all the fractured subcategories is an $n$-cluster tilting subcategory. We illustrate with a detailed example.

\begin{example}\label{ex:nctwithtwoglues}
Let $B$, $(T_B^L, T_B^R)$ and $\cC_B$ be as in Example \ref{ex:fracturedsubcategory}. Recall that $\cC_B$ is a left $n$-cluster tilting subcategory obtained by repeatedly applying $\tau_2^-$ starting from $B$ and there are two noninjective fractures in $T_B^R$, namely 
\[T^{\left({I(3)}_B\right)}=\qthree{}[3][]\raisebox{-1ex}{$\scriptstyle B$}\oplus\qthree{2}[3]\raisebox{-1.4ex}{$\scriptstyle B$}\oplus\qthree{1}[2][3]\raisebox{-1.8ex}{$\scriptstyle B$}\;\; \text{ and } \;T^{\left({I(3')}_B\right)}=\qthree{}[2'][]\raisebox{-1ex}{$\scriptstyle B$}\oplus\qthree{2'}[3']\raisebox{-1.4ex}{$\scriptstyle B$}\oplus\qthree{1'}[2'][3']\raisebox{-2ex}{$\scriptstyle B$},\]

We want to glue two appropriate algebras with $B$, one alongside $\qthree{1}[2][3]\raisebox{-1.8ex}{$\scriptstyle B$}$ and one alongside $\qthree{1'}[2'][3']\raisebox{-2ex}{$\scriptstyle B$}$. Let us start with $\qthree{1}[2][3]\raisebox{-1.8ex}{$\scriptstyle B$}$. Consider the algebra $A$ as in Example \ref{ex:first glue}. It is easy to see that $A$ admits a $2$-cluster tilting subcategory, given by $\cC_A=\add(A\oplus D(A))$. Then by Proposition \ref{prop:fracluster} we have that $\cC_A$ is $(T_A^L, T_A^R,2)$-fractured subcategory where 
\[T^{\left({P(1)}_A\right)}=\qthree{}[3][]\raisebox{-1ex}{$\scriptstyle A$}\oplus\qthree{2}[3]\raisebox{-1.4ex}{$\scriptstyle A$}\oplus\qthree{1}[2][3]\raisebox{-1.8ex}{$\scriptstyle A$}.\]
Hence, viewing $T^{\left({I(3)}_B\right)}$ and $T^{\left({P(1)}_A\right)}$ as $KA_3$-modules via the respective functors, we have that they coincide since
\[g_{{I(3)}_B}^{\ast} \left( \qthree{}[3][]\raisebox{-1ex}{$\scriptstyle B$}\oplus\qthree{2}[3]\raisebox{-1.4ex}{$\scriptstyle B$}\oplus\qthree{1}[2][3]\raisebox{-1.8ex}{$\scriptstyle B$}\right) \cong \qthree{}[3][]\raisebox{-1ex}{$\scriptstyle KA_3$}\oplus\qthree{2}[3]\raisebox{-1.4ex}{$\scriptstyle KA_3$}\oplus\qthree{1}[2][3]\raisebox{-1.8ex}{$\scriptstyle KA_3$} \cong f_{{P(1)}_A}^{!}\left(\qthree{}[3][]\raisebox{-1ex}{$\scriptstyle A$}\oplus\qthree{2}[3]\raisebox{-1.4ex}{$\scriptstyle A$}\oplus\qthree{1}[2][3]\raisebox{-1.8ex}{$\scriptstyle A$}\right).\]
In particular, by Theorem \ref{thrm:fractsubcat}, the algebra $\La_1=B \glue[{P(1)}_A][{I(3)}_B] A$ admits a $(T_{\La_1}^L, T_{\La_1}^R,2)$-fractured subcategory $\cC_{\La_1}$ where $(T_{\La_1}^L, T_{\La_1}^R)=(T_{B}^L, T_{B}^R) \glue[{P(1)}_A][{I(3)}_B] (T_{A}^L, T_{A}^R)$. Viewing the Aus\-lan\-der--Rei\-ten quivers of $B$ and $A$ embedded in the Aus\-lan\-der--Rei\-ten quiver of $\La_1$ we can find an additive generator of $\cC_{\La_1}$. If we denote the indecomposable modules in the $2$-fractured subcategories by encircling the corresponding vertices we have:
\[\begin{tikzpicture}
\tikzstyle{nct2}=[circle, minimum width=0.6cm, draw, inner sep=0pt, text centered, scale=0.9]
\tikzstyle{nct22}=[circle, minimum width=0.6cm, draw, inner sep=0pt, text centered, scale=0.9]
\tikzstyle{nct3}=[circle, minimum width=6pt, draw=white, inner sep=0pt, scale=0.9]
\node[scale=0.8] (Name) at (4.2,-2.8) {$\Gamma(B)$ and indecomposables in $\cC_B$};
\node[scale=0.8] (Name2) at (12.6,-2.8) {$\Gamma(A)$ and indecomposables in $\cC_A$};

\node[nct2] (A) at (0,0) {$\qthree{}[7][]$};
\node[nct2] (B) at (0.7,0.7) {$\qthree{6}[7]$};
\node[nct2] (C) at (1.4,1.4) {$\qthree{5}[6][7]$};
\node[nct3] (D) at (1.4,0) {$\qthree{}[6][]$};
\node[nct3] (E) at (2.1,0.7) {$\qthree{5}[6]$};
\node[nct2] (F) at (2.8,1.4) {$\qthree{4}[5][6]$};
\node[nct3] (G) at (2.8,0) {$\qthree{}[5][]$};
\node[nct2] (H) at (3.5,0.7) {$\qthree{4}[5]$};
\node[nct2] (I) at (3.5,-0.7) {$\qthree{3'}[5]$};
\node[nct22] (J) at (4.2,0) {$\begin{smallmatrix} 4 && 3' \\ & 5 &
\end{smallmatrix}$};
\node[nct3] (K) at (4.9,0.7) {$\qthree{}[3'][]$};
\node[nct3] (L) at (4.9,-0.7) {$\qthree{}[4][]$};
\node[nct2] (M) at (5.6,1.4) {$\qthree{2'}[3']$};
\node[nct2] (N) at (5.6,-1.4) {$\qthree{3}[4]$};
\node[nct2] (O) at (6.3,2.1) {$\qthree{1'}[2'][3']$};
\node[nct2] (P) at (6.3,0.7) {$\qthree{}[2'][]$};
\node[nct2] (Q) at (6.3,-0.7) {$\qthree{}[3][]$};
\node[nct3] (R) at (7,1.4) {$\qthree{1'}[2']$};
\node[nct2] (S) at (7,-1.4) {$\qthree{2}[3]$};
\node[nct3] (T) at (7.7,0.7) {$\qthree{}[1'][]$};
\node[nct3] (U) at (7.7,-0.7) {$\qthree{}[2][]$};
\node[nct2] (V) at (7.7,-2.1) {$\qthree{1}[2][3]$};
\node[nct3] (W) at (8.4,-1.4) {$\qthree{1}[2]$};
\node[nct3] (X) at (9.1,-0.7) {$\qthree{}[1][]$};

\draw[->] (A) to (B);
\draw[->] (B) to (C);
\draw[->] (B) to (D);
\draw[->] (C) to (E);
\draw[->] (D) to (E);
\draw[->] (E) to (F);
\draw[->] (E) to (G);
\draw[->] (F) to (H);
\draw[->] (G) to (H);
\draw[->] (G) to (I);
\draw[->] (H) to (J);
\draw[->] (I) to (J);
\draw[->] (J) to (K);
\draw[->] (J) to (L);
\draw[->] (K) to (M);
\draw[->] (L) to (N);
\draw[->] (M) to (O);
\draw[->] (M) to (P);
\draw[->] (N) to (Q);
\draw[->] (O) to (R);
\draw[->] (P) to (R);
\draw[->] (Q) to (S);
\draw[->] (R) to (T);
\draw[->] (S) to (U);
\draw[->] (S) to (V);
\draw[->] (U) to (W);
\draw[->] (V) to (W);
\draw[->] (W) to (X);

\node[nct2] (AA) at (10.3,-0.7) {$\qthree{}[3][]$};
\node[nct2] (BB) at (11,-1.4) {$\qthree{2}[3][]$}; 
\node[nct3] (CC) at (11.7,-0.7) {$\qthree{}[2][]$}; 
\node[nct2] (DD) at (11.7,-2.1) {$\qthree{1}[2][3]$}; 
\node[nct3] (EE) at (12.4,-1.4) {$\qthree{1}[2][]$}; 
\node[nct3] (FF) at (13.1,-0.7) {$\qthree{}[1][]$}; 
\node[nct2] (GG) at (13.1,-2.1) {$\qthree{0}[1][2]$}; 
\node[nct2] (HH) at (13.8,-1.4) {$\qthree{0}[1][]$}; 
\node[nct2] (II) at (14.5,-0.7) {$\qthree{}[0][]$}; 

\draw[->] (AA) to (BB);
\draw[->] (BB) to (CC);
\draw[->] (BB) to (DD);
\draw[->] (CC) to (EE);
\draw[->] (DD) to (EE);
\draw[->] (EE) to (FF);
\draw[->] (EE) to (GG);
\draw[->] (FF) to (HH);
\draw[->] (GG) to (HH);
\draw[->] (HH) to (II);

\draw[decorate, decoration={zigzag, segment length=+2pt, amplitude=+.75pt,post length=+4pt}, -stealth'] (DD.west) -- (V.east) node[below, midway, scale=0.9] {we glue here};

\draw[loosely dotted] (A.east) -- (D);
\draw[loosely dotted] (B.east) -- (E);
\draw[loosely dotted] (D.east) -- (G);
\draw[loosely dotted] (E.east) -- (H);
\draw[loosely dotted] (G.east) -- (J);
\draw[loosely dotted] (H.east) -- (K);
\draw[loosely dotted] (I.east) -- (L);
\draw[loosely dotted] (K.east) -- (P);
\draw[loosely dotted] (M.east) -- (R);
\draw[loosely dotted] (L.east) -- (Q);
\draw[loosely dotted] (Q.east) -- (U);
\draw[loosely dotted] (P.east) -- (T);
\draw[loosely dotted] (S.east) -- (W);
\draw[loosely dotted] (U.east) -- (X);

\draw[loosely dotted] (AA.east) -- (CC);
\draw[loosely dotted] (BB.east) -- (EE);
\draw[loosely dotted] (CC.east) -- (FF);
\draw[loosely dotted] (EE.east) -- (HH);
\draw[loosely dotted] (FF.east) -- (II);
\end{tikzpicture}\]
and after gluing we get
\[\begin{tikzpicture}[scale=1.2, transform shape]
\tikzstyle{nct2}=[circle, minimum width=0.6cm, draw, inner sep=0pt, text centered, scale=0.9]
\tikzstyle{nct22}=[circle, minimum width=0.6cm, draw, inner sep=0pt, text centered, scale=0.9]
\tikzstyle{nct3}=[circle, minimum width=6pt, draw=white, inner sep=0pt, scale=0.9]
\node[scale=0.8] (Name) at (5,-2.8) {$\Gamma(\La_1)$ and indecomposables in $\cC_{\La_1}$\nospacepunct{.}};

\node[nct2] (A) at (0,0) {$\qthree{}[7][]$};
\node[nct2] (B) at (0.7,0.7) {$\qthree{6}[7]$};
\node[nct2] (C) at (1.4,1.4) {$\qthree{5}[6][7]$};
\node[nct3] (D) at (1.4,0) {$\qthree{}[6][]$};
\node[nct3] (E) at (2.1,0.7) {$\qthree{5}[6]$};
\node[nct2] (F) at (2.8,1.4) {$\qthree{4}[5][6]$};
\node[nct3] (G) at (2.8,0) {$\qthree{}[5][]$};
\node[nct2] (H) at (3.5,0.7) {$\qthree{4}[5]$};
\node[nct2] (I) at (3.5,-0.7) {$\qthree{3'}[5]$};
\node[nct22] (J) at (4.2,0) {$\begin{smallmatrix} 4 && 3' \\ & 5 &
\end{smallmatrix}$};
\node[nct3] (K) at (4.9,0.7) {$\qthree{}[3'][]$};
\node[nct3] (L) at (4.9,-0.7) {$\qthree{}[4][]$};
\node[nct2] (M) at (5.6,1.4) {$\qthree{2'}[3']$};
\node[nct2] (N) at (5.6,-1.4) {$\qthree{3}[4]$};
\node[nct2] (O) at (6.3,2.1) {$\qthree{1'}[2'][3']$};
\node[nct2] (P) at (6.3,0.7) {$\qthree{}[2'][]$};
\node[nct2] (Q) at (6.3,-0.7) {$\qthree{}[3][]$};
\node[nct3] (R) at (7,1.4) {$\qthree{1'}[2']$};
\node[nct2] (S) at (7,-1.4) {$\qthree{2}[3]$};
\node[nct3] (T) at (7.7,0.7) {$\qthree{}[1'][]$};
\node[nct3] (U) at (7.7,-0.7) {$\qthree{}[2][]$};
\node[nct2] (V) at (7.7,-2.1) {$\qthree{1}[2][3]$};
\node[nct3] (W) at (8.4,-1.4) {$\qthree{1}[2]$};
\node[nct3] (X) at (9.1,-0.7) {$\qthree{}[1][]$};
\node[nct2] (Y) at (9.1,-2.1) {$\qthree{0}[1][2]$};
\node[nct2] (Z) at (9.8,-1.4) {$\qthree{0}[1]$};
\node[nct2] (AA) at (10.4,-0.7) {$\qthree{}[0][]$};

\draw[->] (A) to (B);
\draw[->] (B) to (C);
\draw[->] (B) to (D);
\draw[->] (C) to (E);
\draw[->] (D) to (E);
\draw[->] (E) to (F);
\draw[->] (E) to (G);
\draw[->] (F) to (H);
\draw[->] (G) to (H);
\draw[->] (G) to (I);
\draw[->] (H) to (J);
\draw[->] (I) to (J);
\draw[->] (J) to (K);
\draw[->] (J) to (L);
\draw[->] (K) to (M);
\draw[->] (L) to (N);
\draw[->] (M) to (O);
\draw[->] (M) to (P);
\draw[->] (N) to (Q);
\draw[->] (O) to (R);
\draw[->] (P) to (R);
\draw[->] (Q) to (S);
\draw[->] (R) to (T);
\draw[->] (S) to (U);
\draw[->] (S) to (V);
\draw[->] (U) to (W);
\draw[->] (V) to (W);
\draw[->] (W) to (X);
\draw[->] (X) to (Z);
\draw[->] (Z) to (AA);
\draw[->] (W) to (Y);
\draw[->] (Y) to (Z);

\draw[loosely dotted] (A.east) -- (D);
\draw[loosely dotted] (B.east) -- (E);
\draw[loosely dotted] (D.east) -- (G);
\draw[loosely dotted] (E.east) -- (H);
\draw[loosely dotted] (G.east) -- (J);
\draw[loosely dotted] (H.east) -- (K);
\draw[loosely dotted] (I.east) -- (L);
\draw[loosely dotted] (K.east) -- (P);
\draw[loosely dotted] (M.east) -- (R);
\draw[loosely dotted] (L.east) -- (Q);
\draw[loosely dotted] (Q.east) -- (U);
\draw[loosely dotted] (P.east) -- (T);
\draw[loosely dotted] (S.east) -- (W);
\draw[loosely dotted] (U.east) -- (X);
\draw[loosely dotted] (W.east) -- (Z);
\draw[loosely dotted] (X.east) -- (AA);
\end{tikzpicture}\]
In particular, $\cC_{\La_1}$ is a $2$-left cluster tilting subcategory, as expected. Moreover, $\La_1$ has two maximal right abutments, namely ${I(2)}_{\La_1}$ and ${I(3')}_{\La_1}$. The fracture corresponding to the first one is injective, while the fracture of the second one is
\[T^{\left({I(3')}_{\La_1}\right)}=\qthree{}[2'][]\raisebox{-1ex}{$\scriptstyle \La_1$}\oplus\qthree{2'}[3']\raisebox{-1.4ex}{$\scriptstyle \La_1$}\oplus\qthree{1'}[2'][3']\raisebox{-2ex}{$\scriptstyle \La_1$},\]
which is noninjective. Hence we want to glue at ${I(3')}_{\La_1}$. Let $C$ be the algebra given by the quiver with relations 
\[\begin{tikzpicture}[scale=0.9, transform shape]
\node (-2) at (-3.4,1) {$-2'$};
\node (-1) at (-2.2,1) {$-1'$};
\node (0) at (-1,1) {$0'$};
\node (1) at (0,1) {$1'$};
\node (2) at (1,1) {$2'$};
\node (3) at (2,1) {$3'$\nospacepunct{.}};

\draw[dotted] (0) to [out=30,in=150] (3);
\draw[dotted] (-1) to [out=30,in=150] (2);
\draw[dotted] (-2) to [out=30,in=150] (0);

\draw[->] (-2) to (-1);
\draw[->] (-1) to (0);
\draw[->] (0) to (1);
\draw[->] (1) to (2);
\draw[->] (2) to (3);
\end{tikzpicture}\]
Then the Aus\-lan\-der--Rei\-ten quiver $\Gamma(C)$ of $C$ is 
\[\begin{tikzpicture}[scale=1.2, transform shape]
\tikzstyle{nct3}=[circle, minimum width=6pt, draw=white, inner sep=0pt, scale=0.9]
\node[nct3] (A) at (0,0) {$\qthree{}[3'][]$};
\node[nct3] (B) at (0.7,0.7) {$\qthree{2'}[3']$};
\node[nct3] (C) at (1.4,1.4) {$\qthree{1'}[2'][3']$};
\node[nct3] (D) at (1.4,0) {$\qthree{}[2'][]$};
\node[nct3] (E) at (2.1,0.7) {$\qthree{1'}[2']$};
\node[nct3] (F) at (2.8,1.4) {$\qthree{0'}[1'][2']$};
\node[nct3] (G) at (2.8,0) {$\qthree{}[1'][]$};
\node[nct3] (H) at (3.5,0.7) {$\qthree{0'}[1']$};
\node[nct3] (I) at (4.2,0) {$\qthree{}[0'][]$};
\node[nct3] (J) at (4.2,1.4) {$\qthree{-1'\;\;\;}[0'][1']$};
\node[nct3] (K) at (4.9,0.7) {$\qthree{-1'}[\;\;\;0']$};
\node[nct3] (L) at (5.6,0) {$\qthree{}[-1'][]$};
\node[nct3] (M) at (6.3,0.7) {$\qthree{-2'}[-1']$};
\node[nct3] (N) at (7,0) {$\qthree{}[-2'][]$\nospacepunct{.}};

\draw[->] (A) to (B);
\draw[->] (B) to (C);
\draw[->] (B) to (D);
\draw[->] (C) to (E);
\draw[->] (D) to (E);
\draw[->] (E) to (F);
\draw[->] (E) to (G);
\draw[->] (F) to (H);
\draw[->] (G) to (H);
\draw[->] (H) to (I);
\draw[->] (H) to (J);
\draw[->] (J) to (K);
\draw[->] (I) to (K);
\draw[->] (K) to (L);
\draw[->] (L) to (M);
\draw[->] (M) to (N);

\draw[loosely dotted] (A.east) -- (D);
\draw[loosely dotted] (B.east) -- (E);
\draw[loosely dotted] (D.east) -- (G);
\draw[loosely dotted] (E.east) -- (H);
\draw[loosely dotted] (G.east) -- (I);
\draw[loosely dotted] (H.east) -- (K);
\draw[loosely dotted] (I.east) -- (L);
\draw[loosely dotted] (L.east) -- (N);
\end{tikzpicture}\]
Hence by Proposition \ref{prop:triangles} there is a unique maximal left abutment, namely ${P(1')}_C=\qthree{1'}[2'][3']\raisebox{-2ex}{$\scriptstyle C$}$ and a unique maximal right abutment, namely $\qthree{-2'}[-1']\raisebox{-1.4ex}{$\scriptstyle C$}$. It follows that $(T_C^L, T_C^R)$ is a fracturing of $C$, where 
\[T_C^L=\qthree{}[2'][]\raisebox{-1ex}{$\scriptstyle C$}\oplus\qthree{2'}[3']\raisebox{-1.4ex}{$\scriptstyle C$}\oplus\qthree{1'}[2'][3']\raisebox{-2ex}{$\scriptstyle C$} \text{\; and \;} T_C^R=\qthree{-2'}[-1']\raisebox{-1.4ex}{$\scriptstyle C$}\oplus \qthree{}[-2'][]\raisebox{-1ex}{$\scriptstyle C$}.\]
It is easy to see that $C$ has a $(T_C^L, T_C^R,2)$-fractured subcategory such that the gluing 
\[\La_2=\La_1 \glue[{P(1')}_C][{I(3')}_{\La_1}] C\] 
is compatible according to Theorem \ref{thrm:fractsubcat}. Hence the gluing of the subcategories $\cC_{\La_1}$ and $\cC_C$ is a $2$-cluster tilting subcategory. Concretely, the Aus\-lan\-der--Rei\-ten quivers of $\La_1$ and $C$ along with their $2$-fractured subcategories are
\[\resizebox{\textwidth}{!}{\begin{tikzpicture}
\tikzstyle{nct}=[shape= rectangle, minimum width=6pt, minimum height=7.5, draw, inner sep=0pt]
\tikzstyle{nct2}=[circle, minimum width=0.6cm, draw, inner sep=0pt, text centered, scale=0.9]
\tikzstyle{nct22}=[circle, minimum width=0.6cm, draw, inner sep=0pt, text centered, scale=0.8]
\tikzstyle{nct3}=[circle, minimum width=6pt, draw=white, inner sep=0pt, scale=0.9]
\node[scale=0.8] (Name) at (5,-2.8) {$\Gamma(\La_1)$ and indecomposables in $\cC_{\La_1}$};
\node[scale=0.8] (Name) at (12.4,0) {$\Gamma(C)$ and indecomposables in $\cC_{C}$};

\node[nct2] (A) at (0,0) {$\qthree{}[7][]$};
\node[nct2] (B) at (0.7,0.7) {$\qthree{6}[7]$};
\node[nct2] (C) at (1.4,1.4) {$\qthree{5}[6][7]$};
\node[nct3] (D) at (1.4,0) {$\qthree{}[6][]$};
\node[nct3] (E) at (2.1,0.7) {$\qthree{5}[6]$};
\node[nct2] (F) at (2.8,1.4) {$\qthree{4}[5][6]$};
\node[nct3] (G) at (2.8,0) {$\qthree{}[5][]$};
\node[nct2] (H) at (3.5,0.7) {$\qthree{4}[5]$};
\node[nct2] (I) at (3.5,-0.7) {$\qthree{3'}[5]$};
\node[nct22] (J) at (4.2,0) {$\begin{smallmatrix} 4 && 3' \\ & 5 &
\end{smallmatrix}$};
\node[nct3] (K) at (4.9,0.7) {$\qthree{}[3'][]$};
\node[nct3] (L) at (4.9,-0.7) {$\qthree{}[4][]$};
\node[nct2] (M) at (5.6,1.4) {$\qthree{2'}[3']$};
\node[nct2] (N) at (5.6,-1.4) {$\qthree{3}[4]$};
\node[nct2] (O) at (6.3,2.1) {$\qthree{1'}[2'][3']$};
\node[nct2] (P) at (6.3,0.7) {$\qthree{}[2'][]$};
\node[nct2] (Q) at (6.3,-0.7) {$\qthree{}[3][]$};
\node[nct3] (R) at (7,1.4) {$\qthree{1'}[2']$};
\node[nct2] (S) at (7,-1.4) {$\qthree{2}[3]$};
\node[nct3] (T) at (7.7,0.7) {$\qthree{}[1'][]$};
\node[nct3] (U) at (7.7,-0.7) {$\qthree{}[2][]$};
\node[nct2] (V) at (7.7,-2.1) {$\qthree{1}[2][3]$};
\node[nct3] (W) at (8.4,-1.4) {$\qthree{1}[2]$};
\node[nct3] (X) at (9.1,-0.7) {$\qthree{}[1][]$};
\node[nct2] (Y) at (9.1,-2.1) {$\qthree{0}[1][2]$};
\node[nct2] (Z) at (9.8,-1.4) {$\qthree{0}[1]$};
\node[nct2] (AA) at (10.4,-0.7) {$\qthree{}[0][]$};

\draw[->] (A) to (B);
\draw[->] (B) to (C);
\draw[->] (B) to (D);
\draw[->] (C) to (E);
\draw[->] (D) to (E);
\draw[->] (E) to (F);
\draw[->] (E) to (G);
\draw[->] (F) to (H);
\draw[->] (G) to (H);
\draw[->] (G) to (I);
\draw[->] (H) to (J);
\draw[->] (I) to (J);
\draw[->] (J) to (K);
\draw[->] (J) to (L);
\draw[->] (K) to (M);
\draw[->] (L) to (N);
\draw[->] (M) to (O);
\draw[->] (M) to (P);
\draw[->] (N) to (Q);
\draw[->] (O) to (R);
\draw[->] (P) to (R);
\draw[->] (Q) to (S);
\draw[->] (R) to (T);
\draw[->] (S) to (U);
\draw[->] (S) to (V);
\draw[->] (U) to (W);
\draw[->] (V) to (W);
\draw[->] (W) to (X);
\draw[->] (X) to (Z);
\draw[->] (Z) to (AA);
\draw[->] (W) to (Y);
\draw[->] (Y) to (Z);

\node[nct3] (A2) at (8.9,0.7) {$\qthree{}[3'][]$};
\node[nct2] (B2) at (9.6,1.4) {$\qthree{2'}[3']$};
\node[nct2] (C2) at (10.3,2.1) {$\qthree{1'}[2'][3']$};
\node[nct2] (D2) at (10.3,0.7) {$\qthree{}[2'][]$};
\node[nct3] (E2) at (11,1.4) {$\qthree{1'}[2']$};
\node[nct2] (F2) at (11.7,2.1) {$\qthree{0'}[1'][2']$};
\node[nct3] (G2) at (11.7,0.7) {$\qthree{}[1'][]$};
\node[nct3] (H2) at (12.4,1.4) {$\qthree{0'}[1']$};
\node[nct2] (I2) at (13.1,0.7) {$\qthree{}[0'][]$};
\node[nct22] (J2) at (13.1,2.1) {$\qthree{-1'\;\;\;}[0'][1']$};
\node[nct2] (K2) at (13.8,1.4) {$\qthree{-1'}[\;\;\;0']$};
\node[nct3] (L2) at (14.5,0.7) {$\qthree{}[-1'][]$};
\node[nct2] (M2) at (15.2,1.4) {$\qthree{-2'}[-1']$};
\node[nct2] (N2) at (15.9,0.7) {$\qthree{}[-2'][]$};

\draw[->] (A2) to (B2);
\draw[->] (B2) to (C2);
\draw[->] (B2) to (D2);
\draw[->] (C2) to (E2);
\draw[->] (D2) to (E2);
\draw[->] (E2) to (F2);
\draw[->] (E2) to (G2);
\draw[->] (F2) to (H2);
\draw[->] (G2) to (H2);
\draw[->] (H2) to (I2);
\draw[->] (H2) to (J2);
\draw[->] (J2) to (K2);
\draw[->] (I2) to (K2);
\draw[->] (K2) to (L2);
\draw[->] (L2) to (M2);
\draw[->] (M2) to (N2);

\draw[decorate, decoration={zigzag, segment length=+2pt, amplitude=+.75pt,post length=+4pt}, -stealth'] (C2.west) -- (O.east) node[below, midway, scale=0.9] {we glue here};

\draw[loosely dotted] (A.east) -- (D);
\draw[loosely dotted] (B.east) -- (E);
\draw[loosely dotted] (D.east) -- (G);
\draw[loosely dotted] (E.east) -- (H);
\draw[loosely dotted] (G.east) -- (J);
\draw[loosely dotted] (H.east) -- (K);
\draw[loosely dotted] (I.east) -- (L);
\draw[loosely dotted] (K.east) -- (P);
\draw[loosely dotted] (M.east) -- (R);
\draw[loosely dotted] (L.east) -- (Q);
\draw[loosely dotted] (Q.east) -- (U);
\draw[loosely dotted] (P.east) -- (T);
\draw[loosely dotted] (S.east) -- (W);
\draw[loosely dotted] (U.east) -- (X);
\draw[loosely dotted] (W.east) -- (Z);
\draw[loosely dotted] (X.east) -- (AA);

\draw[loosely dotted] (A2.east) -- (D2);
\draw[loosely dotted] (B2.east) -- (E2);
\draw[loosely dotted] (D2.east) -- (G2);
\draw[loosely dotted] (E2.east) -- (H2);
\draw[loosely dotted] (G2.east) -- (I2);
\draw[loosely dotted] (H2.east) -- (K2);
\draw[loosely dotted] (I2.east) -- (L2);
\draw[loosely dotted] (L2.east) -- (N2);
\end{tikzpicture}}\]
the algebra $\La_2$ is given by the quiver with relations
\[\begin{tikzpicture}[scale=0.9, transform shape]
\node (1) at (0,1) {$1$};
\node (2) at (1,1) {$2$};
\node (3) at (2,1) {$3$};
\node (4) at (3,1) {$4$};
\node (5) at (4,0.5) {$5$};
\node (6) at (5,0.5) {$6$};
\node (7) at (6,0.5) {$7$\nospacepunct{,}};
\node (1') at (1,0) {$1'$};
\node (2') at (2,0) {$2'$};
\node (3') at (3,0) {$3'$};

\node (0) at (-1,1) {$0$};
\draw[->] (0) to (1);
\draw[dotted] (0) to [out=30,in=150] (3);

\draw[->] (1) to (2);
\draw[->] (2) to (3);
\draw[->] (3) to (4);
\draw[->] (4) to (5);
\draw[->] (5) to (6);
\draw[->] (6) to (7);
\draw[->] (1') to (2');
\draw[->] (2') to (3');
\draw[->] (3') to (5);

\draw[dotted] (2) to [out=30,in=150] (4);
\draw[dotted] (3) to [out=-30,in=0] (5.west);
\draw[dotted] (4) to [out=0,in=160] (7);
\draw[dotted] (2') to [out=30,in=0] (5.west);
\draw[dotted] (3') to [out=0,in=210] (6);

\node (-2') at (-2.4,0) {$-2'$};
\node (-1') at (-1.2,0) {$-1'$};
\node (0') at (0,0) {$0'$};

\draw[dotted] (0') to [out=30,in=150] (3');
\draw[dotted] (-1') to [out=30,in=150] (2');
\draw[dotted] (-2') to [out=30,in=150] (0');

\draw[->] (-2') to (-1');
\draw[->] (-1') to (0');
\draw[->] (0') to (1');
\end{tikzpicture}\]
and the Aus\-lan\-der--Rei\-ten quiver $\Gamma(\La_2)$ of $\La_2$ with the $2$-cluster tilting subcategory $\cC_{\La_2}$ is
\[\begin{tikzpicture}[scale=1.2, transform shape]
\tikzstyle{nct2}=[circle, minimum width=0.6cm, draw, inner sep=0pt, text centered, scale=0.9]
\tikzstyle{nct22}=[circle, minimum width=0.6cm, draw, inner sep=0pt, text centered, scale=0.8]
\tikzstyle{nct3}=[circle, minimum width=6pt, draw=white, inner sep=0pt, scale=0.9]
\node[scale=0.8] (Name) at (6,-2.8) {$\Gamma(\La_2)$ and indecomposables in $\cC_{\La_2}$\nospacepunct{.}};

\node[nct2] (A) at (0,0) {$\qthree{}[7][]$};
\node[nct2] (B) at (0.7,0.7) {$\qthree{6}[7]$};
\node[nct2] (C) at (1.4,1.4) {$\qthree{5}[6][7]$};
\node[nct3] (D) at (1.4,0) {$\qthree{}[6][]$};
\node[nct3] (E) at (2.1,0.7) {$\qthree{5}[6]$};
\node[nct2] (F) at (2.8,1.4) {$\qthree{4}[5][6]$};
\node[nct3] (G) at (2.8,0) {$\qthree{}[5][]$};
\node[nct2] (H) at (3.5,0.7) {$\qthree{4}[5]$};
\node[nct2] (I) at (3.5,-0.7) {$\qthree{3'}[5]$};
\node[nct22] (J) at (4.2,0) {$\begin{smallmatrix} 4 && 3' \\ & 5 &
\end{smallmatrix}$};
\node[nct3] (L) at (4.9,-0.7) {$\qthree{}[4][]$};
\node[nct2] (N) at (5.6,-1.4) {$\qthree{3}[4]$};
\node[nct2] (Q) at (6.3,-0.7) {$\qthree{}[3][]$};
\node[nct2] (S) at (7,-1.4) {$\qthree{2}[3]$};
\node[nct3] (U) at (7.7,-0.7) {$\qthree{}[2][]$};
\node[nct2] (V) at (7.7,-2.1) {$\qthree{1}[2][3]$};
\node[nct3] (W) at (8.4,-1.4) {$\qthree{1}[2]$};
\node[nct3] (X) at (9.1,-0.7) {$\qthree{}[1][]$};
\node[nct2] (Y) at (9.1,-2.1) {$\qthree{0}[1][2]$};
\node[nct2] (Z) at (9.8,-1.4) {$\qthree{0}[1]$};
\node[nct2] (AA) at (10.4,-0.7) {$\qthree{}[0][]$};

\draw[->] (A) to (B);
\draw[->] (B) to (C);
\draw[->] (B) to (D);
\draw[->] (C) to (E);
\draw[->] (D) to (E);
\draw[->] (E) to (F);
\draw[->] (E) to (G);
\draw[->] (F) to (H);
\draw[->] (G) to (H);
\draw[->] (G) to (I);
\draw[->] (H) to (J);
\draw[->] (I) to (J);
\draw[->] (J) to (L);
\draw[->] (L) to (N);
\draw[->] (N) to (Q);
\draw[->] (Q) to (S);
\draw[->] (S) to (U);
\draw[->] (S) to (V);
\draw[->] (U) to (W);
\draw[->] (V) to (W);
\draw[->] (W) to (X);
\draw[->] (X) to (Z);
\draw[->] (Z) to (AA);
\draw[->] (W) to (Y);
\draw[->] (Y) to (Z);

\node[nct3] (A2) at (4.9,0.7) {$\qthree{}[3'][]$};
\node[nct2] (B2) at (5.6,1.4) {$\qthree{2'}[3']$};
\node[nct2] (C2) at (6.3,2.1) {$\qthree{1'}[2'][3']$};
\node[nct2] (D2) at (6.3,0.7) {$\qthree{}[2'][]$};
\node[nct3] (E2) at (7,1.4) {$\qthree{1'}[2']$};
\node[nct2] (F2) at (7.7,2.1) {$\qthree{0'}[1'][2']$};
\node[nct3] (G2) at (7.7,0.7) {$\qthree{}[1'][]$};
\node[nct3] (H2) at (8.4,1.4) {$\qthree{0'}[1']$};
\node[nct2] (I2) at (9.1,0.7) {$\qthree{}[0'][]$};
\node[nct22] (J2) at (9.1,2.1) {$\qthree{-1'\;\;\;}[0'][1']$};
\node[nct2] (K2) at (9.8,1.4) {$\qthree{-1'}[\;\;\;0']$};
\node[nct3] (L2) at (10.5,0.7) {$\qthree{}[-1'][]$};
\node[nct2] (M2) at (11.2,1.4) {$\qthree{-2'}[-1']$};
\node[nct2] (N2) at (11.9,0.7) {$\qthree{}[-2'][]$};

\draw[->] (J) to (A2);

\draw[->] (A2) to (B2);
\draw[->] (B2) to (C2);
\draw[->] (B2) to (D2);
\draw[->] (C2) to (E2);
\draw[->] (D2) to (E2);
\draw[->] (E2) to (F2);
\draw[->] (E2) to (G2);
\draw[->] (F2) to (H2);
\draw[->] (G2) to (H2);
\draw[->] (H2) to (I2);
\draw[->] (H2) to (J2);
\draw[->] (J2) to (K2);
\draw[->] (I2) to (K2);
\draw[->] (K2) to (L2);
\draw[->] (L2) to (M2);
\draw[->] (M2) to (N2);

\draw[loosely dotted] (A.east) -- (D);
\draw[loosely dotted] (B.east) -- (E);
\draw[loosely dotted] (D.east) -- (G);
\draw[loosely dotted] (E.east) -- (H);
\draw[loosely dotted] (G.east) -- (J);
\draw[loosely dotted] (H.east) -- (A2);
\draw[loosely dotted] (I.east) -- (L);
\draw[loosely dotted] (L.east) -- (Q);
\draw[loosely dotted] (Q.east) -- (U);
\draw[loosely dotted] (S.east) -- (W);
\draw[loosely dotted] (U.east) -- (X);
\draw[loosely dotted] (W.east) -- (Z);
\draw[loosely dotted] (X.east) -- (AA);

\draw[loosely dotted] (A2.east) -- (D2);
\draw[loosely dotted] (B2.east) -- (E2);
\draw[loosely dotted] (D2.east) -- (G2);
\draw[loosely dotted] (E2.east) -- (H2);
\draw[loosely dotted] (G2.east) -- (I2);
\draw[loosely dotted] (H2.east) -- (K2);
\draw[loosely dotted] (I2.east) -- (L2);
\draw[loosely dotted] (L2.east) -- (N2);
\end{tikzpicture}\]
\end{example}

\begin{remark}
The algebras of Example \ref{ex:nctwithtwoglues} and Corollary \ref{cor:glueatsimple} give rise to algebras with many interesting properties. For instance let us consider the number of sinks and sources in the quiver of an algebra. Following the notation of Corollary \ref{cor:numberofprojandinj}, and since the number of sinks (respectively sources) in the quiver of an algebra is equal to the number of simple projective (respectively injective) modules, we denote for an algebra $\La$ by $s_{\La}$ the number of sources in its quiver and by $t_{\La}$ the number of sinks in its quiver. Then let $A$ be a strongly $(2,d_A)$-rep\-re\-sen\-ta\-tion-di\-rect\-ed algebra with $s_A=2$ and $t_A=1$ (for example, we may take $A$ to be the algebra $\La_2$ as in Example \ref{ex:nctwithtwoglues}). Let $B$ be a $(2,d_B)$-rep\-re\-sen\-ta\-tion-di\-rect\-ed algebra which admits a $2$-cluster tilting subcategory. By gluing at the simple projective $A$-module and any simple injective $B$-module we get the algebra $B^{(1)} = B \glue A$. By Corollary \ref{cor:glueatsimple}, we have that $B^{(1)}$ admits a $2$-cluster tilting subcategory. By Corollary \ref{cor:numberofprojandinj} we have that $(s_{B^{(1)}},t_{B^{(1)}})=(s_B,t_B+1)$. Continuing inductively, let $B^{(i)}$ be a sequence of algebras defined by $B^{(i)}=B^{(i-1)} \glue A $ where the gluing is done over any simple projective $A$-module and any simple injective $B^{(i-1)}$-module. Then we get that $B^{(i)}$ admits a $2$-cluster tilting subcategory and $(s_{B^{(i)}}, t_{B^{(i)}})=(s_B,t_B+i)$.

A similar argument shows that if we let $B_{(j)}$ be a sequence of algebras defined by $B_{(1)}=A^{\text{op}} \glue B$ and $B_{(j)} = A^{\text{op}} \glue B_{(j-1)}$, where all gluings are done over simple modules, then again $B_{(j)}$ admits a $2$-cluster tilting subcategory and $(s_{B_{(j)}}, t_{B_{(j)}})=(s_B+j,t_B)$. More generally, we have that
\[\Big(s_{B^{(i)}_{(j)}},t_{B^{(i)}_{(j)}}\Big) = (s_B+j,t_B+i).\]
In particular, by choosing $(s_B,t_B)=(1,1)$ (for example, we may take $B=KA_h/\rad(KA_h)^{h-1}$ for some $h\geq 3$), we have that for any pair $(s,t)$ with $s,t\geq 1$ there exists an algebra $\La$ such that $\La$ admits a $2$-cluster tilting subcategory and $(s_{\La}, t_{\La})=(s,t)$. It follows that for any given pair of numbers $(s,t)$, there exists a quiver $Q$ with $s$ sinks and $t$ sources and a bound quiver algebra $\La=KQ/\cR$ such that $\La$ admits a $2$-cluster tilting subcategory. Note that by construction the number of vertices of the quiver of $\La$ is of the order of $s+t$ but can be made arbitrarily large.
\end{remark}

In Example \ref{ex:nctwithtwoglues} it was not clear how one should find the algebras $A$ and $C$. They depended on the type of fractures that the algebra $B$ had and clearly they are not unique since we can always glue at simple modules via Corollary \ref{cor:glueatsimple}. The fractures in this example corresponded to slice modules of $KA_3$ and we will see in section \ref{sect:slices} how we can find appropriate algebras to glue in this case. More generally we have the following question.

\begin{question} \label{Question: can we glue?} Let $n\geq 2$ and let $T=\bigoplus_{i=1}^hT_i$ be a basic tilting module of $KA_h$.
\begin{itemize}
\item[(a)] Can we find a rep\-re\-sen\-ta\-tion-di\-rect\-ed algebra $B$ with a right fracturing $T^R_B = \bigoplus_{[J]\in \MIAB}T^{(J)}$ such that the conditions 
\begin{enumerate}
    \item[(1)] there exists a maximal right abutment $I$ of $B$ such that $g_I^{\ast}(T^{(I)})\cong T$,
    \item[(2)] for every maximal right abutment $J$ with $J\not\cong I$ we have that $T^{(J)}$ is injective, and
    \item[(3)] there exists a $(\Lab_B, T_B^R,n)$-fractured subcategory
\end{enumerate}
are satisfied?

\item[(b)] Can we find a rep\-re\-sen\-ta\-tion-di\-rect\-ed algebra $A$ with a left fracturing $T^L_A = \bigoplus_{[Q]\in \MABP}T^{(Q)}$ such that the conditions
\begin{enumerate}
    \item[(1)] there exists a maximal left abutment $P$ of $A$ such that $f_P^{!}(T^{(P)})\cong T$,
    \item[(2)] for every maximal left abutment $Q$ with $Q\not\cong P$ we have that $T^{(Q)}$ is projective, and
    \item[(3)] there exists a $(T_A^L,\DLab_A,n)$-fractured subcategory
\end{enumerate}
are satisfied?
\end{itemize}
\end{question}

If we can answer Question \ref{Question: can we glue?}(a) (respectively \ref{Question: can we glue?}(b)) affirmatively we will say that \emph{we can complete $T$ on the left (respectively right)}. Notice that by symmetry we can complete $T$ on the left if and only if we can complete $D(T)$ on the right by taking $A=B^{\text{op}}$. 

As a special case, let $T=\bigoplus_{k=1}^h M(i_k,j_k)$ be such that $T\cong\bigoplus_{k=1}^{h}M(h+2-j_k-i_k,j_k)$. This condition is equivalent to saying that viewing the indecomposable summands of $T$ as vertices in $\triangle(h)$, they are symmetric along the perpendicular bisector of the bottom line of the triangle. It follows that if we can answer Question \ref{Question: can we glue?}(a) affirmatively in this case, then the algebra $B \glue[D(I)][I] B^{\text{op}}$ admits an $n$-cluster tilting subcategory by Theorem \ref{thrm:fractsubcat}. A similar result holds if we can answer Question \ref{Question: can we glue?}(b) affirmatively. We illustrate this situation with an example.

\begin{example}\label{ex:selfdual}
Let $A$ be given by the quiver with relations
\[\begin{tikzpicture}[scale=0.9, transform shape]
\node (-5) at (-2.4,0) {$-5$};
\node (-4) at (-1.2,0) {$-4$};
\node (-3) at (-3.6,1) {$-3$};
\node (-2) at (-2.4,1) {$-2$};
\node (-1) at (-1.2,1) {$-1$};
\node (0) at (0,0.5) {$0$};
\node (1) at (1,0.5) {$1$};
\node (2) at (2,0.5) {$2$};
\node (3) at (3,0.5) {$3$\nospacepunct{.}};

\draw[->] (-5) to (-4);
\draw[->] (-3) to (-2);
\draw[->] (-2) to (-1);
\draw[->] (-1) to (0);
\draw[->] (-4) to (0);
\draw[->] (0) to (1);
\draw[->] (1) to (2);
\draw[->] (2) to (3);

\draw[dotted] (-2) to [out=-30,in=0] (0.west);
\draw[dotted] (-5) to [out=30,in=0] (0.west);
\draw[dotted] (0) to [out=30,in=150] (3);
\draw[dotted] (-3) to [out=30,in=150] (-1);
\draw[dotted] (-1) to [out=0,in=160] (2);
\draw[dotted] (-4) to [out=0,in=210] (1);
\end{tikzpicture}\]

By Proposition \ref{prop:abutmentsquiver} there is a unique maximal left abutment, namely ${P(1)}_A=\qthree{1}[2][3]$. If we set $T^R_A=I^{\text{ab}}$ to be an injective right fracturing of $A$ and
\[T^L_A = T^{({(P(1)}_A)} = \qthree{}[3][]\raisebox{-1ex}{$\scriptstyle A$}\oplus \qthree{1}[2][3]\raisebox{-1.8ex}{$\scriptstyle A$}\oplus \qthree{}[1]{}\raisebox{-1ex}{$\scriptstyle A$},\]
then $(T^L_A, T^R_A)$ is a fracturing of $A$ and there exists a $(T^L_A, T^R_A,3)$-fractured subcategory $\cC_A$. The Aus\-lan\-der--Rei\-ten quiver $\Gamma(A)$ as well as the indecomposable modules in $\cC_A$ are
\[\begin{tikzpicture}[scale=1.2, transform shape]
\tikzstyle{nct2}=[circle, minimum width=0.6cm, draw, inner sep=0pt, text centered, scale=0.9]
\tikzstyle{nct22}=[circle, minimum width=0.6cm, draw, inner sep=0pt, text centered, scale=0.8]
\tikzstyle{nct3}=[circle, minimum width=6pt, draw=white, inner sep=0pt, scale=0.9]
\node[scale=0.8] (Name) at (3,-2) {$\Gamma(A)$ and indecomposables in $\cC_{A}$\nospacepunct{.}};

\node[nct2] (Z) at (0,1.4) {$\qthree{1}[2][3]$};
\node[nct3] (W) at (-0.7,0.7) {$\qthree{2}[3]$};
\node[nct2] (V) at (-1.4,0) {$\qthree{}[3][]$};
\node[nct3] (A) at (0,0) {$\qthree{}[2][]$};
\node[nct3] (B) at (0.7,0.7) {$\qthree{1}[2]$};
\node[nct2] (C) at (1.4,1.4) {$\qthree{0}[1][2]$};
\node[nct2] (D) at (1.4,0) {$\qthree{}[1][]$};
\node[nct3] (E) at (2.1,0.7) {$\qthree{0}[1]$};
\node[nct22] (F) at (2.8,1.4) {$\qthree{-1\;\;\;}[0][1]$};
\node[nct3] (G) at (2.8,0) {$\qthree{}[0][]$};
\node[nct3] (H) at (3.5,0.7) {$\qthree{-1\;\;\;}[0]$};
\node[nct2] (I) at (3.5,-0.7) {$\qthree{-4\;\;\;}[0]$};
\node[nct22] (J) at (4.2,0) {$\begin{smallmatrix} -1 & -4 \\ & 0\;\;\;
\end{smallmatrix}$};
\node[nct3] (K) at (4.9,0.7) {$\qthree{}[-4][]$};
\node[nct3] (L) at (4.9,-0.7) {$\qthree{}[-1][]$};
\node[nct2] (M) at (5.6,1.4) {$\qthree{-5}[-4]$};
\node[nct2] (N) at (5.6,-1.4) {$\qthree{-2}[-1]$};
\node[nct2] (P) at (6.3,0.7) {$\qthree{}[-5][]$};
\node[nct3] (Q) at (6.3,-0.7) {$\qthree{}[-2][]$};
\node[nct2] (S) at (7,-1.4) {$\qthree{-3}[-2]$};
\node[nct2] (U) at (7.7,-0.7) {$\qthree{}[-3][]$};

\draw[->] (V) to (W);
\draw[->] (W) to (A);
\draw[->] (W) to (Z);
\draw[->] (Z) to (B);
\draw[->] (A) to (B);
\draw[->] (B) to (C);
\draw[->] (B) to (D);
\draw[->] (C) to (E);
\draw[->] (D) to (E);
\draw[->] (E) to (F);
\draw[->] (E) to (G);
\draw[->] (F) to (H);
\draw[->] (G) to (H);
\draw[->] (G) to (I);
\draw[->] (H) to (J);
\draw[->] (I) to (J);
\draw[->] (J) to (K);
\draw[->] (J) to (L);
\draw[->] (K) to (M);
\draw[->] (L) to (N);
\draw[->] (M) to (P);
\draw[->] (N) to (Q);
\draw[->] (Q) to (S);
\draw[->] (S) to (U);

\draw[loosely dotted] (V.east) -- (A);
\draw[loosely dotted] (W.east) -- (B);
\draw[loosely dotted] (A.east) -- (D);
\draw[loosely dotted] (B.east) -- (E);
\draw[loosely dotted] (D.east) -- (G);
\draw[loosely dotted] (E.east) -- (H);
\draw[loosely dotted] (G.east) -- (J);
\draw[loosely dotted] (H.east) -- (K);
\draw[loosely dotted] (I.east) -- (L);
\draw[loosely dotted] (K.east) -- (P);
\draw[loosely dotted] (L.east) -- (Q);
\draw[loosely dotted] (Q.east) -- (U);
\end{tikzpicture}\]
In particular, $\cC_A$ is a right $3$-cluster tilting subcategory. If we view the fracture appearing in the foundation of ${P(1)}_A$ as a $KA_3$-module, we have
\[f^{!}_{P(1)_A}\left(T_A^{\left({P(1)}_A\right)}\right) \cong M(1,1)\oplus M(1,3) \oplus M(3,1)\]
and
\begin{align*} 
M(3+2-1-1,1)\oplus M(3+2-1-3,3) \oplus M(3+2-3-1,1) &= M(3,1)\oplus M(1,3)\oplus M(1,1)\\
&\cong f^{!}_{P(1)_A}\left(T_A^{\left({P(1)}_A\right)}\right).
\end{align*}
Then the algebra $B=A^{\text{op}}$ is given by the quiver with relations
\[\begin{tikzpicture}[scale=0.9, transform shape]
\node (9) at (-2.4,0) {$9$};
\node (8) at (-1.2,0) {$8$};
\node (7) at (-3.6,1) {$7$};
\node (6) at (-2.4,1) {$6$};
\node (5) at (-1.2,1) {$5$};
\node (4) at (0,0.5) {$4$};
\node (3) at (1,0.5) {$3$};
\node (2) at (2,0.5) {$2$};
\node (1) at (3,0.5) {$1$\nospacepunct{.}};

\draw[->] (1) to (2);
\draw[->] (2) to (3);
\draw[->] (3) to (4);
\draw[->] (4) to (5);
\draw[->] (4) to (8);
\draw[->] (5) to (6);
\draw[->] (6) to (7);
\draw[->] (8) to (9);

\draw[dotted] (6) to [out=-30,in=0] (4.west);
\draw[dotted] (9) to [out=30,in=0] (4.west);
\draw[dotted] (4) to [out=30,in=150] (1);
\draw[dotted] (7) to [out=30,in=150] (5);
\draw[dotted] (5) to [out=0,in=160] (2);
\draw[dotted] (8) to [out=0,in=210] (3);
\end{tikzpicture}\]
and there exists a unique maximal right abutment of $B$, namely ${I(3)}_B$. Then, for the choice of fracturing $\left(P_B^{\text{ab}},T_B^{\left({I(3)}_B\right)}\right)$ with 
\[g_{{I(3)}_B}^{\ast}\left(T_B^{\left({I(3)}_B\right)}\right) \cong f_{{P(1)}_A}^{!} \left(T_A^{\left({P(1)}_A\right)}\right)\]
there exists a left $3$-cluster tilting subcategory $\cC_B=D(\cC_A)$. Hence we can apply Theorem \ref{thrm:fractsubcat}. The algebra $\La=B \glue[{P(1)}_A][{I(3)}_B] A$ is given by the quiver with relations
\[\begin{tikzpicture}[scale=0.9, transform shape]
\node (-5) at (-2.4,0) {$-5$};
\node (-4) at (-1.2,0) {$-4$};
\node (-3) at (-3.6,1) {$-3$};
\node (-2) at (-2.4,1) {$-2$};
\node (-1) at (-1.2,1) {$-1$};
\node (0) at (0,0.5) {$0$};
\node (1) at (1,0.5) {$1$};
\node (2) at (2,0.5) {$2$};
\node (3) at (3,0.5) {$3$};

\draw[->] (-5) to (-4);
\draw[->] (-3) to (-2);
\draw[->] (-2) to (-1);
\draw[->] (-1) to (0);
\draw[->] (-4) to (0);
\draw[->] (0) to (1);
\draw[->] (1) to (2);
\draw[->] (2) to (3);

\draw[dotted] (-2) to [out=-30,in=0] (0.west);
\draw[dotted] (-5) to [out=30,in=0] (0.west);
\draw[dotted] (0) to [out=30,in=150] (3);
\draw[dotted] (-3) to [out=30,in=150] (-1);
\draw[dotted] (-1) to [out=0,in=160] (2);
\draw[dotted] (-4) to [out=0,in=210] (1);

\node (9) at (6,0) {$9$\nospacepunct{.}};
\node (8) at (5,0) {$8$};
\node (7) at (7,1) {$7$};
\node (6) at (6,1) {$6$};
\node (5) at (5,1) {$5$};
\node (4) at (4,0.5) {$4$};

\draw[->] (3) to (4);
\draw[->] (4) to (5);
\draw[->] (4) to (8);
\draw[->] (5) to (6);
\draw[->] (6) to (7);
\draw[->] (8) to (9);

\draw[dotted] (6) to [out=-150,in=0] (4.east);
\draw[dotted] (9) to [out=150,in=0] (4.east);
\draw[dotted] (1) to [out=30,in=150] (4);
\draw[dotted] (5) to [out=30,in=150] (7);
\draw[dotted] (5) to [out=180,in=20] (2);
\draw[dotted] (8) to [out=180,in=-30] (3);
\end{tikzpicture}\]
Then the Aus\-lan\-der--Rei\-ten quivers of $A$ and $B$ along with their $3$-fractured subcategories are
\[\resizebox{\textwidth}{!}{\begin{tikzpicture}
\tikzstyle{nct}=[shape= rectangle, minimum width=6pt, minimum height=7.5, draw, inner sep=0pt]
\tikzstyle{nct2}=[circle, minimum width=0.6cm, draw, inner sep=0pt, text centered, scale=0.9]
\tikzstyle{nct22}=[circle, minimum width=0.6cm, draw, inner sep=0pt, text centered, scale=0.8]
\tikzstyle{nct3}=[circle, minimum width=6pt, draw=white, inner sep=0pt, scale=0.9]
\node[scale=0.8] (Name) at (3,-2) {$\Gamma(A)$ and indecomposables in $\cC_{A}$\nospacepunct{.}};
\node[scale=0.8] (Name) at (-7.2,-2) {$\Gamma(B)$ and indecomposables in $\cC_{B}$};

\node[nct2] (Z) at (0,1.4) {$\qthree{1}[2][3]$};
\node[nct3] (W) at (-0.7,0.7) {$\qthree{2}[3]$};
\node[nct2] (V) at (-1.4,0) {$\qthree{}[3][]$};
\node[nct3] (A) at (0,0) {$\qthree{}[2][]$};
\node[nct3] (B) at (0.7,0.7) {$\qthree{1}[2]$};
\node[nct2] (C) at (1.4,1.4) {$\qthree{0}[1][2]$};
\node[nct2] (D) at (1.4,0) {$\qthree{}[1][]$};
\node[nct3] (E) at (2.1,0.7) {$\qthree{0}[1]$};
\node[nct22] (F) at (2.8,1.4) {$\qthree{-1\;\;\;}[0][1]$};
\node[nct3] (G) at (2.8,0) {$\qthree{}[0][]$};
\node[nct3] (H) at (3.5,0.7) {$\qthree{-1\;\;\;}[0]$};
\node[nct2] (I) at (3.5,-0.7) {$\qthree{-4\;\;\;}[0]$};
\node[nct22] (J) at (4.2,0) {$\begin{smallmatrix} -1 & -4 \\ & 0\;\;\;
\end{smallmatrix}$};
\node[nct3] (K) at (4.9,0.7) {$\qthree{}[-4][]$};
\node[nct3] (L) at (4.9,-0.7) {$\qthree{}[-1][]$};
\node[nct2] (M) at (5.6,1.4) {$\qthree{-5}[-4]$};
\node[nct2] (N) at (5.6,-1.4) {$\qthree{-2}[-1]$};
\node[nct2] (P) at (6.3,0.7) {$\qthree{}[-5][]$};
\node[nct3] (Q) at (6.3,-0.7) {$\qthree{}[-2][]$};
\node[nct2] (S) at (7,-1.4) {$\qthree{-3}[-2]$};
\node[nct2] (U) at (7.7,-0.7) {$\qthree{}[-3][]$};

\draw[->] (V) to (W);
\draw[->] (W) to (A);
\draw[->] (W) to (Z);
\draw[->] (Z) to (B);
\draw[->] (A) to (B);
\draw[->] (B) to (C);
\draw[->] (B) to (D);
\draw[->] (C) to (E);
\draw[->] (D) to (E);
\draw[->] (E) to (F);
\draw[->] (E) to (G);
\draw[->] (F) to (H);
\draw[->] (G) to (H);
\draw[->] (G) to (I);
\draw[->] (H) to (J);
\draw[->] (I) to (J);
\draw[->] (J) to (K);
\draw[->] (J) to (L);
\draw[->] (K) to (M);
\draw[->] (L) to (N);
\draw[->] (M) to (P);
\draw[->] (N) to (Q);
\draw[->] (Q) to (S);
\draw[->] (S) to (U);

\node[nct2] (ZZ) at (-4.2,1.4) {$\qthree{1}[2][3]$};
\node[nct3] (WW) at (-4.9,0.7) {$\qthree{2}[3]$};
\node[nct2] (VV) at (-5.6,0) {$\qthree{}[3][]$};
\node[nct3] (AA) at (-4.2,0) {$\qthree{}[2][]$};
\node[nct3] (BB) at (-3.5,0.7) {$\qthree{1}[2]$};
\node[nct2] (DD) at (-2.8,0) {$\qthree{}[1][]$};
\node[nct2] (CC) at (-5.6,1.4) {$\qthree{2}[3][4]$};
\node[nct3] (EE) at (-6.3,0.7) {$\qthree{3}[4]$};
\node[nct3] (GG) at (-7,0) {$\qthree{}[4][]$};
\node[nct2] (FF) at (-7,1.4) {$\qthree{3}[4][5]$};
\node[nct3] (HH) at (-7.7,0.7) {$\qthree{4}[5]$};
\node[nct2] (II) at (-7.7,-0.7) {$\qthree{4}[8]$};
\node[nct2] (JJ) at (-8.4,0) {$\begin{smallmatrix}  & 4 & \\ 5 && 8
\end{smallmatrix}$};
\node[nct3] (KK) at (-9.1,0.7) {$\qthree{}[8][]$};
\node[nct2] (MM) at (-9.8,1.4) {$\qthree{8}[9]$};
\node[nct2] (PP) at (-10.5,0.7) {$\qthree{}[9][]$};
\node[nct3] (LL) at (-9.1,-0.7) {$\qthree{}[5][]$};
\node[nct2] (NN) at (-9.8,-1.4) {$\qthree{5}[6]$};
\node[nct3] (QQ) at (-10.5,-0.7) {$\qthree{}[6][]$};
\node[nct2] (SS) at (-11.2,-1.4) {$\qthree{6}[7]$};
\node[nct2] (UU) at (-11.9,-0.7) {$\qthree{}[7][]$};

\draw[->] (VV) to (WW);
\draw[->] (WW) to (AA);
\draw[->] (WW) to (ZZ);
\draw[->] (ZZ) to (BB);
\draw[->] (AA) to (BB);
\draw[->] (BB) to (DD);
\draw[->] (CC) to (WW);
\draw[->] (EE) to (CC);
\draw[->] (EE) to (VV);
\draw[->] (GG) to (EE);
\draw[->] (FF) to (EE);
\draw[->] (HH) to (FF);
\draw[->] (HH) to (GG);
\draw[->] (II) to (GG);
\draw[->] (JJ) to (HH);
\draw[->] (JJ) to (II);
\draw[->] (KK) to (JJ);
\draw[->] (MM) to (KK);
\draw[->] (PP) to (MM);
\draw[->] (LL) to (JJ);
\draw[->] (NN) to (LL);
\draw[->] (QQ) to (NN);
\draw[->] (SS) to (QQ);
\draw[->] (UU) to (SS);

\draw[decorate, decoration={zigzag, segment length=+2pt, amplitude=+.75pt,post length=+4pt}, -stealth'] (ZZ.east) -- (Z.west) node[below, midway, scale=0.9] {we glue here};

\draw[loosely dotted] (V.east) -- (A);
\draw[loosely dotted] (W.east) -- (B);
\draw[loosely dotted] (A.east) -- (D);
\draw[loosely dotted] (B.east) -- (E);
\draw[loosely dotted] (D.east) -- (G);
\draw[loosely dotted] (E.east) -- (H);
\draw[loosely dotted] (G.east) -- (J);
\draw[loosely dotted] (H.east) -- (K);
\draw[loosely dotted] (I.east) -- (L);
\draw[loosely dotted] (K.east) -- (P);
\draw[loosely dotted] (L.east) -- (Q);
\draw[loosely dotted] (Q.east) -- (U);

\draw[loosely dotted] (VV.east) -- (AA);
\draw[loosely dotted] (WW.east) -- (BB);
\draw[loosely dotted] (AA.east) -- (DD);
\draw[loosely dotted] (BB.east) -- (EE);
\draw[loosely dotted] (DD.east) -- (GG);
\draw[loosely dotted] (EE.east) -- (HH);
\draw[loosely dotted] (GG.east) -- (JJ);
\draw[loosely dotted] (HH.east) -- (KK);
\draw[loosely dotted] (II.east) -- (LL);
\draw[loosely dotted] (KK.east) -- (PP);
\draw[loosely dotted] (LL.east) -- (QQ);
\draw[loosely dotted] (QQ.east) -- (UU);
\end{tikzpicture}}\]
and the Aus\-lan\-der--Rei\-ten quiver of $\La$ with its $3$-cluster tilting subcategory is
\[\resizebox{\textwidth}{!}{\begin{tikzpicture}
\tikzstyle{nct2}=[circle, minimum width=0.6cm, draw, inner sep=0pt, text centered, scale=0.9]
\tikzstyle{nct22}=[circle, minimum width=0.6cm, draw, inner sep=0pt, text centered, scale=0.8]
\tikzstyle{nct3}=[circle, minimum width=6pt, draw=white, inner sep=0pt, scale=0.9]
\node[scale=0.8] (Name) at (-4.2,-2) {$\Gamma(\La)$ and indecomposables in $\cC_{\La}$\nospacepunct{.}};

\node[nct2] (ZZ) at (-4.2,1.4) {$\qthree{1}[2][3]$};
\node[nct3] (WW) at (-4.9,0.7) {$\qthree{2}[3]$};
\node[nct2] (VV) at (-5.6,0) {$\qthree{}[3][]$};
\node[nct3] (AA) at (-4.2,0) {$\qthree{}[2][]$};
\node[nct3] (BB) at (-3.5,0.7) {$\qthree{1}[2]$};
\node[nct2] (DD) at (-2.8,0) {$\qthree{}[1][]$};
\node[nct2] (CC) at (-5.6,1.4) {$\qthree{2}[3][4]$};
\node[nct3] (EE) at (-6.3,0.7) {$\qthree{3}[4]$};
\node[nct3] (GG) at (-7,0) {$\qthree{}[4][]$};
\node[nct2] (FF) at (-7,1.4) {$\qthree{3}[4][5]$};
\node[nct3] (HH) at (-7.7,0.7) {$\qthree{4}[5]$};
\node[nct2] (II) at (-7.7,-0.7) {$\qthree{4}[8]$};
\node[nct2] (JJ) at (-8.4,0) {$\begin{smallmatrix}  & 4 & \\ 5 && 8
\end{smallmatrix}$};
\node[nct3] (KK) at (-9.1,0.7) {$\qthree{}[8][]$};
\node[nct2] (MM) at (-9.8,1.4) {$\qthree{8}[9]$};
\node[nct2] (PP) at (-10.5,0.7) {$\qthree{}[9][]$};
\node[nct3] (LL) at (-9.1,-0.7) {$\qthree{}[5][]$};
\node[nct2] (NN) at (-9.8,-1.4) {$\qthree{5}[6]$};
\node[nct3] (QQ) at (-10.5,-0.7) {$\qthree{}[6][]$};
\node[nct2] (SS) at (-11.2,-1.4) {$\qthree{6}[7]$};
\node[nct2] (UU) at (-11.9,-0.7) {$\qthree{}[7][]$};
\node[nct2] (C) at (-2.8,1.4) {$\qthree{0}[1][2]$};
\node[nct3] (E) at (-2.1,0.7) {$\qthree{0}[1]$};
\node[nct22] (F) at (-1.4,1.4) {$\qthree{-1\;\;\;}[0][1]$};
\node[nct3] (G) at (-1.4,0) {$\qthree{}[0][]$};
\node[nct3] (H) at (-0.7,0.7) {$\qthree{-1\;\;\;}[0]$};
\node[nct2] (I) at (-0.7,-0.7) {$\qthree{-4\;\;\;}[0]$};
\node[nct22] (J) at (0,0) {$\begin{smallmatrix} -1 & -4 \\ & 0\;\;\;
\end{smallmatrix}$};
\node[nct3] (K) at (0.7,0.7) {$\qthree{}[-4][]$};
\node[nct3] (L) at (0.7,-0.7) {$\qthree{}[-1][]$};
\node[nct2] (M) at (1.4,1.4) {$\qthree{-5}[-4]$};
\node[nct2] (N) at (1.4,-1.4) {$\qthree{-2}[-1]$};
\node[nct2] (P) at (2.1,0.7) {$\qthree{}[-5][]$};
\node[nct3] (Q) at (2.1,-0.7) {$\qthree{}[-2][]$};
\node[nct2] (S) at (2.8,-1.4) {$\qthree{-3}[-2]$};
\node[nct2] (U) at (3.5,-0.7) {$\qthree{}[-3][]$};

\draw[->] (VV) to (WW);
\draw[->] (WW) to (AA);
\draw[->] (WW) to (ZZ);
\draw[->] (ZZ) to (BB);
\draw[->] (AA) to (BB);
\draw[->] (BB) to (DD);
\draw[->] (CC) to (WW);
\draw[->] (EE) to (CC);
\draw[->] (EE) to (VV);
\draw[->] (GG) to (EE);
\draw[->] (FF) to (EE);
\draw[->] (HH) to (FF);
\draw[->] (HH) to (GG);
\draw[->] (II) to (GG);
\draw[->] (JJ) to (HH);
\draw[->] (JJ) to (II);
\draw[->] (KK) to (JJ);
\draw[->] (MM) to (KK);
\draw[->] (PP) to (MM);
\draw[->] (LL) to (JJ);
\draw[->] (NN) to (LL);
\draw[->] (QQ) to (NN);
\draw[->] (SS) to (QQ);
\draw[->] (UU) to (SS);
\draw[->] (BB) to (C);
\draw[->] (DD) to (E);
\draw[->] (C) to (E);
\draw[->] (E) to (F);
\draw[->] (E) to (G);
\draw[->] (F) to (H);
\draw[->] (G) to (H);
\draw[->] (G) to (I);
\draw[->] (H) to (J);
\draw[->] (I) to (J);
\draw[->] (J) to (K);
\draw[->] (J) to (L);
\draw[->] (K) to (M);
\draw[->] (L) to (N);
\draw[->] (M) to (P);
\draw[->] (N) to (Q);
\draw[->] (Q) to (S);
\draw[->] (S) to (U);

\draw[loosely dotted] (BB.east) -- (E);
\draw[loosely dotted] (E.east) -- (H);
\draw[loosely dotted] (H.east) -- (K);
\draw[loosely dotted] (I.east) -- (L);
\draw[loosely dotted] (K.east) -- (P);
\draw[loosely dotted] (G.east) -- (J);
\draw[loosely dotted] (L.east) -- (Q);
\draw[loosely dotted] (Q.east) -- (U);
\draw[loosely dotted] (DD.east) -- (G);

\draw[loosely dotted] (AA.east) -- (DD);
\draw[loosely dotted] (WW.east) -- (BB);
\draw[loosely dotted] (VV.east) -- (AA);
\draw[loosely dotted] (EE.east) -- (WW);
\draw[loosely dotted] (GG.east) -- (VV);
\draw[loosely dotted] (HH.east) -- (EE);
\draw[loosely dotted] (JJ.east) -- (GG);
\draw[loosely dotted] (KK.east) -- (HH);
\draw[loosely dotted] (LL.east) -- (II);
\draw[loosely dotted] (PP.east) -- (KK);
\draw[loosely dotted] (QQ.east) -- (LL);
\draw[loosely dotted] (UU.east) -- (QQ);
\end{tikzpicture}.}\]
\end{example}

\subsection{The case of slice modules}\label{sect:slices}

In this section, we answer Question \ref{Question: can we glue?} positively in the case of $T$ being a \emph{slice module}. We begin with the definition of slice modules for $KA_h$; for the general definition of slice modules we refer to \cite{HR}.

\begin{definition}\label{def:slice}
A set $\{T_1,\dots,T_h\}$ of distinct indecomposable $KA_h$-modules is called a \emph{slice of $KA_h$} if they all have different lengths, and if $l(T_i)=l(T_j)-1$ then either there exists a monomorphism $T_i\hookrightarrow T_j$ or an epimorphism $T_j\twoheadrightarrow T_i$. In this case we call $T=\bigoplus_{i=1}^{h}T_i$ a \emph{slice module} and $\add (T)$ a \emph{slice subcategory}.
\end{definition}

Since the possible lengths of $T_i$ are $1$ to $h$, we can assume without loss of generality that for a slice of $KA_h$, we have $l(T_i)=i$. If we denote the indecomposable $KA_h$-modules by $M(i,j)$ as in (\ref{eq:indecomposables}), it follows that a slice of $KA_h$ is a set of modules $\{M(i_1,1),M(i_2,2),\dots, M(i_h,h)\}$ such that $i_k=i_{k-1}$ or $i_k=i_{k-1}-1$. In particular, $i_h=1$ and $i_{h-1}=1$ or $i_{h-1}=2$.

\begin{definition}\label{def:slice fracture}
Let $\La$ be a rep\-re\-sen\-ta\-tion-di\-rect\-ed algebra and let $T$ be a fracture corresponding to a maximal abutment of $\La$. We will say that $T$ is a \emph{slice fracture} if $T$ viewed as a $KA_h$-module is a slice module.
\end{definition}

Our aim is to answer Question \ref{Question: can we glue?} affirmatively when $T$ is a slice module. Notice that if $T$ is a slice module, then $D(T)$ is also a slice module. Hence by symmetry it is enough to answer Question \ref{Question: can we glue?}(a). The following computational lemma will be used.

\begin{lemma}\emph{\cite[Lemma 4.8]{VAS}}
\label{lemma:taun}
Let $M(i,j)\neq 0$ be a $\La_{m,h}$-module. Then 
\begin{itemize}
\item[(a)] If $M(i,j)$ is nonprojective, we have
\[\tn \left(M(i,j)\right)=\begin{cases} 
M\left(i+j-\frac{n}{2}h-1,h-j\right) &\mbox{if } n \text{ is even,}\\
M\left(i-\frac{n-1}{2}h-1,j\right) &\mbox{if } n \text{ is odd.}
\end{cases}\]
\item[(b)] If $M(i,j)$ is noninjective, we have
\[\tno \left(M(i,j)\right)=\begin{cases} 
M\left(i+j+\frac{n-2}{2}h+1,h-j\right) &\mbox{if } n \text{ is even,}\\
M\left(i+\frac{n-1}{2}h+1,j\right) &\mbox{if } n \text{ is odd.}
\end{cases}\]
\end{itemize}
\end{lemma}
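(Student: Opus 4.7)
The plan is to verify the formulas by a direct computation starting from explicit descriptions of the Auslander-Reiten translation $\tau$ and the syzygy $\om$ on indecomposable modules over the Nakayama algebra $\La_{m,h}$, together with the definitions $\tn=\tau\om^{n-1}$ and $\tno=\tau^-\om^{-(n-1)}$.

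First, I would record the base formulas. For every non-projective indecomposable $M(i,j)$ (equivalently, $i\geq 2$ and $j\leq h-1$), the mesh structure of $\Gamma(\La_{m,h})$ gives $\tau M(i,j)=M(i-1,j)$. The projective cover of $M(i,j)$ is the indecomposable projective with simple top corresponding to the vertex $m-i-j+2$, and computing the kernel of the natural surjection yields $\om M(i,j)=M(i+j-h,h-j)$, under the convention that $M(i',j')=0$ whenever $(i',j')$ lies outside the allowed range. Dually, for non-injective $M(i,j)$, one has $\tau^- M(i,j)=M(i+1,j)$ and $\om^- M(i,j)=M(i+j,h-j)$, either by an analogous computation using injective hulls or by transport along the duality $\La_{m,h}^{\text{op}}\cong\La_{m,h}$.

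The second step is to iterate the (co)syzygy. A one-step calculation yields
\[\om^2 M(i,j)=\om M(i+j-h,h-j)=M(i-h,j),\]
so $\om$ acts, every two steps, as a shift by $-h$ in the first coordinate. An easy induction on $k\geq 0$ then gives
\[\om^{2k}M(i,j)=M(i-kh,j),\qquad \om^{2k+1}M(i,j)=M(i+j-(k+1)h,h-j),\]
and the analogous identities
\[\om^{-2k}M(i,j)=M(i+kh,j),\qquad \om^{-(2k+1)}M(i,j)=M(i+j+kh,h-j).\]

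In the third step I would combine these with $\tn=\tau\om^{n-1}$ and $\tno=\tau^-\om^{-(n-1)}$. Writing $n-1=2k$ when $n$ is odd (so $k=\tfrac{n-1}{2}$) and $n-1=2k+1$ when $n$ is even (so $k+1=\tfrac{n}{2}$) and substituting produces the four stated closed formulas; for instance, for $n$ even,
\[\tn M(i,j)=\tau\, M\!\left(i+j-(k+1)h,\,h-j\right)=M\!\left(i+j-\tfrac{n}{2}h-1,\,h-j\right),\]
and similarly in the remaining cases. The only genuinely delicate point is the compatibility of the iteration with the convention ``out of range equals zero'': one must check both that if some intermediate $\om^r M(i,j)$ becomes projective then all later syzygies vanish, and that the final shifted indices $i-\tfrac{n-1}{2}h-1$ or $i+j-\tfrac{n}{2}h-1$ fall outside $\{1,\dots,m\}$ exactly when the recursive expression also vanishes. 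Both are short case checks, and once they are dispatched the formulas hold uniformly.
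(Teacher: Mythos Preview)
The paper does not prove this lemma; it is quoted from \cite[Lemma~4.8]{VAS} without argument. Your direct-computation approach is the natural one and yields a correct proof, with one caveat worth making explicit. Your single-step formula $\om M(i,j)=M(i+j-h,h-j)$ holds only when the projective cover of $M(i,j)$ has full length $h$, i.e.\ when $i+j\geq h+1$; for non-projective $M(i,j)$ with $i+j\leq h$ the cover is $M(1,i+j-1)$ and the true syzygy is the projective module $M(1,i-1)\neq 0$, whereas your formula returns $0$. (The dual discrepancy occurs for $\om^-$ when $i\geq m-h+2$.) This does not affect $\tn$ or $\tno$: whenever your iterated formula for $\om^{r}M(i,j)$ first disagrees with the true value, the latter is projective, so all further syzygies and the final $\tau$ vanish anyway, matching the out-of-range index your closed formula produces. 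Your ``short case checks'' therefore go through, but the content of the check is precisely that every boundary discrepancy lands on a projective (respectively injective) module---this is slightly more than a pure out-of-range convention, and it is what makes the closed formulas for $\tn$ and $\tno$ valid uniformly.
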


Before we proceed with the main result of this section, let us explain how Lemma \ref{lemma:taun} will be used. By Proposition \ref{prop:abutmentsquiver} there is a unique maximal left abutment of $\La_{m,h}$, namely $P(m-h+1)=M(1,h)$. Moreover, the Aus\-lan\-der--Rei\-ten quiver of $\La_{m,h}$ is a subquiver of $\triangle(m)$ where we remove all the vertices corresponding to indecomposable modules of length at least $h+1$. Described otherwise, it is the same as the quiver $\triangle(h)$ with the addition of more diagonals on the right hand side of the same height $h$. 

Let $T$ be a slice fracture of $P(m-h)$ and let $\hat{T}\oplus P(m-h+1) \cong T$ . Then Lemma \ref{lemma:taun} implies that the action of $\tno$ translates $\hat{T}$ through the Aus\-lan\-der--Rei\-ten quiver of $\Gamma(\La_{m,h})$ without changing its shape. In other words, for $m$ large enough we have the following pictures:
\[\begin{tikzpicture}[scale=0.8, transform shape]

\draw (0,0) -- (1.5,3);
\draw (1.5,3) -- (10.3,3);
\draw (1.5,3) -- (3,0);
\draw[dotted] (10.3,3) -- (11.5,3);

\draw[ultra thick] (1.55,2.9) -- (1.75,2.5) -- (1.25,1.5) -- (1.375,1.25) -- (1.125,0.75) -- (1.5,0);

\draw[ultra thick] (4.55,2.9) -- (4.75,2.5) -- (4.25,1.5) -- (4.375,1.25) -- (4.125,0.75) -- (4.5,0);

\draw[ultra thick] (7.55,2.9) -- (7.75,2.5) -- (7.25,1.5) -- (7.375,1.25) -- (7.125,0.75) -- (7.5,0);

\node (A1) at (1.7,1.25) {$\hat{T}$};
\node (A2) at (5,1.25) {$\tno(\hat{T})$};
\node (A3) at (8.1,1.25) {$\tau_n^{-2}(\hat{T})$};

\draw[->] (2.3,1.75) -- (4,1.75) node[draw=none, midway, above] {$\tno$};
\draw[->] (5.3,1.75) -- (7,1.75) node[draw=none, midway, above] {$\tno$};
\draw[->] (8.3,1.75) -- (10,1.75) node[draw=none, midway, above] {$\tno$};

\draw[dotted] (10.3,1.75) -- (11.5,1.75);

\node(B) at (-0.5,2.8) {$n$ odd:};

\end{tikzpicture}\]

\[\begin{tikzpicture}[scale=0.8, transform shape]
\draw (0,0) -- (1.5,3);
\draw (1.5,3) -- (10.3,3);
\draw (1.5,3) -- (3,0);
\draw[dotted] (10.3,3) -- (11.5,3);

\draw[ultra thick] (1.55,2.9) -- (1.75,2.5) -- (1.25,1.5) -- (1.375,1.25) -- (1.125,0.75) -- (1.5,0);

\begin{scope}[yscale=-1,xscale=1]
\draw[ultra thick] (4.55,0) -- (4.75,-0.4) -- (4.25,-1.4) -- (4.375,-1.65) -- (4.125,-2.15) -- (4.5,-2.9);
\end{scope}

\draw[ultra thick] (7.55,2.9) -- (7.75,2.5) -- (7.25,1.5) -- (7.375,1.25) -- (7.125,0.75) -- (7.5,0);

\node (A1) at (1.7,1.25) {$\hat{T}$};
\node (A2) at (5,1.25) {$\tno(\hat{T})$};
\node (A3) at (8.1,1.25) {$\tau_n^{-2}(\hat{T})$};

\draw[->] (2.3,1.75) -- (4,1.75) node[draw=none, midway, above] {$\tno$};
\draw[->] (5.3,1.75) -- (7,1.75) node[draw=none, midway, above] {$\tno$};
\draw[->] (8.3,1.75) -- (10,1.75) node[draw=none, midway, above] {$\tno$};

\draw[dotted] (10.3,1.75) -- (11.5,1.75);

\node(B) at (-0.5,2.8) {$n$ even:};

\end{tikzpicture}\]

In each of the above pictures the leftmost thick line represents the indecomposable summands of $\hat{T}$ in the foundation of $P(m-h+1)$, the middle thick line represents the indecomposable summands of $\tno(\hat{T})$ and the rightmost thick line represent the indecomposable summands of $\tau_n^{-2}(\hat{T})$. Notice that in the case of $n$ being even $\hat{T}$ is reflected horizontally at every application of $\tno$. Moreover, the module $P(m-h)$, which would be at the top of the slice, is injective and so $\tno(P(m-h+1))=0$. Additionally, the above applications of $\tno$ are invertible by $\tn$. The idea of the proof is that by choosing $m$ correctly we can stop precisely at the point where the thick diagonal aligns with the end of $\La_{m,h}$. Then we can remove these aligned modules from our slice and consider the leftover piece as a slice of smaller height. Finally, an induction on the height of the slice will give us the proof. Let us illustrate with a concrete example.

\begin{example}
Consider the slice module $T$ of $KA_5$ given by the following encircled modules in $\Gamma(KA_5)$:
\[\begin{tikzpicture}[scale=0.7, transform shape]
\tikzstyle{nct2}=[circle, minimum width=0.6cm, draw, inner sep=0pt, text centered]

\node (11) at (1,1) {$(1,1)$};
\node[nct2] (21) at (3,1) {$(2,1)$};
\node (31) at (5,1) {$(3,1)$};
\node (41) at (7,1) {$(4,1)$};
\node (51) at (9,1) {$(5,1)$\nospacepunct{.}};

\node (12) at (2,2) {$(1,2)$};
\node[nct2] (22) at (4,2) {$(2,2)$};
\node (32) at (6,2) {$(3,2)$};
\node (42) at (8,2) {$(4,2)$};

\node[nct2] (13) at (3,3) {$(1,3)$};
\node (23) at (5,3) {$(2,3)$};
\node (33) at (7,3) {$(3,3)$};

\node[nct2] (14) at (4,4) {$(1,4)$};
\node (24) at (6,4) {$(2,4)$};

\node[nct2] (15) at (5,5) {$(1,5)$};

\draw[->] (11) -- (12);
\draw[->] (21) -- (22);
\draw[->] (31) -- (32);
\draw[->] (41) -- (42);
\draw[->] (12) -- (13);
\draw[->] (22) -- (23);
\draw[->] (32) -- (33);
\draw[->] (13) -- (14);
\draw[->] (23) -- (24);
\draw[->] (14) -- (15);

\draw[->] (12) -- (21);
\draw[->] (22) -- (31);
\draw[->] (32) -- (41);
\draw[->] (42) -- (51);
\draw[->] (13) -- (22);
\draw[->] (23) -- (32);
\draw[->] (33) -- (42);
\draw[->] (14) -- (23);
\draw[->] (24) -- (33);
\draw[->] (15) -- (24);

\draw[loosely dotted] (11.east) -- (21);
\draw[loosely dotted] (21.east) -- (31);
\draw[loosely dotted] (31.east) -- (41);
\draw[loosely dotted] (41.east) -- (51);

\draw[loosely dotted] (12.east) -- (22);
\draw[loosely dotted] (22.east) -- (32);
\draw[loosely dotted] (32.east) -- (42);

\draw[loosely dotted] (13.east) -- (23);
\draw[loosely dotted] (23.east) -- (33);

\draw[loosely dotted] (14.east) -- (24);
\end{tikzpicture}\]
Assume we want to complete $T$ on the right for $n=4$. Let $ m> 5$ and consider the maximal left $\La_{m,5}$-abutment $P(m-4)=M(1,5)$. By gluing 
\[KA_5 \glue[P(m-4)][I(5)] \La_{m,5}=\La_{m,5}\] 
and assuming $m$ is large enough, Lemma \ref{lemma:taun} gives
\[\tau_4^{-}\left(M(1,4)\right)=M(11,1),\; \tau_4^{-}\left(M(1,3)\right)=M(10,2),\; \tau_4^{-}\left(M(2,2)\right)=M(10,3),\; \tau_4^{-}\left(M(2,1)\right)=M(9,4).\]
Hence for $m=12$ we have the following picture of $\Gamma(\La_{12,5})$:
\[\resizebox{\textwidth}{!}{\begin{tikzpicture}
\tikzstyle{nct2}=[circle, minimum width=0.6cm, draw, inner sep=0pt, text centered]
\node (11) at (1,1) {$(1,1)$};
\node[nct2] (21) at (3,1) {$(2,1)$};
\node (31) at (5,1) {$(3,1)$};
\node (41) at (7,1) {$(4,1)$};
\node (51) at (9,1) {$(5,1)$};
\node (61) at (11,1) {$(6,1)$};
\node (71) at (13,1) {$(7,1)$};
\node (81) at (15,1) {$(8,1)$};
\node (91) at (17,1) {$(9,1)$};
\node (101) at (19,1) {$(10,1)$};
\node[nct2] (111) at (21,1) {$(11,1)$};
\node (121) at (23,1) {$(12,1)$\nospacepunct{,}};

\node (12) at (2,2) {$(1,2)$};
\node[nct2] (22) at (4,2) {$(2,2)$};
\node (32) at (6,2) {$(3,2)$};
\node (42) at (8,2) {$(4,2)$};
\node (52) at (10,2) {$(5,2)$};
\node (62) at (12,2) {$(6,2)$};
\node (72) at (14,2) {$(7,2)$};
\node (82) at (16,2) {$(8,2)$};
\node (92) at (18,2) {$(9,2)$};
\node[nct2] (102) at (20,2) {$(10,2)$};
\node (112) at (22,2) {$(11,2)$};

\node[nct2] (13) at (3,3) {$(1,3)$};
\node (23) at (5,3) {$(2,3)$};
\node (33) at (7,3) {$(3,3)$};
\node (43) at (9,3) {$(4,3)$};
\node (53) at (11,3) {$(5,3)$};
\node (63) at (13,3) {$(6,3)$};
\node (73) at (15,3) {$(7,3)$};
\node (83) at (17,3) {$(8,3)$};
\node (93) at (19,3) {$(9,3)$};
\node[nct2] (103) at (21,3) {$(10,3)$};

\node[nct2] (14) at (4,4) {$(1,4)$};
\node (24) at (6,4) {$(2,4)$};
\node (34) at (8,4) {$(3,4)$};
\node (44) at (10,4) {$(4,4)$};
\node (54) at (12,4) {$(5,4)$};
\node (64) at (14,4) {$(6,4)$};
\node (74) at (16,4) {$(7,4)$};
\node (84) at (18,4) {$(8,4)$};
\node[nct2] (94) at (20,4) {$(9,4)$};

\node[nct2] (15) at (5,5) {$(1,5)$};
\node[nct2] (25) at (7,5) {$(2,5)$};
\node[nct2] (35) at (9,5) {$(3,5)$};
\node[nct2] (45) at (11,5) {$(4,5)$};
\node[nct2] (55) at (13,5) {$(5,5)$};
\node[nct2] (65) at (15,5) {$(6,5)$};
\node[nct2] (75) at (17,5) {$(7,5)$};
\node[nct2] (85) at (19,5) {$(8,5)$};

\draw[->] (11) -- (12);
\draw[->] (21) -- (22);
\draw[->] (31) -- (32);
\draw[->] (41) -- (42);
\draw[->] (51) -- (52);
\draw[->] (61) -- (62);
\draw[->] (71) -- (72);
\draw[->] (81) -- (82);
\draw[->] (91) -- (92);
\draw[->] (101) -- (102);
\draw[->] (111) -- (112);

\draw[->] (12) -- (13);
\draw[->] (22) -- (23);
\draw[->] (32) -- (33);
\draw[->] (42) -- (43);
\draw[->] (52) -- (53);
\draw[->] (62) -- (63);
\draw[->] (72) -- (73);
\draw[->] (82) -- (83);
\draw[->] (92) -- (93);
\draw[->] (102) -- (103);

\draw[->] (13) -- (14);
\draw[->] (23) -- (24);
\draw[->] (33) -- (34);
\draw[->] (43) -- (44);
\draw[->] (53) -- (54);
\draw[->] (63) -- (64);
\draw[->] (73) -- (74);
\draw[->] (83) -- (84);
\draw[->] (93) -- (94);

\draw[->] (14) -- (15);
\draw[->] (24) -- (25);
\draw[->] (34) -- (35);
\draw[->] (44) -- (45);
\draw[->] (54) -- (55);
\draw[->] (64) -- (65);
\draw[->] (74) -- (75);
\draw[->] (84) -- (85);

\draw[->] (12) -- (21);
\draw[->] (22) -- (31);
\draw[->] (32) -- (41);
\draw[->] (42) -- (51);
\draw[->] (52) -- (61);
\draw[->] (62) -- (71);
\draw[->] (72) -- (81);
\draw[->] (82) -- (91);
\draw[->] (92) -- (101);
\draw[->] (102) -- (111);
\draw[->] (112) -- (121);

\draw[->] (13) -- (22);
\draw[->] (23) -- (32);
\draw[->] (33) -- (42);
\draw[->] (43) -- (52);
\draw[->] (53) -- (62);
\draw[->] (63) -- (72);
\draw[->] (73) -- (82);
\draw[->] (83) -- (92);
\draw[->] (93) -- (102);
\draw[->] (103) -- (112);

\draw[->] (14) -- (23);
\draw[->] (24) -- (33);
\draw[->] (34) -- (43);
\draw[->] (44) -- (53);
\draw[->] (54) -- (63);
\draw[->] (64) -- (73);
\draw[->] (74) -- (83);
\draw[->] (84) -- (93);
\draw[->] (94) -- (103);

\draw[->] (15) -- (24);
\draw[->] (25) -- (34);
\draw[->] (35) -- (44);
\draw[->] (45) -- (54);
\draw[->] (55) -- (64);
\draw[->] (65) -- (74);
\draw[->] (75) -- (84);
\draw[->] (85) -- (94);

\draw[loosely dotted] (11.east) -- (21);
\draw[loosely dotted] (21.east) -- (31);
\draw[loosely dotted] (31.east) -- (41);
\draw[loosely dotted] (41.east) -- (51);
\draw[loosely dotted] (51.east) -- (61);
\draw[loosely dotted] (61.east) -- (71);
\draw[loosely dotted] (71.east) -- (81);
\draw[loosely dotted] (81.east) -- (91);
\draw[loosely dotted] (91.east) -- (101);
\draw[loosely dotted] (101.east) -- (111);
\draw[loosely dotted] (111.east) -- (121);

\draw[loosely dotted] (12.east) -- (22);
\draw[loosely dotted] (22.east) -- (32);
\draw[loosely dotted] (32.east) -- (42);
\draw[loosely dotted] (42.east) -- (52);
\draw[loosely dotted] (52.east) -- (62);
\draw[loosely dotted] (62.east) -- (72);
\draw[loosely dotted] (72.east) -- (82);
\draw[loosely dotted] (82.east) -- (92);
\draw[loosely dotted] (92.east) -- (102);
\draw[loosely dotted] (102.east) -- (112);

\draw[loosely dotted] (13.east) -- (23);
\draw[loosely dotted] (23.east) -- (33);
\draw[loosely dotted] (33.east) -- (43);
\draw[loosely dotted] (43.east) -- (53);
\draw[loosely dotted] (53.east) -- (63);
\draw[loosely dotted] (63.east) -- (73);
\draw[loosely dotted] (73.east) -- (83);
\draw[loosely dotted] (83.east) -- (93);
\draw[loosely dotted] (93.east) -- (103);

\draw[loosely dotted] (14.east) -- (24);
\draw[loosely dotted] (24.east) -- (34);
\draw[loosely dotted] (34.east) -- (44);
\draw[loosely dotted] (44.east) -- (54);
\draw[loosely dotted] (54.east) -- (64);
\draw[loosely dotted] (64.east) -- (74);
\draw[loosely dotted] (74.east) -- (84);
\draw[loosely dotted] (84.east) -- (94);
\end{tikzpicture}}\]
where the encircled modules form a $(T, \tau_4^-(\hat{T})\oplus M(8,5),4)$-fractured subcategory $\cC$. Notice that the top row of the above Aus\-lan\-der--Rei\-ten quiver has only pro\-jec\-tive-in\-jec\-tive modules and so is included automatically in $\cC$. In this setup the modules $M(8,5)$, $M(9,4)$ and $M(10,3)$ are injective and hence we opt to glue at the one with the smallest height, that is the module $I(3)=M(10,3)$. Then the module
\[T'=M(10,3)\oplus M(10,2)\oplus M(11,1)\] 
corresponds to a slice of $KA_3$. Hence it is enough to show that we can complete this particular slice. After renumbering the vertices and by a similar computation as before we see that the gluing $KA_3 \glue[P(6)][I(3)] \La_{8,3}=\La_{8,3}$ gives the following Aus\-lan\-der--Rei\-ten quiver 
\[\begin{tikzpicture}[scale=0.7, transform shape]
\tikzstyle{nct2}=[circle, minimum width=0.6cm, draw, inner sep=0pt, text centered]
\node (11) at (1,1) {$(1,1)$};
\node[nct2] (21) at (3,1) {$(2,1)$};
\node (31) at (5,1) {$(3,1)$};
\node (41) at (7,1) {$(4,1)$};
\node (51) at (9,1) {$(5,1)$};
\node (61) at (11,1) {$(6,1)$};
\node[nct2] (71) at (13,1) {$(7,1)$};
\node (81) at (15,1) {$(8,1)$\nospacepunct{,}};

\node[nct2] (12) at (2,2) {$(1,2)$};
\node (22) at (4,2) {$(2,2)$};
\node (32) at (6,2) {$(3,2)$};
\node (42) at (8,2) {$(4,2)$};
\node (52) at (10,2) {$(5,2)$};
\node (62) at (12,2) {$(6,2)$};
\node[nct2] (72) at (14,2) {$(7,2)$};

\node[nct2] (13) at (3,3) {$(1,3)$};
\node[nct2] (23) at (5,3) {$(2,3)$};
\node[nct2] (33) at (7,3) {$(3,3)$};
\node[nct2] (43) at (9,3) {$(4,3)$};
\node[nct2] (53) at (11,3) {$(5,3)$};
\node[nct2] (63) at (13,3) {$(6,3)$};

\draw[->] (11) -- (12);
\draw[->] (21) -- (22);
\draw[->] (31) -- (32);
\draw[->] (41) -- (42);
\draw[->] (51) -- (52);
\draw[->] (61) -- (62);
\draw[->] (71) -- (72);

\draw[->] (12) -- (13);
\draw[->] (22) -- (23);
\draw[->] (32) -- (33);
\draw[->] (42) -- (43);
\draw[->] (52) -- (53);
\draw[->] (62) -- (63);

\draw[->] (12) -- (21);
\draw[->] (22) -- (31);
\draw[->] (32) -- (41);
\draw[->] (42) -- (51);
\draw[->] (52) -- (61);
\draw[->] (62) -- (71);
\draw[->] (72) -- (81);

\draw[->] (13) -- (22);
\draw[->] (23) -- (32);
\draw[->] (33) -- (42);
\draw[->] (43) -- (52);
\draw[->] (53) -- (62);
\draw[->] (63) -- (72);

\draw[loosely dotted] (11.east) -- (21);
\draw[loosely dotted] (21.east) -- (31);
\draw[loosely dotted] (31.east) -- (41);
\draw[loosely dotted] (41.east) -- (51);
\draw[loosely dotted] (51.east) -- (61);
\draw[loosely dotted] (61.east) -- (71);
\draw[loosely dotted] (71.east) -- (81);

\draw[loosely dotted] (12.east) -- (22);
\draw[loosely dotted] (22.east) -- (32);
\draw[loosely dotted] (32.east) -- (42);
\draw[loosely dotted] (42.east) -- (52);
\draw[loosely dotted] (52.east) -- (62);
\draw[loosely dotted] (62.east) -- (72);
\end{tikzpicture}\]
where again the encircled modules form a $4$-fractured subcategory. Here we see as before that the module $M(7,2)\oplus M(7,1)$ also corresponds to a slice of $KA_2$. After renumbering the vertices and computing as before we see that for the gluing $KA_2 \glue[P(4)][I(2)] \La_{5,2}$ we get the Aus\-lan\-der--Rei\-ten quiver
\[\begin{tikzpicture}[scale=0.7, transform shape]
\tikzstyle{nct2}=[circle, minimum width=0.6cm, draw, inner sep=0pt, text centered]
\node[nct2] (11) at (1,1) {$(1,1)$};
\node (21) at (3,1) {$(2,1)$};
\node (31) at (5,1) {$(3,1)$};
\node (41) at (7,1) {$(4,1)$};
\node[nct2] (51) at (9,1) {$(5,1)$\nospacepunct{,}};

\node[nct2] (12) at (2,2) {$(1,2)$};
\node[nct2] (22) at (4,2) {$(2,2)$};
\node[nct2] (32) at (6,2) {$(3,2)$};
\node[nct2] (42) at (8,2) {$(4,2)$};

\draw[->] (11) -- (12);
\draw[->] (21) -- (22);
\draw[->] (31) -- (32);
\draw[->] (41) -- (42);

\draw[->] (12) -- (21);
\draw[->] (22) -- (31);
\draw[->] (32) -- (41);
\draw[->] (42) -- (51);

\draw[loosely dotted] (11.east) -- (21);
\draw[loosely dotted] (21.east) -- (31);
\draw[loosely dotted] (31.east) -- (41);
\draw[loosely dotted] (41.east) -- (51);
\end{tikzpicture}\]
where now the encircled modules form a $4$-cluster tilting subcategory. As a consequence, the algebra
\[ \La_{12,5} \glue[P(6)][I(3)] \La_{8,3} \glue[P(4)][I(2)] \La_{5,2}\]
admits a right $4$-cluster tilting subcategory.
\end{example}

\begin{proposition}
Let $\{M(i_1,1),\dots, M(i_h,h)\}$ be a slice of $KA_h$ and $T=\bigoplus_{k=1}^h M(i_k,k)$. Then we can complete $T$ on the left and on the right.
\end{proposition}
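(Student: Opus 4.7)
First, by the duality $D$ and the fact that $D(T)$ is a slice module whenever $T$ is, it suffices to show that $T$ can be completed on one side, say the right. The plan is to proceed by induction on the height $h$ of the slice. The base case $h=1$ is immediate: then $T$ is a simple $KA_1$-module, and the algebra $KA_1=K$ itself completes $T$ on the right, as its unique indecomposable is both maximal left and maximal right abutment and the subcategory $\add(T)$ is a fractured subcategory by Example \ref{ex:KAhfracturing}.

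For the inductive step, suppose every slice of height strictly less than $h$ can be completed on the right. Given $T=\bigoplus_{k=1}^h M(i_k,k)$, I take $A$ to be $\La_{m,h}$ for a suitable $m$ (chosen below), so that $P(1)=M(1,h)$ is the unique maximal left abutment of $\La_{m,h}$. The slice $T$, pulled back through $f_{P(1)}$, lifts to a fracture $T^{(P(1))}$ of $P(1)$ in $\La_{m,h}$; let $\hat T$ denote this fracture with its projective summand $M(1,h)$ removed. Using Lemma \ref{lemma:taun}, $\tau_n^-$ translates $\hat T$ rigidly along $\Gamma(\La_{m,h})$, horizontally when $n$ is odd and with a horizontal reflection when $n$ is even, so for the correct choice of $m$ the translate $\tau_n^-(\hat T)$ abuts the right boundary of $\Gamma(\La_{m,h})$: its summand of largest length becomes injective. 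Among the injective summands thereby produced, I pick the one of smallest length; this is an indecomposable $I=M(m-h'+1,h')$ for some $1\leq h'<h$. A direct check (using the slice condition $i_k\in\{i_{k-1},i_{k-1}-1\}$) shows that the aligned summands of $\tau_n^-(\hat T)$ together with $I$ all lie in $\cG_I$ and that their image under $g_I^{\ast}$ is a slice $T'$ of $KA_{h'}$ of strictly smaller height.

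By the induction hypothesis, $T'$ can be completed on the right, producing a rep\-re\-sen\-ta\-tion-di\-rect\-ed algebra $C$ with a left fracturing whose only non-projective fracture, at some maximal left abutment $P'$ of height $h'$, satisfies $f_{P'}^{!}(T^{(P')})\cong T'$, and with an injective right fracturing admitting a $(T_C^L,\DLab_C,n)$-fractured subcategory $\cC_C$. I then form $\La:=\La_{m,h}\glue[P'][I] C$. The compatibility condition \eqref{eq:compatibility} of Theorem \ref{thrm:fractsubcat} is built in by construction of $T'$, so the theorem produces a fracturing of $\La$ with a fractured subcategory. Tracking summands through the gluing, the right fracturing of $\La$ is the gluing of the injective right fracturing of $\La_{m,h}$ with the injective right fracturing of $C$ and is therefore itself injective, while the left fracturing has $T$ as the unique non-projective fracture (at the image of $P(1)$). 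Iterating this construction at the other possible maximal abutments does not affect the argument. This yields an algebra completing $T$ on the right.

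The main obstacle is the combinatorial bookkeeping in the choice of $m$: it must be large enough that one application of $\tau_n^-$ keeps $\hat T$ inside $\La_{m,h}$, and chosen precisely so that the longest summand of $\tau_n^-(\hat T)$ lands on the right boundary, i.e.\ becomes injective. The explicit formulas in Lemma \ref{lemma:taun} dictate this choice, with distinct cases according to the parity of $n$ (for $n$ odd, $m$ must realise $i+(n-1)h/2+1$ as the appropriate injective index, and for $n$ even a reflected analogue must hold). Verifying that the aligned summands and $I$ genuinely form the foundation of a right abutment, rather than leaving gaps, is where one needs the slice hypothesis $i_k-i_{k-1}\in\{0,-1\}$; once this is checked, the reduction to a slice of height $h'<h$ and the appeal to the inductive hypothesis are formal.
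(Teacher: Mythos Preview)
Your overall strategy matches the paper's: reduce a slice of height $h$ to one of strictly smaller height by passing through a Nakayama algebra $\La_{m,h}$ and applying one step of $\tau_n^{\pm}$, then close up by induction and a gluing. The paper dualises (it completes on the left, applying $\tau_n$ rather than $\tau_n^-$) and also splits off the case $i_{h-1}=1$, handling it by the simpler gluing $B'\glue[M(1,h-1)][I] KA_h$ without any Nakayama step; your uniform use of $\La_{m,h}$ in both cases is legitimate but requires you to check that projective summands $M(1,k)$ of $\hat T$ (which occur precisely when $i_{h-1}=1$) do not break the fractured-subcategory conditions.

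There is, however, a genuine error in your argument for why the right fracturing of $\La=\La_{m,h}\glue[P'][I] C$ is injective. You write that it is ``the gluing of the injective right fracturing of $\La_{m,h}$ with the injective right fracturing of $C$'', but the right fracturing of $\La_{m,h}$ compatible with your fractured subcategory is \emph{not} injective: it is forced to be $T^R_{\La_{m,h}}=M(m{-}h{+}1,h)\oplus\tau_n^-(\hat T)$, the slice sitting in the foundation $\cG_{I(h)}$ of the unique maximal right abutment $I(h)$, and the summands of length $<h'$ in $\tau_n^-(\hat T)$ are not injective. If this fracture contributed to $T^R_\La$, your conclusion would fail.

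The correct reason your conclusion holds is different: the gluing over $I$ (a right abutment of height $h'<h$) destroys every right abutment of $\La_{m,h}$ as a right abutment of $\La$. Indeed $\cG_{I(h)}\supseteq\cG_I$, so the quotient $M(m{-}h'{+}1,h')$ of each $M(m{-}h{+}j,h{-}j{+}1)$ is identified with the projective $P'$ of $C$ and is no longer injective in $\La$; hence none of the surviving $\La_{m,h}$-injectives are right abutments of $\La$ (compare Corollary~\ref{cor:remaining}(b2)). Consequently every maximal right abutment of $\La$ is supported in $C$, and $T^R_\La$ is the image of the injective right fracturing of $C$, hence injective. You should replace your sentence with this argument; once that is fixed, the induction goes through.
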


\begin{proof}
Let $n\geq 2$. As mentioned before, by symmetry, it is enough to show that we can complete $T$ on the right. We will use induction on $h$. For $h=1$ we have only one indecomposable $KA_1$-module, say $N$, and so $\add (T)=\add (N)$ is an $n$-cluster tilting subcategory for any $n$. For the induction step, assume that we can complete any slice of $KA_{h-1}$ on the right and we will show that we can complete any slice of $KA_{h}$ on the right.

We consider the two possible cases $i_{h-1}=1$ and $i_{h-1}=2$ separately. For the case $i_{h-1}=2$ consider the slice $\{M(i_1,1),\dots,M(i_{h-1},h-1)\}$ of $KA_{h-1}$. By induction hypothesis there exists a rep\-re\-sen\-ta\-tion-di\-rect\-ed algebra $A$ satisfying the conditions of Question \ref{Question: can we glue?}(b) for this slice. We check that the conditions of Theorem \ref{thrm:fractsubcat} are satisfied for the gluing $A'\coloneqq KA_h \glue[P][M(1,h-1)] A$. By assumption, the algebra $A$ admits a $(T_A^L,\DLab_A,n)$-fractured subcategory $\cC_{A}$, with the only nonprojective fracture of $T^L_{A}$ being $T_A^{(P)}$ and $f_P^{!}(T_A^{(P)})\cong\bigoplus_{k=1}^{h-1}M(i_k,k)$. Notice that $P$ is a maximal left abutment of $A$ of height $h-1$ and so clearly $h\geq \lvl(T_A^{(P)})$. Moreover, the algebra $KA_h$ has a $(T,T,n)$-fractured subcategory $\cC_{KA_h}=\add(T)$ and $M(1,h)$ is a maximal right abutment of $KA_h$ of height $h$. In particular, since $M(1,h)$ is a direct summand of $T$, we have $h\geq \lvl(T)$. Finally, we have
\[f^{!}_P\left(\add(T_A^{(P)}\cap \cF_P\right) = \add\left(\bigoplus_{k=1}^{h-1}M(i_k,k)\right) = g_{M(1,h)}^{\ast}\left(\add T_{KA_h}^{(M(1,h))}\cap \cG_{M(1,h)}\right),\]
and so $\cC_{A'}=\add\{\phi_{\ast}(\cC_A),\psi_{\ast}(T)\}$ is a $(T_{A'}^L,T_{A'}^R,n)$-fractured subcategory. Since the only maximal right abutment of $KA_h$ is $M(1,h)$, it follows that $T_{A'}^R=\DLab_{A'}$. Since the only maximal left abutment of $KA_h$ is $M(1,h)$ and the only nonprojective fracture of $T_{A}^L$ is $T_{A}^{(P)}$, it follows that the only nonprojective fracture of $T_{A'}^L$ is $T_{A'}^{(\psi_{\ast}(M(1,h))}$. By definition, we have $T_{A'}^{(\psi_{\ast}(M(1,h))}=\psi_{\ast}(T)$. Then
\[f^!_{\psi_{\ast}(M(1,h))}\left(T_{A'}^{(\psi_{\ast}(M(1,h))}\right) = T.\]
But this shows that $A'$ completes $T$ on the right, as required.

For the case $i_{h-1}=1$ we consider the cases $n$ being odd and $n$ being even separately. 

For the case $n$ being odd, set $m\coloneqq \frac{n+1}{2}h$. First we glue $KA_h  \glue[P(m-(h-1))][I(h)] \La_{m,h}$. Since this is a trivial gluing as in Example \ref{ex:KAglued}, the resulting algebra is isomorphic to $\La_{m,h}$ again. It is clear that viewing the modules $M(i_k,k)$ as $\La_{m,h}$-modules we have 
\[\psi_{\ast}(M(i_k,k)) \cong M\left(i_k,k\right).\]
Computing $\tno$ by using Lemma \ref{lemma:taun} gives 
\[\tno \left(M\left(i_k,k\right)\right)=M\left(i_k+1+\frac{n-1}{2}h,k\right).\]
In particular we have \[\tno\left(M\left(1,h-1\right)\right)=M\left(2+\frac{n-1}{2}h,h-1\right)=M\left(m+2-h,h-1\right)=I(h-1),\] 
which is a right abutment of $\La_{h+\frac{n-1}{2}h,h}$ of height $h-1$. Moreover, the set $\left\{\tno \left(M(i_k,k\right)\right\}_{k=1}^{h-1}\subseteq \cF_{M(1,h-1)}$ is a fracture which is a slice, viewed as a $KA_{h-1}$-module. By induction hypothesis we can complete this slice on the right. Hence there exists an algebra $A$ and a maximal left abutment $P$ of $A$ such that $A$ admits a $(T_A^{L},\DLab_{A},n)$-fractured subcategory and the only nonprojective fracture of $T_A^{L}$ is $T_{A}^{(P)}$, where $f_P^{!}(T_{A}^{(P)})\cong\bigoplus_{k=1}^{h-1} M(i_k,k)$. It follows that the algebra $A'=\La_{m,h} \glue[P][I(h-1)] A$ completes the fracture $T$ on the right.

Finally, for the case $n$ being even, set $m\coloneqq i_1+\frac{n}{2}h$. A similar computation shows that 
\[\La_{m,h} =KA_h \glue[P(m-(h-1))][I(h)] \La_{m,h}\]
admits a $(T^L,T^R,n)$-fractured subcategory with $\psi^{\ast}(T^L)\cong T$ and where the fracture on the right can be considered as a slice module of $KA_{h-1}$. A similar induction as in the case of $n$ being odd completes the proof.
\end{proof}

\section{Part IV: \texorpdfstring{$(n,d)$}{(n,d)}-rep\-re\-sen\-ta\-tion-fi\-nite algebras}\label{sect:listofnds}

In this section we construct examples of $(n,d)$-rep\-re\-sen\-ta\-tion-fi\-nite algebras. Specifically, if $n$ is odd we will construct an $(n,d)$-rep\-re\-sen\-ta\-tion-fi\-nite algebra for any $d\geq n$ and if $n$ is even we will construct an $(n,d)$-rep\-re\-sen\-ta\-tion-fi\-nite algebra for any $d$ odd or $d\geq 2n$.

In our constructions we again use acyclic Nakayama algebras. Recall that the Aus\-lan\-der--Rei\-ten quiver $\Gamma(KA_h/\cR)$ of a quotient of $KA_h$ by an admissible ideal is a full subquiver of $\triangle(h)$ with the property that if a vertex $(i,j)$ is not in $\Gamma(KA_h/\cR)$, then the vertices $(i-1,j+1)$ and $(i,j+1)$ are not in $\Gamma(KA_h/\cR)$. Recall also that acyclic Nakayama algebras can be classified by \emph{Kupisch series}, first introduced in \cite{KUP}.

\begin{definition}\label{def:Kupisch}
A tuple $(d_1,\dots,d_{m})\in \ZZ^m_{>0}$ is called an \emph{$m$-Kupisch series} if 
\begin{itemize}
    \item $d_m=1$,
    \item $d_i\geq 2$ for all $1\leq i \leq m$ and
    \item $d_{i-1}-1\leq d_i$ for all $2\leq i \leq m$.
\end{itemize}  
\end{definition}

Then the correspondence between $m$-Kupisch series and  acyclic Nakayama algebras is given by
\[\left\{\text{$m$-Kupisch Series } (d_1,\dots,d_{m})\right\}\;\; \overset{1:1}{\longleftrightarrow}\;\; \left\{\text{$KA_m/\cR$ with $\cR=\langle \alpha_i\cdots\alpha_{i+(d_i-1)} \mid 1\leq i \leq m\rangle$}\right\},\]
where in the above description of $\cR$ paths not belonging to $A_m$ should just be ignored. Using this identification, we will identify a Kupisch series with the corresponding acyclic Nakayama algebra.

Moreover, given an $m$-Kupisch series we explain how to describe the Aus\-lan\-der--Rei\-ten quiver of the corresponding acyclic Nakayama algebra. First, to each number $d_i$ we assign the indecomposable modules $\ind(d_i)=\{M(i,1),M(i-1,2),\dots,M(i-d_i+1,d_i)\}$. Then the Aus\-lan\-der--Rei\-ten quiver corresponding to the $m$-Kupisch series $(d_1,\dots,d_m)$ is the full subquiver of $\triangle(m)$ consisting of the vertices $\bigcup_{i=1}^m\text{ind}(d_i)$. 

We will also use the following lemma for computing syzygies, cosyzygies and Aus\-lan\-der--Rei\-ten translations of modules over acyclic Nakayama algebras.

\begin{lemma} 
\label{lemma:nakayamacompute}
Let $\La$ be an acyclic Nakayama algebra and $M(i,j)\neq 0$ be a $\La$-module.
\begin{itemize}
\item[(a)] If $M(i,j)$ is nonprojective, then 
\begin{align*}
    \tau(M(i,j)) &\cong M(i-1,j), \\
    \om (M(i,j)) &\cong M(i+j-u_{i+j},u_{i+j}-j)
\end{align*}
where 
\[u_s=\max\{y\in \{1,...,n\} \mid (x,y)\in \Gamma (\La)_0 \text{ and } x+y=s\}.\]
\item[(b)] If $M(i,j)$ is noninjective, then 
\begin{align*}
    \tau^-(M(i,j)) &\cong M(i+1,j), \\
    \om^- (M(i,j)) &\cong M(i+j,v_{i}-j)
\end{align*}
where 
\[v_s=\max\{y\in \{1,...,n\} \mid (x,y)\in \Gamma (\La)_0 \text{ and } x=s\}.\]
\end{itemize}
\end{lemma}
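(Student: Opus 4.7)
The proof is a direct computation using the standard description of Auslander--Reiten sequences and projective covers for acyclic Nakayama algebras, as developed for instance in \cite[Chapter V]{ASS}. Since (a) and (b) are dual statements (via the functor $D$, which turns a Nakayama algebra into its opposite and swaps the roles of projective covers/syzygies with injective envelopes/cosyzygies), the plan is to prove (a) and then remark that (b) follows by duality.

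For the Auslander--Reiten translation in (a), the formula $\tau(M(i,j)) \cong M(i-1,j)$ follows from the explicit form of almost split sequences in $\m\La$: for a nonprojective $M(i,j)$, the unique almost split sequence ending at it has the shape
\[
0 \longrightarrow M(i-1,j) \longrightarrow M(i,j-1) \oplus M(i-1,j+1) \longrightarrow M(i,j) \longrightarrow 0,
\]
with the convention that a summand is zero whenever the corresponding indices fall outside $\Gamma(\La)_0$. This translates directly to $\tau(M(i,j)) \cong M(i-1,j)$, and all the pictures of $\triangle(h)$ and $\Gamma(\La_{m,h})$ in the paper are consistent with the fact that $\tau$ decreases the first coordinate by one.

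The syzygy formula requires identifying the projective cover of $M(i,j)$. From the explicit description in (\ref{eq:indecomposables}), the top of $M(i,j)$ is the simple at vertex $m-i-j+2$, so its projective cover is the indecomposable projective with the same top. All indecomposables $M(i',j')$ sharing this top satisfy $i' + j' = i + j$, and the longest such indecomposable is precisely the projective cover; by definition this length is $u_{i+j}$, so the projective cover equals $M(i+j-u_{i+j},\,u_{i+j})$. The syzygy is then the unique submodule of length $u_{i+j}-j$ of this uniserial module, and using that submodules of $M(i',j')$ are of the form $M(i',k)$ for $0 \le k \le j'$ (they share the socle of $M(i',j')$), one reads off
\[
\om(M(i,j)) \;\cong\; M(i+j-u_{i+j},\,u_{i+j}-j).
\]
The cosyzygy is dual: the injective envelope of $M(i,j)$ is the longest indecomposable sharing its socle at vertex $m-i+1$, so it has length equal to the relevant maximum of second coordinates among modules with first coordinate $i$, and the cokernel $I(M(i,j))/M(i,j)$ then has the asserted form (its first coordinate is shifted from $i$ to $i+j$ because the quotient corresponds to truncating from the socle end).

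There is no real obstacle here; the argument is a bookkeeping exercise in the $(i,j)$-coordinates. The only care required is to keep straight the convention that the first coordinate encodes the horizontal position in $\Gamma(\La)$ (shifted by $\tau$) while the second encodes the length, and to track which formulas vanish in the degenerate cases (when $M(i,j)$ is already projective the projective cover is itself so $u_{i+j}=j$ and the kernel is zero, matching the hypothesis of non-projectivity; dually for injectives).
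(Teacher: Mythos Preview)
Your proposal is correct and follows essentially the same approach as the paper: the paper cites \cite[Lemma 4.7]{VAS} for the Auslander--Reiten translation formulas and then, just as you do, identifies $M(i+j-u_{i+j},u_{i+j})$ as the projective cover of $M(i,j)$ (respectively $M(i,v_i)$ as the injective envelope) to read off the syzygy and cosyzygy. Your version simply spells out the almost split sequence and the uniserial bookkeeping that the paper leaves implicit.
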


\begin{proof}
The claims about the Aus\-lan\-der--Rei\-ten translations are \cite[Lemma 4.7]{VAS}. The claim about syzygy in (a) follows by noticing that $M(i+j-u_{i+j},u_{i+j})$ is a projective cover of $M(i,j)$ and the claim about cosyzygy in (b) follows by noticing that $M(i,v_{i})$ is an injective envelope of $M(i,j)$. 
\end{proof}

In the rest of this and the next subsection we will only draw Aus\-lan\-der--Rei\-ten quivers of Nakayama algebras. When drawing such an Aus\-lan\-der--Rei\-ten quiver we will only draw the vertices as the arrows can be inferred by our conventions. We write some vertices in bold to denote that the additive closure of the indecomposable modules corresponding to the bold vertices is an $n$-cluster tilting subcategory for some $n$. Moreover, we will denote by $h^{(k)}$ a sequence $h,h,\dots,h$ where $h$ appears $k$ times in a Kupisch series. We give an example using this notation. 

\begin{example}\label{ex:firstKupisch}
The acyclic Nakayama algebra $\La_{m,h}$ corresponds to the $m$-Kupisch series 
\[(h,\dots,h,h-1,\dots,2,1)\in \ZZ_{>0}^m,\]
or with our notation $\left(h^{(m-h+1)},h-1,\dots,2,1\right)$. For $m=14$ and $h=3$, let $\cC=\add\left(\bigoplus_{i\geq 0}\tau_2^{-i}(\La_{14,3})\right)$. Then the Aus\-lan\-der--Rei\-ten quiver of $\La_{14,3}$ is
\[\begin{tikzpicture}[scale=1.5, transform shape]
\foreach \x in {0,0.2,...,2.6}
    \mct{\x}{0};

\foreach \x in {0.1,0.3,...,2.5}
    \mct{\x}{0.2};

\foreach \x in {0.2,0.4,...,2.4}
    \nct{\x}{0.4};

\nct{0}{0};
\nct{0.1}{0.2};
\nct{0.6}{0};
\nct{0.5}{0.2};
\nct{1}{0};
\nct{1.1}{0.2};
\nct{1.6}{0};
\nct{1.5}{0.2};
\nct{2}{0};
\nct{2.1}{0.2};
\nct{2.5}{0.2};
\nct{2.6}{0};
\end{tikzpicture}.\]
where the bold vertices correspond to the indecomposable modules in $\cC$. Using Theorem \ref{thrm:char} we can see that $\cC_{\La}$ is a $2$-cluster tilting subcategory.
\end{example}

The algebras $\La_{m,2}$ are of special interest. Specifically, we will use the following Proposition.

\begin{proposition}\label{prop:caseh2}
\begin{itemize}
    \item[(a)] $\gldim(\La_{m,2})=m-1$, the projective dimension of $I(1)$ is $m-1$ and the injective dimension of $P(m)$ is $m-1$,
    \item[(b)] $\La_{m,2}$ admits a unique basic $(m-1)$-cluster tilting module $M$,
    \item[(c)] $M\cong \La_{m,2}\oplus I(1)\cong P(m)\oplus D(\La_{m,2})$.
\end{itemize}
\end{proposition}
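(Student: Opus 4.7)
The plan is to first unpack the module structure of $\La_{m,2}$ concretely. The indecomposables are $M(i,1)$ for $1\leq i\leq m$ (the simples) and $M(i,2)$ for $1\leq i\leq m-1$. The key observation is that every length-$2$ indecomposable is simultaneously projective and injective: for $1\leq k<m$ we have $P(k)=M(m-k,2)=I(k+1)$, while the simples $P(m)=S(m)=M(1,1)$ and $I(1)=S(1)=M(m,1)$ are the only indecomposable projective that is not injective and the only indecomposable injective that is not projective, respectively.

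For (a), I compute iterated syzygies of the simples using Lemma~\ref{lemma:nakayamacompute}(a). Since for $k<m$ the projective cover of $S(k)$ is $P(k)$ with radical $S(k+1)$, we get $\om(S(k))=S(k+1)$, while $S(m)=P(m)$ is projective. Iterating yields $\om^i(S(1))=S(i+1)$ for $0\leq i\leq m-1$, so $\pd(I(1))=\pd(S(1))=m-1$. Since all length-$2$ indecomposables are projective, $\gldim(\La_{m,2})=\max_k\pd(S(k))=m-1$. A dual computation gives $\om^{-i}(S(m))=S(m-i)$ and $\id(P(m))=\id(S(m))=m-1$.

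For (b) and (c), set $M:=\La_{m,2}\oplus I(1)$, $\cC:=\add(M)$, and $n:=m-1$, and I verify the four conditions of Theorem~\ref{thrm:char}. Condition (1) is immediate. By the first paragraph, $\Ind{\cC_{\setminus\cP}}=\{[I(1)]\}$ and $\Ind{\cC_{\setminus\cI}}=\{[P(m)]\}$, so condition (2) reduces to checking that $\tn$ exchanges $I(1)$ and $P(m)$. Using Lemma~\ref{lemma:nakayamacompute}(a) together with the syzygy computation above,
\[\tn(I(1))=\tau(\om^{m-2}S(1))=\tau(S(m-1))=\tau(M(2,1))=M(1,1)=P(m),\]
and $\tno(P(m))=I(1)$ follows dually. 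Conditions (3) and (4) hold since $\om^i(S(1))=S(i+1)$ and $\om^{-i}(S(m))=S(m-i)$ are simple, hence indecomposable, for $0<i<m-1$. For the second isomorphism in (c), write $D(\La_{m,2})=\bigoplus_{k=1}^m I(k)=I(1)\oplus\bigoplus_{k=2}^m P(k-1)=I(1)\oplus\bigoplus_{j=1}^{m-1}P(j)$, so $P(m)\oplus D(\La_{m,2})\cong\La_{m,2}\oplus I(1)=M$.

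Uniqueness of $M$ follows from the general fact, recalled in the introduction, that for a representation-directed algebra an $n$-cluster tilting subcategory is unique when it exists: conditions (1) and (2) of Theorem~\ref{thrm:char} force $\cC=\add\{\tno^{i}(\La)\mid i\geq 0\}$, so $\add(M)$ is the only candidate and its basic additive generator is unique up to isomorphism. There is no real obstacle here beyond bookkeeping: the proof is entirely computational, and the only care needed is in translating between the $M(i,j)$-indexing and the usual projective/injective labelling.
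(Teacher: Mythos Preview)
Your proof is correct. The paper's own proof simply cites external references: \cite[Proposition 5.2]{VAS} for part (a) and \cite[Proposition 6.2]{JAS} for parts (b) and (c). By contrast, you give a fully self-contained argument using only tools developed in this paper, namely Lemma~\ref{lemma:nakayamacompute} for the syzygy computations and Theorem~\ref{thrm:char} for the $n$-cluster tilting verification. This is arguably preferable here, since it illustrates how Theorem~\ref{thrm:char} applies in the simplest nontrivial case and keeps the paper internally closed.

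One small remark on uniqueness: your sentence ``conditions (1) and (2) of Theorem~\ref{thrm:char} force $\cC=\add\{\tno^{i}(\La)\mid i\geq 0\}$'' is correct but compresses a short induction. The point is that for any indecomposable $N\in\cC$, iterating $\tn$ produces a sequence of indecomposables in $\cC$ which must eventually become projective, since $\La$ is representation-directed (so no cycles) and representation-finite (so the sequence cannot be infinite); running $\tno$ back from that projective recovers $N$. You clearly have this in mind, and it is not a gap, but spelling it out in one sentence would make the argument watertight.
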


\begin{proof}
Part (a) of the Lemma is well-known; for a proof see \cite[Proposition 5.2]{VAS}. Parts (b) and (c) follow immediately as a special case of \cite[Proposition 6.2]{JAS}.  
\end{proof}

The following theorem will be our main tool in constructing examples of $(n,d)$-rep\-re\-sen\-ta\-tion-fi\-nite algebras. 

\begin{theorem}\label{thrm:glueplusn}
\begin{itemize}
\item[(a)] Let $A$ be a strongly $(n,d)$-rep\-re\-sen\-ta\-tion-di\-rect\-ed algebra and assume that there exists a simple projective $A$-module $P$ with injective dimension $d$. Then $\La=\La_{n+1,2} \glue[P][I(1)] A$ is strongly $(n,d+n)$-rep\-re\-sen\-ta\-tion-di\-rect\-ed and there exists a simple projective $\La$-module $P'$ with injective dimension $d+n$.
\item[(b)] Let $B$ be a strongly $(n,d)$-rep\-re\-sen\-ta\-tion-di\-rect\-ed algebra and assume that there exists a simple injective $A$-module $I$ with projective dimension $d$. Then $\La=B \glue[P(n+1)][I] \La_{n+1,2}$ is strongly $(n,d+n)$-rep\-re\-sen\-ta\-tion-di\-rect\-ed and there exists a simple injective $\La$-module $I'$ with projective dimension $d+n$.
\end{itemize}
\end{theorem}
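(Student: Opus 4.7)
The plan is to combine Corollary \ref{cor:glueatsimple}, which already delivers a strongly $(n,d')$-rep\-re\-sen\-ta\-tion-di\-rect\-ed gluing for some $d'\leq d+n$, with an explicit cosyzygy (respectively syzygy) calculation via Corollary \ref{cor:compute} to pin down $d' = d+n$ and to exhibit the required simple projective (respectively injective) $\La$-module. First note that $B:=\La_{n+1,2}$ is itself strongly $(n,n)$-rep\-re\-sen\-ta\-tion-di\-rect\-ed: it is an acyclic Nakayama algebra, hence rep\-re\-sen\-ta\-tion-di\-rect\-ed, and by Proposition \ref{prop:caseh2} it has global dimension $n$ and admits an $n$-cluster tilting module. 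Moreover $I(1)_B$ is a simple injective $B$-module, i.e.\ a height-$1$ right abutment, so the gluings in both parts are defined and Corollary \ref{cor:glueatsimple} applies.

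For part (a), set $P':=\psi_{\ast}(P(n+1)_B)$, where $\psi:\La\twoheadrightarrow B$ is the canonical projection. Then Proposition \ref{prop:projinj}(a) identifies $P'$ with the indecomposable projective $\vare_{n+1}\La$, and Corollary \ref{cor:remaining}(b1) shows $P'$ is a left abutment; since $P(n+1)_B$ is simple, so is $P'$, giving a simple projective $\La$-module. A direct computation in the Nakayama algebra $B$ yields the cosyzygy chain $\om^{-k}_B(P(n+1)_B)\cong S(n{+}1{-}k)_B$ for $0\leq k\leq n$, with $\om^{-n}_B(P(n+1)_B)\cong S(1)_B=I(1)_B$ and $\om^{-(n+1)}_B(P(n+1)_B)=0$. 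For $0\leq k\leq n-1$ the module $S(n{+}1{-}k)_B$ differs from $I(1)_B$, so by Lemma \ref{lemma:intersection} its image under $\psi_{\ast}$ lies in $(\m B)_{\ast}\setminus(\m A)_{\ast}$, and iterating Corollary \ref{cor:compute}(b1) gives $\om^{-k}_\La(P')\cong\psi_{\ast}(S(n{+}1{-}k)_B)$ in this range. At step $n$ we reach $\om^{-n}_\La(P')\cong\psi_{\ast}(I(1)_B)\cong\phi_{\ast}(P)$, again by Lemma \ref{lemma:intersection}. From this point on Corollary \ref{cor:compute}(b2) transports the calculation into $\m A$:
\[ \om^{-(n+k)}_\La(P')\cong\phi_{\ast}\bigl(\om^{-k}_A(P)\bigr)\qquad\text{for all }k\geq 0.\]
Since $\id_A(P)=d$, this yields $\id_\La(P')=d+n$, so $d'\geq d+n$, and hence $d'=d+n$ by the upper bound of Corollary \ref{cor:globaldimensionglued}.

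Part (b) is dual: take $I':=\phi_{\ast}(I(1)_A)$ with $A=\La_{n+1,2}$, where $\phi:\La\twoheadrightarrow A$ is the canonical projection; by Corollary \ref{cor:remaining}(a1), $I'$ is a height-$1$ right abutment of $\La$, i.e.\ simple injective. A direct computation in $A$ gives the syzygy chain $\om^k_A(I(1)_A)\cong S(k{+}1)_A$ for $0\leq k\leq n$, with $\om^n_A(I(1)_A)\cong P(n+1)_A$. Exactly as in (a), Corollary \ref{cor:compute}(a1) iterated for the first $n$ steps (where the syzygies miss the glued simple $P(n+1)_A$) followed by Corollary \ref{cor:compute}(a2) from step $n$ onwards yields
\[ \om^{n+k}_\La(I')\cong\psi_{\ast}\bigl(\om^k_B(I)\bigr)\qquad\text{for all }k\geq 0,\]
and the hypothesis $\pd_B(I)=d$ gives $\pd_\La(I')=n+d$, so again $d'=d+n$.

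The main technical point is the bookkeeping at the step where the computation crosses from one side of the gluing to the other: one must verify that the cosyzygy (respectively syzygy) $\om^{-n}_\La(P')$ lands precisely on the glued simple, so that the correct branch of Corollary \ref{cor:compute} applies at exactly the right step. This is controlled by the very regular, single-zigzag shape of $\Gamma(\La_{n+1,2})$ recorded in Example \ref{ex:firstKupisch} together with the foundation description of Proposition \ref{prop:triangles}, which guarantees that the intermediate simples $S(2)_B,\dots,S(n+1)_B$ (respectively $S(1)_A,\dots,S(n)_A$) never accidentally belong to the common piece.
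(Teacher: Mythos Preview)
Your proof is correct and follows essentially the same approach as the paper's own proof: both invoke Proposition \ref{prop:caseh2} to see that $\La_{n+1,2}$ is strongly $(n,n)$-rep\-re\-sen\-ta\-tion-di\-rect\-ed, apply Corollary \ref{cor:glueatsimple} for the $n$-cluster tilting subcategory and the upper bound $d'\leq d+n$, set $P'=\psi_{\ast}(P(n+1))$, and then compute $\om^{-n}_{\La}(P')\cong\psi_{\ast}(I(1))\cong\phi_{\ast}(P)$ via Corollary \ref{cor:compute} to conclude $\id_{\La}(P')=d+n$. Your version is in fact slightly more careful than the paper's, since you explicitly iterate Corollary \ref{cor:compute}(b1) one cosyzygy at a time and verify via Lemma \ref{lemma:intersection} that the intermediate simples $S(2)_B,\dots,S(n+1)_B$ do not lie in the common piece, whereas the paper applies Corollary \ref{cor:compute}(b1) to $\om^{-n}$ in a single step without spelling out this check.
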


\begin{proof}
We only prove (a); (b) is similar. By Proposition \ref{prop:caseh2}, we have that $\La_{n+1,2}$ is strongly $(n,n)$-rep\-re\-sen\-ta\-tion-di\-rect\-ed. Moreover, $I(1)$ is a simple injective $\La_{n+1,2}$-module since the vertex $1$ is a source. Hence by Corollary \ref{cor:glueatsimple}, it follows that $\La_{n+1,2} \glue[P][I(1)] A$ admits an $n$-cluster tilting subcategory and has global dimension at most $d+n$. To finish the proof it is enough to show that there exists a simple projective $\La$-module $P'$ with injective dimension $d+n$.

Notice that by Corollary \ref{cor:AR glued} we have $\psi_{\ast}(I(1))\cong \phi_{\ast}(P)$ and also that this is the only module in $\psi_{\ast}(\m \La_{n+1,2})\cap \phi_{\ast}(\m A)$. Let $P':=\psi_\ast(P(n+1))$, where $P(n+1)$ is the simple projective $\La_{n+1,2}$-module corresponding to the vertex $n+1$. Since $\dim_{K}\left(P(n+1)\right)=1$ it follows that $\dim_{K}\left(P'\right)=1$ and so $P'$ is simple. Moreover, since $P(n+1)$ is projective, it follows that $P'$ is also projective by Proposition \ref{prop:projinj}.

It is a simple computation to see that $\om^{-n}(P(n+1))\cong I(1)$ (for example, see \cite[Corollary 4.5]{VAS}). Moreover, by Corollary \ref{cor:compute}(b1) we have
\[\om^{-n}(P') = \om^{-n}\psi_{\ast}(P(n+1)) \cong \psi_{\ast}(I(1)).\]
In particular, it follows that
\[\id(P(n+1))=n+\id(\psi_{\ast}(I(1)).\]
Since $\psi_{\ast}(I(1))\cong \phi_{\ast}(P)$ is supported in $A$, it follows from Corollary \ref{cor:compute}(b2) that the injective dimension of $\phi_{\ast}(P)$ is the same as the injective dimension of $P$, which is $d$ by assumption. Hence the injective dimension of $P'$ is $d+n$ which completes the proof.
\end{proof}

In particular we have the following corollary.

\begin{corollary}\label{cor:allnds}
\begin{itemize}
\item[(a)] Let $A$ be a strongly $(n,d)$-rep\-re\-sen\-ta\-tion-di\-rect\-ed algebra and assume that there exists a simple projective $A$-module $P$ with injective dimension $d$. Let $\La$ be the algebra
\[\La = \La_{n+1,2} \glue[P(n+1)][I(1)] \La_{n+1,2} \glue[P(n+1)][I(1)] \cdots \glue[P(n+1)][I(1)] \La_{n+1,2} \glue[P][I(1)]  A, \]
where there are $k-1$ terms $\La_{n+1,2}$ on the right-hand side of the above expression. Then $\La$ is strongly $(n,kn+d)$-rep\-re\-sen\-ta\-tion-di\-rect\-ed.
\item[(b)] Let $B$ be a strongly $(n,d)$-rep\-re\-sen\-ta\-tion-di\-rect\-ed algebra and assume that there exists a simple injective $B$-module $I$ with projective dimension $d$. Let $\La$ be the algebra
\[\La = B \glue[P(n+1)][I] \La_{n+1,2} \glue[P(n+1)][I(1)] \La_{n+1,2} \glue[P(n+1)][I(1)] \cdots \glue[P(n+1)][I(1)] \La_{n+1,2}, \]
where there are $k-1$ terms $\La_{n+1,2}$ on the right-hand side of the above expression. Then $\La$ is strongly $(n,kn+d)$-rep\-re\-sen\-ta\-tion-di\-rect\-ed.
\end{itemize}
\end{corollary}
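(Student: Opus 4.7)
The proof will be a straightforward induction on $k$, using Theorem \ref{thrm:glueplusn} as the inductive step. For part (a), the base case $k=1$ gives $\Lambda = \Lambda_{n+1,2} \glue[P][I(1)] A$, and Theorem \ref{thrm:glueplusn}(a) immediately yields that this is strongly $(n, n+d)$-rep\-re\-sen\-ta\-tion-di\-rect\-ed. Moreover, the same theorem provides, as a byproduct, the existence of a simple projective $\Lambda$-module with injective dimension $n+d$, which is exactly the hypothesis needed to iterate.

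For the inductive step, I will suppose that the algebra $\Lambda^{(k-1)}$ obtained by gluing $k-2$ copies of $\Lambda_{n+1,2}$ on the left of $\Lambda_{n+1,2} \glue[P][I(1)] A$ is strongly $(n, (k-1)n+d)$-rep\-re\-sen\-ta\-tion-di\-rect\-ed and admits a simple projective module $P^{(k-1)}$ of injective dimension $(k-1)n+d$. Since gluing is associative (as noted immediately after the definition of gluing of algebras), the algebra $\Lambda^{(k)}$ appearing in the statement can be identified with
\[
\Lambda^{(k)} = \Lambda_{n+1,2} \glue[P^{(k-1)}][I(1)] \Lambda^{(k-1)}.
\]
Applying Theorem \ref{thrm:glueplusn}(a) to this expression then yields that $\Lambda^{(k)}$ is strongly $(n, kn+d)$-rep\-re\-sen\-ta\-tion-di\-rect\-ed, and (as a bonus, preserved for future iterations) that it still admits a simple projective module of injective dimension $kn+d$. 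Part (b) is proved in exactly the same way, with the roles of projective and injective modules, and of the two sides of the gluing, interchanged, using Theorem \ref{thrm:glueplusn}(b) instead.

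There is essentially no substantial obstacle here: the heavy lifting has been done in Theorem \ref{thrm:glueplusn} and in the general machinery of Sections II--III. The only point worth stating carefully is that the inductive hypothesis must be strengthened to carry along the existence of the distinguished simple projective (respectively injective) module of maximal injective (respectively projective) dimension, since this is what allows the next application of Theorem \ref{thrm:glueplusn} to go through. Everything else --- associativity of the iterated gluing, the bound $\max\{d_1,d_2\} \le d \le d_1+d_2$ on global dimensions, and the preservation of rep\-re\-sen\-ta\-tion-di\-rect\-ed\-ness --- is already established.
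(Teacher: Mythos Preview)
Your proof is correct and matches the paper's own argument, which simply says ``Follows immediately by applying Theorem~\ref{thrm:glueplusn} $k$ times.'' Your write-up is a careful unrolling of exactly that iteration, and you correctly identify the one non-trivial point: the inductive hypothesis must be strengthened to carry along the simple projective (respectively injective) module of maximal injective (respectively projective) dimension, which Theorem~\ref{thrm:glueplusn} explicitly provides.
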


\begin{proof}
Follows immediately by applying Theorem \ref{thrm:glueplusn} $k$ times.
\end{proof}

As an immediate application of Corollary \ref{cor:allnds} we can now recover \cite[Proposition 6.2(i)]{JAS}:

\begin{example}\label{ex:thecasekn}
It is easy to see (for example using Corollary \ref{cor:AR glued}) that 
\[\La_{kn+1,2}\cong \La_{n+1,2} \glue[P(n+1)][I(1)] \La_{n+1,2} \glue[P(n+1)][I(1)] \cdots \glue[P(n+1)][I(1)] \La_{n+1,2}, \]
where there appear $k$ terms $\La_{n+1,2}$ on the right hand side of the above expression. Hence by Corollary \ref{cor:allnds} it follows that the algebra $\La_{kn+1,2}$ is $(n,kn)$-rep\-re\-sen\-ta\-tion-fi\-nite.
\end{example}

Hence we have examples of $(n,kn)$-rep\-re\-sen\-ta\-tion-fi\-nite algebras for any $k$. Our construction of examples of $(n,d)$-rep\-re\-sen\-ta\-tion-fi\-nite algebras for $d\neq kn$ will follow the same spirit. 

\subsection{The case of \texorpdfstring{$n$}{n} being odd}

Let us start with the case of $n$ being odd. Given $n$, we will construct a strongly $(n,d)$-rep\-re\-sen\-ta\-tion-di\-rect\-ed algebra for any $n\leq d \leq 2n-1$. Moreover, each such algebra will admit a simple projective module of injective dimension $d$. Then by applying Corollary \ref{cor:allnds} we obtain an example of an $(n,d)$-rep\-re\-sen\-ta\-tion-di\-rect\-ed algebra for any $d$. 

Consider the following motivating example.

\begin{example}\label{ex:thecase9}
Let $\La$ be the Nakayama algebra given by the $15$-Kupisch series $\left(2,3^{(11)},2^{(2)},1\right)$ and let $\cC_{\La}=\add\left(\bigoplus_{i\geq 0}\tau_9^{-i}\left(\La\right)\right)$. Then the Aus\-lan\-der--Rei\-ten quiver $\Gamma(\La)$ of $\La$ is
\[\begin{tikzpicture}[scale=1.5, transform shape]
    \foreach \x in {0,0.2,...,2.8}
        \mct{\x}{0};
    \foreach \x in {0.1,0.3,...,2.7}
        \mct{\x}{0.2};
    \foreach \x in {0.4,0.6,...,2.6}
        \nct{\x}{0.4};
    \nct{0}{0};
    \nct{0.1}{0.2};
    \nct{0.3}{0.2};
    \nct{2.5}{0.2};
    \nct{2.7}{0.2};
    \nct{2.8}{0};
\end{tikzpicture},\] 
where the bold vertices denote the indecomposable modules belonging to $\cC_{\La}$. Then using Theorem \ref{thrm:char} we can see that $\cC_{\La}$ is a $9$-cluster tilting subcategory, and a simple calculation shows that 
\[d=\gldim(\La)=\pd(I(1))= 10.\]
Hence $\La$ is a $(9,10)$-rep\-re\-sen\-ta\-tion-fi\-nite algebra. Following the same notation we have in Table \ref{tab:1} a list of $(9,d)$-rep\-re\-sen\-ta\-tion-fi\-nite algebras for different global dimensions $d<18$. Moreover, in each of these examples we have that the $d=\pd(I(1))$. Hence it follows from Corollary \ref{cor:allnds} that there exists a $(9,d)$-rep\-re\-sen\-ta\-tion-fi\-nite algebra for any $d\geq 9$.
\end{example}

\begin{table}[H]\caption{Examples of $(9,d)$--rep\-re\-sen\-ta\-tion-fi\-nite Nakayama algebras for $d<18$}\label{tab:1}
    \centering
    \begin{tabular}{|>{\centering\arraybackslash}m{5.5cm}|>{\centering\arraybackslash}m{8cm}|>{\centering\arraybackslash}m{0.4cm}|}
        \hline 
    $\La$ (as a Kupisch series) & $\Gamma(\La)$ and indecomposables in $\cC_{\La}$  & $d$ \\   
\hline 
    $\left(2^{(2)},3^{(2)},4^{(3)},5^{(13)},4^{(4)},3^{(3)},2^{(3)},1\right)$ 
    & \begin{tikzpicture}[scale=1.3, transform shape]
        \foreach \x in {0,0.2,...,6}
            \mct{\x}{0};
        \foreach \x in {0.1,0.3,...,5.9}
            \mct{\x}{0.2};
        \foreach \x in {0.6,0.8,...,5.4}
            \mct{\x}{0.4};
        \foreach \x in {1.1,1.3,...,4.9}
            \mct{\x}{0.6};
        \foreach \x in {1.8,2.0,...,4.2}
            \nct{\x}{0.8};
        \nct{0}{0};
        \nct{0.1}{0.2};
        \nct{0.3}{0.2};
        \nct{0.5}{0.2};
        \nct{0.6}{0.4};
        \nct{0.8}{0.4};
        \nct{1}{0.4};
        \nct{1.1}{0.6};
        \nct{1.3}{0.6};
        \nct{1.5}{0.6};
        \nct{1.7}{0.6};
        \nct{4.3}{0.6};
        \nct{4.5}{0.6};
        \nct{4.7}{0.6};
        \nct{4.9}{0.6};
        \nct{5}{0.4};
        \nct{5.2}{0.4};
        \nct{5.4}{0.4};
        \nct{5.5}{0.2};
        \nct{5.7}{0.2};
        \nct{5.9}{0.2};
        \nct{6}{0};
        \nct{3}{0};
        
        \node (A) at (3,0.85) {};
    \end{tikzpicture}
    & $11$ \\
\hline    
    $\left(2^{(3)},3^{(8)},2^{(4)},1\right)$
    & \begin{tikzpicture}[scale=1.3, transform shape]
        \foreach \x in {0,0.2,...,3}
            \mct{\x}{0};
        \foreach \x in {0.1,0.3,...,2.9}
            \mct{\x}{0.2};
        \foreach \x in {0.8,1,...,2.2}
            \nct{\x}{0.4};
        \nct{0}{0};
        \nct{0.1}{0.2};
        \nct{0.3}{0.2};
        \nct{0.5}{0.2};
        \nct{0.7}{0.2};
        \nct{2.3}{0.2};
        \nct{2.5}{0.2};
        \nct{2.7}{0.2};
        \nct{2.9}{0.2};
        \nct{3}{0};
                        
        \node (A) at (1.5,0.45) {};
    \end{tikzpicture} 
    & $12$ \\
\hline
    $\left(2^{(4)},3^{(2)},4^{(14)},3^{(3)},2^{(3)},1\right)$
    & \begin{tikzpicture}[scale=1.3, transform shape]
        \foreach \x in {0,0.2,...,5.2}
            \mct{\x}{0};
        \foreach \x in {0.1,0.3,...,5.1}
            \mct{\x}{0.2};
        \foreach \x in {0.6,0.8,...,4.2}
            \mct{\x}{0.4};
        \foreach \x in {1.1,1.3,...,3.7}
            \nct{\x}{0.6};
        \nct{0}{0};
        \nct{0.1}{0.2};
        \nct{0.3}{0.2};
        \nct{0.5}{0.2};
        \nct{0.6}{0.4};
        \nct{0.8}{0.4};
        \nct{1}{0.4};
        \nct{3.8}{0.4};
        \nct{4}{0.4};
        \nct{4.2}{0.4};
        \nct{4.3}{0.2};
        \nct{4.5}{0.2};
        \nct{4.7}{0.2};
        \nct{4.9}{0.2};
        \nct{5.1}{0.2};
        \nct{5.2}{0};
        \nct{2.8}{0};
                        
        \node (A) at (2.6,0.65) {};
    \end{tikzpicture}
    & $13$ \\
\hline 
    $\left(2^{(5)},3^{(5)},2^{(6)},1\right)$
    & \begin{tikzpicture}[scale=1.3, transform shape]
        \foreach \x in {0,0.2,...,3.2}
            \mct{\x}{0};
        \foreach \x in {0.1,0.3,...,3.1}
            \mct{\x}{0.2};
        \foreach \x in {1.2,1.4,...,2}
            \nct{\x}{0.4};
        \nct{0}{0};
        \nct{0.1}{0.2};
        \nct{0.3}{0.2};
        \nct{0.5}{0.2};
        \nct{0.7}{0.2};
        \nct{0.9}{0.2};
        \nct{1.1}{0.2};
        \nct{2.1}{0.2};
        \nct{2.3}{0.2};
        \nct{2.5}{0.2};
        \nct{2.7}{0.2};
        \nct{2.9}{0.2};
        \nct{3.1}{0.2};
        \nct{3.2}{0};
        
        \node (A) at (1.6,0.45) {};
    \end{tikzpicture}
    & $14$ \\
\hline 
    $\left(2^{(6)},3^{(13)},2^{(3)},1\right)$ 
    & \begin{tikzpicture}[scale=1.3, transform shape]
        \foreach \x in {0,0.2,...,4.4}
            \mct{\x}{0};
        \foreach \x in {0.1,0.3,...,4.3}
            \mct{\x}{0.2};
        \foreach \x in {0.6,0.8,...,3}
            \nct{\x}{0.4};
        \nct{0}{0};
        \nct{0.1}{0.2};
        \nct{0.3}{0.2};
        \nct{0.5}{0.2};
        \nct{3.1}{0.2};
        \nct{3.3}{0.2};
        \nct{3.5}{0.2};
        \nct{3.7}{0.2};
        \nct{3.9}{0.2};
        \nct{4.1}{0.2};
        \nct{4.3}{0.2};
        \nct{4.4}{0};
        \nct{2.4}{0};        
        
        \node (A) at (2.2,0.45) {};
    \end{tikzpicture}
    & $15$ \\
\hline 
    $\left(2^{(7)},3^{(2)},2^{(8)},1\right)$
    & \begin{tikzpicture}[scale=1.3, transform shape]
        \foreach \x in {0,0.2,...,3.4}
            \mct{\x}{0};
        \foreach \x in {0.1,0.3,...,3.3}
            \mct{\x}{0.2};
        \foreach \x in {1.6,1.8}
            \nct{\x}{0.4};
        \nct{0}{0};
        \nct{0.1}{0.2};
        \nct{0.3}{0.2};
        \nct{0.5}{0.2};
        \nct{0.7}{0.2};
        \nct{0.9}{0.2};
        \nct{1.1}{0.2};
        \nct{1.3}{0.2};
        \nct{1.5}{0.2};
        \nct{1.9}{0.2};
        \nct{2.1}{0.2};
        \nct{2.3}{0.2};
        \nct{2.5}{0.2};
        \nct{2.7}{0.2};
        \nct{2.9}{0.2};
        \nct{3.1}{0.2};
        \nct{3.3}{0.2};
        \nct{3.4}{0};
        
        \node (A) at (1.7,0.45) {};
    \end{tikzpicture} 
    & $16$ \\
\hline 
    $\left(2^{(8)},3^{(13)},2,1\right)$
    & \begin{tikzpicture}[scale=1.3, transform shape]
        \foreach \x in {0,0.2,...,4.4}
            \mct{\x}{0};
        \foreach \x in {0.1,0.3,...,4.3}
            \mct{\x}{0.2};
        \foreach \x in {0.2,0.4,...,2.6}
            \nct{\x}{0.4};
        \nct{0}{0};
        \nct{0.1}{0.2};
        \nct{2.7}{0.2};
        \nct{2.9}{0.2};
        \nct{3.1}{0.2};
        \nct{3.3}{0.2};
        \nct{3.5}{0.2};
        \nct{3.7}{0.2};
        \nct{3.9}{0.2};
        \nct{4.1}{0.2};
        \nct{4.3}{0.2};
        \nct{4.4}{0};
        \nct{2.6}{0};        
        
        \node (A) at (2.2,0.45) {};
    \end{tikzpicture}
    & $17$ \\
\hline
    \end{tabular}
\end{table}
With Example \ref{ex:thecase9} in mind, we have the following proposition.

\begin{proposition}\label{prop:thecasendodd}
Let $n$ be odd and $n<d<2n$. 
\begin{itemize}
\item[(a)] If $d$ is even, then the acyclic Nakayama algebra $\La$ with Kupisch series 
\[\left(2^{(d-n)},3^{\left(3\left(n-\frac{d}{2}\right)-1\right)},2^{(d-n+1)},1\right)\]
is $(n,d)$-rep\-re\-sen\-ta\-tion-fi\-nite and we have $\pd(I(1))=d$.

\item[(b)] If $d$ is odd and $d\neq 2n-1$, set
\[h=n-\frac{d-1}{2}\; \text{ and } \;s=\left( \frac{d-n}{2}\right)(h+1)+2h.\]
Then the acyclic Nakayama algebra $\La$ with Kupisch series
\[\left( 2^{\left(d-n\right)}, 3^{(2)}, 4^{(3)}, \dots, h^{\left(h-1\right)}, (h+1)^{(s)}, h^{(h)}, (h-1)^{(h-1)}, \dots, 3^{(3)},2^{(2)},2,1\right),\]
is $(n,d)$-rep\-re\-sen\-ta\-tion-fi\-nite and we have $\pd(I(1))=d$. 
    
\item[(c)] If $d=2n-1$, then the acyclic Nakayama algebra $\La = \La_{3\left(\frac{n+1}{2}\right),3} \glue[P(n)][I(2)] \La_{n+1,2}$ with Kupisch series
\[\left(2^{(n-1)}, 3^{\left(3\left(\frac{n+1}{2}\right)-2\right)},2,1\right) \]
is $(n,d)$-rep\-re\-sen\-ta\-tion-fi\-nite and we have $\pd(I(1))=d$.
\end{itemize}
\end{proposition}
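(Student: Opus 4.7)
The plan is to verify all three cases by the same general scheme: in each case, consider the candidate subcategory
\[\cC_{\La} := \add \Big(\bigoplus_{i\geq 0} \tau_n^{-i}(\La)\Big),\]
explicitly enumerate its indecomposable summands using Lemma \ref{lemma:nakayamacompute} and Lemma \ref{lemma:taun}, and verify the four conditions of Theorem \ref{thrm:char}. The global dimension is then computed directly via Lemma \ref{lemma:nakayamacompute} by exhibiting a minimal projective resolution of $I(1)$.

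First I would deal with case (c), which can be obtained rapidly from the earlier gluing machinery. The algebra $\La_{n+1,2}$ is strongly $(n,n)$-rep\-re\-sen\-ta\-tion-di\-rect\-ed by Proposition \ref{prop:caseh2}, and a direct verification of Theorem \ref{thrm:char} (analogous to Example \ref{ex:motivation}, using Lemma \ref{lemma:taun} with $h=3$) shows that $\La_{3((n+1)/2),3}$ is strongly $(n,n)$-rep\-re\-sen\-ta\-tion-di\-rect\-ed when $n$ is odd, admitting the additive generator $\bigoplus_{i\geq 0}\tau_n^{-i}\La_{3((n+1)/2),3}$. In $\La_{n+1,2}$, the module $I(2)$ is a maximal right abutment of height $2$ whose unique fracture in the $n$-cluster tilting subcategory is $I(2)\oplus I(1)$, matching via the respective footings the fracture $P(2)\oplus P(3)$ of the maximal left abutment $P(2)$ of $\La_{3((n+1)/2),3}$. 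Thus the compatibility condition (\ref{eq:compatibility}) of Theorem \ref{thrm:fractsubcat} holds, so the glued subcategory is a $(\Lab_{\La},\DLab_{\La},n)$-fractured subcategory, i.e.\ an $n$-cluster tilting subcategory by Proposition \ref{prop:fracluster}. The identification of the Kupisch series follows from Lemma \ref{lemma:gluingquivers}, and $\pd(I(1))=d$ is computed via Lemma \ref{lemma:nakayamacompute} by chasing the projective resolution $I(1)\to P(n+1)\to\cdots$ through the two glued pieces, yielding total length $n+(n-1)=2n-1=d$.

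For cases (a) and (b) I would proceed by direct verification. Writing the indecomposable $\La$-modules as $M(i,j)$ and using Lemma \ref{lemma:nakayamacompute} together with Lemma \ref{lemma:taun}, the iterates $\tau_n^{-k}(\La)$ can be listed explicitly in terms of the vertices $(i,j)$ of $\Gamma(\La)$. The pattern is exactly as illustrated in Example \ref{ex:thecase9}: the $n$-Auslander-Reiten translate moves through the triangular/trapezoidal AR quiver in discrete jumps whose geometry is dictated by the Kupisch series, and the specific values $d-n$, $3(n-d/2)-1$, and (in case (b)) the block sizes $h$, $s$ are chosen precisely so that successive $\tau_n^{-}$-orbits close up on the injective column without producing indecomposable modules outside the Kupisch shape. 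Conditions (1) and (2) of Theorem \ref{thrm:char} amount to checking that every non-injective indecomposable in the orbit has a nonzero $\tau_n^-$-image inside $\Ind{\cC_{\La,\setminus \cP}}\cup\Ind{\cC_{\La,\setminus \cI}}$; conditions (3) and (4) reduce, by Lemma \ref{lemma:nakayamacompute} and the shape of $\Gamma(\La)$, to checking that each intermediate syzygy is a single indecomposable (no sum occurs because the radical series of every projective in our Kupisch algebra is uniserial and of the correct length). Finally, tracing $\om^i(I(1))$ through the Kupisch data using Lemma \ref{lemma:nakayamacompute}(a) yields $\pd(I(1))=d$ in both (a) and (b), and consequently $\gldim(\La)\geq d$; the reverse inequality $\gldim(\La)\leq d$ is obtained by bounding $\pd(S(i))$ for every simple $S(i)$ using the same formula.

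The main obstacle is the bookkeeping in case (b): the Kupisch series has four structurally different regions (the initial $2^{(d-n)}$ tail, the ascending staircase $3^{(2)},4^{(3)},\dots,h^{(h-1)}$, the long $(h+1)^{(s)}$ plateau, and the descending tail), and one must check that the orbit $\{\tau_n^{-i}\La\}$ traverses each region at the right speed so that the discrete jumps of Lemma \ref{lemma:taun} land exactly on the vertices of $\Gamma(\La)$ and so that no orbit exits $\Gamma(\La)$ prematurely. This is essentially a parity/arithmetic calculation: the choice $s=((d-n)/2)(h+1)+2h$ is forced by demanding that the horizontal reflection produced by $\tau_n^-$ on the plateau returns the orbit to the descending tail at the precisely correct height, so the verification of (2) and (3) in this region is the most delicate step and should be presented with a labelled picture analogous to the one in Example \ref{ex:thecase9}.
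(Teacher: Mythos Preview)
Your treatment of cases (a) and (b) is essentially the paper's own approach: both proceed by direct verification of Theorem~\ref{thrm:char}, explicitly listing the projective non-injective and injective non-projective indecomposables $Q_i$, $J_i$ (and in case (b) the intermediate module $N$), computing their $\tau_n^{\pm}$-images via Lemma~\ref{lemma:nakayamacompute}, and then reading off $\pd(I(1))=d$ from the projective resolution. The paper is somewhat more explicit than your sketch (it writes down the coordinates of all the relevant $M(i,j)$ and shows, for instance, that $\om^{n+1}(J_i)\cong Q_i$ in case (b)), but the strategy is identical.

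Your approach to case (c), however, has a genuine gap. You assert that $\La_{3((n+1)/2),3}$ is strongly $(n,n)$-representation-directed, i.e.\ that it admits an $n$-cluster tilting subcategory. This is false. Already for $n=3$ the algebra $\La_{6,3}$ fails: by Lemma~\ref{lemma:taun} one has $\tau_3^{-}(M(1,1))=M(5,1)$, and $M(5,1)$ is neither injective nor does it satisfy $\tau_3^{-}(M(5,1))\neq 0$ (since $M(9,1)=0$), so condition~(2) of Theorem~\ref{thrm:char} is violated. Hence your proposed gluing of two $n$-cluster tilting subcategories cannot work. Moreover, even setting this aside, the fractures you name --- the projective fracture $P(2)\oplus P(3)$ on one side and the injective fracture $I(2)\oplus I(1)$ on the other --- do not agree as $KA_2$-tilting modules under the footings, so the compatibility condition~(\ref{eq:compatibility}) of Theorem~\ref{thrm:fractsubcat} would fail anyway.

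The paper sidesteps all of this by treating (c) exactly like (a) and (b): it works directly in the Auslander--Reiten quiver of $\La$, identifies the two projective non-injectives $Q_1=M(1,1)$, $Q_2=M(1,2)$, the intermediate module $N$, and the two injective non-projectives $J_1$, $J_2$, checks that $\tau_n^{-}$ sends $Q_1\mapsto N\mapsto J_2$ and $Q_2\mapsto J_1$ (with $\tau_n$ the inverse), and finishes with $\om^d(J_2)\cong J_1$ projective, giving $\pd(I(1))=d$. The gluing decomposition in the statement of (c) is used only to describe the algebra, not in the proof. If you wish to salvage a gluing argument, you would need to exhibit a genuine (non-trivial) $n$-\emph{fractured} subcategory on $\La_{3((n+1)/2),3}$ with a non-injective right fracture at $I(2)$, and a matching non-projective left fracture on $\La_{n+1,2}$; this is possible but is essentially as much work as the direct computation.
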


\begin{proof}
\item[(a)] The Aus\-lan\-der--Rei\-ten quiver $\Gamma(\La)$ in this case has the form
\[\begin{tikzpicture}
\node (A1) at (0,0) {$Q_1$};
\node (B1) at (0.3,0.3) {$\bullet$};
\node (B2) at (0.6,0) {\textopenbullet};
\node (C1) at (0.9,0.3) {$\bullet$};
\node (C2) at (1.2,0) {\textopenbullet};

\node (D1) at (2.2,0.3) {$\bullet$};
\node (D2) at (2.5,0) {\textopenbullet};
\node (E1) at (2.8,0.3) {$Q_2$};
\node (E2) at (3.1,0) {\textopenbullet};
\node (F1) at (3.1,0.6) {$\bullet$};
\node (F2) at (3.4,0.3) {\textopenbullet};
\node (F3) at (3.7,0) {\textopenbullet};
\node (G1) at (3.7,0.6) {$\bullet$};
\node (G2) at (4,0.3) {\textopenbullet};
\node (G3) at (4.3,0) {\textopenbullet};

\node (H1) at (5,0.6) {$\bullet$};
\node (H2) at (5.3,0.3) {\textopenbullet};
\node (H3) at (5.6,0) {\textopenbullet};
\node (I1) at (5.6,0.6) {$\bullet$};
\node (I2) at (5.9,0.3) {$J_1$};
\node (I3) at (6.2,0) {\textopenbullet};
\node (J1) at (6.5,0.3) {$\bullet$};
\node (J2) at (6.8,0) {\textopenbullet};

\node (K1) at (7.8,0.3) {$\bullet$};
\node (K2) at (8.1,0) {\textopenbullet};
\node (L1) at (8.4,0.3) {$\bullet$};
\node (L2) at (8.7,0) {$J_2$\nospacepunct{,}};

\draw[loosely dotted] (1.4,0.15) -- (2,0.15);
\draw[loosely dotted] (4.35,0.3) -- (4.95,0.3);
\draw[loosely dotted] (7,0.15) -- (7.6,0.15);
\end{tikzpicture}\]
where $Q_1\cong M\left(1,1\right)$, $Q_2\cong M\left(d-n+1,2\right)$, $J_1\cong M\left(2n-\frac{d}{2},2\right)$, $J_2\cong M\left(n+\frac{d}{2}+1,1\right)$ and the bold vertices correspond to the indecomposable pro\-jec\-tive-in\-jec\-tive $\La$-modules. Moreover, the vertices $Q_1$ and $Q_2$ correspond to the indecomposable projective noninjective $\La$-modules and the vertices $J_1$ and $J_2$ correspond to the indecomposable injective nonprojective $\La$-modules. Using Lemma \ref{lemma:nakayamacompute} we compute

\begin{align*} 
\tno (Q_1) &= \tau^- \om^{-(n-1)} (Q_1) \cong \tau^- \om^{-(2n-d-1)}\om^{-(d-n)}\left( M(1,1)\right) \cong \tau^- \om^{-(2n-d-1)}\left(M(d-n+1,1)\right)\\
&\cong \tau^-\left(M\left(2n-\frac{d}{2}-1,2\right)\right) \cong M\left(2n-\frac{d}{2},2\right) 
\cong J_1.
\end{align*}

Similar computations show that 
\[\tno(Q_2) \cong J_2,\;\; \tn(J_1)\cong Q_1,\;\; \tn(J_2)\cong Q_2,\;\; \om^{n+1}(J_1)\cong 0, \om^d(J_2) \cong Q_1.\]
Hence Theorem \ref{thrm:char} implies that 
\[ \cC = \add(\La \oplus \tno \La ) = \add (\La \oplus J_1 \oplus J_2) \]
is an $n$-cluster tilting subcategory. Moreover, since 
\[ \pd(J_1) <d \text{ and } \pd(J_2)=d\]
and $J_1$ and $J_2$ are the only indecomposable injective nonprojective  $\La$-modules, we have $\gldim (\La)=d$. For the final part, we have that $J_2\cong M\left(n+\frac{d}{2}+1,1\right) \cong I(1)$ and hence $\pd(I(1))=d$.

\item[(b)] The two cases are similar; let us only prove the case $h>2$. The Aus\-lan\-der--Rei\-ten quiver $\Gamma(\La)$ in this case has the form
\[\resizebox {\columnwidth} {!} {
\begin{tikzpicture}
\node (Z) at (-0.6,0) {$Q_1$};
\node (A1) at (-0.3,0.3) {$\bullet$};
\node (A2) at (0,0) {\textopenbullet};
\node (B1) at (0.3,0.3) {$\bullet$};
\node (B2) at (0.6,0) {\textopenbullet};
\node (C1) at (0.9,0.3) {$Q_2$};
\node (C2) at (1.2,0) {\textopenbullet};
\node (D1) at (1.2,0.6) {$\bullet$};
\node (D2) at (1.5,0.3) {\textopenbullet};
\node (D3) at (1.8,0) {\textopenbullet};
\node (E1) at (1.8,0.6) {$\bullet$};
\node (E2) at (2.1,0.3) {\textopenbullet};
\node (E3) at (2.4,0) {\textopenbullet};
\node (F1) at (2.4,0.6) {$Q_3$};
\node (F2) at (2.7,0.3) {\textopenbullet};
\node (F3) at (3,0) {\textopenbullet};
\node (G1) at (2.7,0.9) {$\bullet$};
\node (G2) at (3,0.6) {\textopenbullet};
\node (G3) at (3.3,0.3) {\textopenbullet};
\node (G4) at (3.6,0) {\textopenbullet};

\draw[loosely dotted] (3.5,0.6) -- (4,0.6);

\node (H1) at (3.9,1.5) {$\bullet$};
\node (H2) at (4.2,1.2) {\textopenbullet};
\node[scale=0.5] (Ha) at (4.4,1) {$\cdot$};
\node[scale=0.5] (Hb) at (4.5,0.9) {$\cdot$};
\node[scale=0.5] (Hc) at (4.6,0.8) {$\cdot$};
\node (H3) at (4.8,0.6) {\textopenbullet};
\node (H4) at (5.1,0.3) {\textopenbullet};
\node (H5) at (5.4,0) {\textopenbullet};
\node (I1) at (4.5,1.5) {$Q_h$};
\node (I2) at (4.8,1.2) {\textopenbullet};
\node[scale=0.5] (Ia) at (5,1) {$\cdot$};
\node[scale=0.5] (Ib) at (5.1,0.9) {$\cdot$};
\node[scale=0.5] (Ic) at (5.2,0.8) {$\cdot$};
\node (I3) at (5.4,0.6) {\textopenbullet};
\node (I4) at (5.7,0.3) {\textopenbullet};
\node (I5) at (6,0) {\textopenbullet};
\node (J1) at (4.8,1.8) {$\bullet$};
\node (J2) at (5.1,1.5) {\textopenbullet};
\node (J3) at (5.4,1.2) {\textopenbullet};
\node[scale=0.5] (Ja) at (5.6,1) {$\cdot$};
\node[scale=0.5] (Jb) at (5.7,0.9) {$\cdot$};
\node[scale=0.5] (Jc) at (5.8,0.8) {$\cdot$};
\node (J4) at (6,0.6) {\textopenbullet};
\node (J5) at (6.3,0.3) {\textopenbullet};
\node (J6) at (6.6,0) {\textopenbullet};

\draw[loosely dotted] (6.5,0.9) -- (7,0.9);

\node (K1) at (6.9,1.8) {$\bullet$};
\node (K2) at (7.2,1.5) {\textopenbullet};
\node (K3) at (7.5,1.2) {\textopenbullet};
\node[scale=0.5] (Ka) at (7.7,1) {$\cdot$};
\node[scale=0.5] (Kb) at (7.8,0.9) {$\cdot$};
\node[scale=0.5] (Kc) at (7.9,0.8) {$\cdot$};
\node (K4) at (8.1,0.6) {\textopenbullet};
\node (K5) at (8.4,0.3) {\textopenbullet};
\node (K6) at (8.7,0) {$N$};

\draw[loosely dotted] (8.6,0.9) -- (9.1,0.9);

\node (L1) at (9,1.8) {$\bullet$};
\node (L2) at (9.3,1.5) {\textopenbullet};
\node (L3) at (9.6,1.2) {\textopenbullet};
\node[scale=0.5] (La) at (9.8,1) {$\cdot$};
\node[scale=0.5] (Lb) at (9.9,0.9) {$\cdot$};
\node[scale=0.5] (Lc) at (10,0.8) {$\cdot$};
\node (L4) at (10.2,0.6) {\textopenbullet};
\node (L5) at (10.5,0.3) {\textopenbullet};
\node (L6) at (10.8,0) {\textopenbullet};
\node (M1) at (9.6,1.8) {$\bullet$};
\node (M2) at (9.9,1.5) {$J_1$};
\node (M3) at (10.2,1.2) {\textopenbullet};
\node[scale=0.5] (Ma) at (10.4,1) {$\cdot$};
\node[scale=0.5] (Mb) at (10.5,0.9) {$\cdot$};
\node[scale=0.5] (Mc) at (10.6,0.8) {$\cdot$};
\node (M4) at (10.8,0.6) {\textopenbullet};
\node (M5) at (11.1,0.3) {\textopenbullet};
\node (M6) at (11.4,0) {\textopenbullet};
\node (N1) at (10.5,1.5) {$\bullet$};
\node (N2) at (10.8,1.2) {\textopenbullet};
\node[scale=0.5] (Na) at (11,1) {$\cdot$};
\node[scale=0.5] (Nb) at (11.1,0.9) {$\cdot$};
\node[scale=0.5] (Nc) at (11.2,0.8) {$\cdot$};
\node (N3) at (11.4,0.6) {\textopenbullet};
\node (N4) at (11.7,0.3) {\textopenbullet};
\node (N5) at (12,0) {\textopenbullet};

\draw[loosely dotted] (11.9,0.75) -- (12.4,0.75);

\node (O1) at (12.3,1.5) {$\bullet$};
\node (O2) at (12.6,1.2) {\textopenbullet};
\node (O3) at (12.9,0.9) {\textopenbullet};
\node[scale=0.5] (Oa) at (13.1,0.7) {$\cdot$};
\node[scale=0.5] (Ob) at (13.2,0.6) {$\cdot$};
\node[scale=0.5] (Oc) at (13.3,0.5) {$\cdot$};
\node (O4) at (13.5,0.3) {\textopenbullet};
\node (O5) at (13.8,0) {\textopenbullet};
\node (P1) at (12.9,1.5) {$\bullet$};
\node (P2) at (13.2,1.2) {$J_2$};
\node (P3) at (13.5,0.9) {\textopenbullet};
\node[scale=0.5] (Pa) at (13.7,0.7) {$\cdot$};
\node[scale=0.5] (Pb) at (13.8,0.6) {$\cdot$};
\node[scale=0.5] (Pc) at (13.9,0.5) {$\cdot$};
\node (P4) at (14.1,0.3) {\textopenbullet};
\node (P5) at (14.4,0) {\textopenbullet};
\node (Q1) at (13.8,1.2) {$\bullet$};
\node (Q2) at (14.1,0.9) {\textopenbullet};
\node[scale=0.5] (Qa) at (14.3,0.7) {$\cdot$};
\node[scale=0.5] (Qb) at (14.4,0.6) {$\cdot$};
\node[scale=0.5] (Qc) at (14.5,0.5) {$\cdot$};
\node (Q3) at (14.7,0.3) {\textopenbullet};
\node (Q4) at (15,0) {\textopenbullet};

\draw[loosely dotted] (15.2,0.3) -- (15.7,0.3);

\node (R1) at (15.9,0.6) {$\bullet$};
\node (R2) at (16.2,0.3) {\textopenbullet};
\node (R3) at (16.5,0) {\textopenbullet};
\node (Q1) at (16.5,0.6) {$\bullet$};
\node (Q2) at (16.8,0.3) {$J_{h-1}$};
\node (Q3) at (17.1,0) {\textopenbullet};
\node (R1) at (17.4,0.3) {$\bullet$};
\node (R2) at (17.7,0) {\textopenbullet};

\draw[loosely dotted] (17.9,0.15) -- (18.4,0.15);

\node (S1) at (18.6,0.3) {$\bullet$};
\node (S2) at (18.9,0) {\textopenbullet};
\node (T1) at (19.2,0.3) {$\bullet$};
\node (T2) at (19.5,0) {$J_h$\nospacepunct{,}};
\end{tikzpicture}}\]
where 
\begin{itemize}
    \item $Q_1\cong M\left(1,1\right)$,
    \item $Q_i\cong M\left(\frac{i(i-1)}{2}+2,i\right)$, for $2\leq i \leq h$,
    \item $N\cong M\left( \frac{(2n-d+3)(d+1)}{8}+1,1\right)$,
    \item $J_i \cong M\left(\frac{h(h-1)}{2}+2+s+(i-1)h-\frac{(i-2)(i-1)}{2},h-i+1\right)$, for $1\leq i \leq h-1$
    \item $J_h \cong M\left(\frac{1}{4}(n(2n-d+5)+d+5),1\right)$.
\end{itemize}

Moreover the bold vertices correspond to the indecomposable pro\-jec\-tive-in\-jec\-tive $\La$-modules, the vertices $Q_i$ correspond to the indecomposable projective noninjective $\La$-modules and the vertices $J_i$ correspond to the indecomposable injective nonprojective $\La$-modules. Using Lemma \ref{lemma:nakayamacompute} we compute
\[\tno (Q_i) \cong \begin{cases} N &\mbox{ if $i=1$,} \\ J_{i-1} &\mbox{ if $2\leq i \leq h$,}\end{cases} \;\; \tno (N) \cong J_h, \;\;
\tn (J_i) \cong \begin{cases} Q_{i+1} &\mbox{ if $1\leq i \leq h-1$,} \\ N &\mbox{ if $i=h$,}\end{cases}\;\; \tn (N) \cong Q_1.\]
Let us indicatively show that $\tn(J_i)\cong Q_{i+1}$ for $1\leq i \leq h-1$; the other computations are similar. Looking at the Aus\-lan\-der--Rei\-ten quiver $\Gamma(\La)$ it is clear that the projective cover of $J_i$ corresponds to the vertex exactly to the left and above of $J_i$. In other words, we have 
\[u_{\left(\frac{h(h-1)}{2}+2+s+(i-1)h-\frac{(i-2)(i-1)}{2}\right)+(h-i+1)}=h-i+2.\]
Then by Lemma \ref{lemma:nakayamacompute} we have (for simplicity, we write $(x,y)$ instead of $M(x,y)$)
\begin{align*}
    \Omega (J_i) &= \Omega \left(\frac{h(h-1)}{2}+2+s+(i-1)h-\frac{(i-2)(i-1)}{2},h-i+1\right) \\
    &\cong \left(\frac{h(h-1)}{2}+2+s+(i-1)h-\frac{(i-2)(i-1)}{2}+h-i+1-(h-i+2),h-i+2-(h-i+1)\right) \\
    &=\left(\frac{h(h-1)}{2}+1+s+(i-1)h-\frac{(i-2)(i-1)}{2},1\right).
\end{align*}
Assume that $i>1$. Again it is easy to see that \[u_{\left(\frac{h(h-1)}{2}+1+s+(i-1)h-\frac{(i-2)(i-1)}{2}\right)+1}=h-(i-1)+2\]
by noticing that $J_{i-1}$ is a vertex in the Aus\-lan\-der--Rei\-ten quiver $\Gamma(\La)$ and the sum of its coordinates is equal to the sum of the coordinates of $\Omega (J_i)$. Then
\begin{align*}
    \Omega^2 (J_i) &=\Omega\left(\Omega (J_i)\right) \\ 
    &\cong\Omega \left(\frac{h(h-1)}{2}+1+s+(i-1)h-\frac{(i-2)(i-1)}{2},1\right) \\
    &\cong \left(\frac{h(h-1)}{2}+1+s+(i-1)h-\frac{(i-2)(i-1)}{2}+1-\left(h-(i-1)+2\right),h-(i-1)+2-1\right) \\
    &=\left(\frac{h(h-1)}{2}+2+s+(i-2)h-\frac{(i-2)(i-1)}{2}+(i-1)-2,h-(i-1)+1\right) \\
    &=\left(\frac{h(h-1)}{2}+2+s+(i-2)h-\frac{(i-3)(i-2)}{2}-1,h-(i-1)+1\right)\\
    &\cong \tau (J_{i-1}).
\end{align*}
It follows by a simple induction that for every $1\leq i \leq h-1$ and $0\leq k \leq i-1$ we have $\Omega^{2k} (J_i) \cong \tau^k (J_{i-k})$. In particular, for $k=i-1$ we have 
\begin{align*}
    \Omega^{2i-2}(J_i) &\cong \tau^{i-1}(J_1) \\
    &\cong \tau^{i-1}\left(\frac{h(h-1)}{2}+2+s,h\right) \\
    &\cong \left(\frac{h(h-1)}{2}+2+s-(i-1),h\right)\\
    &=\left(\frac{h(h-1)}{2}+3+s-i,h\right).
\end{align*}
Now notice that if $(x,y)$ is a vertex in the Aus\-lan\-der--Rei\-ten quiver $\Gamma(\La)$ such that
\begin{equation}\label{eq:homogeneous coordinates}
    \frac{h(h-1)}{2}+3\leq x+y\leq \frac{h(h-1)}{2}+2+s+h,
\end{equation}
then $(x,y)$ is inside the parallelogram defined by the vertices $\left(\frac{h(h-1)}{2}+2,h+1\right)$, $\left(\frac{h(h-1)}{2}+2+h,1\right)$, $\left(\frac{h(h-1)}{2}+1+s,h+1\right)$, and $\left(\frac{h(h-1)}{2}+1+s+h,1\right)$. It follows that in this case $u_{x+y}=h+1$.
It is easy to check that $\Omega^{2i-2}(J_i)$ satisfies (\ref{eq:homogeneous coordinates}). Then we have
\begin{align*}
    \Omega(\Omega^{2i-2}(J_i)) &\cong \Omega \left(\frac{h(h-1)}{2}+3+s-i,h\right) \\
    &\cong \left(\frac{h(h-1)}{2}+3+s-i+h-(h+1),h+1-h\right) \\
    &= \left(\frac{h(h-1)}{2}+2+s-i,1\right),
\end{align*}
which again satisfies (\ref{eq:homogeneous coordinates}). Hence
\begin{align*}
    \Omega^2(\Omega^{2i-2}(J_i)) &\cong \Omega \left(\frac{h(h-1)}{2}+2+s-i,1\right) \\
    &\cong \left(\frac{h(h-1)}{2}+2+s-i+1-(h+1),h+1-1\right) \\
    &= \left(\frac{h(h-1)}{2}+2+s-i-h,h\right)\\
    &\cong \tau^{h+1}\tau^{i-1}(J_1)
\end{align*}
Since $s=\frac{d-n}{2}(h+1)+2h$, and since $1\leq i \leq h-1$, it follows again by a simple induction that for every $1\leq i \leq h-1$ and $1\leq k\leq \frac{d-n}{2}+1$ we have $\Omega^{2k}\Omega^{2i-2}(J_i) \cong \tau^{k(h+1)}\tau^{i-1}(J_1)$. In particular, for $k=\frac{d-n}{2}+1$ we have
\begin{align*}
    \Omega^{(d-n)+2}\Omega^{2i-2} (J_i) &\cong \tau^{\left(\frac{d-n}{2}+1\right)(h+1)}\tau^{i-1} J_1 \\
    &\cong \tau^{s-h+1}\tau^{i-1} J_1 \\
    &\cong \tau^{s-h+i} \left(\frac{h(h-1)}{2}+2+s,h\right) \\
    &\cong \left(\frac{h(h-1)}{2}+2+s-\left(s-h+i\right),h\right) \\
    &= \left(\frac{h(h-1)}{2}+2+h-i,h\right).
\end{align*}
Rewriting the above, we have shown that for $1\leq i \leq h-1$ we have
\[\Omega^{d-n+2i}(J_i) \cong \left(\frac{h(h-1)}{2}+2+h-i,h\right).\]
Clearly $\Omega^{d-n+2i}(J_i)$ also satisfies (\ref{eq:homogeneous coordinates}) and so
\begin{align*}
    \Omega(\Omega^{d-n+2i}) (J_i) &\cong \Omega \left(\frac{h(h-1)}{2}+2+h-i,h\right) \\
    &\cong \left(\frac{h(h-1)}{2}+2+h-i+h-(h+1),h+1-h\right) \\
    &= \left(\frac{h(h-1)}{2}+1+h-i,1\right).
\end{align*}
We have
\[\frac{(h-1)(h-2)}{2}+2+h \leq\frac{h(h-1)}{2}+2+h-i\leq\frac{h(h-1)}{2}+1+h,\]
that is $\Omega(\Omega^{d-n+2i}) (J_i)$ is inside the parallelogram defined by $\left(\frac{(h-1)(h-2)}{2}+2,h\right)$, $\left(\frac{(h-1)(h-2)}{2}+1+h,1\right)$, $\left(\frac{h(h-1)}{2}+1,h\right)$ and $\left(\frac{h(h-1)}{2}+h,1\right)$. In particular, this implies that $u_{\left(\frac{h(h-1)}{2}+1+h-i\right)+1}=h$. Hence
\begin{align*}
    \Omega^2(\Omega^{d-n+2i}) (J_i) &\cong \Omega \left(\frac{h(h-1)}{2}+1+h-i,1\right) \\
    &\cong \left(\frac{h(h-1)}{2}+1+h-i+1-h,h-1\right) \\
    &=\left(\frac{h(h-1)}{2}+2-i,h-1\right) \\
    &=\left(\frac{(h-1)(h-2)}{2}+2+(h-i-1),h-1\right)\\
    &\cong \tau^{-(h-i-1)}(Q_{h-1}).
\end{align*}
It follows again by a simple induction that for every $1\leq i \leq h-1$ and $0\leq k \leq h-i$ we have 
\[\Omega^{2k}\Omega^{d-n+2i} (J_i)\cong \tau^{-(h-i-k)} (Q_{h-k}).\]
In particular, for $k=h-i-1$ we have 
\[\Omega^{2(h-i-1)}\Omega^{d-n+2i} (J_i) \cong \tau^{-(h-i-(h-i-1))}(Q_{h-(h-i-1)})=\tau^{-}(Q_{i+1})\]
and so
\[\tau \Omega^{2(h-i-1)+d-n+2i} (J_i) \cong \tau\tau^{-}(Q_{i+1}) \cong Q_{i+1}.\]
But 
\[2(h-i-1)+d-n+2i=d-n+2h-2=d-n+2n-(d-1)=n-1,\]
which shows that $\tau_n(J_i)\cong Q_{i+1}$. 

Next, it follows from Theorem \ref{thrm:char} that 
\[\cC = \add(\La \oplus \tno(\La) \oplus \tau_n^{-2}(\La) ) = \add(\La \oplus N \oplus D(\La) )\]
is an $n$-cluster tilting subcategory. For the computation of the global dimension, notice that again using Lemma \ref{lemma:nakayamacompute} as well as the previous computations, for $1\leq i \leq h-1$ we have
\begin{align*} 
\om^{n+1} (J_i) &\cong \om^{2}\om^{n+1}(J_i) \cong \om^{2}\tau^- (Q_{i+1}) \cong \om^{2} \tau^- \left(M\left(\frac{i(i+1)}{2}+2,i+1\right)\right)\\ &\cong \om^2 \left(M\left(\frac{i(i+1)}{2}+3,i+1\right)\right) \cong \om \left(M \left(\frac{i(i+1)}{2}+2,1 \right) \right) \\ &\cong \left(M\left(\frac{i(i-1)}{2}+2,i\right)\right) \cong Q_i.
\end{align*}
A similar computation shows that $\om^{d}(J_h)\cong Q_h$. Since $n$ and $d$ are both odd and since $Q_i$ is projective for any $i$ it follows that
\[\gldim(\La) = \max \{\pd(J_i) \mid 1\leq i \leq h\} = d.\]
Finally, since $I(1) = J_h$, we also have $\pd(I(1))=d$.

\item[(c)] The Aus\-lan\-der--Rei\-ten quiver $\Gamma(\La)$ in this case has the form
\[\begin{tikzpicture}
\node (A1) at (0,0) {$Q_1$};
\node (B1) at (0.3,0.3) {$Q_2$};
\node (B2) at (0.6,0) {\textopenbullet};
\node (C1) at (0.6,0.6) {$\bullet$};
\node (C2) at (0.9,0.3) {\textopenbullet};
\node (C3) at (1.2,0) {\textopenbullet};
\node (D1) at (1.2,0.6) {$\bullet$};
\node (D2) at (1.5,0.3) {\textopenbullet};
\node (D3) at (1.8,0) {\textopenbullet};

\node (E1) at (2.8,0.6) {$\bullet$};
\node (E2) at (3.1,0.3) {\textopenbullet};
\node (E3) at (3.4,0) {\textopenbullet};
\node (F1) at (3.4,0.6) {$\bullet$};
\node (F2) at (3.7,0.3) {\textopenbullet};
\node (F3) at (4,0) {$N$};
\node (G1) at (4,0.6) {$\bullet$};
\node (G2) at (4.3,0.3) {$J_1$};
\node (G3) at (4.6,0) {\textopenbullet};
\node (H1) at (4.9,0.3) {$\bullet$};
\node (H2) at (5.2,0) {\textopenbullet};

\node (H1) at (6.2,0.3) {$\bullet$};
\node (H2) at (6.5,0) {\textopenbullet};
\node (I1) at (6.8,0.3) {$\bullet$};
\node (I2) at (7.1,0) {$J_2$\nospacepunct{,}};

\draw[loosely dotted] (2,0.3) -- (2.6,0.3);
\draw[loosely dotted] (5.4,0.15) -- (6.05,0.15);
\end{tikzpicture}\]
where 
\[Q_1\cong M(1,1),\;\; Q_2\cong M(1,2),\;\; N\cong M\left( \frac{3}{2}(n-1)+2,1 \right),\] 
\[J_1\cong M\left(\frac{3}{2}(n-1)+2,2\right),\;\; J_2\cong M\left(\frac{5}{2}(n-1)+3,1 \right),\]
and the bold vertices correspond to the indecomposable pro\-jec\-tive-in\-jec\-tive $\La$-modules. Moreover the vertices $Q_i$ correspond to the indecomposable projective noninjective $\La$-modules and the vertices $J_i$ correspond to the indecomposable injective nonprojective modules. In particular, the algebra $\La$ can be identified with a gluing $\La_{3\left(\frac{n+1}{2}\right),3} \glue[P(n)][I(2)] \La_{n+1,2}$. We set $A\coloneqq \La_{n+1,2}$ and $B\coloneqq \La_{3\left(\frac{n+1}{2}\right),3}$. Then we can identify the Aus\-lan\-der--Rei\-ten quivers $\Gamma\left(B\right)$ and $\Gamma(A)$ as follows:
\[\begin{tikzpicture}[baseline={(current bounding box.center)}]
\node (A1) at (0,0) {$Q_1$};
\node (B1) at (0.3,0.3) {$Q_2$};
\node (B2) at (0.6,0) {\textopenbullet};
\node (C1) at (0.6,0.6) {$\bullet$};
\node (C2) at (0.9,0.3) {\textopenbullet};
\node (C3) at (1.2,0) {\textopenbullet};
\node (D1) at (1.2,0.6) {$\bullet$};
\node (D2) at (1.5,0.3) {\textopenbullet};
\node (D3) at (1.8,0) {\textopenbullet};

\node (E1) at (2.8,0.6) {$\bullet$};
\node (E2) at (3.1,0.3) {\textopenbullet};
\node (E3) at (3.4,0) {\textopenbullet};
\node (F1) at (3.4,0.6) {$\bullet$};
\node (F2) at (3.7,0.3) {\textopenbullet};
\node (F3) at (4,0) {$N$};
\node (G1) at (4,0.6) {$\bullet$};
\node (G2) at (4.3,0.3) {$J_1$};
\node (G3) at (4.6,0) {\textopenbullet};

\node[scale=0.7] (name) at (2.3,-0.5) {$\Gamma\left(B\right)$};
\draw[loosely dotted] (2,0.3) -- (2.6,0.3);
\end{tikzpicture}
\text{\;\; and \;\;}
\begin{tikzpicture}[baseline={(current bounding box.center)}]

\node (F3) at (4,0) {$N$};
\node (G2) at (4.3,0.3) {$J_1$};
\node (G3) at (4.6,0) {\textopenbullet};
\node (H1) at (4.9,0.3) {$\bullet$};
\node (H2) at (5.2,0) {\textopenbullet};

\node (H1) at (6.2,0.3) {$\bullet$};
\node (H2) at (6.5,0) {\textopenbullet};
\node (I1) at (6.8,0.3) {$\bullet$};
\node (I2) at (7.1,0) {$J_2$\nospacepunct{.}};

\node[scale=0.7] (name) at (6,-0.5) {$\Gamma(A)$};
\draw[loosely dotted] (5.4,0.15) -- (6.05,0.15);
\end{tikzpicture}
\]
Using Lemma \ref{lemma:taun}, we compute in $\m B$:
\[\tno (Q_1) \cong N,\;\; \tno (Q_2)\cong J_1,\;\; \tno (N) \cong J_2,\]
and in $\m A$:
\[\tn (N) \cong Q_1,\;\; \tn (J_1) \cong Q_2,\;\; \tn (J_2)\cong N.\]
It follows that $\cC_A=\add(\La_{n+1,2}\oplus J_2)$ is a $(J_1\oplus N, J_2\oplus I(2),n)$-fractured subcategory of $\m A$ and that $\cC_B=\add(\La_{3\left(\frac{n+1}{2}\right),3}\oplus J_1\oplus N)$ is a $(\La_{3\left(\frac{n+1}{2}\right),3}, I(3)\oplus J_1\oplus N, n)$-fractured subcategory of $\m B$. The gluing is clearly compatible at the fracture as per the requirements of Theorem \ref{thrm:fractsubcat} and hence we have that
\begin{align*} 
\cC &= \add(\La \oplus N\oplus J_1 \oplus J_2)
\end{align*}
is an $n$-cluster tilting subcategory of $\m\La$. Similar computations as above show that in $\m\La$ we have
\[\om^n (J_1) \cong Q_1, \;\; \om^{d} (J_2) \cong Q_2,\]
from which it follows that $\gldim(\La)=d$. Since $J_2=I(1)$, the proof is complete.
\end{proof}

\begin{corollary}\label{cor:oddclustertilting}
Let $n$ be odd and $d\geq n$. There exists an $(n,d)$-rep\-re\-sen\-ta\-tion-fi\-nite algebra $\La$.
\end{corollary}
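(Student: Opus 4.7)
The plan is to derive the corollary by a short case split on $d$, combining Proposition \ref{prop:caseh2}, Proposition \ref{prop:thecasendodd}, and Corollary \ref{cor:allnds}(b). For the base range $n \leq d \leq 2n-1$ I take $\La_{n+1,2}$ when $d=n$ (it is strongly $(n,n)$-rep\-re\-sen\-ta\-tion-di\-rect\-ed by Proposition \ref{prop:caseh2}) and the explicit Nakayama algebra from Proposition \ref{prop:thecasendodd} when $n<d\leq 2n-1$. In each instance the algebra is a quotient of $KA_m$ for some $m$, and since no arrow of $A_m$ ends at vertex $1$, the injective envelope $I(1)$ coincides with $S(1)$. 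Combined with the identity $\pd(I(1))=d$ recorded in Propositions \ref{prop:caseh2} and \ref{prop:thecasendodd}, this shows that each base algebra has a \emph{simple} injective module of projective dimension equal to its global dimension, which is exactly the data required by Corollary \ref{cor:allnds}(b).

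For $d \geq 2n$ I would write $d = d_0 + kn$ uniquely with $n \leq d_0 \leq 2n-1$ and $k \geq 1$, let $B$ be the base algebra associated to $d_0$ in the previous paragraph, and feed $(B, I(1))$ into Corollary \ref{cor:allnds}(b). Its conclusion says exactly that the iterated gluing
\[ \La = B \glue \La_{n+1,2} \glue \cdots \glue \La_{n+1,2} \]
is strongly $(n,\,d_0+kn)=(n,d)$-rep\-re\-sen\-ta\-tion-di\-rect\-ed, hence $(n,d)$-rep\-re\-sen\-ta\-tion-fi\-nite, finishing the proof.

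I expect no real obstacle: the structural work is already done in Proposition \ref{prop:thecasendodd}, and everything else is bookkeeping. The only subtlety worth spelling out explicitly is the remark that $I(1)\cong S(1)$ in each base algebra (which ensures the hypothesis of Corollary \ref{cor:allnds}(b) is satisfied and not merely that of a non-simple injective of the correct projective dimension).
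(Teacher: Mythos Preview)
Your proposal is correct and follows essentially the same strategy as the paper's proof: reduce to a base case of global dimension in $[n,2n-1]$ using Proposition~\ref{prop:caseh2} and Proposition~\ref{prop:thecasendodd}, then invoke Corollary~\ref{cor:allnds}(b) for larger $d$. The only cosmetic difference is that the paper parametrises via $d=kn+d'$ with $0\le d'<n$ (using Example~\ref{ex:thecasekn} for $d'=0$), whereas you parametrise via $d=d_0+kn$ with $n\le d_0\le 2n-1$; your explicit verification that $I(1)$ is simple is a useful detail the paper leaves implicit.
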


\begin{proof}
Write $d=qn+d'$ for some $q\geq 1$ and $0\leq d'\leq n-1$. If $d'=0$ then $\La$ exists by Example \ref{ex:thecasekn}. If $0<d'<n$, let $\La'$ be an $(n,d')$-rep\-re\-sen\-ta\-tion-fi\-nite algebra as in Proposition \ref{prop:thecasendodd}. Then $\La'$ satisfies the assumptions of Corollary \ref{cor:allnds}(b) and so there exists an algebra $\La$ which is $(n,qn+d')$-rep\-re\-sen\-ta\-tion-fi\-nite as required.
\end{proof}

\subsection{The case of \texorpdfstring{$n$}{n} being even}

In this case we have the following families of $(n,d)$-rep\-re\-sen\-ta\-tion-fi\-nite algebras. 

\begin{proposition}\label{prop:thecasendeven}
Let $n$ be even and $0< k < n$.
\begin{itemize}

\item[(a)] If $k$ is even, then the acyclic Nakayama algebra $\La$ with Kupisch series
\[\left( 2^{(k)}, 3^{\left(3\frac{n-k}{2}-1\right)}, 2^{(k+1)},1\right)\]
is $(n,n+k)$-rep\-re\-sen\-ta\-tion-fi\-nite and we have $\pd(I(1))=n+k$.

\item[(b)] If $k$ is odd and $k\neq n-1$, then the acyclic Nakayama algebra $\La$ with Kupisch series
\[\left(2^{(k)},3^{\left(3\left(n-\frac{k+1}{2}\right)\right)},2^{(k+1)},1\right)\]
is $(n,2n+k)$-rep\-re\-sen\-ta\-tion-fi\-nite and we have $\pd(I(1))=2n+k$. 

\item[(c)] If $k=n-1$, then the acyclic Nakayama algebra $\La=\La_{\frac{9n}{2},3}$ with Kupisch series
\[\left(3^{\left(\frac{9n}{2}-2 \right)} ,2,1\right)\]
is $(n,2n+k)$-rep\-re\-sen\-ta\-tion-fi\-nite and we have $\pd(I(1))=2n+k$.

\end{itemize}
\end{proposition}

\begin{proof}
The proof is similar to the proof of Proposition \ref{prop:thecasendodd}. Computations are done using Lemma \ref{lemma:nakayamacompute}.
\begin{itemize}

\item[(a)] The Aus\-lan\-der--Rei\-ten quiver $\Gamma(\La)$ in this case has the form
\[\begin{tikzpicture}
\node (A1) at (0,0) {$Q_1$};
\node (B1) at (0.3,0.3) {$\bullet$};
\node (B2) at (0.6,0) {\textopenbullet};
\node (C1) at (0.9,0.3) {$\bullet$};
\node (C2) at (1.2,0) {\textopenbullet};

\node (D1) at (2.2,0.3) {$\bullet$};
\node (D2) at (2.5,0) {\textopenbullet};
\node (E1) at (2.8,0.3) {$Q_2$};
\node (E2) at (3.1,0) {\textopenbullet};
\node (F1) at (3.1,0.6) {$\bullet$};
\node (F2) at (3.4,0.3) {\textopenbullet};
\node (F3) at (3.7,0) {\textopenbullet};
\node (G1) at (3.7,0.6) {$\bullet$};
\node (G2) at (4,0.3) {\textopenbullet};
\node (G3) at (4.3,0) {\textopenbullet};

\node (H1) at (5,0.6) {$\bullet$};
\node (H2) at (5.3,0.3) {\textopenbullet};
\node (H3) at (5.6,0) {\textopenbullet};
\node (I1) at (5.6,0.6) {$\bullet$};
\node (I2) at (5.9,0.3) {$J_1$};
\node (I3) at (6.2,0) {\textopenbullet};
\node (J1) at (6.5,0.3) {$\bullet$};
\node (J2) at (6.8,0) {\textopenbullet};

\node (K1) at (7.8,0.3) {$\bullet$};
\node (K2) at (8.1,0) {\textopenbullet};
\node (L1) at (8.4,0.3) {$\bullet$};
\node (L2) at (8.7,0) {$J_2$\nospacepunct{,}};

\draw[loosely dotted] (1.4,0.15) -- (2,0.15);
\draw[loosely dotted] (4.35,0.3) -- (4.95,0.3);
\draw[loosely dotted] (7,0.15) -- (7.6,0.15);
\end{tikzpicture}\]
where $Q_1\cong M\left(1,1\right)$, $Q_2\cong M\left(k+1,2\right)$, $J_1\cong M\left(\frac{3n-k}{2},2\right)$, $J_2\cong M\left(\frac{3n+k}{2}+1,1\right)$ and the bold vertices correspond to the indecomposable pro\-jec\-tive-in\-jec\-tive $\La$-modules. Moreover, the vertices $Q_1$ and $Q_2$ correspond to the indecomposable projective noninjective $\La$-modules, the vertices $J_1$ and $J_2$ correspond to the indecomposable injective nonprojective $\La$-modules and for $i=1,2$ we have
\[ \tno (Q_i) \cong J_i,\;\; \tn (J_i) \cong Q_i.\]
Hence by Theorem \ref{thrm:char} we have that $\cC=\add(\La\oplus D(\La))$ is an $n$-cluster tilting subcategory. Finally, we have $I(1)=J_2$ and 
\[d=\gldim(\La) = \pd(J_2)=n+k.\]

\item[(b)] The Aus\-lan\-der--Rei\-ten quiver $\Gamma(\La)$ in this case has the form
\[\begin{tikzpicture}
\node (A1) at (0.6,0) {$Q_1$};
\node (B1) at (0.9,0.3) {$\bullet$};
\node (B2) at (1.2,0) {\textopenbullet};

\node (C1) at (2.2,0.3) {$\bullet$};
\node (C2) at (2.5,0) {\textopenbullet};
\node (D1) at (2.8,0.3) {$Q_2$};
\node (D2) at (3.1,0) {\textopenbullet};
\node (E1) at (3.1,0.6) {$\bullet$};
\node (E2) at (3.4,0.3) {\textopenbullet};
\node (E3) at (3.7,0) {\textopenbullet};

\node (F1) at (4.4,0.6) {$\bullet$};
\node (F2) at (4.7,0.3) {\textopenbullet};
\node (F3) at (5,0) {\textopenbullet};
\node (G1) at (5,0.6) {$\bullet$};
\node (G2) at (5.3,0.3) {\textopenbullet};
\node (G3) at (5.6,0) {$N_1$};
\node (H1) at (5.6,0.6) {$\bullet$};
\node (H2) at (5.9,0.3) {\textopenbullet};
\node (H3) at (6.2,0) {\textopenbullet};

\node (I1) at (6.9,0.6) {$\bullet$};
\node (I2) at (7.2,0.3) {\textopenbullet};
\node (I3) at (7.5,0) {\textopenbullet};
\node (J1) at (7.5,0.6) {$\bullet$};
\node (J2) at (7.8,0.3) {\textopenbullet};
\node (J3) at (8.1,0) {$N_2$};
\node (K1) at (8.1,0.6) {$\bullet$};
\node (K2) at (8.4,0.3) {\textopenbullet};
\node (K3) at (8.7,0) {\textopenbullet};

\node (L1) at (9.4,0.6) {$\bullet$};
\node (L2) at (9.7,0.3) {\textopenbullet};
\node (L3) at (10,0) {\textopenbullet};
\node (M1) at (10.3,0.3) {$J_1$};
\node (M2) at (10.6,0) {\textopenbullet};
\node (N1) at (10.9,0.3) {$\bullet$};
\node (N2) at (11.2,0) {\textopenbullet};

\node (O1) at (12.2,0.3) {$\bullet$};
\node (O2) at (12.5,0) {\textopenbullet};
\node (P1) at (12.8,0.3) {$\bullet$};
\node (P2) at (13.1,0) {$J_2$\nospacepunct{,}};

\draw[loosely dotted] (1.4,0.15) -- (2,0.15);
\draw[loosely dotted] (3.75,0.3) -- (4.35,0.3);
\draw[loosely dotted] (6.25,0.3) -- (6.85,0.3);
\draw[loosely dotted] (8.75,0.3) -- (9.35,0.3);
\draw[loosely dotted] (11.4,0.15) -- (12,0.15);
\end{tikzpicture}\]
where 
\[Q_1\cong M\left(1,1\right),\;\; Q_2\cong M\left(k+1,2\right),\;\; N_1\cong M\left(\frac{3n-k+1}{2},1\right),\;\;\]
\[N_2\cong \left(\frac{3n}{2}+k+1,1\right),\;\; J_1\cong M\left(3n-\frac{k+1}{2},2\right),\;\; J_2\cong M\left(3n+\frac{k+1}{2},1\right)\] 
and the bold vertices correspond to the indecomposable pro\-jec\-tive-in\-jec\-tive $\La$-modules. Moreover, the projective noninjective indecomposable modules are $Q_1$ and $Q_2$, the injective nonprojective indecomposable modules are $J_1$ and $J_2$ and for $i=1,2$ we have
\[\tno(Q_i) \cong N_i,\;\; \tno(N_i)\cong J_i,\;\; \tn(N_i)\cong Q_i,\;\; \tn(J_i) \cong N_i.\]
Hence by Theorem \ref{thrm:char} we have that $\cC=\add(\La\oplus D(\La)\oplus N_1\oplus N_2)$ is an $n$-cluster tilting subcategory. Finally, we have $I(1)=J_2$ and 
\[d=\gldim(\La) = \pd(J_2)=2n+k.\]

\item[(c)] The Aus\-lan\-der--Rei\-ten quiver $\Gamma(\La)$ in this case has the form
\[\begin{tikzpicture}
\node (A1) at (0.6,0) {$Q_1$};
\node (B1) at (0.9,0.3) {$Q_2$};
\node (B2) at (1.2,0) {\textopenbullet};
\node (C1) at (1.2,0.6) {$\bullet$};
\node (C2) at (1.5,0.3) {\textopenbullet};
\node (C3) at (1.8,0) {\textopenbullet};

\node (D1) at (2.5,0.6) {$\bullet$};
\node (D2) at (2.8,0.3) {\textopenbullet};
\node (D3) at (3.1,0) {\textopenbullet};
\node (E1) at (3.1,0.6) {$\bullet$};
\node (E2) at (3.4,0.3) {$N_1$};
\node (E3) at (3.7,0) {$N_2$};
\node (F1) at (3.7,0.6) {$\bullet$};
\node (F2) at (4,0.3) {\textopenbullet};
\node (F3) at (4.3,0) {\textopenbullet};

\node (G1) at (5,0.6) {$\bullet$};
\node (G2) at (5.3,0.3) {\textopenbullet};
\node (G3) at (5.6,0) {\textopenbullet};
\node (H1) at (5.6,0.6) {$\bullet$};
\node (H2) at (5.9,0.3) {\textopenbullet};
\node (H3) at (6.2,0) {$K_1$};
\node (I1) at (6.2,0.6) {$\bullet$};
\node (I2) at (6.5,0.3) {$K_2$};
\node (I3) at (6.8,0) {\textopenbullet};
\node (J1) at (6.8,0.6) {$\bullet$};
\node (J2) at (7.1,0.3) {\textopenbullet};
\node (J3) at (7.4,0) {\textopenbullet};

\node (K1) at (8.1,0.6) {$\bullet$};
\node (K2) at (8.4,0.3) {\textopenbullet};
\node (K3) at (8.7,0) {\textopenbullet};
\node (L1) at (8.7,0.6) {$\bullet$};
\node (L2) at (9,0.3) {$J_1$};
\node (L3) at (9.3,0) {$J_2$\nospacepunct{,}};

\draw[loosely dotted] (1.85,0.3) -- (2.45,0.3);
\draw[loosely dotted] (4.35,0.3) -- (4.95,0.3);
\draw[loosely dotted] (7.45,0.3) -- (8.05,0.3);
\end{tikzpicture}\]
where $Q_1\cong M\left(1,1\right)$, $Q_2\cong M\left(1,2\right)$, $N_1\cong M\left(\frac{3n}{2},2\right)$, $N_2\cong \left(\frac{3n}{2}+1,1\right)$, $K_1\cong M\left(3n,1 \right)$, $K_2\cong M\left(3n,2\right)$, $J_1\cong M\left(\frac{9n}{2}-1,2\right)$, $J_2\cong M\left(\frac{9n}{2},1\right)$ and the bold vertices correspond to the indecomposable pro\-jec\-tive-in\-jec\-tive $\La$-modules. Moreover, the vertices $Q_1$ and $Q_2$ correspond to the indecomposable projective noninjective $\La$-modules, the vertices $J_1$ and $J_2$ correspond to the indecomposable injective nonprojective $\La$-modules and for $i=1,2$ we have
\[ \tno(Q_i) \cong N_i,\;\; \tno(N_i)\cong K_i,\;\; \tno(K_i)\cong J_i,\]
\[\tn(J_i) \cong K_i,\;\; \tn(K_i)\cong N_i,\;\; \tn(N_i)\cong Q_i.\]
Hence by Theorem \ref{thrm:char} we have that $\cC=\add(\La\oplus D(\La)\oplus N_1\oplus N_2\oplus K_1\oplus K_2)$ is an $n$-cluster tilting subcategory. Finally, we have $I(1)=J_2$ and 
\[d=\gldim(\La) = \pd(J_2)=3n-1.\]
\end{itemize}
\end{proof}

\begin{corollary}\label{cor:evenclustertilting}
Let $n$ be even and $d\geq 2n$. There exists an $(n,d)$-rep\-re\-sen\-ta\-tion-fi\-nite algebra $\La$.
\end{corollary}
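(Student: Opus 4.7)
The plan is to mirror the proof of Corollary~\ref{cor:oddclustertilting}, feeding the explicit Nakayama examples from Proposition~\ref{prop:thecasendeven} into the iterated gluing machinery of Corollary~\ref{cor:allnds}(b). I would start by writing $d = kn + d'$ with $k \geq 1$ and $0 \leq d' < n$; the assumption $d \geq 2n$ immediately guarantees $k \geq 2$, which turns out to be the only numerical condition one has to check.

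If $d' = 0$, then $d = kn$ with $k \geq 2$, and the Nakayama algebra $\La_{kn+1,2}$ from Example~\ref{ex:thecasekn} is already $(n,d)$-rep\-re\-sen\-ta\-tion-fi\-nite. If $d'$ is positive and even, Proposition~\ref{prop:thecasendeven}(a), applied with parameter value $d'$, produces a strongly $(n, n+d')$-rep\-re\-sen\-ta\-tion-di\-rect\-ed algebra $\La'$ with a simple injective of projective dimension $n+d'$. Applying Corollary~\ref{cor:allnds}(b) with $B = \La'$ and gluing $k-1$ copies of $\La_{n+1,2}$ on the left produces a strongly $(n, (k-1)n + (n+d')) = (n,d)$-rep\-re\-sen\-ta\-tion-di\-rect\-ed algebra, which requires only $k - 1 \geq 0$.

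The case $d'$ odd is the one that actually forces the hypothesis $d \geq 2n$. Proposition~\ref{prop:thecasendeven}(b) (if $d' \neq n-1$) or Proposition~\ref{prop:thecasendeven}(c) (if $d' = n-1$) supplies a strongly $(n, 2n+d')$-rep\-re\-sen\-ta\-tion-di\-rect\-ed algebra $\La'$ admitting a simple injective of projective dimension $2n+d'$. I would then glue $k-2$ copies of $\La_{n+1,2}$ on the left of $\La'$ via Corollary~\ref{cor:allnds}(b) to obtain a strongly $(n, (k-2)n + (2n+d')) = (n,d)$-rep\-re\-sen\-ta\-tion-di\-rect\-ed algebra. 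This step demands $k - 2 \geq 0$, which is exactly the content of $d \geq 2n$; for odd $d'$ with $0 < d' < n$ the smallest global dimension reachable from this construction is $2n+d'$, so there is no hope of covering the odd-remainder cases with $d < 2n$ by the same recipe, which is precisely the asymmetry between Corollary~\ref{cor:oddclustertilting} and the present statement. Since all the heavy lifting has already been done in assembling the explicit Nakayama examples and the gluing technology, I expect no substantive obstacle beyond this elementary case split; the only thing to verify is that the number of $\La_{n+1,2}$ factors appearing in each subcase is non-negative.
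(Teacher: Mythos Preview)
Your proof is correct and follows essentially the same strategy as the paper: write $d = kn + d'$, invoke Example~\ref{ex:thecasekn} when $d'=0$, and otherwise take a base algebra from Proposition~\ref{prop:thecasendeven} and iterate Corollary~\ref{cor:allnds}(b). Your version is in fact more carefully argued than the paper's own proof, which glosses over the case split on the parity of $d'$ and the distinction between the $(n,n+d')$ and $(n,2n+d')$ starting points; your explicit bookkeeping of how many copies of $\La_{n+1,2}$ are needed (namely $k-1$ in the even-remainder case and $k-2$ in the odd-remainder case) and your observation that the latter is what forces the hypothesis $d\geq 2n$ are exactly right.
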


\begin{proof}
Write $d=qn+d'$ for some $q\geq 2$ and $0\leq d'< n$. If $d'=0$ then $\La$ exists by Example \ref{ex:thecasekn}. Assume that $d'>0$. If $d'$ is even, let $\La'$ be an $(n,n+d')$-rep\-re\-sen\-ta\-tion-fi\-nite algebra as in Proposition \ref{prop:thecasendeven}(a). Then $\La'$ satisfies the assumptions of Corollary \ref{cor:allnds}(b) and so there exists an algebra $\La$ which is $(n,n+d'+(q-1)n)=(n,d)$-rep\-re\-sen\-ta\-tion-fi\-nite. If $d$ is odd, let $\La'$ be an $(n,2n+d')$-rep\-re\-sen\-ta\-tion-fi\-nite algebra as in Proposition \ref{prop:thecasendeven}(b) or (c), depending on whether $d'<n-1$ or $d'=n-1$. If $q=2$ then take $\La=\La'$. Otherwise, if $q>2$, since $\La'$ satisfies the assumptions of Corollary \ref{cor:allnds}(b), there exists an algebra $\La$ which is $(n,2n+d'+(q-2)n)=(n,d)$-rep\-re\-sen\-ta\-tion-fi\-nite, as required.
\end{proof}

Let us give an example in this case as well.

\begin{example}\label{ex:thecase6}
Let $n=6$. Using Proposition \ref{prop:thecasendeven} and Example \ref{ex:thecasekn} we have in Table \ref{tab:2} a list of $(6,d)$-rep\-re\-sen\-ta\-tion-fi\-nite algebras $\La$ where the $6$-cluster tilting subcategories are denoted by the bold vertices in the Aus\-lan\-der--Rei\-ten quivers. Using Corollary \ref{cor:allnds} and the list in Table \ref{tab:2}, we obtain a $(6,d)$-rep\-re\-sen\-ta\-tion-fi\-nite algebra for any $d\geq 12$.

\begin{table}[H] \caption{$(6,d)$--rep\-re\-sen\-ta\-tion-fi\-nite Nakayama algebras for $d=6,8,10,13,15,17$}\label{tab:2}  
\begin{tabular}{|>{\centering\arraybackslash}m{5.5cm}|>{\centering\arraybackslash}m{8cm}|>{\centering\arraybackslash}m{0.4cm}|}
\hline 
    $\La$ (as a Kupisch series) & $\Gamma(\La)$ and indecomposables in $\cC_{\La}$  & $d$ \\   
\hline 
    $\left(2^{(6)},1\right)$ 
    & \begin{tikzpicture}[scale=1.3, transform shape]
        \foreach \x in {0,0.2,...,1.2}
            \mct{\x}{0};
        \foreach \x in {0.1,0.3,...,1.1}
            \nct{\x}{0.2};
        \nct{0}{0};
        \nct{1.2}{0};
        \node (A) at (0.6,0.25) {};
    \end{tikzpicture}
    & $6$ \\
\hline    
    $\left(2^{(2)},3^{(15)},2^{(2)},1\right)$
    & \begin{tikzpicture}[scale=1.3, transform shape]
        \foreach \x in {0,0.2,...,3.6}
            \mct{\x}{0};
        \foreach \x in {0.1,0.3,...,3.5}
            \mct{\x}{0.2};
        \foreach \x in {0.4,0.6,...,3.4}
            \nct{\x}{0.4};
        \nct{0}{0};
        \nct{0.1}{0.2};
        \nct{0.3}{0.2};
        \nct{1.6}{0};
        \nct{2}{0};
        \nct{3.3}{0.2};
        \nct{3.5}{0.2};
        \nct{3.6}{0};
                        
        \node (A) at (1.8,0.45) {};
    \end{tikzpicture} 
    & $13$ \\
\hline
    $\left(2^{(2)},3^{(5)},2^{(3)},1\right)$
    & \begin{tikzpicture}[scale=1.3, transform shape]
        \foreach \x in {0,0.2,...,2}
            \mct{\x}{0};
        \foreach \x in {0.1,0.3,...,1.9}
            \mct{\x}{0.2};
        \foreach \x in {0.6,0.8,...,1.4}
            \nct{\x}{0.4};
        \nct{0}{0};
        \nct{0.1}{0.2};
        \nct{0.3}{0.2};
        \nct{0.5}{0.2};
        \nct{1.5}{0.2};
        \nct{1.7}{0.2};
        \nct{1.9}{0.2};
        \nct{2}{0};
                        
        \node (A) at (1,0.45) {};
    \end{tikzpicture}
    & $8$ \\
\hline 
    $\left(2^{(3)},3^{(12)},2^{(4)},1\right)$
    & \begin{tikzpicture}[scale=1.3, transform shape]
        \foreach \x in {0,0.2,...,3.8}
            \mct{\x}{0};
        \foreach \x in {0.1,0.3,...,3.7}
            \mct{\x}{0.2};
        \foreach \x in {0.8,1,...,3}
            \nct{\x}{0.4};
        \nct{0}{0};
        \nct{0.1}{0.2};
        \nct{0.3}{0.2};
        \nct{0.5}{0.2};
        \nct{0.7}{0.2};
        \nct{1.4}{0};
        \nct{2.4}{0};
        \nct{3.1}{0.2};
        \nct{3.3}{0.2};
        \nct{3.5}{0.2};
        \nct{3.7}{0.2};
        \nct{3.8}{0};
        
        \node (A) at (1.9,0.45) {};
    \end{tikzpicture}
    & $15$ \\
\hline 
    $\left(2^{(4)},3^{(2)},2^{(5)},1\right)$ 
    & \begin{tikzpicture}[scale=1.3, transform shape]
        \foreach \x in {0,0.2,...,2.2}
            \mct{\x}{0};
        \foreach \x in {0.1,0.3,...,2.1}
            \mct{\x}{0.2};
        \foreach \x in {1,1.2}
            \nct{\x}{0.4};
        \nct{0}{0};
        \nct{0.1}{0.2};
        \nct{0.3}{0.2};
        \nct{0.5}{0.2};
        \nct{0.7}{0.2};
        \nct{0.9}{0.2};
        \nct{1.3}{0.2};
        \nct{1.5}{0.2};
        \nct{1.7}{0.2};
        \nct{1.9}{0.2};
        \nct{2.1}{0.2};
        \nct{2.2}{0};        
        
        \node (A) at (1.1,0.45) {};
    \end{tikzpicture}
    & $10$ \\
\hline 
    $\left(3^{(25)},2,1\right)$
    & \begin{tikzpicture}[scale=1.3, transform shape]
        \foreach \x in {0,0.2,...,5.2}
            \mct{\x}{0};
        \foreach \x in {0.1,0.3,...,5.1}
            \mct{\x}{0.2};
        \foreach \x in {0.2,0.4,...,5}
            \nct{\x}{0.4};
        \nct{0}{0};
        \nct{0.1}{0.2};
        \nct{1.7}{0.2};
        \nct{1.8}{0};
        \nct{3.4}{0};
        \nct{3.5}{0.2};
        \nct{5.1}{0.2};
        \nct{5.2}{0};
        
        \node (A) at (2.6,0.45) {};
    \end{tikzpicture} 
    & $17$ \\
\hline
    \end{tabular}
\end{table}
\end{example}

\subsection{Main result}

In this short section we summarize the results of section \ref{sect:listofnds} in the following theorem.

\begin{theorem}\label{thrm:thendalgebras}
Let $n$ be a positive integer and $d\geq n$.
\begin{itemize}
\item[(a)] If $n$ is odd, then there exists an $(n,d)$-rep\-re\-sen\-ta\-tion-fi\-nite algebra.
\item[(b)] If $n$ is even, and $d$ is even or $d\geq 2n$, then there exists an $(n,d)$-rep\-re\-sen\-ta\-tion-fi\-nite algebra.
\end{itemize}
\end{theorem}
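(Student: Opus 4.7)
The plan is to observe that Theorem \ref{thrm:thendalgebras} is essentially a repackaging of the constructions carried out in Propositions \ref{prop:thecasendodd} and \ref{prop:thecasendeven} together with the iterated gluing procedure encoded in Corollary \ref{cor:allnds}. In other words, the hard work has already been done in producing explicit acyclic Nakayama algebras with the correct invariants; what remains is to split the pair $(n,d)$ into the right cases and invoke the appropriate result.

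For part (a), I would simply cite Corollary \ref{cor:oddclustertilting}, which says precisely that for $n$ odd and any $d\geq n$ an $(n,d)$-rep\-re\-sen\-ta\-tion-fi\-nite algebra exists. That corollary is proved by writing $d=kn+d'$ with $0\leq d'<n$, handling $d'=0$ via Example \ref{ex:thecasekn} (the algebra $\La_{kn+1,2}$), and otherwise taking an $(n,d')$-rep\-re\-sen\-ta\-tion-fi\-nite seed from Proposition \ref{prop:thecasendodd} and applying Corollary \ref{cor:allnds}(b) $k$ times to glue copies of $\La_{n+1,2}$ onto a simple injective of projective dimension $d'$.

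For part (b), I would split into two subcases. If $d\geq 2n$, then Corollary \ref{cor:evenclustertilting} applies directly and produces the required algebra by the same iterated-gluing argument as in the odd case, with Proposition \ref{prop:thecasendeven} supplying the seeds (note the statement requires $k\geq 2$ precisely because the seed algebras coming from Proposition \ref{prop:thecasendeven}(b)(c) have global dimensions in the range $[2n, 3n-1]$, not $[n, 2n-1]$). If instead $d$ is even with $n\leq d<2n$, write $d=n+k$ with $k=d-n$. Then $k$ is even and $0\leq k<n$. For $k=0$ we have $d=n$, and $\La_{n+1,2}$ from Example \ref{ex:thecasekn} (taking one copy) is $(n,n)$-rep\-re\-sen\-ta\-tion-fi\-nite. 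For $0<k<n$ with $k$ even, Proposition \ref{prop:thecasendeven}(a) provides an explicit Nakayama algebra which is $(n,n+k)$-rep\-re\-sen\-ta\-tion-fi\-nite. In either case we are done without needing to invoke Corollary \ref{cor:allnds} at all, since $d$ is small enough.

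There is no genuine obstacle in this theorem as stated; the real content lies in Propositions \ref{prop:thecasendodd} and \ref{prop:thecasendeven} where one must verify by direct computation with Lemma \ref{lemma:nakayamacompute} and Theorem \ref{thrm:char} that the Kupisch series written down do yield $n$-cluster tilting subcategories of the asserted global dimensions, and in Corollary \ref{cor:allnds} where one must know that the simple projective/injective modules retain maximal injective/projective dimension after each gluing step (this is the content of Theorem \ref{thrm:glueplusn}). Thus the proof of Theorem \ref{thrm:thendalgebras} itself reduces to a short case analysis organizing the pair $(n,d)$ and citing the relevant prior result.
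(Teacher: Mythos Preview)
Your proposal is correct and takes essentially the same approach as the paper, which proves the theorem in a single line by citing Corollary \ref{cor:oddclustertilting}, Corollary \ref{cor:evenclustertilting}, and Proposition \ref{prop:thecasendeven}. Your elaboration of the case analysis (in particular, isolating the even-$d$, $n\leq d<2n$ subcase and handling it directly via Proposition \ref{prop:thecasendeven}(a) and Example \ref{ex:thecasekn}) spells out exactly what the paper's terse citation encodes.
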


\begin{proof}
Follows immediately by Corollary \ref{cor:oddclustertilting}, Corollary \ref{cor:evenclustertilting} and Proposition \ref{prop:thecasendeven}.
\end{proof}

\begin{remark}
Let us note that Theorem \ref{thrm:thendalgebras} is not sharp in the sense that there exist alebras that are $(n,d)$-rep\-re\-sen\-ta\-tion-fi\-nite where $n$ is even, while $n<d<2n$ and $d$ is odd. For example, in \cite[Example 3.8]{VAS} it was shown that the path algebra of the quiver with relations
\[\begin{tikzpicture}[scale=0.8, transform shape]
\node (A) at (0,0) {$\circ$};
\node (B) at (-1,1) {$\circ$};
\node (C) at (-2,0) {$\circ$};
\node (D) at (-2,2) {$\circ$};
\node (E) at (-3,1) {$\circ$};
\node (F) at (-4,0) {$\circ$};
\node (G) at (-5,1) {$\circ$};
\node (H) at (1,1) {$\circ$};

\draw[<-] (A) to (B);
\draw[<-] (B) to (D);
\draw[<-] (B) to (C);
\draw[<-] (D) to (E);
\draw[<-] (C) to (E);
\draw[<-] (E) to (F);
\draw[<-] (F) to (G);
\draw[<-] (H) to (A);

\draw[dotted] (A) -- (C);
\draw[dotted] (C) -- (F);
\draw[dotted] (H) -- (B);
\draw[dotted] (B) -- (E);
\draw[dotted] (E) -- (G);
\end{tikzpicture}\]
is $(2,3)$-rep\-re\-sen\-ta\-tion-fi\-nite.
\end{remark}

\section{Summary of notation}\label{sec:summary of notation}

Due to the length and the technical nature of the paper, we include a list of terminology with references to the corresponding numbered definitions as well as a list of symbols with a short description and a reference to where they are first encountered.

\newpage

\subsection*{List of definitions and table of notation}

\begin{itemize}
    \item[] \emph{$(n,d)$--rep\-re\-sen\-ta\-tion-fi\-nite algebra} \dotfill \textbf{Introduction}.
    \item[] \emph{Abutment} \dotfill \textbf{Section \ref{sect:construction}}.
    \item[] \emph{Completion of a fracture} \dotfill \textbf{Section \ref{sect:construction}}.
    \item[] \emph{Gluing of algebras} \dotfill \textbf{Definition \ref{def:gluing}}.
    \item[] \emph{Gluing of fracturings} \dotfill \textbf{Section \ref{sect:construction}}.
    \item[] \emph{Gluing of subcategories} \dotfill \textbf{Definition \ref{def:catglue}}.
    \item[] \emph{Height of an abutment \dotfill} \textbf{Definition \ref{def:abutment}}.
    \item[] \emph{Footing} \dotfill \textbf{Definition \ref{def:footing}}.
    \item[] \emph{Foundation of an abutment} \dotfill \textbf{Proposition \ref{prop:triangles}}.
    \item[] \emph{Fracture} \dotfill \textbf{Definition \ref{def:fracture}}.
    \item[] \emph{Fracturing} \dotfill \textbf{Definition \ref{def:fracturing}}.
    \item[] \emph{Kupisch series} \dotfill \textbf{Definition \ref{def:Kupisch}}.
    \item[] \emph{Left $n$-fractured subcategory} \dotfill \textbf{Definition \ref{def:left and right nfract}}.
    \item[] \emph{Level of a fracture} \dotfill \textbf{Definition \ref{def:fracture}}.
    \item[] \emph{Maximal abutment} \dotfill \textbf{Section \ref{subsec:fractured subcategories}}.
    \item[] \emph{Module supported in an algebra} \dotfill \textbf{Definition \ref{def:supported}}.
    \item[] \emph{$n$-cluster tilting subcategory and module} \dotfill \textbf{Definition \ref{def:n-ct}}.
    \item[] \emph{$n$-fractured subcategory} \dotfill \textbf{Definition \ref{def:nctfract}}.
    \item[] \emph{Realization of an abutment} \dotfill \textbf{Definition \ref{def:abutment}}.
    \item[] \emph{Rep\-re\-sen\-ta\-tion-di\-rect\-ed algebra} \dotfill \textbf{Introduction}.
    \item[] \emph{Right $n$-fractured subcategory} \dotfill \textbf{Definition \ref{def:left and right nfract}}.
    \item[] \emph{Slice fracture} \dotfill \textbf{Definition \ref{def:slice fracture}}.
    \item[] \emph{Slice subcategory and module} \dotfill \textbf{Definition \ref{def:slice}}.
    \item[] \emph{Strongly $(n,d)$-rep\-re\-sen\-ta\-tion-di\-rect\-ed algebra} \dotfill \textbf{Introduction}.
\end{itemize}

\clearpage

\begin{table}[hbt!]
\resizebox{0.89\textwidth}{!}{
\begin{tabular}{|c|p{8.8cm}|p{2.4cm}|}
	\hline
	Symbol & Description & Reference\\ \hline\hline
	\raisebox{-.5\normalbaselineskip}[0pt][0pt]{$P(k)$ (resp. $I(k)$)} & The indecomposable projective (resp. injective) $KQ/\cR$-module corresponding to a vertex $k\in Q_0$. & \raisebox{-.5\normalbaselineskip}[0pt][0pt]{Section~\ref{subsection:conventions}.}\\ \hline
	\raisebox{-.5\normalbaselineskip}[0pt][0pt]{$\overline{\cC}$} & The set of isomorphism classes of indecomposable modules in $\cC\subseteq\m\La$. & \raisebox{-.5\normalbaselineskip}[0pt][0pt]{Section~\ref{subsection:conventions}.}\\ \hline 
	$\abs{\cC}$ & The cardinality of $\overline{\cC}$. &
	Section \ref{subsection:conventions}.\\ \hline
    $\add(\cC)$ & The additive closure of $\cC$. &
	Section~\ref{subsection:conventions}.\\ \hline
    \raisebox{-.5\normalbaselineskip}[0pt][0pt]{$\Sub(\cC)$ (resp. $\Fac(\cC)$)} & The subcategory of $\m\La$ containing all submodules (resp. factor modules) of modules in $\cC$. & 
	\raisebox{-.5\normalbaselineskip}[0pt][0pt]{Section~\ref{subsection:conventions}.}\\ \hline
    \raisebox{-.5\normalbaselineskip}[0pt][0pt]{$D$} & The standard duality $D=\Hom_{K}(-,K)$ between $\m\La$ and $\m\La^{\text{op}}$. & \raisebox{-.5\normalbaselineskip}[0pt][0pt]{Section~\ref{subsection:conventions}.}\\ \hline
    $\Omega(X)$ (resp. $\Omega^{-}(X)$) & The syzygy (resp. cosyzygy) of $X$. & Section~\ref{subsection:conventions}.\\ \hline
    \raisebox{-.5\normalbaselineskip}[0pt][0pt]{$\tau_n(X)$ and $\tau_n^{-}(X)$} & The $n$-Aus\-lan\-der--Rei\-ten translations $\tau_n(X)=\tau\om^{n-1}(X)$ and $\tau_n^{-}(X)=\tau^{-}\om^{-(n-1)}(X)$. & \raisebox{-.5\normalbaselineskip}[0pt][0pt]{Section~\ref{subsection:conventions}.}\\ \hline
    $\Gamma(\La)$ & The Aus\-lan\-der--Rei\-ten quiver of $\La$. & Section~\ref{subsection:conventions}.\\ \hline
    \raisebox{-.5\normalbaselineskip}[0pt][0pt]{$[M]$} & The vertex in $\Gamma(\La)$ corresponding to the indecomposable $\La$-module $M$. & \raisebox{-.5\normalbaselineskip}[0pt][0pt]{Section~\ref{subsection:conventions}.}\\ \hline
    \raisebox{-.5\normalbaselineskip}[0pt][0pt]{$\phi_{\ast}$} & The restriction of scalars functor $\phi_{\ast}:\m\Gamma\to\m\La$ for an algebra morphism $\phi:\La\to \Gamma$. & \raisebox{-.5\normalbaselineskip}[0pt][0pt]{Section~\ref{subsection:conventions}.}\\ \hline
    \raisebox{-.9\normalbaselineskip}[0pt][0pt]{$\phi^{\ast}$} & The left adjoint $\phi^{\ast}(-)=-\otimes_{\La}\Gamma$ to the restriction of scalars functor $\phi_{\ast}:\m\Gamma\to\m\La$ for an algebra morphism $\phi:\La\to \Gamma$. & \raisebox{-.9\normalbaselineskip}[0pt][0pt]{Section~\ref{subsection:conventions}.}\\ \hline
    \raisebox{-.9\normalbaselineskip}[0pt][0pt]{$\phi^{!}$} & The right adjoint $\phi^{!}(-)=\Hom_{\La}(\Gamma,-)$ to the restriction of scalars functor $\phi_{\ast}:\m\Gamma\to\m\La$ for an algebra morphism $\phi:\La\to \Gamma$. & \raisebox{-.9\normalbaselineskip}[0pt][0pt]{Section~\ref{subsection:conventions}.}\\ \hline
    \raisebox{+.2\normalbaselineskip}[0pt][0pt]{$A_h$} & The quiver $1\overset{\alpha_1}{\longrightarrow} 2 \overset{\alpha_2}{\longrightarrow} 3 \longrightarrow\cdots\longrightarrow h-1\overset{\alpha_{h-1}}{\longrightarrow} h$. & \raisebox{+.1\normalbaselineskip}[0pt][0pt]{Section~\ref{subsection:conventions}.}\\ \hline
    \raisebox{-.9\normalbaselineskip}[0pt][0pt]{$M(i,j)$} & The indecomposable representation of a Nakayama algebra $\La=KA_m/\cR$ with support $\{m-(i-1)-(j-1),\dots,m-(i-1)\}$. & \raisebox{-.9\normalbaselineskip}[0pt][0pt]{Section~\ref{subsection:conventions}.}\\ \hline
    $\La_{m,h}$ & The bound quiver algebra $KA_m/\rad(KA_m)^h$. & Section~\ref{subsection:conventions}.\\ \hline
    $\triangle(h)$ & The Aus\-lan\-der--Rei\-ten quiver $\Gamma(KA_h)$. & Section~\ref{subsection:conventions}.\\ \hline
    $\cL=\cB\glue \cA$ & The category $\cL$ is the gluing of its subcategories $\cB$ and $\cA$. & Definition~\ref{def:catglue}.\\ \hline
    \raisebox{-.5\normalbaselineskip}[0pt][0pt]{$(e_i,f_i)_{i=1}^h$ (resp. $(e_i,g_{i-1})_{i=1}^h$)} & The realization of a left abutment $P=e_1\La$ and of a right abutment $I=D(\La e_h)$. & \raisebox{-.5\normalbaselineskip}[0pt][0pt]{Section~\ref{sect:abutments}.}\\ \hline
    $Q_{\La}$ & The ordinary quiver of an algebra $\La$. & Section~\ref{sect:abutments}.\\ \hline
    \raisebox{-.5\normalbaselineskip}[0pt][0pt]{$f_P$ (resp. $g_I$)} & The footing $f_P:\La\twoheadrightarrow KA_h$ for a left abutment $P$ and $g_I:\La\twoheadrightarrow KA_h$ for a right abutment $I$. & \raisebox{-.5\normalbaselineskip}[0pt][0pt]{Definition~\ref{def:footing}.}\\ \hline
    \raisebox{-.5\normalbaselineskip}[0pt][0pt]{$\PD$ (resp. $\DI$)} & The foundation of a left abutment $P$ and of a right abutment $I$. & \raisebox{-.5\normalbaselineskip}[0pt][0pt]{Proposition~\ref{prop:triangles}.}\\ \hline
    \raisebox{-.9\normalbaselineskip}[0pt][0pt]{$\cF_P$ (resp. $\cG_I$)} & The smallest additive subcategory of $\m\La$ containing all modules in the foundation of a left abutment $P$ (resp. a right abutment $I$). & \raisebox{-.9\normalbaselineskip}[0pt][0pt]{Section~\ref{sect:abutments}.}\\ \hline
    \raisebox{-.2\normalbaselineskip}[0pt][0pt]{$B\glue[P][I] A$} & The gluing of $A$ and $B$ along $P$ and $I$. & Definition~\ref{def:gluing}.\\ \hline
    \raisebox{-.5\normalbaselineskip}[0pt][0pt]{$\supp(M)$} & The support of a representation $M$ of a bound quiver algebra. & \raisebox{-.5\normalbaselineskip}[0pt][0pt]{Section~\ref{subsubsection:gluing via pullbacks}.}\\ \hline
    $\cP_{\La}$ (resp. $\cI_{\La}$) & The category of projective (resp. injective) $\La$-modules. & Section~\ref{subsec:fractured subcategories}\\ \hline
    \raisebox{-.5\normalbaselineskip}[0pt][0pt]{$\ABL$ (resp. $\LAB$)} & The additive closure of the category of left (resp. right) abutments of $\La$. & \raisebox{-.5\normalbaselineskip}[0pt][0pt]{Section~\ref{subsec:fractured subcategories}.}\\ \hline
    \raisebox{-.5\normalbaselineskip}[0pt][0pt]{$\MABL$ (resp. $\MLAB$)} & The additive closure of the category of maximal left (resp. right) abutments of $\La$. & \raisebox{-.5\normalbaselineskip}[0pt][0pt]{Section~\ref{subsec:fractured subcategories}.}\\ \hline
    \raisebox{-.5\normalbaselineskip}[0pt][0pt]{$\cC_{\setminus \cV}$} & The additive closure of all indecomposable modules $X\in\cC$ such that $X\not\in\cV$. & \raisebox{-.5\normalbaselineskip}[0pt][0pt]{Section~\ref{subsec:fractured subcategories}.}\\ \hline
    \raisebox{-.5\normalbaselineskip}[0pt][0pt]{$T^{(P)}$ (resp. $T^{(I)}$)} & The fracture corresponding to a maximal left (resp. right) abutment $P$ (resp. $I$). & \raisebox{-.5\normalbaselineskip}[0pt][0pt]{Definition~\ref{def:fracture}.}\\ \hline
    $\lvl(T)$ & The level of a fracture $T$. & Definition~\ref{def:fracture}.\\ \hline
    \raisebox{-.5\normalbaselineskip}[0pt][0pt]{$T^L$ (resp. $T^R$)} & A left (resp. right) fracturing, that is a direct sum of fractures of maximal left (resp. right) abutments. & \raisebox{-.5\normalbaselineskip}[0pt][0pt]{Definition~\ref{def:fracturing}.}\\ \hline
    \raisebox{-.5\normalbaselineskip}[0pt][0pt]{$\Lab$ (resp. $\DLab$)} & A left (resp. right) fracturing which is projective (resp. injective) as a module. & \raisebox{-.5\normalbaselineskip}[0pt][0pt]{Section~\ref{subsec:fractured subcategories}.}\\ \hline
    \raisebox{-.5\normalbaselineskip}[0pt][0pt]{$T_{\ast}^{(P)}$ (resp. $T_{\ast}^{(I)}$)} & The fracture induced by a maximal left (resp. right) abutment on a gluing. & \raisebox{-.5\normalbaselineskip}[0pt][0pt]{Section~\ref{sect:construction}.}\\ \hline
    \raisebox{-.1\normalbaselineskip}[0pt][0pt]{$h^{(k)}$} & A sequence $h,h,\dots,h$ where $h$ appears $k$ times. & Section~\ref{sect:listofnds}.\\ \hline
\end{tabular}
}\end{table}

\clearpage

\noindent \textbf{Acknowledgements.} The author wishes to thank his advisor Martin Herschend for the constant support and suggestions during the preparation of this article. He also wishes to thank Andrea Pasquali for offering helpful suggestions about the manuscript. Finally he wishes to thank an anonymous referee for their careful reading of the article as well as their many helpful and detailed comments and corrections which improved the presentation considerably.

\bibliography{nct}

\begin{thebibliography}{JKPK19}

\bibitem[ARS95]{ARS}
M.~Auslander, I.~Reiten, and O.~Smal{\o}.
\newblock {\em {Representation theory of {A}rtin algebras}}, volume~36 of {\em
  {Cambridge Studies in Advanced Mathematics}}.
\newblock Cambridge University Press, Cambridge, 1995.

\bibitem[AS81]{AS2}
M.~Auslander and O.~Smal{\o}.
\newblock {Almost split sequences in subcategories}.
\newblock {\em J. Algebra}, 69(2):426--454, 1981.

\bibitem[ASS06]{ASS}
I.~Assem, D.~Simson, and A.~Skowronski.
\newblock {\em {Elements of the Representation Theory of Associative Algebras:
  Volume 1: Techniques of Representation Theory}}.
\newblock {Elements of the Representation Theory of Associative Algebras}.
  Cambridge University Press, 2006.

\bibitem[BCW15]{BCW}
Viktor Bekkert, Flávio~U. Coelho, and Heily Wagner.
\newblock Tree oriented pullback.
\newblock {\em Communications in Algebra}, 43(10):4247--4257, 2015,
  https://doi.org/10.1080/00927872.2014.942422.

\bibitem[DI20]{DI}
Erik Darpö and Osamu Iyama.
\newblock d-representation-finite self-injective algebras.
\newblock {\em Advances in Mathematics}, 362:106932, 2020.

\bibitem[EH08]{EH}
K.~Erdmann and T.~Holm.
\newblock {Maximal n-orthogonal modules for selfinjective algebras}.
\newblock {\em Proc. Amer. Math. Soc.}, 136(9):3069--3078, 2008.

\bibitem[HI10a]{HI2}
M.~Herschend and O.~Iyama.
\newblock {n-representation-finite algebras and twisted fractionally Calabi-Yau
  algebras}.
\newblock {\em Bull. Lond. Math. Soc. 43 (2011), no. 3, 449--466}, March 2010,
  0908.3510v2.

\bibitem[HI10b]{HI}
M.~Herschend and O.~Iyama.
\newblock {Selfinjective quivers with potential and 2-representation-finite
  algebras}.
\newblock {\em Compos. Math. 147 (2011), no. 6, 1885--1920}, July 2010,
  1006.1917v2.

\bibitem[HIO14]{HIO}
M.~Herschend, O.~Iyama, and S.~Oppermann.
\newblock {n-representation infinite algebras}.
\newblock {\em Advances in Mathematics}, 252:292–342, 2014.

\bibitem[HR81]{HR}
D.~Happel and C.~M. Ringel.
\newblock {Construction of tilted algebras}.
\newblock In Maurice Auslander and Emilo Lluis, editors, {\em {Representations
  of Algebras}}, pages 125--144, Berlin, Heidelberg, 1981. Springer Berlin
  Heidelberg.

\bibitem[IO10]{IO2}
O.~Iyama and S.~Oppermann.
\newblock {n-representation-finite algebras and n-APR tilting}.
\newblock {\em Trans. Amer. Math. Soc. 363 (2011), no. 12, 6575--6614}, January
  2010, 0909.0593v2.

\bibitem[IO13]{IO}
O.~Iyama and S.~Oppermann.
\newblock {Stable categories of higher preprojective algebras}.
\newblock {\em Advances in Mathematics}, 244:23--68, 2013.

\bibitem[IPTZ87]{IPTZ}
K.~Igusa, M.~Platzeck, G.~Todorov, and D.~Zacharia.
\newblock Auslander algebras of finite representation type.
\newblock {\em Communications in Algebra}, 15(1-2):377–424, 1987,
  https://doi.org/10.1080/00927878708823424.

\bibitem[Iya07]{IYA2}
O.~Iyama.
\newblock {Higher-dimensional Auslander--Reiten theory on maximal orthogonal
  subcategories}.
\newblock {\em Advances in Mathematics}, 210(1):22--50, 2007.

\bibitem[Iya08]{IYA1}
O.~Iyama.
\newblock {Auslander-{R}eiten theory revisited}.
\newblock In {\em {Trends in representation theory of algebras and related
  topics}}, {EMS Ser. Congr. Rep.}, pages 349--397. Eur. Math. Soc.,
  Z{\"u}rich, 2008.

\bibitem[Jas16]{JAS}
G.~Jasso.
\newblock {n-Abelian and n-exact categories}.
\newblock {\em Mathematische Zeitschrift}, 283(3):703--759, 2016.

\bibitem[JKPK19]{JK}
Gustavo Jasso, Julian Külshammer, Chrysostomos Psaroudakis, and Sondre Kvamme.
\newblock {Higher Nakayama algebras I: Construction}.
\newblock {\em Advances in Mathematics}, 351:1139 -- 1200, 2019.

\bibitem[Kup58]{KUP}
H.~Kupisch.
\newblock {\em {Beiträge zur Theorie nichthalbeinfacher Ringe mit
  Minimalbedingung}}.
\newblock PhD thesis, NA Heidelberg, 1958.

\bibitem[Lé08]{LEV}
Jessica Lévesque.
\newblock {Nakayama oriented pullbacks and stably hereditary algebras}.
\newblock {\em Journal of Pure and Applied Algebra}, 212(5):1149 -- 1161, 2008.

\bibitem[Mac98]{CWM}
S.~MacLane.
\newblock {\em {Categories for the Working Mathematician}}.
\newblock Springer, Berlin, 2. auflage edition, 10 1998.

\bibitem[Mil71]{MIL}
J.~Milnor.
\newblock {\em {Introduction to Algebraic K-Theory. (AM-72)}}.
\newblock Princeton University Press, 1971.

\bibitem[Rin16]{RIN}
C.~M. Ringel.
\newblock {Representation theory of Dynkin quivers. Three contributions}.
\newblock {\em Frontiers of Mathematics in China}, 11(4):765--814, Aug 2016.

\bibitem[Vas18]{VAS}
L.~Vaso.
\newblock {n-Cluster tilting subcategories of representation-directed
  algebras}.
\newblock {\em Journal of Pure and Applied Algebra}, 2018.

\end{thebibliography}
\bibliographystyle{halpha}

\end{document}